\theoremstyle{plain}
\newtheorem{lemma}{Lemma}[section]
\newtheorem{theorem}[lemma]{Theorem}
\newtheorem{corollary}[lemma]{Corollary}
\newtheorem{proposition}[lemma]{Proposition}
\newtheorem{assumption}[lemma]{Assumption}
\theoremstyle{definition}
\newtheorem{definition}[lemma]{Definition}
\newtheorem{remark}[lemma]{Remark}
\newtheorem*{notation*}{Notation}
\newtheorem*{plan*}{Plan of the paper}
\newtheorem*{ackno*}{Acknowledgements}
\newcommand{\E}{\mathbb{E}}
\newcommand{\N}{\mathbb{N}}
\newcommand{\R}{\mathbb{R}}
\newcommand{\T}{\mathbb{T}}
\newcommand{\Z}{\mathbb{Z}}
\newcommand{\X}{\mathbb{X}}
\newcommand{\Prob}{\mathbb{P}}
\newcommand{\CA}{\mathcal{A}}
\newcommand{\CB}{\mathcal{B}}
\newcommand{\CC}{\mathcal{C}}
\newcommand{\CD}{\mathcal{D}}
\newcommand{\CF}{\mathcal{F}}
\newcommand{\CG}{\mathcal{G}}
\newcommand{\CH}{\mathcal{H}}
\newcommand{\CJ}{\mathcal{J}}
\newcommand{\CK}{\mathcal{K}}
\newcommand{\CL}{\mathcal{L}}
\newcommand{\CM}{\mathcal{M}}
\newcommand{\CO}{\mathcal{O}}
\newcommand{\CQ}{\mathcal{Q}}
\newcommand{\CR}{\mathcal{R}}
\newcommand{\CS}{\mathcal{S}}
\newcommand{\CT}{\mathcal{T}}
\newcommand{\CX}{\mathcal{X}}
\newcommand{\CW}{\mathcal{W}}
\newcommand{\FK}{\mathfrak{K}}
\newcommand{\FI}{\mathfrak{I}}
\newcommand{\s}{\mathfrak{s}}
\newcommand{\SF}{\mathscr{F}}
\newcommand{\SL}{\mathscr{L}}
\newcommand{\SR}{\mathscr{R}}
\newcommand{\ST}{\mathscr{T}}
\newcommand{\eps}{\varepsilon}
\newcommand{\Id}{\mathrm{Id}}
\newcommand{\Imma}{\mathrm{Im}}
\newcommand{\1}{\mathbbm{1}}
\newcommand{\one}{\textcolor{blue}{\mathbf{1}}}
\newcommand{\dd}{\mathrm{d}}      
\newcommand{\VERT}{\vert\kern-0.25ex\vert\kern-0.25ex\vert} 
\newcommand{\eqdef}{\stackrel{\mathclap{\mbox{\tiny def}}}{=}}
\newcommand{\BV}{\textcolor{blue}{\mathbf{V}}}
\newcommand{\BC}{\mathbf{C}}
\definecolor{darkred}{rgb}{0.9,0.1,0.1}
\definecolor{darkblue}{rgb}{0,0,0.7}
\definecolor{darkgreen}{rgb}{0,0.5,0}
\def\snorm#1{| #1|}
\def\onorm#1{| #1|_0}
\def\enorm#1{| #1|_{\eps}}
\def\pnorm#1{\bigl[ #1\bigr]}
\tikzset{
    wavy/.style={decorate, decoration={snake,amplitude=0.2mm,segment length=1mm,post length=0.1cm}, ->, thick, -stealth, draw=black},
    verywavy/.style={decorate, decoration={snake,amplitude=0.7mm,segment length=0.7mm,post length=0.13cm}, ->, -stealth, draw=black},
    arrow/.style={->, thick, >=stealth, draw=black},
    dashed/.style={thick, densely dotted,->, >=stealth, draw=black},
    grayNode/.style={shape=circle,minimum size=0.06cm,inner sep=0, draw=gray, fill=gray},
    plainNode/.style={shape=circle,minimum size=0.06cm,inner sep=0, fill=black},
    deriv/.style={->>}
}
\tikzset{
	xi/.style={circle,fill=blue!10,draw=black,inner sep=0pt,minimum size=1.2mm},
	xib/.style={circle,fill=blue!10,draw=black,inner sep=0pt,minimum size=1.6mm},
	not/.style={circle,fill=black,draw=black,inner sep=0pt,minimum size=0.5mm,line width=0.5pt},
	>=stealth,
	}
\def\DeclareSymbol#1#2#3{\expandafter\gdef\csname MH@symb@#1\endcsname{\tikz[baseline=#2,scale=0.15]{#3}}}
\def\<#1>{\csname MH@symb@#1\endcsname}
\numberwithin{equation}{section}
\begin{document}

\title{Space-time discrete KPZ equation}
\date{}
\author{G. Cannizzaro\footnote{University of Warwick, \textit{G.Cannizzaro@warwick.ac.uk}}$\,\,$ and K. Matetski\footnote{University of Toronto, \textit{matetski@math.toronto.edu}}}
\date{\today}

\maketitle

\begin{abstract}
We study a general family of space-time discretizations of the KPZ equation and show that they converge to its solution. The approach we follow makes use of basic elements of the theory of regularity structures [M. Hairer, A theory of regularity structures, Invent. Math. 2014] as well as its discrete counterpart [M. Hairer, K. Matetski, Discretizations of rough stochastic PDEs, 2015]. Since the discretization is in both space and time and we allow non-standard discretization for the product, the methods mentioned above have to be suitably modified in order to accommodate the structure of the models under study. 
\end{abstract}
 
\tableofcontents


\section{Introduction}

The celebrated KPZ equation is a singular Stochastic Partial Differential Equation (SPDE) which was introduced by three physicists, Kardar, Parisi and Zhang to which it owes its name~\cite{KPZ86}, as a description of the fluctuations of a randomly growing surface. 
It is formally given by the expression
\begin{equ}\label{e:KPZ}
\partial_t h=\Delta h + (\partial_x h)^2 +\xi\;,
\end{equ}
in which $h$ is the {\it height function} (the surface mentioned above) and $\xi$ is a space-time white noise, i.e. a Gaussian random field on $L^2(\R^2)$ with covariance satisfying
\[
\E\big[\langle \xi,\varphi\rangle\langle \xi,\psi\rangle\big]=\langle \varphi, \psi\rangle_{L^2}\;,
\]
for $\varphi,\,\psi\in L^2(\R^2)$. It is well-known that the realizations of $\xi$, seen as a random distribution, belong almost surely to a H\"older-type space of distributions of regularity $\alpha<-\frac{3}{2}$ (see below for the exact definition of these spaces). The regularising properties  of the heat operator $\partial_t-\Delta$ then imply that the expected regularity of $h$ cannot be better than $\alpha+2<\frac{1}{2}$ which in turn makes it unclear what meaning to attribute to the square of its derivative.

That said, the KPZ equation is presumed (and in certain cases, shown) to be a universal object and, by now, people have developed various methods in order to make sense of it. The first significant work in this direction is due to Bertini and Giacomin~\cite{BG97}. 
They defined the solution to~\eqref{e:KPZ} as the logarithm of the solution to the linear multiplicative stochastic heat equation (known to be strictly positive~\cite{Mue91}), which, in turn, can be treated thanks to classical tools from stochastic analysis (see for example~\cite{Wal86}). They not only showed well-posedness but also proved that a suitably rescaled particle system (the Weakly Asymmetric Simple Exclusion Process, briefly WASEP) converges to it, thus confirming that their notion of solution is the physically correct one. 
Nevertheless, their approach is based on a non-linear transformation (the logarithm mentioned above), the so-called {\it Cole-Hopf transform}, that is in general difficult to implement at the discrete level (although it has been successfully done in various but specific situations, see~\cite{DT16, CT17, CST16, Lab16, CS16}). 

More recently, in~\cite{GJ10,GJ14}, Gon\c{c}alves and Jara introduced the notion of {\it energy solution}, which can be seen as a martingale problem-type formulation of the KPZ equation, and proved existence of such solution as well as the fact that all the limit points of a significant class of suitably rescaled interacting particle systems (among which WASEP and zero-range processes) are indeed energy solutions. 
The problem of uniqueness has been settled for a slight reformulation of this notion, given in~\cite{GJ13}, by Gubinelli and Perkowski in~\cite{GP15b} and from then on a number of results confirming the claim of universality has been obtained, e.g.~\cite{GS15,FGS15,BGS16,GJ16, GP16,DGP16}. 
The main difficulty of their method lies in the fact that, in order to apply it, it is necessary to know the invariant measure for the system explicitly and have a good control over it already at the discrete level. 

Almost contemporarily, a different way of making sense of~\eqref{e:KPZ} has been established. Thanks to the theory of rough paths first~\cite{Hai13} (introduced by Lyons in~\cite{Lyo98}) and paracontrolled calculus~\cite{Reloaded} (first in~\cite{GIP15}), regularity structures~\cite{Hai14} then, it is now possible to give a {\it pathwise} meaning to the KPZ equation which has the significant advantage of dealing with the equation directly (no need of the Cole-Hopf transform) and the invariant measure plays no role whatsoever. Such techniques are extremely stable and general enough to be applicable to a huge number of singular SPDEs.  The price to pay though is that, in this context, one has to work with strong topologies and the solution is identified in spaces constrained by certain non-trivial algebraic relations that are not always easily identifiable. Nevertheless, they have been proved to be very powerful and a significant body of literature is now available,~\cite{HQ15, HS15, Reloaded, FH16, Hos16}.  

The aim of the present paper is to show that a family of space-time discretizations of the KPZ equation converges to its solution once the discretization is removed. We will prefer to work with the Stochastic Burgers Equation (SBE) obtained by formally taking the derivative of $h$ in~\eqref{e:KPZ}, i.e. setting $u\eqdef\partial_x h$ so that $u$ solves
\begin{equ}\label{e:SBE}
\partial_t u=\Delta u +\partial_x u^2 + \partial_x \xi\,,\qquad u(0,\cdot)=u_0(\cdot)\;,
\end{equ}
where $u_0$ is the initial condition and $\xi$ a space-time white noise. An advantage of SBE is that no renormalization is required (see Remark~\ref{rem:Ren2}). We will assume throughout periodic boundary conditions, so that the space variable ranges over the one-dimensional torus $\T\eqdef\R/\Z$. Let $\eps>0$, $\xi$ be as above and define the discrete noise as
\begin{equ}[e:DNoise]
\xi^\eps(z)\eqdef \eps^{-3} \langle \xi, \1_{|\eps^{-\s}(\cdot-z)| \leq 1/2}\rangle
\end{equ}
for $z\in(\eps^2\Z)\times\T_\eps$ (with $\T_\eps\eqdef \T\cap(\eps\Z)$), where for $z=(t,x)\in\R^2$, we write $\eps^{-\s} z = (\eps^{-2} t, \eps^{-1} x)$ and $|z| = |t| \vee |x|$. Then, the family of space-time discretizations we have in mind is obtained by a forward explicit scheme, which, at scale $\eps$ reads 
\begin{equ}[e:DiscreteSBE]
\bar D_{t,\eps^2} u^\eps=\Delta_\eps u^{\eps} +D_{x,\eps} B_\eps(u^{\eps},u^{\eps}) +D_{x,\eps}\xi^\eps\,,\qquad u^\eps(0,\cdot)=u^\eps_0(\cdot)\;,
\end{equ}
where $(t,x)$ belongs to the space-time grid $(\eps^2\N)\times\T_\eps$, $u_0^\eps$ is the initial condition, $\{\xi^\eps(z)\}_z$ is as in~\eqref{e:DNoise}, $\bar D_{t,\eps^2}$ is the forward difference (i.e. $\bar D_{t,\eps^2}f(t)=\eps^{-2}(f(t+\eps^2)-f(t))$) and $\Delta_\eps,\,D_{x,\eps},\,B_\eps$ are, respectively, the discrete Laplacian, spatial derivative and product whose precise definition will be given in~\eqref{e:DOperators}. 
Only using the most basic elements of the theory of regularity structures~\cite{Hai14} and their discrete counterpart (see~\cite{HM15}), we can prove the following theorem, which represents the main result of this work. All the norms and ``distances'' used in the statement are defined in the following section and we provide a proof of this theorem in Section~\ref{sec:Conv}.

\begin{theorem}\label{t:Convergence}
Make Assumptions~\ref{a:nu},~\ref{a:pi},~\ref{a:MeasureMu} on the discrete operators $\Delta_\eps,\,D_{x,\eps},\,B_\eps$ and fix $\eta>-1$. Let $\xi$ be a space-time white noise on the probability space $(\Omega, \CF,\Prob)$ and $\{\xi^\eps(z)\}_z$, $z\in(\eps^2\Z)\times\T_\eps$, be given by~\eqref{e:DNoise}. For $\{u_0^\eps\}_\eps$, a sequence of random functions on $\T_\eps$ independent of $\xi^\eps$, let $u^\eps$ be the solution to~\eqref{e:DiscreteSBE} with respect to $\xi^\eps$, starting at $u_0^\eps$. If there exists $u_0\in\CC^\eta$ such that
\begin{equ}
\lim_{\eps\to 0} \|u_0^\eps,u_0\|_{\CC^\eta}^{(\eps)}=0
\end{equ}
in probability, then for every $\alpha<-\frac{1}{2}$ there exists a stopping time $T_\infty$ 
such that for all $T< T_\infty$ the limit
\begin{equ}
\lim_{\eps\to 0}\|u^\eps,u\|_{\CC^{\alpha}_{\eta, T}}^{(\eps)}=0
\end{equ} 
holds in probability, where $u$ is the unique solution to
\begin{equ}\label{e:SBERen1}
\partial_t u=\Delta u +\partial_x u^2 + C\partial_x u + \partial_x \xi\,,\qquad u(0,\cdot)=u_0(\cdot)\;,
\end{equ}
and a constant $C$ depends on the specific definition of the discrete Laplacian, derivative and product.
\end{theorem}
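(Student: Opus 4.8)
The plan is to recast the space--time discrete equation~\eqref{e:DiscreteSBE} as an abstract fixed point problem in a discrete regularity structure tailored to SBE, to construct and control the associated discrete models, and then to transfer convergence through the joint continuity of the solution map in the model and in the initial condition. The regularity structure for~\eqref{e:SBE} is generated by a symbol $\Xi$ for the noise, abstract integration maps $\CI,\CI'$ encoding convolution with the heat kernel $G$ and with $\partial_x G$, and a product; since the equation is only mildly singular the relevant set of trees is small --- essentially $\Xi$, $\CI'(\Xi)$, $\CI'(\Xi)\CI'(\Xi)$ and finitely many further symbols below the reconstruction threshold --- and the classically ill-defined products, the first being $\CI'(\Xi)\cdot\CI'(\Xi)$, are where renormalization enters. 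One rewrites~\eqref{e:SBE} as the fixed point $U=\CI'(\Xi)+\CI'(B(U,U))+(\text{free evolution of }u_0)$ for a modelled distribution $U$ whose reconstruction is $u$, and~\eqref{e:DiscreteSBE} as the analogous equation in a space of discrete modelled distributions, with $B$ replaced by $B_\eps$, with $\CI'$ replaced by a discrete integration operator $\CK^\eps$, and with the free-evolution term built from $u_0^\eps$.

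On the analytic side I would define $\CK^\eps$ from the Green's function $G^\eps$ of the forward-Euler operator $\bar D_{t,\eps^2}-\Delta_\eps$ --- a convolution power of a random-walk kernel, well defined under the stability (CFL-type) condition that Assumption~\ref{a:nu} should encode --- decompose it, in the spirit of~\cite{Hai14,HM15}, into a singular part with the correct parabolic scaling plus a smooth remainder, and verify the discrete Schauder estimates on weighted discrete modelled distributions. Assumptions~\ref{a:nu} and~\ref{a:pi} on the symbols of $\Delta_\eps$ and $D_{x,\eps}$ should guarantee that $G^\eps$ and $D_{x,\eps}G^\eps$ approximate $G$ and $\partial_x G$ in the relevant $\eps$-scaled norms, so that $\CK^\eps\to\CI'$. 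In parallel I would construct the canonical discrete model $(\Pi^\eps,\Gamma^\eps)$ from $\xi^\eps$ together with its renormalization $\hat\Pi^\eps$, the only nontrivial ingredient being the values assigned to the ill-defined products, with Assumption~\ref{a:MeasureMu} on $B_\eps$ pinning down the resulting constants.

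The key probabilistic estimate is then a Wiener-chaos / moment bound over the finitely many trees, showing that $\hat\Pi^\eps$ converges in probability, in the sense of~\cite{HM15}, to the renormalized continuous model for SBE, with the renormalization constants converging to the finite $C$ appearing in~\eqref{e:SBERen1}; in the continuum this counterterm amounts to $\partial_x$ of a constant and hence vanishes (Remark~\ref{rem:Ren2}), whereas the space--time discretization, and in particular the non-standard product $B_\eps$, breaks this cancellation and leaves the genuine term $C\,\partial_x u$. With the discrete Schauder estimates and the uniform-in-$\eps$ model bounds in hand, I would solve the abstract equation by the discrete fixed point theorem of~\cite{HM15} on a random interval $[0,T_\infty)$ with bounds uniform in $\eps$, verify that the reconstruction of its solution, restricted to the grid, coincides with the solution $u^\eps$ of~\eqref{e:DiscreteSBE}, and identify $T_\infty$ in the limit with the existence time of $u$. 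The convergence $\|u^\eps,u\|_{\CC^\alpha_{\eta,T}}^{(\eps)}\to0$ in probability for $T<T_\infty$ then follows from the joint continuity of the map $(\text{model},u_0)\mapsto(\text{reconstructed solution})$, combined with the convergence of models above and the hypothesis $\|u_0^\eps,u_0\|_{\CC^\eta}^{(\eps)}\to0$.

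I expect the main obstacle to be precisely the non-standardness built into the scheme. Because the discretization is in space \emph{and} time and $B_\eps$ need not be pointwise multiplication, neither the discrete Schauder theory nor the model estimates of~\cite{Hai14,HM15} apply off the shelf: one must re-derive the kernel decomposition and the Schauder estimates for the forward-Euler Green's function, whose regularity, decay and merely conditional existence differ from those of $G$, and one must recheck the algebraic and analytic properties of ``multiplication'' in the regularity structure when the model-level product is $B_\eps$ rather than $\cdot$ --- in particular that it still obeys the continuity estimates required by the fixed point argument, that its action on the model remains controlled by the finitely many trees, and that the additional finite term $C\,\partial_x u$ is the only correction it produces in the limit.
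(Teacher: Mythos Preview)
Your high-level plan is sound and would in principle work, but the paper takes a route that differs from yours in two structural ways.

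First, on the analytic/abstract side you propose to set up the full tree-based regularity structure with abstract integration operators $\CI,\CI'$ and the systematic discrete Schauder theory of~\cite{HM15}. The paper deliberately avoids this: it uses only a \emph{minimal} regularity structure with four symbols ($\<bigtree1>,\<bigtree21>,\one,\<bigtree11>$), whose sole purpose is to give meaning, via the reconstruction theorem, to the single ill-posed product $vX^{\<tree1>}$. All other nonlinear terms are handled as explicit ``controlling processes'' $\X\in\CX$ living in ordinary H\"older-type spaces (Definition~\ref{def:ControlProc}). Schauder estimates are then proved directly (Proposition~\ref{p:HeatBounds} and its discrete analogue Proposition~\ref{p:DHeatBounds}) by regarding $P_t$ as a rescaled test function with $t$ the scaling parameter, rather than by lifting integration to the regularity structure. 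The twisted product $B_\eps$ is absorbed by enlarging the discrete structure with shifted symbols $\<bigtree1>_k$, $k\in\FI$ (see~\eqref{e:DRegularityStructure}--\eqref{e:DTModel} and Proposition~\ref{p:DProduct}); this is the concrete mechanism that you correctly flag as the main obstacle but leave unspecified.

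Second, and more importantly for the convergence proof itself, you propose a \emph{direct} comparison: show $\hat\Pi^\eps\to\Pi$ in a discrete-to-continuous model norm and conclude by joint continuity of the solution map. The paper instead runs a \emph{diagonal argument} (Section~\ref{sec:Conv}): it never compares the discrete and continuous models to each other. It introduces a second mollification scale $\bar\eps\geq\eps$, uses the continuous local Lipschitz continuity (Theorem~\ref{t:FixedPoint} with Proposition~\ref{p:Control}) to get $u_{\bar\eps}\to u$, uses the \emph{uniform-in-$\eps$} discrete Lipschitz continuity (Theorem~\ref{t:DFixedPoint} with Proposition~\ref{p:DControl}) to get $u^\eps_{\bar\eps}\to u^\eps$ as $\bar\eps\to0$, and bridges the remaining gap $u_{\bar\eps}\leftrightarrow u^\eps_{\bar\eps}$ by \emph{classical} convergence of the numerical scheme for a smooth semilinear PDE driven by the fixed smooth noise $\xi_{\bar\eps}$. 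This buys a cleaner separation of concerns --- the stochastic estimates (Propositions~\ref{p:Control} and~\ref{p:DControl}) are purely ``within one world'' (continuous or discrete), and no mixed discrete/continuous model topology is needed --- at the cost of the extra mollification layer. Your approach is more in the spirit of~\cite{HM15} and is heavier to set up but conceptually more direct.
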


The presence of a finite constant in~\eqref{e:SBERen1} does not come as a surprise. If it is true in general that specific features of the discrete model can influence the shape of the limiting equation, it is not clear what is the physical meaning of the transport term in this context. The constant $C$, in turn, depends on the definition of the discrete operators $\bar D_{t,\eps^2},\,\Delta_\eps,\,D_{x,\eps}$ and $B_\eps$, it might happen that $C$ can be taken to be $0$ (for example, under the assumptions of Theorem~\ref{t:Convergence}, if $D_{x,\eps}$ is antisymmetric). We point out that a similar phenomenon has already been observed in~\cite{Reloaded} when taking discretizations of the type in~\eqref{e:DiscreteSBE} but involving only the space variable. 

A special role is played by Zabusky-type discretizations (see~\cite{KZ65}, in~\cite{Reloaded} the authors refer to them as discretizations of Sasamoto-Spohn type), which, in the continuous-time case, preserve Gaussian invariant measures of the discrete model.

\begin{remark}
The discretion mentioned above corresponds to the choice of $D_{x,\eps} \phi(x) = \eps^{-1}(\phi(x) - \phi(x -  \eps))$, the nearest-neighbour discrete Laplacian $\Delta_\eps \phi(x) = \eps^{-2} (\phi(x+\eps) - 2 \phi(x) + \phi(x-\eps))$ and the product
\begin{equ}
  B_\eps(\phi,\psi)(x) = \frac{1}{3} \left(\phi(x+\eps) \psi(x+\eps) + \frac{1}{2} \bigl(\phi(x) \psi(x+\eps) + \phi(x+\eps) \psi(x) \bigr) + \phi(x) \psi(x) \right)\;.
\end{equ}
Theorem \ref{t:Convergence} implies that convergence holds as $\eps \to 0$ with a suitable renormalization constant.
\end{remark}

Before proceeding to the proof of the previous theorem let us point out some of the key aspects of the present work. 

\begin{itemize}
\item To prove convergence of the discrete systems introduced above, we first have to clarify what we mean by solution for~\eqref{e:SBE} and, to do so, we solve SBE with a slightly new methodology which is applicable to a number of different subcritical singular SPDEs (the dimension plays no role whatsoever). In order to make sense of the nonlinearity in space, (and {\it only} for that!) we will exploit the most basic elements of the theory of Regularity Structures (in particular, the reconstruction theorem) while the Schauder estimates that one needs in order to close the fixed point are obtained by regarding the heat kernel as a rescaled test function, with time as the scaling parameter (see Section~\ref{sec:ContConv} for more on this). 
In particular, Lemma~\ref{l:Schwartz} and Proposition~\ref{p:HeatBounds}, that represent the core of the argument, are valid for more general space-time kernels, namely any for which one can show a sufficiently fast decay at infinity (for the purpose of this paper, we will consider as an example the off-the-grid extension of the space-time discrete heat kernel). This means that, on one side we can avoid the complicated construction carried out in~\cite[Sections 5]{Hai14}, on the other it leaves the possibility to treat time as a separate quantity, and this, in certain situations, can come at hand. 
The overall approach seems to us more direct (and similar in spirit to~\cite{GIP15}) even if definitely not as systematic as~\cite{Hai14}. 

\item The fact that we are discretizing the product through the operator $B_\eps$ will play an important role in our analysis. For two functions $f$ and $g$ on $\T_\eps$, $B_\eps$ is a {\it twisted product} given by the average of the product of $f$ and $g$ evaluated at different points (for the definition, see~\eqref{e:Dproduct}). It includes the usual pointwise product, general Zabusky-type discretizations (which are also refer to as Sasamoto-Spohn type).
As we pointed out before, in order to make sense of the ill-posed terms in the equation one has to identify a suitable subspace of the space of H\"older functions/distributions constrained by specific algebraic relations, depending on the structure of the equation itself. 
This twisted product naturally requires to modify such relations and all the constructions/definitions behind them in a non-trivial way. This is the first time in which this is done in the discrete setting in the context of regularity structures and hints at the way in which such a procedure might be carried out for actual interacting particle systems.  

\item In general, one cannot expect the product of gaussians to be invariant for the system in~\eqref{e:DiscreteSBE} for {\it any} choice of the discrete operators, and the discrete Cole-Hopf transform (known also as G\"artner transform~\cite{Gar88}) of $u^\eps$ does not seem to solve any suitable discrete version of the stochastic heat equation, so that it is not clear how one could apply these methods in order to obtain a result as Theorem~\ref{t:Convergence} in its full generality. 

\item The advantage of taking space-time discretizations lies also in the fact that such a scheme is in principle directly numerically implementable while when the discretization is only in space then another discretization step is needed. In this paper though we do not obtain explicit rates of convergence, leaving the question for future investigations. 
\end{itemize}

At last let us discuss previous works and related literature. Although discretizations in {\it both} space and time in the context of singular SPDEs and these new pathwise techniques have never been considered before, we point out that the overall strategy is similar to situations in which the discretization is only in space. In this case, we mention in particular~\cite{GP15b} where the authors, among the various, show convergence for a class of spatially discrete models to the KPZ equation, imposing assumptions on the discrete Laplacian, derivative and product very similar to the ones of the present paper. Their approach is based though on paracontrolled calculus (see also~\cite{ZZ14,ZZ15} for the space discretization of stochastic Navier-Stokes and $\Phi_3^4$ equations with similar techniques), which represents a significant difference with respect to our work. 
On the other hand, more closely related to ours is~\cite{HM15}, in which it is given a general framework to deal with space discretizations of singular SPDEs. Nevertheless, the discretization of the product there considered is only the pointwise one and it follows more closely~\cite{Hai14}. 

\begin{plan*}
In Section~\ref{sec:RS}, we recall the ingredients of the theory of Regularity Structures and its grid counterpart, we will need in order to make sense of the nonlinearity. In Section~\ref{s:Burgers}, we explain the ideas behind our approach and the procedure we follow, precisely introducing all the quantities of interest and stating all the main results. Section~\ref{sec:Analytic} contains the proof of the discrete and continuous Schauder estimates as well as the fixed point argument and Section~\ref{sec:Stochastic} is devoted to the stochastic estimates and the definition of the discrete and continuous stochastic processes that appear in the description of our solution. At last, in Section~\ref{sec:Conv}, we prove Theorem~\ref{t:Convergence}. 
\end{plan*}

\begin{ackno*}
We want to thank P.K. Friz, M. Hairer and N. Perkowski for many useful discussions, comments/suggestions and, the first two, for the financial support that allowed KM to visit the Technische Universit\"at Berlin and GC, the University of Warwick. Most of this work was carried out while GC was funded by DFG/RTG1845, that he kindly acknowledges. 
\end{ackno*}

\subsection*{Spaces and conventions}

In this section we want to introduce and recall the definition of the function spaces we will be using throughout the rest of the paper. 
We begin with some useful conventions that will make the notations lighter. 

Fix $T>0$. Let $t,\bar t\in(0,T]$, $x,\bar x\in\R$ and set $z=(t,x)$ and $\bar z=(\bar t,\bar x)$. Then, for a function $\zeta:(0,T]\times \R\to\R$, we define the increment operators in space, time and space-time respectively as
\begin{equ}[e:deltaOperator]
\delta^{(t)}_{x, \bar x} \zeta \eqdef \zeta(t, \bar x) - \zeta(t, x)\;, \quad \delta^{(t, \bar t)}_{x} \zeta \eqdef \zeta(\bar t, x) - \zeta(t, x)\,\quad \text{and}\quad \delta_{z, \bar z} \zeta \eqdef \zeta(\bar z) - \zeta(z)\;.
\end{equ}
Given $t,\,\bar t,\,x,\,\bar x\in\R$, let us introduce the following quantities
\begin{equ}
\onorm{t} \eqdef |t|^{1/2} \wedge 1\,,\qquad\onorm{t, \bar t} \eqdef \onorm{t} \wedge \onorm{\bar t}\qquad\text{and}\qquad|z-\bar z|_\s\eqdef |t-\bar t|^{1 / 2}\vee |x-\bar x|\;,
\end{equ}
where in the last we set $z=(t,x)$ and $\bar z=(\bar t, \bar x)$ and $|\cdot|_\s$ is the parabolic norm on $\R^2$. 

For $\alpha \in (0,1)$, $\eta \in \R$, we say that a function $\zeta:(0,T]\times \R\to\R$ is $\alpha$-H\"older in space with explosion rate $\eta$, briefly $\zeta\in\CC^{\alpha}_{\eta,T}$, if the quantity 
\begin{equ}[e:SpaceHolder]
\|\zeta\|_{\CC^{\alpha}_{\eta,T}}\eqdef \sup_{(t, x) \in (0,T]\times\R} \onorm{t}^{-(\eta \wedge 0)} |\zeta(t,x)|+ \sup_{t \in (0,T]} \sup_{\bar x \neq x \in \R}  \frac{|\delta^{(t)}_{x, \bar x} \zeta|}{\onorm{t}^{\eta - \alpha} | x - \bar x|^{\alpha}}
\end{equ} 
is finite. We say instead that the function is parabolic $\alpha$-H\"older continuous, i.e. $\zeta\in\CC^{\alpha,\s}_{\eta,T}$, if 
\begin{equ}[e:ParabolicHolder]
\|\zeta\|_{\CC^{\alpha,\s}_{\eta,T}}\eqdef\|\zeta\|_{\CC^{\alpha}_{\eta,T}}+\sup_{x \in \R}\sup_{\substack{\bar t \neq t \in (0,T]\\|t-\bar t|\leq\onorm{t,\bar t}^2}} \frac{|\delta^{(t, \bar t)}_{x} \zeta|}{\onorm{t, \bar t}^{\eta-\alpha}\, | t - \bar t|^{\alpha/2}}<\infty\,.
\end{equ}
In case $\zeta$ is (parabolic) $\alpha$-H\"older continuous but does not exhibit any blow-up as $t$ approaches $0$, we will simply write $\zeta\in\CC^{\alpha}_T$ (or $\CC^{\alpha,\s}_T$). The norm on the latter space is the same as before but with $\eta=0$ at the first summand of~\eqref{e:SpaceHolder}, and $\eta=\alpha$ in the second of both~\eqref{e:SpaceHolder} and~\eqref{e:ParabolicHolder}.  

For a point $z\in(0,T]\times\R$, we also introduce the following quantity
\begin{equ}[e:BracketNorm]
\pnorm{\zeta}_{\eta, \alpha; T, z} \eqdef \sup_{\substack{\bar z\neq z \in (0,T] \times \R\\|t-\bar t|\leq\onorm{t,\bar t}^2}} \frac{|\delta_{z, \bar z} \zeta|}{\onorm{t, \bar t}^{\eta}\, \snorm{\bar z - z}_\s^{\alpha}}\;,
\end{equ}
where the generic points $z,\,\bar z$ are given by $z=(t,x)$ and $\bar z=(\bar t,\bar x)$. We will use the previous seminorm also for functions on $((0,T]\times\R)^2$. If $R(\cdot,\cdot)$ is such a function, its $\pnorm{\cdot}_{\eta, \alpha; T, z}$ seminorm has the same definition as above but the increment $\delta_{z, \cdot} \zeta$ is replaced by $R(z,\cdot)$. Moreover, it is immediate to verify that  $\zeta \in \CC^{\alpha,\s}_{\eta,T}$ is equivalent to
\begin{equ}
\sup_{z \in (0,T] \times \R} \onorm{t}^{-(\eta \wedge 0)} |\zeta(z)| + \sup_{z \in (0,T] \times \R} \pnorm{\zeta}_{\eta - \alpha, \alpha; T, z} < \infty\;.
\end{equ}
where again the generic point $z$ is $(t,x)$. 

Let $\CS(\R)$ be the space of Schwartz functions and $\CS'(\R)$ its dual, i.e. the space of tempered distributions. 
In order to measure the regularity of functions on $(0,T]$ with values in $\CS'(\R)$, we introduce the spaces $\CC^{\alpha}_{\eta,T}$ and $\CC^{\alpha,\s}_{\eta,T}$, for $\alpha<0$ and $\eta\in\R$. 
$\CC^{\alpha}_{\eta,T}$ contains functions of time, $\zeta:(0,T]\to\CS'(\R)$, such that, for every $t\in(0,T]$, $\zeta(t,\cdot)$ belongs to the dual of $\CC^{r}$, with integer $r > - \lfloor \alpha \rfloor$ and 
\begin{equ}\label{e:HolderNegative}
\Vert \zeta \Vert_{\CC^\alpha_{\eta,T}} \eqdef \sup_{t\in(0,T]}\sup_{\varphi \in \CB^r_0} \sup_{x \in \R} \sup_{\lambda \in (0,1]} |t|_0^{-(\eta\wedge 0)}\lambda^{-\alpha} |\langle \zeta(t,\cdot), \varphi_x^\lambda \rangle|\;,
\end{equ}
where $\CB^r_0$ is the space of $\CC^r$ compactly supported functions whose $\CC^r$-norm is bounded by 1 and $\varphi^\lambda_x$ is the rescaled version of $\varphi\in\CB_0^r$ centered at $x\in\R$, i.e. $\varphi^\lambda_x(y)=\lambda^{-1}\varphi(\lambda^{-1}(y-x))$. In case $\eta=0$ we simply write $\CC^\alpha_T$, meaning with this the space $\CC((0,T],\CC^\alpha(\R))$. On the other hand, $\zeta\in\CC_T^{\alpha,\s}$ if $\zeta$ belongs to the dual of $\CC^{r,\s}_\infty$ (here, ``$\infty$'' refers to $T = \infty$) and
\begin{equ}\label{e:ParabolicHolderNegative}
\Vert \zeta \Vert_{\CC_T^{\alpha,\s}} \eqdef\sup_{\varphi \in \CB^{r,\s}_0} \sup_{z \in [-T,T]\times\R} \sup_{\lambda \in (0,1]} \lambda^{-\alpha} |\langle \zeta, \varphi_z^\lambda \rangle|\;,
\end{equ}
where $\CB^{r,\s}_0$ is the space of $\CC^{r,\s}_\infty$ compactly supported functions whose $\CC^{r,\s}_\infty$-norm is bounded by 1 and $\varphi^\lambda_z$ is the rescaled version of $\varphi\in\CB_0^{r,\s}$ centered at $z\in\R^2$, i.e. $\varphi^\lambda_z(\bar z)=\lambda^{-3}\varphi(\lambda^{-2}(\bar t-t), \lambda^{-1}(\bar x-x))$, where $z=(t,x)$ and $\bar z=(\bar t, \bar x)$. 

Sometimes we will work with families of distributions $\zeta_z \in \CS'(\R)$ parametrised by $z \in [-T,T]\times\R$. For $\alpha < 0$ we denote by $\CL^\alpha_T$ the space of such distributions, belonging to the dual space of $\CC^{r}_0$ for an integer $r > -\lfloor \alpha \rfloor$, and equipped with the seminorm
\begin{equ}
\Vert \zeta \Vert_{\CL^\alpha_T} \eqdef \sup_{\varphi \in \CB^r_0} \sup_{(t,x) \in [-T,T]\times\R} \sup_{\lambda \in (0,1]} \lambda^{-\alpha} |\langle \zeta_{(t,x)}, \varphi_x^\lambda \rangle|\;.
\end{equ}
For functions $\zeta_z : [-T,T]\times\R \to \R$, we will define the space $\CL^{\alpha, \s}_T$ with the seminorm 
\begin{equ}
\Vert \zeta \Vert_{\CL^{\alpha, \s}_T} \eqdef \sup_{\bar z\neq z \in [-T,T] \times \R} \frac{|\zeta_z(\bar z)|}{\snorm{\bar z - z}_\s^{\alpha}}\;.
\end{equ}

\subsubsection*{Space-time discrete norms}

We here define the analog of the norms introduced in the previous section but for functions defined on a grid. 
Let $T>0$, $N\in\N$ and set $\eps\eqdef 2^{-N}$. The discrete grids we will be working with are
\begin{equ}[e:Grids]
\Lambda_\eps\eqdef \eps\Z\,,\quad\Lambda_{\eps^2}\eqdef \eps^2\Z\,,\quad\Lambda_\eps^\s\eqdef \Lambda_{\eps^2}\times\Lambda_\eps\,,\quad \Lambda_{\eps^2,T}\eqdef\Lambda_{\eps^2}\cap(0,T]\,,\quad\Lambda_{\eps,T}^\s\eqdef\Lambda_{\eps^2,T}\times\Lambda_\eps\,.
\end{equ}
Given $t, \bar t > 0$, we define $\enorm{t} \eqdef |t|_0 \vee \eps$ and $\enorm{t, \bar t} \eqdef \enorm{t} \wedge \enorm{\bar t}$. Now, for $\alpha\in(0,1)$, $\eta\in\R$ and a function $\zeta:\Lambda_{\eps,T}^\s\to\R$, we introduce the discrete analog of~\eqref{e:SpaceHolder} and~\eqref{e:ParabolicHolder} as
\begin{equs}
\|\zeta\|_{\CC^{\alpha}_{\eta,T}}^{(\eps)} &\eqdef \sup_{(t, x) \in\Lambda_{\eps,T}^\s} \enorm{t}^{-(\eta \wedge 0)} |\zeta(t,x)|+ \sup_{t \in \Lambda_{\eps^2,T}} \sup_{\bar x \neq x \in \Lambda_\eps}  \frac{|\delta^{(t)}_{x, \bar x} \zeta|}{\enorm{t}^{\eta - \alpha} | x - \bar x|^{\alpha}}\;,\label{e:DSpaceHolder}\\
\|\zeta\|_{\CC^{\alpha,\s}_{\eta,T}}^{(\eps)} &\eqdef \|\zeta\|_{\CC^{\alpha}_{\eta,T}}^{(\eps)}+\sup_{x \in \Lambda_\eps}\sup_{\substack{\bar t \neq t \in \Lambda_{\eps^2,T}\\|t-\bar t|\leq\enorm{t,\bar t}^2}} \frac{|\delta^{(t, \bar t)}_{x} \zeta|}{\enorm{t, \bar t}^{\eta-\alpha}\, | t - \bar t|^{\alpha/2}}.\label{e:DParabolicHolder}
\end{equs}
By analogy to what has been done above, we define $\pnorm{\zeta}_{\eta, \alpha; T, z}^{(\eps)}$ according to~\eqref{e:BracketNorm}. 
For $\alpha<0$, we set 
\begin{equ}\label{e:DHolderNegative}
\Vert \zeta \Vert_{\CC^{\alpha}_{\eta,T}}^{(\eps)} \eqdef \sup_{t\in\Lambda_{\eps^2,T}}\sup_{\varphi \in \CB^r_0} \sup_{x \in \Lambda_\eps} \sup_{\lambda \in [\eps,1]} \enorm{t}^{-(\eta\wedge 0)}\lambda^{-\alpha} |\langle \zeta(t,\cdot), \varphi_x^\lambda \rangle_\eps|\;,
\end{equ}
where $\langle \cdot,\cdot\rangle_\eps$ indicates the discrete pairing, i.e. $\langle f,g\rangle_\eps=\eps\sum_{x\in\Lambda_\eps}f(x)g(x)$ and all the quantities above are the same as in the continuous case, and 
\begin{equ}\label{e:DParabolicHolderNegative}
\Vert \zeta \Vert_{\CC^{\alpha,\s}_T}^{(\eps)} \eqdef\sup_{\varphi \in \CB^{r,\s}_0} \sup_{z \in \Lambda_\eps^\s\cap [-T,T]\times\R} \sup_{\lambda \in [\eps,1]} \lambda^{-\alpha} |\langle \zeta, \varphi_z^\lambda \rangle_\eps|\;,
\end{equ}
where, in this case, $\langle \cdot,\cdot\rangle_\eps$ indicates the parabolic discrete pairing, i.e. $\langle f,g\rangle_\eps=\eps^3\sum_{z\in\Lambda_\eps^\s}f(z)g(z)$. 
For discrete families of functions $\zeta_z$ on the grid $\Lambda_\eps$ parametrized by $z\in\Lambda_\eps^\s$ we write
\begin{equ}
\Vert \zeta \Vert_{\CL^{\alpha}_T}^{(\eps)} \eqdef \sup_{\varphi \in \CB^r_0} \sup_{(t,x) \in \Lambda_\eps^\s\cap[-T,T]\times\R} \sup_{\lambda \in [\eps,1]} \lambda^{-\alpha} |\langle \zeta_{(t,x)}, \varphi_x^\lambda \rangle_\eps|\;,
\end{equ}
where $\alpha < 0$. And for $\zeta_z$ defined on $\Lambda_\eps^\s$ we introduce the quantity
\begin{equ}
\Vert \zeta \Vert_{\CL^{\alpha, \s}_T}^{(\eps)} \eqdef \sup_{\bar z\neq z \in \Lambda_\eps^\s\cap [-T,T] \times \R} \frac{|\zeta_z(\bar z)|}{\snorm{\bar z - z}_\s^{\alpha}}\;.
\end{equ}

At last, for $\alpha<0$ and $\eta\leq 0$, when comparing a discrete map, $\zeta^\eps$ on $\Lambda_{\eps,T}^\s$ (or $\Lambda_\eps$) with a map $\zeta\in\CC^{\alpha}_{\eta,T}$ (or in $\CC^\alpha$), with a slight abuse of notation, we will use the following norm
\begin{equ}[e:DCComparison]
\Vert \zeta^\eps,\zeta \Vert_{\CC^{\alpha}_{\eta,T}}^{(\eps)} \eqdef \sup_{t\in\Lambda_{\eps^2,T}}\sup_{\varphi \in \CB^r_0} \sup_{x \in \Lambda_\eps} \sup_{\lambda \in [\eps,1]} \enorm{t}^{-(\eta\wedge 0)}\lambda^{-\alpha} |\langle \zeta^\eps(t,\cdot), \varphi_x^\lambda \rangle_\eps-\langle \zeta(t,\cdot), \varphi_x^\lambda \rangle|
\end{equ}
(and, correspondingly, when $\zeta$ and $\zeta^\eps$ do not depend on time) where, inside the absolute value, $\langle\cdot,\cdot\rangle_\eps$ is the discrete pairing introduced above, while the second is the usual evaluation of a distribution on a test function. 

\begin{notation*} 
In the rest of the paper, to indicate that a sequence of maps $\zeta^\eps$ parametrized by $\eps$ satisfies $\|\zeta^\eps\|_{G}^{(\eps)}<\infty$ uniformly in $\eps$, where $G$ is any of the spaces defined above, we will write $\zeta^\eps\in G^{\eps}$. 
\end{notation*}

\section{Elements of regularity structures}\label{sec:RS}

The aim of this section is twofold. On one side we want to recall the elements of the theory of regularity structures we need in order to make sense of the ill-posed products appearing in the equation we want to treat. On the other, we will see how to modify these notions to be able to consider functions and ``objects" defined on the space-time dyadic grid. No claim of completeness is made and the interested reader is addressed to~\cite{Hai14,HM15} for the first part (for a thorough introduction see also~\cite{CW, FH14, Hai15}), and to~\cite{HM15} for the second. 

\subsection{Basic definitions and inhomogeneous models}

We begin by defining what a regularity structure is.

\begin{definition}\label{def:RS}
A {\it regularity structure} $\ST $ is a triplet $(\CA, \CT, \CG)$, in which $\CA$ is a locally finite bounded from below set of ``homogeneities" such that $0 \in \CA$, $\CT = \bigoplus_{\alpha \in \CA} \CT_\alpha$, the \textit{model space}, is a graded vector space and each of the $\CT_\alpha$ is a finite dimensional Banach space whose norm will be denoted by $\|\cdot\|$, $\CG$, the \textit{structure group}, is a set of linear transformations on $\CT$, such that for every $\Gamma~\in~\CG$, $\alpha \in \CA$ and $\tau \in \CT_\alpha$ one has
$\Gamma \tau - \tau \in \bigoplus_{\beta < \alpha} \CT_\beta$. Furthermore, we assume that $\CA \subset (-1, 1)$  and $\CT_0$ is generated by the basis vector $\one$, which represents the abstract unit. For $\tau\in\CT_\alpha$, we will denote by $|\tau| = \alpha$ the {\it homogeneity} of $\tau$.
\end{definition}

\begin{remark}\label{r:QOperators}
Given a regularity structure $\ST$ and an element $\tau\in\CT$, we will indicate by  $\CQ_\alpha \tau$ its canonical projection onto $\CT_\alpha$, and define $\| \tau \|_{\alpha} \eqdef \Vert \CQ_\alpha \tau \Vert$, i.e. the $\CT_\alpha$-norm of the component of $\tau$ in $\CT_\alpha$. We also write $\CQ_{<\alpha}$, $\CQ_{\geq \alpha}$ etc. for the projection onto $\CT_{< \alpha} \eqdef \bigoplus_{\beta < \alpha} \CT_\beta$, etc. 
\end{remark}

In practice, a regularity structure is nothing but a list of symbols constrained by specific algebraic requirements. In order to attribute them a meaning, M. Hairer introduces the notion of \textit{model} that we here recall in the variation given in~\cite[Def. 2.4]{HM15}. 

\begin{definition}\label{d:Model}
Let $\ST = (\CA, \CT, \CG)$ be a regularity structure. An ({\it inhomogeneous}) {\it model} $Z=(\Pi,\Gamma, \Sigma)$ for $\ST$ consists of three collection of maps: 
$\{\Gamma^t\}_{t \in \R}$ is such that $\Gamma^t : \R \times \R \to \CG$ and for any $x, y, z \in \R$ and $t \in \R$,
\begin{subequations}\label{ModelAlgProp}
\begin{equ}[e:PropGamma]
\Gamma^t_{x x}=\Id\,,\qquad\Gamma^t_{x y} \Gamma^t_{y z} = \Gamma^t_{x z}\,,\qquad\Gamma^t_{x y} \one = \one\;,
\end{equ}
where $\Id$ is the identity map; $\{\Sigma_x\}_{x \in \R}$ is such that $\Sigma_x:\R\times\R\to\CG$ and for all $x\in\R$ and $s,\,t,\,r\in\R$,
\begin{equ}[e:PropSigma]
\Sigma^{t t}_x =\Id\,,\qquad\Sigma^{s r}_x \Sigma^{r t}_x = \Sigma^{s t}_x\,,\qquad\Sigma^{s t}_x\Gamma^t_{x y}  = \Gamma_{x y}^s \Sigma_y^{s t}\,,\qquad\Sigma^{st}_{x} \one = \one\;,
\end{equ}
and $\{\Pi^t_x\}_{t, x \in \R}$ is such that $\Pi^t_x: \CT \to \mathcal{S}'(\R)$ is linear and for all $x, y \in \R$ and $t \in \R$, 
\begin{equ}[e:PropPi]
\Pi^t_{y} = \Pi^t_x \Gamma^t_{x y}\,,\qquad\bigl(\Pi_x^t \one\bigr)(y) = 1\,.
\end{equ} 
\end{subequations}
Moreover, for any $\gamma > 0$ and every $T > 0$, there exists a constant $C_\gamma>0$ for which the  bounds
\begin{subequations}\label{ModelAnaProp}
\begin{equ}\label{e:PiGammaBound}
| \langle \Pi^t_{x} \tau, \varphi_{x}^\lambda \rangle| \leq C_\gamma \Vert \tau \Vert \lambda^{|\tau|} \;, \qquad \Vert \Gamma^t_{x y} \tau \Vert_{m} \leq C_\gamma \Vert \tau \Vert | x-y|^{|\tau| - m}\;,
\end{equ}
\begin{equ}[e:SigmaBound]
\Vert \Sigma^{s t}_x \tau \Vert_{m} \leq C_\gamma \Vert \tau \Vert | t-s|^{(|\tau| - m) / 2}
\end{equ}
\end{subequations}
hold uniformly over all $\lambda \in (0,1]$, $\varphi \in \CB^1_0$, $|x-y| \leq 1$, $t\in[-T,T]$,  $\tau\in\CT$ with $|\tau| < \gamma$, and $m \in \CA$ such that $m < |\tau|$. 
\end{definition}
\begin{remark}\label{r:ModelNorm}
For a model $Z = (\Pi, \Gamma, \Sigma)$, we denote by $\Vert \Pi \Vert_{\gamma; T}$, $\Vert \Gamma \Vert_{\gamma;T}$ and $\Vert \Sigma \Vert_{\gamma; T}$ the smallest constant $C_\gamma$ such that the bounds on $\Pi$, $\Gamma$ and $\Sigma$ in \eqref{ModelAnaProp} hold and set $\VERT Z \VERT_{\gamma; T} \eqdef \Vert \Pi \Vert_{\gamma; T} + \Vert \Gamma \Vert_{\gamma; T}+\Vert \Sigma \Vert_{\gamma; T}$. We also define the family of semidistances between two models as
\begin{equ}
\VERT Z; \bar{Z} \VERT_{\gamma; T} \eqdef \Vert \Pi - \bar{\Pi} \Vert_{\gamma; T} + \Vert \Gamma - \bar{\Gamma} \Vert_{\gamma; T}+\Vert \Sigma - \bar{\Sigma} \Vert_{\gamma; T}\;.
\end{equ}
Even though, in general, $Z - \bar{Z}$ is not a model, notice that the expressions on the right hand side still makes sense. 
\end{remark}

 Loosely speaking, for every $x\in\R$ the map $\Pi_x^t$ assigns to every symbol a distribution that vanishes (or explodes) at the right order around $x$ (here, ``right" is understood in the sense of homogeneities) while the families of $\Gamma^t$'s and $\Sigma_x$'s guarantee that these expansions are consistently sewed together in space and time respectively. 

We can now define a space of functions taking values in a regularity structure and representing the model dependent counterpart of the space of H\"older functions (compare~\eqref{e:ModelledDistributionNorm} below with~\eqref{e:ParabolicHolder}). Let $\ST=(\CA,\CT,\CG)$ be a regularity structure and $Z=(\Pi,\Gamma, \Sigma)$ a model on $\ST$. Given $\gamma, \,\eta \in \R$, we say that a function $H : (0,T]\times\R \to \CT_{<\gamma}$ belongs to $\CD^{\gamma, \eta}_T(Z)$ if 
\begin{equ}[e:ModelledDistributionNorm]
\VERT H \VERT_{\gamma, \eta; T} \eqdef \Vert H \Vert_{\gamma, \eta; T} + \sup_{\substack{s \neq t \in (0,T]\\|t-s|\leq\onorm{t,s}^2}} \sup_{x \in \R} \sup_{l < \gamma} \frac{\Vert H_t(x) - \Sigma_x^{t s}H_{s}(x) \Vert_l}{\onorm{t, s}^{\eta - \gamma} |t - s|^{(\gamma - l)/2}} \;,
\end{equ}
where the first summand of the right hand side of~\eqref{e:ModelledDistributionNorm} is given by 
\begin{equs}[e:ModelledDistributionNormAbs]
\sup_{(t, x) \in (0,T]\times\R} \sup_{l < \gamma} \onorm{t}^{-(( \eta-l) \wedge 0)} \Vert H_t(x) \Vert_l+ \sup_{t \in (0,T]} \sup_{y \neq x \in \R} \sup_{l < \gamma} \frac{\Vert H_t(x) - \Gamma^{t}_{x y} H_t(y) \Vert_l}{\onorm{t}^{\eta - \gamma} | x - y |^{\gamma - l}}\;.
\end{equs}
Functions belonging to these spaces are called \textit{modelled distributions}. 
In order to study their continuity properties with respect to the underlying model, we will need to be able to compare modelled distributions belonging to the space $\CD_T^{\gamma,\eta}$, but based on different models. 
Let $Z=(\Pi,\Gamma, \Sigma)$, $\bar{Z}=(\bar{\Pi},\bar{\Gamma},\bar \Sigma)$ be two models on $\ST$, and $H\in\CD_T^{\gamma,\eta}(Z)$, $\bar{H}\in\CD_T^{\gamma,\eta}(\bar{Z})$ two modelled distributions, then a natural notion of distance between them can be obtained via replacing $H_t(x)$ by $H_t(x)-\bar{H}_t(x)$ in the first summand of~\eqref{e:ModelledDistributionNormAbs}, $H_t(x) - \Gamma^{t}_{x y} H_t(y)$ by
\[
H_t(x)-\bar{H}_t(x)-\Gamma^t_{x\bar{x}}H_t(\bar{x})+\bar{\Gamma}_{x\bar{x}}^t\bar{H}(\bar{x})
\]
in the second and $H_t(x) - H_{s}(x)$ by $H_t(x)-\bar{H}_t(x)-\Sigma_x^{t s}H_s(x)-\bar \Sigma_x^{t s}\bar{H}_s(x)$ in the second summand of~\eqref{e:ModelledDistributionNorm}. We denote the result by $\VERT H;\bar{H}\VERT_{\gamma,\eta;T}$, this notation being due to the fact that, as a distance, $\VERT \cdot;\cdot\VERT_{\gamma,\eta;T}$ is not a function of $H-\bar{H}$. 

A modelled distribution can be seen as the generalized abstract Taylor expansion of a given function, in which the elements appearing in its expression are not only abstract polynomials but also other ``jets" belonging to the regularity structure and whose meaning is encoded in the model. 
That said, we want to be able to associate a concrete function/distribution to a modelled distribution and this can be done via the reconstruction operator~\cite[Thm. 3.10]{Hai14}. 
The following is an adaptation of the aforementioned result, better suited to deal with inhomogeneous models, and can be found in~\cite[Thm. 2.11]{HM15}. 

\begin{theorem}\label{t:Reconstruction}
Let $\ST = (\CA, \CT, \CG)$ be a regularity structure with $\alpha \eqdef \min \CA < 0$ and $Z = (\Pi, \Gamma, \Sigma)$ be a model. Then, for every $\eta \in \R$, $\gamma > 0$ and $T > 0$, there exists a unique family of linear 
operators $\CR_t : \CD_T^{\gamma, \eta}(Z) \to \CC^{\alpha}(\R)$, parametrised by $t \in (0,T]$, 
such that the bound
\begin{equ}[e:Reconstruction]
|\langle \CR_t H_t - \Pi^t_x H_t(x), \varphi_x^\lambda \rangle| \lesssim \lambda^\gamma \onorm{t}^{\eta - \gamma} \Vert H \Vert_{\gamma, \eta; T} \Vert \Pi \Vert_{\gamma; T}\;,
\end{equ}
holds uniformly in $H \in \CD^{\gamma, \eta}_T(Z)$, $t \in (0,T]$, $x \in \R$, $\lambda \in (0,1]$ and $\varphi \in \CB^1_0$. If furthermore $\bar Z = (\bar \Pi, \bar \Gamma, \bar \Sigma)$ is another model and $\bar \CR_t : \CD_T^{\gamma, \eta}(\bar Z) \to \CC^{\alpha}(\R)$ is the associated family of operators, then the bound
\begin{equs}
|\langle \CR_t H_t - \Pi^t_x H_t(x) - \bar \CR_t \bar H_t + \bar \Pi^t_x \bar H_t(x), \varphi_x^\lambda \rangle| \lesssim \lambda^\gamma \onorm{t}^{\eta - \gamma} \bigl(\Vert H; \bar H \Vert_{\gamma, \eta; T} \Vert \Pi \Vert_{\gamma; T} + \Vert \bar H \Vert_{\gamma, \eta; T} \Vert \Pi - \bar \Pi \Vert_{\gamma; T} \bigr)\;, 
\end{equs}
holds uniformly over $H \in \CD^{\gamma, \eta}_T(Z)$, $\bar H \in \CD^{\gamma, \eta}_T(\bar Z)$ and the same parameters as above.  
\end{theorem}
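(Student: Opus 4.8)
The plan is to reconstruct $H$ one time-slice at a time, exploiting the fact that the temporal datum $\Sigma$ of the inhomogeneous model plays no role in the reconstruction itself (it enters only through the Schauder/integration estimates). Indeed, for each fixed $t \in (0,T]$ the map $x \mapsto H_t(x)$, together with the purely spatial model $(\Pi^t_\cdot, \Gamma^t_{\cdot\,\cdot})$, is an ordinary modelled distribution on $\R$ in the sense of \cite[Def.~3.1]{Hai14}: for fixed $t > 0$ the weight $\onorm{t}$ appearing in \eqref{e:ModelledDistributionNorm}--\eqref{e:ModelledDistributionNormAbs} is merely a finite constant, so there is no spatial singular set. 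One may therefore define $\CR_t H_t$ by the usual wavelet construction: fix a compactly supported scaling function generating a multiresolution analysis of $L^2(\R)$, set $\CR^n_t H_t \eqdef \sum_{y} \langle \Pi^t_y H_t(y), \psi^n_y \rangle\, \psi^n_y$ with $\psi^n_y$ the scaling functions rescaled to dyadic level $2^{-n}$ and centred at the grid points $y$, and show — exactly as in the proof of \cite[Thm.~3.10]{Hai14} — that $(\CR^n_t H_t)_n$ is Cauchy in $\CC^\alpha_{\mathrm{loc}}(\R)$, using the consistency relation $\Pi^t_y = \Pi^t_x \Gamma^t_{xy}$ and the analytic bounds \eqref{e:PiGammaBound}; here $\CB^1_0$ test functions suffice since $\alpha = \min\CA \in (-1,0)$. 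Define $\CR_t H_t$ as the limit; linearity in $H_t$ is immediate.

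Second, I upgrade the ensuing local estimate to the weighted bound \eqref{e:Reconstruction}. By the definitions in \eqref{e:ModelledDistributionNorm}--\eqref{e:ModelledDistributionNormAbs}, for fixed $t$ the components $\Vert H_t(x)\Vert_l$ and the spatial increments $\Vert H_t(x) - \Gamma^t_{xy}H_t(y)\Vert_l$ are controlled by $\Vert H \Vert_{\gamma,\eta;T}$ times the appropriate power of $\onorm{t}$; carrying these powers through the telescoping sum over dyadic scales — which converges because $\gamma > 0$ forces a geometric series in $\lambda^\gamma$ — yields \eqref{e:Reconstruction} with the claimed uniformity over $t \in (0,T]$, $x \in \R$, $\lambda \in (0,1]$ and $\varphi \in \CB^1_0$. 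Uniqueness is the standard argument: if $g_t$ is another family satisfying \eqref{e:Reconstruction}, then $f_t \eqdef \CR_t H_t - g_t$ obeys $|\langle f_t, \varphi^\lambda_x\rangle| \lesssim \lambda^\gamma$ uniformly over $\varphi \in \CB^1_0$ and $\lambda \leq 1$, and since $\gamma > 0$, testing against an arbitrary $\phi \in \CC^\infty_c(\R)$ decomposed into dyadic scales forces $f_t \equiv 0$.

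Third, for the two-model estimate I run the same telescoping argument but compare $\CR^n_t H_t$ with $\bar\CR^n_t \bar H_t$ term by term, rewriting each difference $\langle \Pi^t_y H_t(y) - \bar\Pi^t_y \bar H_t(y), \psi^n_y\rangle$ via the usual "add and subtract" device as a sum in which some summands carry a factor $\Vert H;\bar H\Vert_{\gamma,\eta;T}\,\Vert\Pi\Vert_{\gamma;T}$ and the remaining ones a factor $\Vert\bar H\Vert_{\gamma,\eta;T}\,\Vert\Pi - \bar\Pi\Vert_{\gamma;T}$, using the definition of $\VERT H;\bar H\VERT_{\gamma,\eta;T}$ recalled in the text. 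Propagating the $\onorm{t}$ weights exactly as in the one-model case gives the stated bound.

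The main obstacle is bookkeeping rather than conceptual: one must track the homogeneity-dependent weights $\onorm{t}^{-((\eta-l)\wedge 0)}$ and $\onorm{t}^{\eta-\gamma}$ (distinguishing $l < \eta$ from $l \geq \eta$, and $\eta < \gamma$ from $\eta \geq \gamma$) carefully through the dyadic sums so that the final power of $\onorm{t}$ is exactly $\eta - \gamma$, and, in the two-model part, ensure that every cross term lands against the correct one of the two factors $\Vert H;\bar H\Vert\,\Vert\Pi\Vert$ and $\Vert\bar H\Vert\,\Vert\Pi-\bar\Pi\Vert$. Once one observes that the problem genuinely decouples over time-slices — the only non-obvious point — everything else is a routine adaptation of \cite[Thm.~3.10]{Hai14} and \cite[Thm.~2.11]{HM15}.
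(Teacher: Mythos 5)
Your proposal is correct and coincides with the proof of \cite[Thm.~2.11]{HM15}, which the paper cites rather than reproving: for each fixed $t\in(0,T]$ one applies the purely spatial reconstruction theorem of \cite[Thm.~3.10]{Hai14} to $H_t$ with the model $(\Pi^t,\Gamma^t)$, observing that $\Sigma$ and the time-increment term in \eqref{e:ModelledDistributionNorm} never enter the construction of $\CR_t$. One small simplification to your bookkeeping: only the spatial increment seminorm feeds into the reconstruction bound, and by \eqref{e:ModelledDistributionNormAbs} that seminorm is already $\lesssim \onorm{t}^{\eta-\gamma}\Vert H\Vert_{\gamma,\eta;T}$ outright (with $\Vert\Pi^t\Vert\le\Vert\Pi\Vert_{\gamma;T}$), so the case distinction on $l$ versus $\eta$ that you flag as a source of care is not actually needed.
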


The map $\CR$ introduced in Theorem~\ref{t:Reconstruction}, is the so-called {\it reconstruction operator}. We will always postulate in what follows that $\CR_t = 0$, for $t \leq 0$.

Notice that the statement above does not only guarantee that we can uniquely associate a distribution to a modelled distribution in $\CD^{\gamma}$ for $\gamma>0$, but it also tells us what such a distribution ``looks like" in a neighborhood of every point. 
Indeed, the bound~\eqref{e:Reconstruction} gives a good control over its reconstruction since the image through the model of the modelled distribution is fully explicit (after all, $\Pi$ is a \textit{linear} map on the abstract ``jets" in the regularity structure).
Moreover, in case a model $Z$ is composed of smooth functions, then, for any modelled distribution $H\in\CD^{\gamma,\eta}$, $\gamma>0$, $\CR H$ is a continuous function satisfying for all $(t,x)\in(0,T]\times\R$ the identity
\begin{equ}[e:ContRec]
\CR_t H_t(x)=\bigl(\Pi_x^tH_t(x)\bigr)(x)\;.
\end{equ}

As a first concrete application of the previous theorem, we want to recall a proposition stated and proved in \cite[Prop. 4.14]{Hai14}. 
It shows how it is possible to obtain, in this context, a classical result of harmonic analysis concerning the product of two H\"older functions/distributions. We formulate it here for functions/distributions H\"older in space with a blow-up in time as $t$ approaches 0 (see~\eqref{e:SpaceHolder}).  

\begin{proposition}\label{p:ClassicalProduct}
Let $\alpha_1,\,\alpha_2,\,\eta_1,\,\eta_2\in\R\setminus\N$ and $T>0$. Set $\alpha\eqdef\alpha_1\wedge\alpha_2$ and $\eta\eqdef(\eta_1+\alpha_2\wedge 0)\wedge(\eta_2+\alpha_1\wedge 0)\wedge(\eta_1+\eta_2)$. Then, the map $(f,g)\mapsto fg$ extends to a continuous bilinear map from $\CC^{\alpha_1}_{\eta_1,T}\times\CC^{\alpha_2}_{\eta_2,T}$ to $\CC^{\alpha}_\eta$ if and only if $\alpha_1+\alpha_2>0$ and in this case there exists a constant $C>0$ such that 
\begin{equ}
\|fg\|_{\CC^{\alpha}_{\eta,T}}\leq C \|f\|_{\CC^{\alpha_1}_{\eta_1,T}}\|g\|_{\CC^{\alpha_2}_{\eta_2,T}}\;.
\end{equ}
\end{proposition}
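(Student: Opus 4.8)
We prove the two implications separately. For the ``only if'' direction, if $\alpha_1+\alpha_2\le 0$ a standard counterexample (suitably rescaled H\"older profiles whose products fail to converge in $\CC^\alpha_{\eta,T}$) shows that no continuous bilinear extension can exist; we omit it. For the ``if'' direction we assume $\alpha_1+\alpha_2>0$; since neither exponent is an integer and their sum is positive, at least one of them, say $\alpha_1$, is positive. If $\alpha_2>0$ as well, then $f$ and $g$ are genuine continuous functions, $fg$ is their pointwise product, and the bound follows at once from $\delta^{(t)}_{x,\bar x}(fg)=(\delta^{(t)}_{x,\bar x}f)\,g(t,\bar x)+f(t,x)\,\delta^{(t)}_{x,\bar x}g$ and the definition~\eqref{e:SpaceHolder}, keeping track of the $\onorm{t}$-weights. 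We therefore concentrate on the case $\alpha_2<0$, so that $\min(\alpha_1,\alpha_2)=\alpha_2<0$.

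The plan is to realise $f$ and $g$ as reconstructions of modelled distributions over a minimal regularity structure and to read off the product from the reconstruction theorem, working on each time slice $t\in(0,T]$ separately, so that the $t$-dependence of $f$ and $g$ only enters through the size of the model and of the modelled distributions. Let $\ST=(\CA,\CT,\CG)$ be the regularity structure with $\CT$ spanned by $\one$ (homogeneity $0$) and one further symbol $\Xi$ of homogeneity $\alpha_2$ (with the usual polynomial sector adjoined when $\alpha_1>1$, to carry the Taylor jet of $f$), and with $\CG$ acting trivially. For fixed $t$ define the model by $\Pi^t_x\one\equiv 1$, $\Pi^t_x\Xi\eqdef g(t,\cdot)$ and $\Gamma^t_{xy}=\Sigma^{st}_x=\Id$. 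The algebraic properties~\eqref{e:PropGamma}--\eqref{e:PropPi} are immediate, and the analytic bound~\eqref{e:PiGammaBound} for $\Xi$ is precisely the statement that $g(t,\cdot)\in\CC^{\alpha_2}(\R)$, which by~\eqref{e:HolderNegative} holds with a constant $\lesssim\onorm{t}^{\eta_2\wedge 0}\|g\|_{\CC^{\alpha_2}_{\eta_2,T}}$; thus, on the slice $t$, the relevant part of the model has norm $\lesssim\onorm{t}^{\eta_2\wedge 0}\|g\|_{\CC^{\alpha_2}_{\eta_2,T}}$.

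Now let $F_t(x)$ be the Taylor jet of $f(t,\cdot)$ at $x$ (equal to $f(t,x)\one$ when $\alpha_1\in(0,1)$) and $G_t(x)\eqdef\Xi$. Then $F_t$ is a modelled distribution of regularity $\alpha_1$, $G_t$ one of any regularity $>\alpha_2$, and their product, truncated at level $\gamma\eqdef\alpha_1+\alpha_2>0$, is simply $H_t(x)\eqdef f(t,x)\,\Xi$, a modelled distribution of regularity $\gamma$ whose norm on the slice $t$ is bounded by $\bigl(\onorm{t}^{\eta_1\wedge 0}\vee\onorm{t}^{\eta_1-\alpha_1}\bigr)\|f\|_{\CC^{\alpha_1}_{\eta_1,T}}$, coming respectively from $|f(t,x)|$ and from the increments $\delta^{(t)}_{x,y}f$ via~\eqref{e:SpaceHolder}. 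We then \emph{define} $fg$ by $(fg)(t,\cdot)\eqdef\CR_t H_t$: by Theorem~\ref{t:Reconstruction} this is a well-defined element of $\CC^{\alpha_2}(\R)=\CC^{\alpha_1\wedge\alpha_2}(\R)$ for each $t$; for smooth $f,g$ it coincides with the ordinary product, since by~\eqref{e:ContRec} one has $\CR_t H_t(x)=\bigl(\Pi^t_x(f(t,x)\Xi)\bigr)(x)=f(t,x)g(t,x)$; and the continuity estimate of Theorem~\ref{t:Reconstruction}, applied to the models built from $(f,g)$ and $(\bar f,\bar g)$, shows that $(f,g)\mapsto fg$ is bilinear and continuous, hence the unique such extension. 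Finally, to obtain the explosion rate, fix $t$ and test against $\varphi_x^\lambda$; by~\eqref{e:Reconstruction},
\[
\bigl|\langle\CR_t H_t,\varphi_x^\lambda\rangle\bigr|\;\le\;\bigl|\langle\Pi^t_x H_t(x),\varphi_x^\lambda\rangle\bigr|+\bigl|\langle\CR_t H_t-\Pi^t_x H_t(x),\varphi_x^\lambda\rangle\bigr|\;,
\]
the first term being $|f(t,x)|\,|\langle g(t,\cdot),\varphi_x^\lambda\rangle|\lesssim\onorm{t}^{(\eta_1\wedge 0)+(\eta_2\wedge 0)}\lambda^{\alpha_2}\|f\|\|g\|$, and the second a multiscale sum of the quantities $|\delta^{(t)}_{x,y}f|\cdot|\langle g(t,\cdot),\psi_x^{2^{-n}}\rangle|$ over dyadic scales $2^{-n}\le\lambda$. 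Balancing, at each scale, the ``flat'' bound $|\delta^{(t)}_{x,y}f|\lesssim\onorm{t}^{\eta_1\wedge 0}\|f\|$ against the ``H\"older'' bound $|\delta^{(t)}_{x,y}f|\lesssim\onorm{t}^{\eta_1-\alpha_1}\|f\|\,2^{-n\alpha_1}$, with crossover at a scale $2^{-n}\sim\onorm{t}^{\theta}$ depending on $\eta_1$ and $\alpha_1$, and summing the resulting geometric series, one arrives at a bound of the form $\onorm{t}^{\eta\wedge 0}\lambda^{\alpha_1\wedge\alpha_2}\|f\|\|g\|$ with $\eta=(\eta_1+\alpha_2)\wedge\eta_2\wedge(\eta_1+\eta_2)$, which in this regime (where $\alpha_2\wedge 0=\alpha_2$, $\alpha_1\wedge 0=0$) is exactly the exponent in the statement; comparison with~\eqref{e:HolderNegative} then gives $\|fg\|_{\CC^{\alpha_1\wedge\alpha_2}_{\eta,T}}\lesssim\|f\|_{\CC^{\alpha_1}_{\eta_1,T}}\|g\|_{\CC^{\alpha_2}_{\eta_2,T}}$.

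The main obstacle is precisely this multiscale bookkeeping: one must carry out the flat-versus-H\"older splitting of the increments of $f$ (and, when $\alpha_1>1$, of the remainder of its Taylor jet) in every sign regime of $\eta_1$ and $\eta_2$, identify the correct crossover scale, and sum the contributions so that no $\onorm{t}$-power worse than $\eta\wedge 0$ (for the supremum seminorm) and, in the case $\alpha_1\wedge\alpha_2>0$, than $\eta-(\alpha_1\wedge\alpha_2)$ (for the H\"older seminorm) survives. A subsidiary point is that, since $f$ and $g$ may themselves blow up as $t\to 0$, the model built above is only bounded on each fixed time slice, with a $t$-dependent norm, so unlike in~\cite{Hai14} the argument cannot be run with a single globally bounded model and must be localised in time throughout.
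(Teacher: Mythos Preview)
The paper does not actually prove this proposition: it is stated with the remark that it is ``a proposition stated and proved in \cite[Prop.~4.14]{Hai14}'' and presented as a first application of the reconstruction theorem (Theorem~\ref{t:Reconstruction}). Your approach---building the regularity structure generated by $\one$, $\Xi$ (and polynomials if $\alpha_1>1$), realising $g$ through the model and $f$ as a modelled distribution, and defining $fg$ via the reconstruction operator---is precisely the route taken in \cite[Prop.~4.14]{Hai14}, adapted to the inhomogeneous framework of the present paper. In that sense the proposal matches both the paper's intention and the cited reference.

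Two comments. First, the ``only if'' direction is not addressed beyond a gesture at counterexamples; this is fine for the purposes of the paper (which only ever uses the ``if'' direction), but is not a proof. Second, the tracking of the blow-up exponent $\eta$ is where your write-up becomes vague: you correctly identify that the model norm on the slice $t$ carries a factor $\onorm{t}^{\eta_2\wedge 0}$ and the modelled-distribution norm of $H_t$ a factor $\onorm{t}^{(\eta_1\wedge 0)\vee(\eta_1-\alpha_1)}$, but the passage from ``balancing flat versus H\"older bounds at a crossover scale'' to the precise exponent $\eta=(\eta_1+\alpha_2\wedge 0)\wedge(\eta_2+\alpha_1\wedge 0)\wedge(\eta_1+\eta_2)$ is asserted rather than computed. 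A cleaner route is to note that, in the time-sliced setting, Theorem~\ref{t:Reconstruction} already outputs the $\onorm{t}^{\eta-\gamma}$ weight in~\eqref{e:Reconstruction}; feeding in $H\in\CD^{\gamma,\eta}_T$ with $\gamma=\alpha_1+\alpha_2$ and the appropriate $\eta$ (obtained from \cite[Prop.~6.12]{Hai14} on products of modelled distributions, which the paper also invokes in the proof of Proposition~\ref{p:ModelledDistribution}) gives the exponent directly, without ad hoc multiscale summation.
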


As we will see in the upcoming sections, what presented so far is everything we need from the whole theory of regularity structures, in order to be able to solve the equation \eqref{e:SBE}. Before delving into the details and understand how to put this into practice, we want to introduce the discrete counterpart of the objects previously defined. 


\subsection{Discrete models and modelled distributions}

Since we aim at showing the convergence of the discrete equation to the continuous one exploiting the built-in stability of these techniques, on one side we have to introduce a suitable space-time discrete notion of models, modelled distribution and reconstruction operator while on the other we need to verify that the latter satisfy properties similar to the ones enjoyed by their continuous versions. 

We will work on the dyadic grids \eqref{e:Grids} with $\eps = 2^{-N}$. While the notion of regularity structure clearly does not need to be adapted, we begin with the following. 

\begin{definition}\label{d:DModel}
Let $\ST=(\CA,\CT,\CG)$ be a regularity structure. A {\it discrete} ({\it inhomogeneous}) {\it model} $Z^\eps=(\Pi^\eps, \Gamma^\eps, \Sigma^\eps)$ consists of three collections of maps, parametrised by $(t,x) \in \Lambda_{\eps}^\s$,
\begin{equ}
\Pi_x^{\eps, t}: \CT \to \R^{\Lambda_\eps}\;,\qquad \Gamma^{\eps, t} : \Lambda_\eps \times \Lambda_\eps \to \CG\;, \qquad\text{and} \qquad \Sigma^\eps_x :\Lambda_{\eps^2}\times\Lambda_{\eps^2}\to\CG\,,
\end{equ}
satisfying the same algebraic properties as in~\eqref{ModelAlgProp}, but with the spatial and time variables restricted respectively to $\Lambda_\eps$ and $\Lambda_{\eps^2}$. 
Moreover, we require~\eqref{ModelAnaProp} to hold, with the discrete pairing $\langle\cdot,\cdot\rangle_\eps$ replacing the usual one, for $\lambda \in [\eps,1]$. 
Additionally, we impose $\bigl(\Pi^{\eps, t}_x \tau\bigr) (x) = 0$, for all $\tau \in \CT$ with $|\tau| > 0$, and all $(t, x) \in \Lambda_\eps^\s$.
At last, for $\gamma>0$ and $T>0$, we define the quantities $\|\Pi^\eps\|^{(\eps)}_{\gamma, T}$,$\|\Gamma^\eps\|^{(\eps)}_{\gamma, T}$, $\|\Sigma^\eps\|^{(\eps)}_{\gamma, T}$ and $\VERT Z^\eps\VERT^{(\eps)}_{\gamma,T}$ as well as $\VERT Z^\eps;\bar{Z}^\eps\VERT^{(\eps)}_{\gamma,T}$, analogously to Remark~\ref{r:ModelNorm}.
\end{definition}

Let $\gamma,\,\eta\in\R$, $\ST=(\CA,\CT,\CG)$ be a regularity structure and $Z^\eps=(\Pi^\eps,\Gamma^\eps, \Sigma^\eps)$ be a discrete model. 
For a function $H^\eps$ on $ \Lambda_{\eps,T}^\s$ with values in $\CT_{<\gamma}$, we define 
\begin{equs}
 \Vert H^\eps \Vert_{\gamma, \eta; T}^{(\eps)} &\eqdef \sup_{(t, x) \in \Lambda_{\eps,T}^\s}  \sup_{l < \gamma} \enorm{t}^{(l - \eta) \vee 0} \Vert H^\eps_t(x) \Vert_l+ \sup_{t \in \Lambda_{\eps^2, T}} \sup_{\substack{x \neq y \in \Lambda_\eps}} \sup_{l < \gamma} \frac{\Vert H^\eps_t(x) - \Gamma^{\eps, t}_{x y} H^\eps_t(y) \Vert_l}{\enorm{t}^{\eta - \gamma} | x - y |^{\gamma - l}}\;,\label{e:DModelledDistributionNormAbs}\\
\VERT H^\eps \VERT^{(\eps)}_{\gamma, \eta; T} &\eqdef \Vert H^\eps \Vert^{(\eps)}_{\gamma, \eta; T} + \sup_{\substack{s \neq t \in \Lambda_{\eps^2, T} \\ | t - s | \leq \enorm{t, s}^{2}}} \sup_{x \in \Lambda_\eps} \sup_{l < \gamma} \frac{\Vert H^{\eps}_t(x) - \Sigma_x^{\eps, t s} H^{\eps}_{s}(x) \Vert_l}{\enorm{t, s}^{\eta - \gamma} |t - s|^{(\gamma - l)/2}}\;,\label{e:DModelledDistributionNorm}
\end{equs}
where, in both cases $l \in \CA$. To indicate that a sequence of such maps $H^\eps$ parametrised by $\eps$ satisfies $\VERT H^\eps \VERT^{(\eps)}_{\gamma, \eta; T} < \infty$ uniformly in $\eps$, we will write $H^\eps \in \CD^{\gamma, \eta}_{\eps, T}(Z^\eps)$ and call such functions \textit{discrete modelled distributions}. We are now ready to give the following definition. 

\begin{definition}\label{d:DReconstruct}
Let $\gamma,\,\eta\in\R$, $\ST=(\CA,\CT,\CG)$ be a regularity structure and $Z^\eps=(\Pi^\eps,\Gamma^\eps, \Sigma^\eps)$ be a discrete model. For a discrete modelled distribution $H^\eps \in \CD^{\gamma, \eta}_{\eps, T}(Z^\eps)$, we define for all $ (t, x) \in \Lambda_{\eps,T}^\s$ the {\it discrete reconstruction map} $\CR^{\eps}$ by
\begin{equ}[e:DReconstructDef]
\big(\CR^{\eps}_t H^\eps_t\big)(x) \eqdef \big(\Pi_x^{\eps, t} H^\eps_t(x) \big)(x)\;.
\end{equ} 
\end{definition}

Notice that, while in the continuous case, for a smooth model relation~\eqref{e:ContRec} is a consequence of the reconstruction theorem, here we are {\it defining} the discrete reconstruction operator according to~\eqref{e:DReconstructDef} because, as we will see, such definition is well-suited for our purposes. 
Nevertheless, in the discrete setting, we have in principle much more freedom in the choice of the discrete reconstruction operator, because, after all, we only have to specify its value on a finite number of points. 
In any case, no matter how we define it, what one needs is to obtain an analog of~\eqref{e:Reconstruction} which is {\it uniform} in $\eps$ and this is what the next theorem shows.

\begin{theorem}[Thm. 4.5, \cite{HM15}]
\label{t:DReconstruct}
Let $\gamma,\,\eta\in\R$, $\ST=(\CA,\CT,\CG)$ be a regularity structure with $\alpha \eqdef \min \CA < 0$ and $Z^\eps=(\Pi^\eps,\Gamma^\eps, \Sigma^\eps)$ be a discrete model. Then the bound
\begin{equ}\label{e:DReconstruction}
|\langle \CR^\eps_t H^\eps_t - \Pi^{\eps, t}_x H^\eps_t(x), \varphi_x^\lambda \rangle_\eps| \lesssim \lambda^\gamma \enorm{t}^{\eta - \gamma} \Vert H^\eps \Vert^{(\eps)}_{\gamma, \eta; T} \Vert \Pi^\eps \Vert^{(\eps)}_{\gamma; T}\;,
\end{equ}
holds for all $H^\eps \in \CD_{\eps, T}^{\gamma, \eta}(Z^\eps)$, locally uniformly over $(t, x) \in \Lambda_{\eps,T}^\s$, $\varphi \in \CB^1_0$ and uniformly in  $\lambda \in [\eps, 1]$, and the hidden constant does not depend on $\eps$. If furthermore $\bar Z^\eps = (\bar \Pi^\eps, \bar \Gamma^\eps, \bar \Sigma^\eps)$ is another model and $\bar \CR_t^\eps : \CD_{\eps,T}^{\gamma, \eta}(\bar Z) \to \CC^{\alpha}(\R)$ is again defined according to Definition~\ref{d:DReconstruct}, then the bound
\begin{equs}
|\langle \CR_t^\eps H_t^\eps - \Pi^{\eps,t}_x &H_t^\eps(x) - \bar \CR_t^\eps \bar H_t^\eps + \bar \Pi^{\eps,t}_x \bar H_t^\eps(x), \varphi_x^\lambda \rangle|_\eps \\
&\lesssim \lambda^\gamma \onorm{t}^{\eta - \gamma} \bigl(\Vert H^\eps; \bar H^\eps \Vert_{\gamma, \eta; T}^{(\eps)} \Vert \Pi^\eps \Vert_{\gamma; T}^{(\eps)} + \Vert \bar H^\eps \Vert_{\gamma, \eta; T}^{(\eps)} \Vert \Pi^\eps - \bar \Pi^\eps \Vert_{\gamma; T}^{(\eps)} \bigr) \label{e:DContReconstruct}
\end{equs}
holds uniformly over $H^\eps \in \CD^{\gamma, \eta}_{\eps,T}(Z^\eps)$, $\bar H^\eps \in \CD^{\gamma, \eta}_{\eps,T}(\bar Z^\eps)$ and the same parameters as above.
\end{theorem}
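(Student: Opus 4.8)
The plan is to adapt the multiresolution (wavelet) proof of the continuous reconstruction theorem to the dyadic grid, truncated at the mesh scale $\eps=2^{-N}$: since $\Lambda_\eps$ carries no scale finer than $\eps$, there is no limit to pass to, and one instead stops at the finest available scale and checks afterwards that the pointwise recipe \eqref{e:DReconstructDef} agrees with the data living at that scale. As usual, the second bound \eqref{e:DContReconstruct} follows from the first by multilinearity, decomposing $\Pi^\eps H^\eps-\bar\Pi^\eps\bar H^\eps=\Pi^\eps(H^\eps-\bar H^\eps)+(\Pi^\eps-\bar\Pi^\eps)\bar H^\eps$ at the relevant step and rerunning the argument, so I would focus on \eqref{e:DReconstruction}, in the regime $\gamma>0$ that is the only one of interest. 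After translating, it is enough to take $x=0$ and a fixed time $t$, which I suppress from the notation, writing $\Pi_z$, $\Gamma_{zy}$, $H$ for the objects at that time.

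First I would isolate the one algebraic input, a coherence identity: from $\Pi^{\eps,t}_y=\Pi^{\eps,t}_z\Gamma^{\eps,t}_{zy}$ and Definition~\ref{d:DReconstruct},
\begin{equ}
\big(\CR^\eps H\big)(y)-\big(\Pi_z H(z)\big)(y)=-\big(\Pi_z\big[H(z)-\Gamma_{zy}H(y)\big]\big)(y)\;,
\end{equ}
where the bracket on the right is, component by component in $\CT_l$, exactly the quantity that $\|H^\eps\|^{(\eps)}_{\gamma,\eta;T}$ controls, of size $\lesssim\enorm{t}^{\eta-\gamma}|z-y|^{\gamma-l}$. The difficulty, and the heart of the argument, is that once this is fed into a pairing $\langle\,\cdot\,,\varphi_z^\lambda\rangle_\eps=\eps\sum_y(\cdots)\varphi_z^\lambda(y)$ the bracket depends on the summation variable $y$, so the analytic bound on $\Pi^\eps$ --- which applies to a \emph{fixed} element of $\CT$ tested against a rescaled bump --- cannot be invoked directly.

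The standard device that resolves this is to fix a compactly supported $\CC^r$ scaling function generating a multiresolution analysis of $L^2(\R)$, to let $V_n$ be the span of its scale-$2^{-n}$ translates, and to decompose $\varphi_z^\lambda$ along the telescope $V_{n_\lambda}\subset V_{n_\lambda+1}\subset\cdots\subset V_N$, where $2^{-n_\lambda}\sim\lambda$ (so $n_\lambda\le N$ precisely because $\lambda\ge\eps$): a coarse $V_{n_\lambda}$-projection at scale $\sim\lambda$ plus wavelet increments at the finer scales $2^{-n}$, $n_\lambda<n\le N$. Testing $\CR^\eps H-\Pi_z H(z)$ against each piece and using the coherence identity to replace the base point $z$ by a grid point $y_n$ within $2^{-n}$ of the relevant support turns every contribution into honest pairings $\langle\Pi_{y_n}\tau,\,\cdot\,\rangle_\eps$ of \emph{fixed} elements $\tau\in\CT_l$ with $\|\tau\|\lesssim\enorm{t}^{\eta-\gamma}(2^{-n})^{\gamma-l}$ (the vanishing moments of the wavelets being what lets one localise the coherence data to one base point per cell), to which the analytic bound on $\Pi^\eps$ --- legitimate because all the scales $2^{-n}$ involved lie in $[\eps,1]$ --- applies and contributes $\lesssim\|\Pi^\eps\|^{(\eps)}_{\gamma;T}(2^{-n})^l$. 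Summing the resulting dyadic series over $n_\lambda<n\le N$, convergent since $\gamma>0$, together with the $O(\lambda^\gamma)$ bound for the coarse $V_{n_\lambda}$-piece (immediate from coherence at scale $\sim\lambda$), yields $\lambda^\gamma\enorm{t}^{\eta-\gamma}\|H^\eps\|^{(\eps)}_{\gamma,\eta;T}\|\Pi^\eps\|^{(\eps)}_{\gamma;T}$, i.e. \eqref{e:DReconstruction}.

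The genuinely discrete step, and where I expect the bookkeeping to be most delicate, is the coarsest grid scale $V_N$ (scale $\eps$): here the scale-$\eps$ wavelet data produces a \emph{local average} of $\CR^\eps H$, which must be compared with the pointwise value $(\Pi_yH(y))(y)$ that \eqref{e:DReconstructDef} actually prescribes; the two differ by a finite weighted combination of increments $(\CR^\eps H)(y+k\eps)-(\CR^\eps H)(y)$, each $\lesssim\eps^\gamma\le\lambda^\gamma$ again by the coherence identity and the analytic bound at $\lambda=\eps$. One must then check that every constant appearing in the telescoping and in this comparison is independent of $N$, which is exactly what makes the hidden constant in \eqref{e:DReconstruction} uniform in $\eps$. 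Granting \eqref{e:DReconstruction}, the bound \eqref{e:DContReconstruct} follows by repeating the computation for the difference, using the splitting $\Pi^\eps H^\eps-\bar\Pi^\eps\bar H^\eps=\Pi^\eps(H^\eps-\bar H^\eps)+(\Pi^\eps-\bar\Pi^\eps)\bar H^\eps$ and the fact that $\|H^\eps;\bar H^\eps\|^{(\eps)}_{\gamma,\eta;T}$ controls the corresponding difference of coherence brackets.
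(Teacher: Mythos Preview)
The paper does not supply its own proof of this theorem: it is quoted verbatim from \cite{HM15} (as indicated in the heading ``Thm.~4.5, \cite{HM15}'') and used as a black box. Your sketch --- wavelet decomposition of $\varphi_x^\lambda$ along scales $2^{-n_\lambda},\dots,2^{-N}$, base-point switching via the coherence identity, and a boundary comparison at the finest scale $\eps$ --- is precisely the strategy carried out in the cited reference, so there is nothing to compare against within this paper and your outline is correct in substance.
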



\section{Analysis of the continuous and discrete equations}
\label{s:Burgers}

As remarked in the introduction, the difficulty in making sense of the KPZ equation comes from the fact that the space-time white noise, as a random distribution, is extremely singular and consequently the expected regularity of the solution does not allow to classically define the non-linear term appearing on the right hand side of~\eqref{e:KPZ}. 
Instead of the latter, we will focus on the SBE given by 
\begin{equ}
\partial_t u = \Delta u+\partial_x u^2 + \partial_x \xi\;, \qquad u(0) = u_0\;,
\end{equ}
where $\xi$ is the space-time white noise on the one-dimensional torus and $u_0$ the initial condition. 
We will mainly work with the mild formulation of the previous, so let $P:\R^2\to\R$ be 
the Green's function of $\partial_t-\Delta$, which, for $t > 0$, is given by $P_t(x) = \frac{1}{\sqrt{4\pi t}} e^{-\frac{x^2}{4 t}}$, and is $0$ for $t\leq0$. We mollify the noise $\xi$, i.e. we consider $\xi_\eps\eqdef \xi\ast\varrho_\eps$,  where $\varrho$ is a symmetric compactly supported smooth function integrating to 1 and $\varrho_\eps(t,x)\eqdef \eps^{-3}\varrho(\eps^{-2} t, \eps^{-1} x)$ its rescaled version, and write 
\begin{equation}\label{e:SBEMild}
u_\eps = P_t u_0 + P'\ast(u_\eps^2)+ P'\ast \xi_\eps\;,
\end{equation}
where $P' \eqdef \partial_x P$, the first summand on the right hand side is a purely spatial convolution and $\ast$ denotes the convolution in space-time.  

The idea developed in~\cite{Hai13, Reloaded} (and~\cite{Hai14}) is then to split the analysis of the equation \eqref{e:SBE} in two distinct modules. 
At first one infers from~\eqref{e:SBEMild} a minimal set of processes, the \textit{controlling processes} $\X$,  built from $\xi$, and  \textit{postulates} that there is a way, not only to properly define them but also to prove that they satisfy certain regularity requirements. 
Then, one identifies a suitable subspace of the space of distributions, depending on such processes, for which it is possible to make sense of the ill-posed operations and formulates a fixed point map that is continuous in these data. 
At last, one exploits stochastic calculus techniques to show that it is indeed possible to construct $\X$ starting from a space-time white noise. 

In this section we want to accomplish two goals. First, we will carefully carry out the program outlined above in the context of~\eqref{e:SBE}, introducing all the quantities of interest.  
Then we will present the family of discrete models we want to deal with, explaining the procedure to follow in order to prove their convergence to the solution of \eqref{e:SBE}, and precisely state all the main results, postponing their proofs to the subsequent sections.


\subsection{The controlling processes}\label{sec:ControllingProcesses}

For reasons that will soon be clarified, we need to split the heat kernel $P$ in a ``singular" and a ``regular" part. To do so, we consider a smooth compactly supported function $\chi:\R^2\to[0,1]$ such that $\chi(z)=1$ for $\|z\|_\s\leq \frac{1}{2}$ and $\chi(z)=0$ for $\|z\|_\s> 1$ (of course the choice of $1$ is completely arbitrary). Then we set, for $z\in\R^2$
\begin{equation}\label{def:Kernel}
K (z)=\chi(z)P(z)\qquad\text{and}\qquad \hat K(z)= (1-\chi(z))P(z)
\end{equation}
so that $P=K+\hat K$. 
Clearly $K$ is compactly supported, smooth except at $0$ and coincides with $P$ in a neighborhood of the origin, while $\hat K$ is even, smooth and, when dealing with convolutions of $\hat K$ with spatially periodic distributions on the time interval $[-1,1]$, we can assume $\hat K$ to be compactly supported (see~\cite[Lem. 7.7]{Hai14} for the precise statement and the proof of this claim). 
As for $P$, we will indicate by $K'$ (resp. $\hat K'$) the spatial derivative of $K$ (resp. $\hat K$).
\newline

Let us try to heuristically understand how to identify what the above mentioned controlling processes in our case are. Recall that we aim at solving \eqref{e:SBE} in a suitable function space of H\"older type and, looking at~\eqref{e:SBEMild}, it is reasonable to expect that the main contribution to the regularity of its solution, $u$, will come from the singular part of the stochastic convolution $P'\ast\xi$, i.e. $K'\ast \xi$, since what is left, no matter how (ir-)regular $\xi$ is, is smooth. 
Hence, if in terms of regularity we can presume that $u$ ``looks like" $K'\ast \xi$ at small scales, briefly, $u\sim K'\ast\xi$, then the ill-posed product at the right hand side should satisfy $u^2\sim(K'\ast \xi)^2$.
At this point, we can further detail the expansion of $u$ beyond the first term. Indeed, referring again to~\eqref{e:SBEMild}, we see that if $u^2\sim(K'\ast \xi)^2$ then $u\sim K'\ast\xi + K'\ast(K'\ast \xi)^2$ and so on.  

The point here is that it suffices to pursue such an iteration only finitely many times. Let us be more precise. Replace the noise $\xi$ by a smooth function $\eta$ and, given $X^{\<tree1>}(\eta) = K' \ast \eta$ and two constants $a,\,b\in\R$, we define the following 
\begin{subequations}\label{Control}
\begin{align}
&X^{\<tree11>}(\eta) = K' \ast X^{\<tree1>}(\eta)\;,&  
&X^{\<tree21>}(\eta) = X^{\<tree11>}(\eta) X^{\<tree1>}(\eta)-b\;,& \notag\\
&X^{\<tree2>}(\eta) = \bigl( X^{\<tree1>}(\eta) \bigr)^2-a\;, &
&X^{\<tree12>}(\eta) = K' \ast X^{\<tree2>}(\eta)\;,&\label{e:Control1}\\
&&\text{as well as              }\qquad\qquad&&\notag\\
&X^{\<tree22>}(\eta) = X^{\<tree12>}(\eta) X^{\<tree1>}(\eta) - 2 b\, X^{\<tree1>}(\eta)\;, &
&X^{\<tree122>}(\eta) = K' \ast X^{\<tree22>}(\eta)\;,&\label{e:Control2}\\
&X^{\<tree124>}(\eta) = P'\ast\bigl(X^{\<tree12>}(\eta)\bigr)^2\;,& 
&X^{\<tree1222>}(\eta) = P'\ast\bigl(X^{\<tree122>}(\eta) X^{\<tree1>}(\eta) - b\,X^{\<tree12>}(\eta)\bigr)\;.&\notag
\end{align}
At last, we introduce, for $t, x, y \in \R$ and $z, \bar z \in \R^2$, the remainders
\begin{align}\label{e:Remainders}
R^{\<tree21>}(t, x; y) &\eqdef X^{\<tree21>}(t, y) - X^{\<tree11>}(t,x)\, X^{\<tree1>} (t,y)\;,\\
R^{\<tree1222>}(z; \bar z) &\eqdef X^{\<tree1222>} (\bar z) - X^{\<tree122>}(z)\, P'\ast X^{\<tree1>} (\bar z)\;,\notag
\end{align}
\end{subequations}
where we omitted the dependence on $\eta$ not to clutter the presentation. 

\begin{remark}\label{rem:Ren}
The reason why we introduced certain constants in the expression of some of the processes above is that, when $\eta$ is the space-time white noise, the products there appearing are ill-posed. 
To be more precise, if $\eta = \xi^\eps$ is a smooth approximation of $\xi$, then in the limit as $\eps$ goes to 0, these processes would diverge to $\infty$ and the only way to prevent them from blowing up is through a suitable renormalization procedure, consisting in surgically remove the divergences. 
As we will see in the following sections, in our context, this simply amounts to subtract the 0-chaos component from the Wiener-chaos expansion of the stochastic process at hand. 
\end{remark}

\begin{remark}\label{rem:Ren2}
In principle, one would expect the presence of a renormalisation constant in the definition of $X^{\<tree124>}$ and  $X^{\<tree1222>}$ as well, but these constants are killed by the convolution with the spatial derivative of the heat kernel and therefore we can forget about them. 
This is the advantage of working with SBE instead of KPZ.
\end{remark}

As pointed out in the introduction, we want to show not only that we can construct these objects in the case in which $\eta$ is the space-time white noise but we also need to prove that they have certain space-time regularities. 
Recall that, as a random distribution, the realizations of the space-time white noise belong almost surely to $\CC^{\alpha}_\s$ for any $\alpha<-\frac{3}{2}$ (see for example the proof of~\cite[Prop. 9.5]{Hai14}). Starting from this, one can guess what is the expected regularity of the terms in~\eqref{Control} by applying the following rules:
\begin{enumerate}[noitemsep, leftmargin=*]
\item by usual Schauder estimates, the convolution with the heat kernel $P$ (and, of course, with its singular part $K$) increases the regularity by 2, and consequently the one with $P'$ (resp. $K'$) by 1;
\item for $\beta,\,\gamma\in\R$, the product of two distributions in $\CC^\beta$ and $\CC^\gamma$ respectively belongs to $\CC^\delta$ where $\delta=\min\{\beta,\,\gamma,\,\beta+\gamma\}$ (see \cite[Thms. 2.82, 2.85, Prop. 2.71]{BCD11} or simply Proposition~\ref{p:ClassicalProduct});
\item since we are subtracting the ill-posed part of the product (i.e. the ``diagonal"), the regularity of the terms in~\eqref{e:Remainders} will be given by the sum of the regularities of the factors.
\end{enumerate}
Even if the second property \textit{analytically} holds provided that $\beta+\gamma>0$, we will simply \textit{postulate} that this is still the case (and this is the point in which the renormalization will enter the game, see Remark~\ref{rem:Ren}). 

In order to lighten the notation, and following the properties 1.-3. stated above, we assign numbers $\alpha_\tau$ and $\beta_{\bar \tau}$, representing the regularities of $X^\tau$ and $R^{\bar \tau}$ respectively, to each label $\tau \in \SL \eqdef \{\<tree1>, \<tree2>, \<tree11>, \<tree21>, \<tree12>, \<tree22>, \<tree122>, \<tree124>, \<tree1222>\}$ and $\bar \tau \in \SL_R \eqdef \{\<tree21>, \<tree1222>\}$. We do it in a recurrent way: 
\begin{itemize}[noitemsep, leftmargin=*]
\item we fix a parameter $\alpha_{\<tree1>}$;
\item we set $\alpha_{\tau} = \sum_{i} \alpha_{\tau_i} \wedge 0 + 1$, in case the forest $\{\tau_i\}$ is obtained by removing the root and the adjacent edges from a tree~$\tau$;
\item we set $\alpha_{\tau} = \alpha_{\tau_1} \wedge 0 + \alpha_{\tau_2} \wedge 0$ and $\beta_{\tau} = \alpha_{\tau_1} + \alpha_{\tau_2}$, for a label $\tau = \tau_1 \tau_2$, with $\tau_i$ being a tree.
\end{itemize}
With these notations at hand we are ready to give the following definition.
\begin{definition}[Controlling Processes]\label{def:ControlProc}
Let $\alpha_{\<tree1>}\in \bigl(-\frac{3}{5}, -\frac{1}{2}\bigr)$ and $\alpha_\tau$, $\beta_{\bar \tau}$, for $\tau\in\SL$ and $\bar \tau\in\SL_R$, be given as above. 
For $(\eta,\,a,\,b)\in \CC^{\infty}(\R^2)\times\R^2$, we set $\X(\eta)\eqdef\X(\eta,a,b)$ to be 
\[
\X(\eta)=\left(\eta, X^{\<tree2>},  X^{\<tree22>}, X^{\<tree11>}, X^{\<tree1>}, X^{\<tree21>}, X^{\<tree12>}, X^{\<tree122>}, X^{\<tree124>}, X^{\<tree1222>}, R^{\<tree21>}, R^{\<tree1222>}\right)(\eta)\;,
\]
where the elements appearing in the previous are given by~\eqref{Control} (and we hid the dependence on the constants $a,\,b$ for the processes $X^{\<tree2>},\,X^{\<tree21>}$ to lighten the notation). Then we define the space $\CX$ of \textit{controlling processes} as 
\[
\CX\eqdef \text{cl}_\CW\left\{\X(\eta, a,b)\,:\, (\eta,\,a,\,b)\in \CC^{\infty}(\R^2)\times\R^2\right\}\;,
\]
where $\text{cl}_\CW\{\cdot\}$ denotes the closure of the set in brackets with respect to the topology of $\CW$ and 
\[
\CW \eqdef \CC_1^{\alpha_{\<tree1>} - 1,\s} \oplus \Bigl( \bigoplus_{\tau \in \SL_1} \CC^{\alpha_\tau,\s}_1 \Bigr)\oplus \Bigl( \bigoplus_{\tau \in \SL_2} \CC^{\alpha_\tau}_1 \Bigr)\oplus\Bigl( \bigoplus_{\tau \in \SL_3} \CC^{\alpha_\tau,\s}_1 \Bigr)\oplus \CL_1^{\beta_{\<tree21>}} \oplus \CL_1^{\beta_{\<tree1222>}, \s}
\] 
endowed with the usual norm, where $\SL_1\eqdef\{\<tree2>,\,\<tree22>,\,\<tree11>\}$, $\SL_2\eqdef\{\<tree1>,\,\<tree21>,\,\<tree12>,\,\<tree122>\}$ and $\SL_3\{\<tree124>,\,\<tree1222>\}$. 
We denote by $\X$ a generic element of this space and, if $\eta\in\CC^{\alpha_{\<tree1>} - 1,\s}$ coincides with the first component of $\X\in\CX$, we say that $\X$ is an {\it enhancement} (or {\it lift}) of $\eta$. 
\end{definition}
\begin{center}
\begin{equ}
\begin{array}{cc}
\toprule
\mathbf{\alpha_\tau} & \textbf{reg.}\\
\midrule
\alpha_{\<tree1>} & -\frac{1}{2}^{-}\\
\alpha_{\<tree2>} & -1^{-}\\
\alpha_{\<tree11>} & \frac{1}{2}^{-}\\
\bottomrule
\end{array}\qquad\quad
\begin{array}{cc}
\toprule
\mathbf{\alpha_\tau} & \textbf{reg.}\\
\midrule
\alpha_{\<tree21>} & 0^{-}\\
\alpha_{\<tree12>} & 0^{-}\\
\alpha_{\<tree22>} & -\frac{1}{2}^{-}\\
\bottomrule
\end{array}\qquad\quad
\begin{array}{cc}
\toprule
\mathbf{\alpha_\tau} & \textbf{reg.}\\
\midrule
\alpha_{\<tree122>} & \frac{1}{2}^{-}\\
\alpha_{\<tree124>} & 1^{-}\\
\alpha_{\<tree1222>} & \frac{1}{2}^{-}\\
\bottomrule
\end{array}\qquad\quad
\begin{array}{cc}
\toprule
\mathbf{\beta_{\bar\tau}} & \textbf{reg.}\\
\midrule
\beta_{\<tree21>} & 0^{-}\\
\beta_{\<tree1222>} & 1^{-}\\
\bottomrule
\end{array}
\end{equ}
\captionof{table}{Summary of the values of the $\alpha_\tau$ and $\beta_{\bar \tau}$ for $\tau\in\SL$ and $\bar\tau\in\SL_R$ respectively. Given $\alpha\in\R$, $\alpha^-$ stands for $\alpha-\eps$ for any $\eps>0$ small enough. }\label{table:reg}
\end{center}

\begin{remark}
Some of the objects in $\X(\eta)$ have low regularity and are not functions in the time variable, e.g. $\eta$, $\<tree2>$ and $\<tree22>$, that's why they belong to the respective spaces of space-time distributions. On the other hand, we prefer to consider some objects with positive regularities, e.g. $\<tree11>$, $\<tree124>$ and $\,\<tree1222>$, as H\"{o}lder functions in space-time, which is more convenient when working with regularity structures in Section~\ref{sec:HeuCont}.
\end{remark}

The following proposition, whose proof is provided in Section~\ref{sec:control}, states that it is indeed possible to enhance the space-time white noise. 

\begin{proposition}\label{p:Control}
Let $\xi$ be a space-time white noise on a probability space $(\Omega,\SF,\Prob)$, $\varrho$ a symmetric compactly supported smooth function integrating to 1 and $\varrho_\eps(t,x)\eqdef \eps^{-3}\varrho(\eps^{-2} t, \eps^{-1} x)$ its rescaled version. Set $\xi_\eps\eqdef \xi\ast\varrho_\eps$. Then, there exists a sequence of constants $C_\eps^{\<tree2>} \sim \eps^{-1}$ and a controlling process $\X(\xi)\in\CX$, such that $\X(\xi_\eps, C_\eps^{\<tree2>}, 0)$ converges to $\X(\xi)$ in $L^p(\Omega,\CX)$ for all $p>1$, and moreover $\X(\xi)$ is independent of the choice of the mollifier $\varrho$.
\end{proposition}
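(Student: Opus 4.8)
The plan is to construct $\X(\xi)$ by the standard Wick-renormalization/Wiener-chaos machinery, treating each of the finitely many processes in Definition~\ref{def:ControlProc} separately and in order of increasing complexity. First I would replace $\xi$ by $\xi_\eps = \xi * \varrho_\eps$, so that all the objects in \eqref{Control} are honestly defined smooth functions, and expand each of them into a finite sum of elements of Wiener chaos using the graphical (Feynman-diagram) representation: each $X^\tau(\xi_\eps)$ is a multiple stochastic integral against a kernel built by gluing copies of $K'$ (or $P'$), so its $k$-th chaos component has an explicit kernel $\CW^{\tau,\eps}_k$. The renormalization is then simply the removal of the $0$-chaos (deterministic) part, as anticipated in Remark~\ref{rem:Ren}: for $X^{\<tree2>}$ this constant is $C^{\<tree2>}_\eps = \E[(X^{\<tree1>}(\xi_\eps))^2] \sim \eps^{-1}$ (the logarithmic/polynomial divergence coming from $\int (K'_t * \varrho_\eps)^2$), while for $X^{\<tree21>}$, $X^{\<tree22>}$, $X^{\<tree122>}$ etc. the corresponding constants either vanish or are finite multiples of already-introduced ones, and for $X^{\<tree124>}, X^{\<tree1222>}$ they are killed by the outer $P'$ as noted in Remark~\ref{rem:Ren2}, so $b=0$ suffices and no further constants appear.

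Next, for each fixed chaos order $k$ and each label $\tau$, I would estimate the $k$-th chaos kernel and show convergence: one shows that $\langle \CW^{\tau,\eps}_k, \varphi^\lambda_z \otimes \cdots\rangle$ (suitably interpreted for the $\CL$-type and remainder objects) is Cauchy in $L^2(\Omega)$ as $\eps \to 0$ with the expected rate, by bounding the difference of the kernels $\CW^{\tau,\eps}_k - \CW^{\tau,\eps'}_k$ in the appropriate $L^2$ norm. The key analytic input is the standard power-counting bound for convolutions of kernels with prescribed singularity degrees: $K'$ behaves like a kernel of order $-3+1 = -2$ in parabolic scaling (homogeneity $|\cdot|_\s^{-2}$, integrable against the $3$-dimensional parabolic volume since $-2 > -3$), and repeated convolution/multiplication follows the bookkeeping rules 1.--3. listed before Table~\ref{table:reg}. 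This gives, for each chaos component, the bound $\E|\langle \delta_{z,\bar z} X^\tau, \varphi^\lambda\rangle|^2 \lesssim \lambda^{2\alpha_\tau + \kappa}$ (and the analogous bound with $\eps$-differences carrying a positive power of $\eps$), uniformly in $\eps$.

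Then I would upgrade these second-moment bounds to the $L^p(\Omega,\CX)$ statement. Since every object lives in a fixed Wiener chaos of bounded order, Gaussian hypercontadiction (equivalence of all $L^p$ norms on a fixed chaos) converts the $L^2$ bounds into $L^p$ bounds for every $p$; then a Kolmogorov-type continuity criterion adapted to the spaces $\CC^{\alpha,\s}_1$, $\CC^\alpha_1$, $\CL^\beta_1$, $\CL^{\beta,\s}_1$ (as in \cite[Sec.~10]{Hai14} or the corresponding statements for parabolically-scaled noise) promotes the pointwise-in-test-function bounds to bounds on the actual $\CW$-norm of $\X(\xi_\eps, C^{\<tree2>}_\eps, 0)$ and, via the $\eps$-difference bounds, shows this family is Cauchy in $L^p(\Omega,\CX)$, hence converges to a limit $\X(\xi)$; the limit lies in $\CX$ because it is by construction the $\CW$-limit of the elements $\X(\eta,a,b)$ with smooth $\eta$, so it belongs to the closure $\mathrm{cl}_\CW\{\cdots\}$. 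Independence of the limit from the mollifier $\varrho$ follows by running the same Cauchy estimate on the difference of two mollifications $\varrho, \tilde\varrho$ (their difference still integrates to zero after the Wick subtraction, since the divergent constants depend on $\varrho$ only through the leading term which cancels in the renormalized object, or more precisely one shows the renormalized objects built from $\varrho_\eps$ and $\tilde\varrho_\eps$ differ by something vanishing in $L^p$).

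The main obstacle I expect is the power-counting for the "worst" objects $X^{\<tree124>}$ and $X^{\<tree1222>}$ and the remainder $R^{\<tree1222>}$: these sit at the top of the tree of iterated convolutions, involve products of objects of borderline regularity (several factors of homogeneity $\tfrac12^-$), and require one to verify that the combined singularity of the glued kernel is still integrable and yields exactly the claimed exponent $1^-$ — in particular one must be careful that the outer convolution is with the full heat kernel $P'$ rather than only its local part $K'$, so a separate (easy) estimate on the smooth remainder $\hat K'$ is needed, and that no hidden sub-divergence forces an extra renormalization constant beyond what Remark~\ref{rem:Ren2} promises. The bookkeeping is routine in spirit but the diagrammatic estimates for the higher-order terms, together with keeping track of the explosion rate $\eta = 1$ near $t=0$ in all the spaces, is where the real work lies.
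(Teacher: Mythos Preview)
Your proposal is correct and follows essentially the same route as the paper. The paper packages the analytic input into a single convergence criterion (Proposition~\ref{p:ChaosKernelsConv}) phrased in terms of the singularity order of the covariance kernel $\CK(z_1,z_2)=\langle\CW(z_1),\CW(z_2)\rangle_{L^2}$, and then applies it term-by-term using the power-counting lemmas of \cite[Sec.~10.3]{Hai14}, but the underlying mechanism---Wiener-chaos decomposition, diagrammatic kernels, Nelson/hypercontractivity for $L^p$, and Kolmogorov continuity---is exactly what you describe; note only that the subscript $1$ in the spaces $\CC^{\alpha,\s}_1$ is the time horizon $T=1$, not an explosion rate, so no blow-up at $t=0$ enters the stochastic estimates.
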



\subsection{The ill-posed product, regularity structure and a notion of solution}\label{sec:HeuCont}

Thanks to Proposition~\ref{p:Control}, we know how to construct a number of stochastic processes, closely related to the equation~\eqref{e:SBE}, starting from a space-time white noise $\xi$. 
Building on the ideas sketched at the beginning of the previous section (see also~\cite{Hai13, Reloaded}), let $\xi_\eps$ be given as before and $u_\eps$ be the solution of \eqref{e:SBE} with $\xi_\eps$ replacing $\xi$. We want to proceed to a Wild expansion of $u_\eps$ around the solution of the linearized equation, or more precisely, around the singular part of it. 
Indeed, we expect that, upon subtracting sufficiently many irregular terms, what remains has better regularity properties allowing, on one side, to make sense of the ill-posed product (via the theory of regularity structures) and, on the other, to define a map that can be proved to have a unique fixed point in a suitable space.

To be more concrete, let $\X(\xi_\eps, 0, 0)\in\CX$ be an enhancement of $\xi_\eps$ according to Definition~\ref{def:ControlProc} without renormalization and we define $v_\eps$ as
\begin{equ}[e:uExp]
v_\eps \eqdef u_\eps - X^{\<tree1>}_\eps - X^{\<tree12>}_\eps - 2 X^{\<tree122>}_\eps\;,
\end{equ}
where $u_\eps$ is as above (i.e. the solution to \eqref{e:SBE} driven by the mollified noise $\xi_\eps$). Then, $v_\eps$ solves the following stochastic PDE (written in its mild formulation), with $u_0$ as initial condition,
\begin{equ}\label{e:TruncBurgers}
v_\eps = P u_0 + 4 X^{\<tree1222>}_\eps+P' \ast \Bigl(  2 v_\eps X^{\<tree1>}_\eps +  F^\eps_{v_\eps}\Bigr) +  Q^\eps\;,
\end{equ}
with $P u_0 \eqdef \bigl(P_t \ast u_0\bigr)(x)$, the convolution involving only the space-variable, and 
\begin{equ}[e:F]
 F^\eps_{v_\eps} \eqdef 2 X^{\<tree12>}_\eps \bigl(2 X^{\<tree122>}_\eps + v_\eps \bigr) + \bigl( 2 X^{\<tree122>}_\eps + v_\eps\bigr)^2\;, \qquad  Q^\eps \eqdef X^{\<tree124>}_\eps  +\hat K' \ast \Bigl( \xi_\eps + X^{\<tree2>}_\eps + 2 X^{\<tree22>}_\eps \Bigr)\;,
\end{equ}
where $\hat K$ is the smooth part of the heat kernel introduced in~\eqref{def:Kernel}. 

We now want to pass to the limit as $\eps$ goes to $0$. Since Proposition~\ref{p:Control} will take care of the terms belonging to $\X(\xi_\eps)$, we can focus on the others.  
In the $\eps$-limit, the regularity of $v_\eps$, $\alpha_\star$ cannot be better than the one of $X^{\<tree1222>}_\eps$, which is $\alpha_{\<tree1222>}<\frac{1}{2}$. 
Assuming also $\alpha_\star > -\alpha_{\<tree12>}$ and thanks to the bounds on $\alpha_{\<tree1>}$, one easily sees that all the summands in~\eqref{e:TruncBurgers} and~\eqref{e:F} are analytically well-defined (all the products fall into the scope of Proposition~\ref{p:ClassicalProduct}) apart from $v_\eps X_\eps^{\<tree1>}$, for which we will need the theory of regularity structures.  

To take a glimpse at the idea behind our approach (and, more generally, the rough path/regularity structures approach), we notice that if we look at the small increments of the mild solution to the equation \eqref{e:TruncBurgers}, then, at least formally, they should behave as the ones of the process $X^{\<tree11>}_\eps$. More precisely, it is reasonable to expect the relation
\begin{equ}[e:VExpansionEps]
 \delta_{z, \bar z} v_\eps = v'_\eps(z)\, \delta_{z, \bar z} X_\eps^{\<tree11>} + R_\eps(z, \bar z)
\end{equ}
to hold, where $v'_\eps = 4 X_\eps^{\<tree122>} + 2 v_\eps$, and $R_\eps$ has ``better'' regularity than $X_\eps^{\<tree11>}$.

This formal relation hints at the way in which the regularity structure should be defined in order to be able to encapsulate a suitable description of the process $v_\eps$ and consequently to make sense of the ill-posed term in~\eqref{e:TruncBurgers} in the limit.

To this purpose we now define a regularity structure $(\CA, \CT, \CG)$ such that each controlling process $\X \in \CX$ gives raise to a model on it. 
Let $\alpha_\tau$, $\beta_{\bar \tau}$ be the parameters introduced in the previous section. Then we set $\CA\eqdef \{\alpha_{\<tree1>},\,\beta_{\<tree21>},\,0,\,\alpha_{\<tree11>}  \}$, the model space $\CT$ will be $\CT \eqdef \CT_{\alpha_{\<tree1>}} \oplus \CT_{\beta_{\<tree21>}} \oplus \CT_{0} \oplus \CT_{\alpha_{\<tree11>}}$, where the Banach spaces $\CT_{\alpha_{\<tree1>}}$, $\CT_{\beta_{\<tree21>}}$, $\CT_{0}$ and $\CT_{\alpha_{\<tree11>}}$ are copies of $\R$ with the unit vectors $\<bigtree1>$, $\<bigtree21>$, $\one$ and $\<bigtree11>$ respectively. 

Let $\X\in\CX$, then we introduce the model $Z=(\Pi, \Gamma,\Sigma)$ given by $\Pi^{t}_x \one = 1$ as well as
\begin{subequations}\label{e:Model}
\begin{equ}\label{e:ModelPi}
\bigl( \Pi^{t}_x \<bigtree1>\bigr)(\cdot) = X^{\<tree1>}(t,\cdot)\;, \qquad \bigl( \Pi^{t}_x \<bigtree21>\bigr)(\cdot) = R^{\<tree21>}(t,x; \cdot)\;, \qquad \bigl( \Pi^{t}_x \<bigtree11>\bigr)(\cdot) = \delta^{(t)}_{x, \cdot} X^{\<tree11>}\;.
\end{equ}
For any points $s, t, x, y \in \R$, the maps $\Gamma^t$ and $\Sigma_x$ are defined as $\Gamma^{t}_{xy} \one = \one$, $\Sigma^{s t}_{x} \one = \one$ and
\begin{align}
&\Gamma^{t}_{xy} \<bigtree1> = \<bigtree1>\;,&  &\Gamma^{t}_{xy} \<bigtree21> = \<bigtree21> + \delta^{(t)}_{x, y} X^{\<tree11>}\, \<bigtree1>\;,& & \Gamma^{t}_{xy} \<bigtree11> = \<bigtree11> + \delta^{(t)}_{x, y} X^{\<tree11>}\, \one\;,&\label{e:ModelGamma}\\
&\Sigma^{s t}_{x} \<bigtree1> = \<bigtree1>\;,&  &\Sigma^{s t}_{x} \<bigtree21> = \<bigtree21> + \delta^{(s, t)}_{x} X^{\<tree11>}\, \<bigtree1>\;,& & \Sigma^{s t}_{x} \<bigtree11> = \<bigtree11> + \delta^{(s, t)}_{x} X^{\<tree11>}\, \one\;.&\label{e:ModelSigma}
\end{align}
\end{subequations}
It follows immediately from the definition of the space $\CX$ that the maps $(\Pi, \Gamma,\Sigma)$ have all the algebraic and analytic properties required in Definition~\ref{d:Model}.
Furthermore, one can define a structure group $\CG$ in such a way that the operators $\Gamma^{t}_{xy}$ and $\Sigma_x^{st}$ belong to $\CG$, see \cite{Hai14, HM15}.

Motivated by the expansion \eqref{e:VExpansionEps}, we can define the modelled distributions, which describe $v$ and $v X^{\<tree1>}$ at the abstract level. 
More precisely, given two functions $v, v' : (0,T]\times\T \rightarrow \R$, we set 
\begin{equ}[e:V]
\BV_t(x)\eqdef v(t,x)\, \one + v'(t,x)\,\<bigtree11> 
\end{equ}
and consequently
\begin{equ}[e:VDot]
\big(\BV \<bigtree1>\big)_t(x) \eqdef v(t,x) \,\<bigtree1> + v'(t,x)\, \<bigtree21>\;.
\end{equ}
which corresponds to the product between the modelled distributions $\BV$ and $\<bigtree1>_t(x)\eqdef 1\,\<bigtree1>$. 
The idea is now to define the product $v X^{\<tree1>}$ in \eqref{e:TruncBurgers} as $\CR \big(\BV \<bigtree1>\big)$, where $\CR$ is the reconstruction operator associated to the model $Z=(\Pi, \Gamma, \Sigma)$. 
Indeed, provided that $\BV \<bigtree1>$ belongs to a space $\CD^{\gamma,\eta}_T(Z)$ for some $\gamma>0$, Theorem~\ref{t:Reconstruction} guarantees that this quantity is indeed well-defined.
\newline

In order to verify this latter statement and define a fixed point map, we need to introduce a suitable function space.
To this end, we fix another global parameter $\alpha_\star > 0$ which, as before, represents the regularity of the solution $v$. 
Now, given a triplet $V = \bigl(v, v', R\bigr)$ of smooth functions, $v, v'$ on $(0,T]\times\R$ and $R$ on $((0,T]\times\R)^2$, we define, for $\gamma > \alpha_{\<tree11>}$ and $\eta \in \R$ the seminorm
\begin{equ}[e:Norms]
\Vert V \Vert_{\eta, \gamma; T} \eqdef \Vert v\Vert _{\CC^{\alpha_\star,\s}_{\eta, T}} + \Vert v' \Vert _{\CC^{\gamma - \alpha_{\<tree11>},\s}_{\eta - \alpha_{\<tree11>}, T}} + \sup_{z \in (0,T]\times\R} \pnorm{R(z, \cdot)}_{\eta - \gamma, \gamma; T, z}\;,
\end{equ}
and we denote by $\CH^{\eta, \gamma}_{T}$ the completion of such smooth triplets under this seminorm. Then we define the space to which the solution will belong. 
\begin{definition}\label{d:CP} 
Let $\alpha_{\star}>0$, $\gamma>-\alpha_{\<tree1>}$, $\eta\in(-1,0)$ and let $\CX$ be the space of controlling processes as in Definition~\ref{def:ControlProc}. Then we define the space of {\it controlled processes} $\CX^{\eta, \gamma}_{T} \subset \mathcal{X} \oplus \CH^{\eta, \gamma}_{T}$ as the algebraic variety of $\X \in \CX$ and $V=\bigl(v, v', R\bigr) \in \CH^{\eta, \gamma}_{T}$ satisfying the identity
\begin{equ}[e:VExpansion]
\delta_{z, \bar z} v = v'(z)\, \delta_{z, \bar z} X^{\<tree11>} + R(z, \bar z)\;.
\end{equ}
Moreover, we define the set of all processes controlled by $\X \in \CX$ as
\begin{equ}
\CH^{\eta, \gamma}_{\X,T} \eqdef \Bigl\{ V \in \CH^{\eta, \gamma}_{T} : \bigl(\X, V\bigr) \in \CX^{\eta, \gamma}_{T} \Bigr\}\;,
\end{equ}
endowed with the norm \eqref{e:Norms}. 
\end{definition}
It is now immediate to verify that, given any controlling process in $\X\in\CX$ and any process $V$ controlled by $\X$, there is a unique way to define the ill-posed product as the reconstruction of the modelled distribution given in~\eqref{e:VDot}.

\begin{proposition}\label{p:ModelledDistribution}
Let $\alpha_{\star}>0$, $\gamma>-\alpha_{\<tree1>}$ and $\eta\in(-1,0)$. Let $\X\in\CX$ and $Z=(\Pi,\Gamma,\Sigma)$ be the model given in~\eqref{e:Model}. Let $V=(v,v',R)\in\CH^{\gamma,\eta}_{\X, T}$ and $\BV$, $\BV\<bigtree1>$ be defined as in~\eqref{e:V} and~\eqref{e:VDot} respectively. 
Then, $\BV\in\CD^{\gamma,\eta}_T(Z)$, $\BV\<bigtree1>\in\CD^{\gamma+\alpha_{\<tree1>},\eta+\alpha_{\<tree1>}}_T(Z)$ and, for all $t\in(0,T]$, $\CR_t(\BV\<bigtree1>)\in\CC^{\alpha_{\<tree1>}}$ is uniquely defined. 
Moreover, the map assigning to every couple  $(\X,V)$ in $\CX_T^{\eta,\gamma}$ the reconstruction of $\BV\<bigtree1>$ is jointly locally Lipschitz continuous. 

At last, if $\X(\eta)\eqdef\X(\eta,a,b)$, with $a,\,b\in\R$, is an enhancement of a smooth function $\eta$, then for all $(t,x)\in(0,T]\times\R$ the following equality holds 
\begin{equ}[e:SmoothReconstruction]
\CR(\BV\<bigtree1>)_t(x)=v(t,x)X^{\<tree1>}(t,x)-bv'(t,x)\;.
\end{equ} 
\end{proposition}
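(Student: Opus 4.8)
The plan is to verify the three assertions in turn: first that $\BV \in \CD^{\gamma,\eta}_T(Z)$, then that $\BV\<bigtree1> \in \CD^{\gamma+\alpha_{\<tree1>},\eta+\alpha_{\<tree1>}}_T(Z)$, from which the existence and uniqueness of $\CR_t(\BV\<bigtree1>)$ follows by Theorem~\ref{t:Reconstruction}, then the Lipschitz continuity, and finally the explicit formula \eqref{e:SmoothReconstruction} in the smooth case.

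For the first claim, I would simply unfold the definition \eqref{e:ModelledDistributionNorm}--\eqref{e:ModelledDistributionNormAbs} of $\VERT\BV\VERT_{\gamma,\eta;T}$ against the model \eqref{e:Model}. Since $\BV_t(x) = v(t,x)\,\one + v'(t,x)\,\<bigtree11>$ only involves the homogeneities $0$ and $\alpha_{\<tree11>}$, the $\Vert\cdot\Vert_l$ norms are $|v(t,x)|$ for $l=0$ and $|v'(t,x)|$ for $l=\alpha_{\<tree11>}$; the absolute-value bounds then follow directly from the $\CC^{\alpha_\star,\s}_{\eta,T}$-bound on $v$ and the $\CC^{\gamma-\alpha_{\<tree11>},\s}_{\eta-\alpha_{\<tree11>},T}$-bound on $v'$ in \eqref{e:Norms}. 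For the $\Gamma$-increment, using $\Gamma^t_{xy}\BV_t(y) = v(t,y)\,\one + v'(t,y)\,\<bigtree11> + v'(t,y)\,\delta^{(t)}_{x,y}X^{\<tree11>}\,\one$, the component in $\CT_0$ is $v(t,x) - v(t,y) - v'(t,y)\,\delta^{(t)}_{x,y}X^{\<tree11>}$, which is precisely $-R(t,x;t,y)$ up to the controlled-process relation \eqref{e:VExpansion}, hence controlled by the $\pnorm{R}$-term in \eqref{e:Norms}; the component in $\CT_{\alpha_{\<tree11>}}$ is $v'(t,x)-v'(t,y)$, controlled by the Hölder norm of $v'$. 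The $\Sigma$-increment is handled identically using the time-increment parts of the same norms and the parabolic (rather than purely spatial) structure of $\CC^{\cdot,\s}$. The constraints $\gamma > \alpha_{\<tree11>}$, $\eta\in(-1,0)$, and the relations among the $\alpha_\tau$ from Table~\ref{table:reg} are exactly what make the exponents match up.

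For the second claim, $\BV\<bigtree1>$ has components in $\CT_{\alpha_{\<tree1>}}$ (coefficient $v$) and $\CT_{\beta_{\<tree21>}}$ (coefficient $v'$); note $\alpha_{\<tree1>} + \alpha_{\<tree11>} = \beta_{\<tree21>}$ by the recursive definition, so the shift of the target space by $\alpha_{\<tree1>}$ is consistent. The model increments $\Gamma^t_{xy}\<bigtree1> = \<bigtree1>$ and $\Gamma^t_{xy}\<bigtree21> = \<bigtree21> + \delta^{(t)}_{x,y}X^{\<tree11>}\,\<bigtree1>$ mean the required bounds on $\BV\<bigtree1>$ reduce verbatim to the bounds already established for $\BV$, with $\gamma$ replaced by $\gamma + \alpha_{\<tree1>}$ and $\eta$ by $\eta + \alpha_{\<tree1>}$; this is the abstract analog of the classical fact that multiplying by something of regularity $\alpha_{\<tree1>}<0$ lowers regularity by $|\alpha_{\<tree1>}|$. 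Since $\gamma + \alpha_{\<tree1>} > 0$ (because $\gamma > -\alpha_{\<tree1>}$), Theorem~\ref{t:Reconstruction} applies and yields a unique $\CR_t(\BV\<bigtree1>) \in \CC^{\alpha_{\<tree1>}}$. Local Lipschitz continuity of $(\X,V)\mapsto\CR(\BV\<bigtree1>)$ then follows by combining the second bound in Theorem~\ref{t:Reconstruction} (which controls $\CR H - \bar\CR\bar H$ in terms of $\VERT H;\bar H\VERT$, $\VERT\Pi\VERT$ and $\VERT\Pi-\bar\Pi\VERT$) with the observation that the maps $\X\mapsto Z$, $\X\mapsto\<bigtree1>$, and $(\X,V)\mapsto\BV\<bigtree1>$ are all (multi)linear or affine in their data and hence locally Lipschitz in the relevant norms; one just has to chase the dependence through, noting that $\VERT\BV\<bigtree1>;\overline{\BV\<bigtree1>}\VERT$ is bounded by $\VERT V - \bar V\VERT + \VERT\X-\bar\X\VERT$ times the sizes of the data.

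For the final formula \eqref{e:SmoothReconstruction}: when the model is built from a smooth $\eta$, Proposition-level smoothness gives \eqref{e:ContRec}, i.e. $\CR_t H_t(x) = (\Pi^t_x H_t(x))(x)$. Applying this with $H = \BV\<bigtree1>$ and using $(\BV\<bigtree1>)_t(x) = v(t,x)\,\<bigtree1> + v'(t,x)\,\<bigtree21>$ together with $(\Pi^t_x\<bigtree1>)(x) = X^{\<tree1>}(t,x)$ and $(\Pi^t_x\<bigtree21>)(x) = R^{\<tree21>}(t,x;x)$, everything reduces to computing $R^{\<tree21>}(t,x;x)$. By \eqref{e:Remainders}, $R^{\<tree21>}(t,x;x) = X^{\<tree21>}(t,x) - X^{\<tree11>}(t,x)X^{\<tree1>}(t,x)$, and since $X^{\<tree21>}(\eta) = X^{\<tree11>}(\eta)X^{\<tree1>}(\eta) - b$ in the smooth enhancement \eqref{e:Control1}, this equals $-b$. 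Hence $\CR(\BV\<bigtree1>)_t(x) = v(t,x)X^{\<tree1>}(t,x) - b\,v'(t,x)$, as claimed.

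The main obstacle is the bookkeeping in the first two claims: one must carefully match every exponent ($\enorm{t}$-weights, $|x-y|$ and $|t-s|$ powers, the various $\eta-\gamma$, $\eta-l$, $(\gamma-l)/2$) against the definitions of the $\CC^{\cdot,\s}_{\eta,T}$ and $\pnorm{\cdot}$ norms in \eqref{e:Norms}, and check that the controlled-process identity \eqref{e:VExpansion} is precisely what lets the $\Gamma$-increment in $\CT_0$ be absorbed into the $R$-seminorm rather than requiring more regularity of $v$ than it has. None of this is conceptually hard, but it is where all the parameter constraints ($\alpha_\star>0$, $\gamma>-\alpha_{\<tree1>}$, $\eta\in(-1,0)$, and the entries of Table~\ref{table:reg}) are actually used.
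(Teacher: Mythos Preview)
Your proposal is correct and follows essentially the same approach as the paper. The only difference is that for showing $\BV\<bigtree1>\in\CD^{\gamma+\alpha_{\<tree1>},\eta+\alpha_{\<tree1>}}_T(Z)$ the paper invokes the abstract product rule \cite[Prop.~6.12]{Hai14} (observing that $(t,x)\mapsto\<bigtree1>$ is itself a modelled distribution in $\CD^{\bar\gamma,\bar\gamma}$ for any $\bar\gamma>0$), whereas you carry out the direct computation; the paper explicitly mentions your route as an alternative, so this is a cosmetic distinction rather than a genuinely different argument.
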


\begin{remark}
The reason why in the actual equation the constant $b$ will not appear is that, thanks to Proposition~\ref{p:Control}, we know that $b=0$ in case $\eta$ is a smooth and symmetric approximation of the space-time white noise. 
\end{remark}
\begin{proof}
There is very little to prove. Indeed, the fact that $\BV\in\CD^{\gamma,\eta}_T(Z)$ follows by 
\[
\VERT\BV\VERT_{\gamma,\eta;T}=\|v\|_{\CC^0_{\eta;T}}+\|v'\|_{\CC^{\gamma-\alpha_{\<tree11>},\s}_{\eta-\alpha_{\<tree11>};T}}+\sup_{z \in (0,T]\times\R} \pnorm{R(z, \cdot)}_{\eta - \gamma, \gamma; T, z}\leq \| V\|_{\gamma,\eta;T}\;.
\]
Notice that the function $(0,T]\times\R\ni (t,x)\mapsto \<bigtree1>$ is a modelled distribution in $\CD^{\bar\gamma,\bar\gamma}$ for every $\bar \gamma>0$ whose element of minimum homogeneity is $\<bigtree1>$. Therefore,~\cite[Prop. 6.12]{Hai14} implies that $\BV\<bigtree1>\in\CD^{\gamma+\alpha_{\<tree1>},\eta+\alpha_{\<tree1>}}_T(Z)$ (alternatively one could prove this by direct computation). 
By assumption, $\gamma+\alpha_{\<tree1>}>0$ hence, Theorem~\ref{t:Reconstruction} guarantees that, for every $t\in(0,T]$, $\CR_t(\BV\<bigtree1>)$ is unique and that the map assigning to every couple  $(\X,V)$ in $\CX_T^{\eta,\gamma}$ the reconstruction of $\BV\<bigtree1>$ is locally Lipschitz continuous. 
At last, the equality~\eqref{e:SmoothReconstruction} is another consequence of the aforementioned theorem. Indeed, for smooth $\eta$ thanks to~\eqref{e:ContRec} we have
\begin{equ}
\CR_t(\BV\<bigtree1>)(x)=\Pi^t_x(\BV\<bigtree1>)_t(x)(x)=v(t,x) \bigl(\Pi^t_x\<bigtree1>\bigr)(x)+v'(t,x) \bigl(\Pi^t_x\<bigtree21>\bigr)(x)\;,
\end{equ}
and the last term coincides with the right hand side of~\eqref{e:SmoothReconstruction} because of the definition of the model since the second summand is equal to $v'(t,x)b$. 
\end{proof}

At this point we are ready to formulate the map that we will prove to admit a unique fixed point in the space of controlled processes. Let us define the map $\CM : \CC^{\eta} \times \CX^{\eta, \gamma}_{T} \to \CH^{\eta, \gamma}_{T}$ by $\CM \bigl(v^0, \X, V\bigr) = \bigl(\tilde{v}, \tilde{v}', \tilde{R}\bigr)$ such that
\minilab{FPMap}
\begin{equs}
\tilde{v} &= P u_0 + 4 X^{\<tree1222>} +P' \ast \bigl( 2 \CR \bigl(\BV\<bigtree1>\bigr) + F_v\bigr) + Q\;,\label{e:FPMapV}\\
\tilde{v}' &= 4 X^{\<tree122>} + 2 v\;, \qquad\qquad \tilde{R}(z, \bar z) = \delta_{z, \bar z} \tilde{v} - \tilde{v}'(z)\, \delta_{z, \bar z} X^{\<tree11>}\;,\label{e:FPMapR}
\end{equs}
where $\bigl(P u_0\bigr)_t(x) \eqdef \bigl(P_t \ast u_0\bigr)(x)$, the modelled distribution $\BV\<bigtree1>$ is defined in~\eqref{e:VDot}, the reconstruction operator $\CR$ corresponds to the model introduced above for the controlling process $\X \in \CX$, and $F_v$ and $Q$ are as in \eqref{e:F}. 

\begin{remark}\label{rem:RenEqu}
Notice that if the controlling process $\X_\eps\eqdef\X(\eta_\eps, 0, 0)$ is not renormalized, then Proposition~\ref{p:ModelledDistribution} and \eqref{e:uExp} yield equation \eqref{e:SBEMild} for the fixed point of \eqref{FPMap}. In general, the controlling process $\X_\eps\eqdef\X(\eta_\eps,a_\eps,b_\eps)$ depends on a non-trivial $b_\eps$ (that, in principle, might even diverge) and thanks to Proposition~\ref{p:ModelledDistribution} and our definitions \eqref{e:uExp} and \eqref{Control}, the equation for $u_\eps$ would read
\begin{equ}[e:RenSBE]
\partial_t u_\eps=\Delta u_\eps+\partial_x u_\eps^2 - 4 b_\eps\partial_x u_\eps +\eta_\eps\,,\qquad u_\eps(0,\cdot)=u_0(\cdot)\;,
\end{equ}
which corresponds to the {\it renormalized} equation. 
\end{remark}

The following theorem, which is proved in Section~\ref{sec:FP}, states the existence and uniqueness result for the limit of equation~\eqref{e:SBE}. 

\begin{theorem}\label{t:FixedPoint}
Let $\alpha_\star \in \bigl(-\alpha_{\<tree12>}, \alpha_{\<tree11>}\bigr]$, $\gamma \in \bigl(\alpha_{\<tree11>}, \alpha_{\<tree11>} + \alpha_\star \wedge \alpha_{\<tree122>}\bigr)$ and $\eta \in (-1,0)$.
Then for every $u_0 \in \CC^{\eta}$ and every $\X \in \mathcal{X}$, there exists $T_\infty\eqdef T_\infty(u_0,\X) \in (0, +\infty]$ such that the map $V \mapsto \CM \bigl(u_0, \X, V\bigr)$ admits a unique fixed point in $\CH^{\eta, \gamma}_{\X,T}$ with $T < T_\infty$. Furthermore, for all $T< T_\infty$, equation \eqref{e:SBE} admits a unique solution $u$ on $[0, T]$ and, if $T_\infty<\infty$ then $\lim_{t \to T_\infty} \|u(t,\cdot)\|_{\CC^\eta}=\infty$. At last, for every $T<T_\infty$ the map $\CS_T$ that assigns to $(u_0,\X)\in\CC^\eta\times\CX$ the unique solution $u$ to~\eqref{e:SBE} on $[0, T]$ is jointly locally Lipschitz continuous. 
\end{theorem}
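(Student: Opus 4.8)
The plan is to run a standard Banach fixed-point argument for the map $V\mapsto\CM(u_0,\X,V)$ on a small time interval, exploiting the fact that every term on the right-hand side of \eqref{FPMap} gains either regularity or a small power of $T$. First I would establish, using Proposition~\ref{p:ModelledDistribution}, that $\CR(\BV\<bigtree1>)\in\CC^{\alpha_{\<tree1>}}_{\eta+\alpha_{\<tree1>}, T}$ with a bound that is locally Lipschitz in $(\X, V)$; this is the only term requiring regularity structures. All remaining products in $F_v$ and $Q$ — namely $X^{\<tree12>}X^{\<tree122>}$, $X^{\<tree12>}v$, $(X^{\<tree122>})^2$, $vX^{\<tree122>}$, $v^2$, and the convolutions $\hat K'\ast(\xi+X^{\<tree2>}+2X^{\<tree22>})$ — fall under Proposition~\ref{p:ClassicalProduct} given the constraint $\alpha_\star>-\alpha_{\<tree12>}$ and the regularities in Table~\ref{table:reg}; here I would check that each product lands in a H\"older space of positive regularity so that the heat operator $P'\ast$ can be applied. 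Then I would invoke the Schauder estimates — Proposition~\ref{p:HeatBounds} (which controls $P'\ast$ as a map raising regularity by $1$, with a factor $T^{\kappa}$ for some $\kappa>0$ when the target exponent is below the source exponent by less than $1$), and the analogous elementary bound for the inhomogeneous term $Pu_0$ with $u_0\in\CC^\eta$, $\eta\in(-1,0)$ — to conclude that $\CM$ maps a ball in $\CH^{\eta,\gamma}_{\X,T}$ into itself and is a contraction once $T$ is small, the contraction constant being $O(T^\kappa)$ plus a term linear in the radius of the ball. The constraint $\gamma<\alpha_{\<tree11>}+\alpha_\star\wedge\alpha_{\<tree122>}$ is exactly what makes the third component $\tilde R$ of $\CM$ land back in the right $\pnorm{\cdot}$-space: one writes $\tilde R(z,\bar z)=\delta_{z,\bar z}\tilde v-\tilde v'(z)\,\delta_{z,\bar z}X^{\<tree11>}$ and checks, term by term in \eqref{e:FPMapV}, that the parts of $\tilde v$ not captured by $\tilde v'\,X^{\<tree11>}$ have the required parabolic H\"older regularity $\gamma$; the dangerous contribution is $P'\ast(2\CR(\BV\<bigtree1>))$, whose increment must be shown to equal $\tilde v'(z)\,\delta_{z,\bar z}X^{\<tree11>}$ up to a $\CC^\gamma_\s$ remainder, and this is precisely the content of the Schauder-type estimate applied to a reconstructed modelled distribution (cf.\ \eqref{e:VExpansionEps}).

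Having the local fixed point $V=(v,v',R)$, I would set $u\eqdef v+X^{\<tree1>}+X^{\<tree12>}+2X^{\<tree122>}$ and verify, via Remark~\ref{rem:RenEqu} and the identity \eqref{e:SmoothReconstruction} in the case $b=0$, that $u$ solves the mild equation \eqref{e:SBEMild}; uniqueness of $u$ follows from uniqueness of $V$ together with the fact that the decomposition \eqref{e:uExp} is forced (the $X^\tau$ are fixed once $\X$ is). For the blow-up alternative and maximality of $T_\infty$, I would use the standard patching argument: the existence time $T_\infty(u_0,\X)$ depends lower-semicontinuously on $\|u_0\|_{\CC^\eta}$ and $\VERT\X\VERT$, so if $\|u(t,\cdot)\|_{\CC^\eta}$ stayed bounded as $t\uparrow T_\infty<\infty$ one could restart the fixed point from a time close to $T_\infty$ and extend the solution, a contradiction; here one needs that the controlling process $\X$, being a fixed space-time object, can be "restarted" at a later time (its time-shifts remain in $\CX$ with comparable norm on compact time windows). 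Finally, joint local Lipschitz continuity of $\CS_T$ in $(u_0,\X)\in\CC^\eta\times\CX$ follows from the Lipschitz dependence of all the ingredients — the Lipschitz bound on $\CR(\BV\<bigtree1>)$ from Proposition~\ref{p:ModelledDistribution}, the bilinearity of the classical products, the linearity of the Schauder maps, and the uniform contraction estimate — by the usual argument comparing two fixed points driven by nearby data.

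The main obstacle I anticipate is the bookkeeping for the remainder component $\tilde R$: one must show simultaneously that $\tilde v\in\CC^{\alpha_\star,\s}_{\eta,T}$, that $\tilde v'=4X^{\<tree122>}+2v$ has the correct regularity $\gamma-\alpha_{\<tree11>}$ with explosion rate $\eta-\alpha_{\<tree11>}$, and that $\tilde R$ satisfies the $\pnorm{\cdot}_{\eta-\gamma,\gamma;T,z}$ bound — and these three are coupled through the defining relation \eqref{e:VExpansion}, so the estimates must be closed jointly rather than one at a time. The subtle point is that $\CR(\BV\<bigtree1>)$ is only $\CC^{\alpha_{\<tree1>}}$ with $\alpha_{\<tree1>}<0$, so $P'\ast\CR(\BV\<bigtree1>)$ is only just above regularity $\alpha_{\<tree1>}+1\approx 1/2$, and extracting from it the exact "derivative" $\tilde v'\,X^{\<tree11>}$ requires a Schauder estimate that tracks not just the Hölder norm of the output but its expansion against the model — i.e.\ one genuinely needs the fact (to be proved in Section~\ref{sec:Analytic}) that $P'\ast$ lifts to an operator on modelled distributions and commutes appropriately with reconstruction. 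The interplay of the three exponent constraints $\alpha_\star>-\alpha_{\<tree12>}$, $\gamma\in(\alpha_{\<tree11>},\alpha_{\<tree11>}+\alpha_\star\wedge\alpha_{\<tree122>})$, $\eta\in(-1,0)$ is exactly calibrated so that every one of these estimates closes, and checking that no term is left out is where the real work lies; the contraction itself, once the space is set up correctly, is then routine because of the $T^\kappa$ gains.
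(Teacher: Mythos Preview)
Your overall strategy — Banach fixed point on $\CH^{\eta,\gamma}_{\X,T}$, treating $F_v$ and $Q$ via Proposition~\ref{p:ClassicalProduct}, and invoking Schauder-type bounds to gain a factor $T^\theta$ — matches the paper's Proposition~\ref{prop:FixedPoint} and the short proof of Theorem~\ref{t:FixedPoint} that follows it. Where you diverge is in the mechanism you propose for the remainder $\tilde R$: you anticipate that one must lift $P'\ast$ to an operator on modelled distributions and track its commutation with reconstruction, \`a la \cite[Sec.~5]{Hai14}. The paper deliberately avoids this. Instead it writes $\tilde R$ out explicitly (equation~\eqref{e:FPMapr}): the subtraction $\delta_{z,\bar z}\tilde v-\tilde v'(z)\,\delta_{z,\bar z}X^{\<tree11>}$ decomposes algebraically so that the $4X^{\<tree1222>}$ piece pairs with $4X^{\<tree122>}\cdot X^{\<tree11>}$ to give $4R^{\<tree1222>}$ (already an element of $\X$ with the required $\CL^{\beta_{\<tree1222>},\s}$ bound), and the reconstruction piece pairs with $2v\cdot X^{\<tree11>}$ to give $P'\ast(\CJ\BV\<bigtree1>)_{\beta_{\<tree21>},z}$ plus a smooth correction $\tilde v'(z)\,\hat X^{\<tree11>}$. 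Here $(\CJ H)_{\beta,z}(s,\cdot)\eqdef\CR_s H_s-\Pi^s_x\CQ_{<\beta}H_t(x)$ is bounded directly at the level of distributions by Lemma~\ref{l:ReconstructBound}, which simply feeds the reconstruction bound~\eqref{e:Reconstruction} and the model estimates into the concrete Schauder estimate of Proposition~\ref{p:HeatBounds}; the leftover piece $P'\ast\bigl(\Pi^\cdot_x\CQ_{<\beta_{\<tree21>}}(\BV\<bigtree1>)_t(x)\bigr)$ is handled by the ad-hoc Lemma~\ref{l:RemV}. No abstract integration operator on $\CT$ is ever defined; the regularity structure is used only for the reconstruction theorem.

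Your route would also work — it is essentially Hairer's original machinery — but it is heavier than what is needed and is precisely what the paper set out to bypass (cf.\ the introduction: ``we can avoid the complicated construction carried out in \cite[Sections~5]{Hai14}''). The paper's approach buys a four-symbol regularity structure and a Schauder estimate (Proposition~\ref{p:HeatBounds}) proved by treating $P_t$ as a rescaled test function via Lemma~\ref{l:Schwartz}; this is what makes the argument port cleanly to the space-time discrete setting in Section~\ref{sec:DConv}, where one does not have an off-the-shelf abstract integration map.
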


\begin{remark}
It was shown in~\cite{Reloaded}, that the solution of SBE exists at all times, i.e. $T_\infty=\infty$ and an analogous argument would work also in this context, but since we will not prove it explicitly here we refrain from stating the previous theorem so to include this observation. 
\end{remark}


Before proving the previous theorem, we want to introduce the family of discrete systems we have in mind, state the main assumptions on the quantities mentioned in~\eqref{e:DiscreteSBE} and see how to translate in the discrete setting what we have done so far. 

\subsection{The discrete setting}\label{sec:DControllingProcesses}

As already mentioned in the introduction, we now want to deal with a family of space-time discrete systems and prove that each of its members converges to the solution of SBE given in Theorem~\ref{t:FixedPoint}. 
Such systems are defined on a space-time grid that mimics the parabolic nature of the equation we are working with. 
To be more precise, we fix $N\in\N$ and $\eps\eqdef 2^{-N}$, define the grids as in~\eqref{e:Grids} and consider the discrete equations \eqref{e:DiscreteSBE}. In order to define the discrete operators introduced above, we will need three signed measures, $\nu$, $\pi$ and $\mu$, the first two on $\R$ and the latter on $\R^2$, satisfying the following assumptions.    

\begin{assumption}\label{a:nu}
$\nu$ is a purely atomic signed symmetric measure on $\R$, supported on $\Z \cap B(0, R_\nu)$, for some radius $R_\nu>0$, such that 
\begin{equ}[e:DLaplacianProps]
\int_{\R} \nu(dx) = \int_{\R} x\, \nu(dx) = 0 \;,\qquad \int_{\R} x^2\, \nu(dx) = 2\;
\end{equ}
and its Fourier transform, given by $\hat{\nu}(k)\eqdef\int_\R e^{-2\pi i k y}\nu(\dd y)$, vanishes only on $\Z$. 
\end{assumption}

\begin{assumption}\label{a:pi}
$\pi$ is a purely atomic signed measure on $\R$, supported on $\Z\cap B(0,R_\pi)$, for $R_\pi>0$ fixed, such that
\begin{equ}[e:DDerivativeProps]
\int_{\R} \pi(dx) = 0\;,\qquad \int_{\R} x\, \pi(\dd x) = 1\;.
\end{equ}
\end{assumption}

\begin{assumption}\label{a:MeasureMu}
$\mu$ is a purely atomic signed measure on $\R^2$, supported on $(\Z\cap B(0,R_\mu))^2$, for $R_\mu>0$ fixed, such that for any $A,\,B\subseteq \Z^2$, $\mu(A\times B)=\mu(B\times A)$.
We will denote its Fourier transform by $\widehat{\mu}(k_1, k_2)\eqdef \int_{\R^2} e^{-2\pi i  k_1 y_1}e^{-2\pi i k_2 y_2}\mu(\dd y_1,\dd y_2)$.
\end{assumption}

Then, to describe the action of $\Delta_\eps$, $D_{x,\eps}$ and $B_\eps$ on functions on the grid $\Lambda_\eps$, we write $\nu_\eps(\dd y)= \nu(\eps^{-1}\dd y)$, $\pi_\eps(\dd y)=\pi(\eps^{-1}\dd y)$ and $\mu_\eps(\dd y_1,\dd y_2)= \mu(\eps^{-1}\dd y_1,\eps^{-1}\dd y_2)$ for the rescaled versions of $\nu$, $\pi$ and $\mu$ respectively, so that, for any $\varphi\,,\psi\in\ell^\infty(\Lambda_\eps)$ and $x\in\Lambda_\eps$, we can define
\begin{subequations}\label{e:DOperators}
\begin{equs}
\Delta_\eps \varphi(x) &\eqdef  \frac{1}{2\bar{\nu}\eps^2}\int_{\R} \varphi(x + y)\, \nu_\eps(dy)\;, \qquad D_{x,\eps} \varphi(x)\eqdef \frac{1}{\eps}\int_\R \varphi (x+ y)\pi_\eps(\dd y)\;, \label{e:DLaplacian}\\
&B_\eps(\varphi,\psi)(x) \eqdef \int_{\R^2} \varphi(x+y_1)\psi(x+y_2) \mu_\eps (\dd y_1,\dd y_2)\;, \label{e:Dproduct}
\end{equs}
\end{subequations}
where $\bar{\nu}\eqdef \int_{\R} |\nu|(\dd x)$ is the total variation of $\nu$. 

In order to write the mild formulation of~\eqref{e:DiscreteSBE} and build the discrete counterpart of the controlling processes given in Section~\ref{sec:ControllingProcesses}, we introduce the operator $P^\eps$, defined as the Green's function of the space-time discrete heat operator. 
More explicitly, $P^\eps$ is the unique solution of 
\begin{equ}[eq:DHeat]
\bar D_{t,\eps^2} P^\eps(z)=\Delta_\eps P^\eps(z)\;, \qquad P^\eps(0, \cdot) = \eps^{-1}\delta_{0, \cdot}\;,
\end{equ}
where $z \in\Lambda^\s_\eps$, $\delta_{0,\cdot}$ is the Kronecker delta function and we recall that $\bar D_{t,\eps^2}f(t)=\eps^{-2}(f(t+\eps^2)-f(t))$. Furthermore, we impose $P_t^\eps=0$ for all $t\leq0$. 
The mild formulation of~\eqref{e:DiscreteSBE} with $C^\eps=0$ then becomes
\begin{equ}[e:DiscreteSBEMild]
u^\eps=P^\eps u_0^\eps +D_{x,\eps}P^\eps\ast_\eps B^\eps (u^\eps,u^\eps) + D_{x,\eps}P^\eps\ast_\eps\xi^\eps\;,
\end{equ}
where $\ast_\eps$ is the discrete space-time convolution, i.e. for two functions $\varphi,\,\psi$ on $\Lambda_{\eps^2,T}\times\Lambda_\eps$ we have $f\ast_\eps g(z)\eqdef \eps^{3}\sum_{w \in \Lambda_\eps^\s} f(z-w) g(w)$, and $P^\eps u_0^\eps$ is a discrete spatial convolution. 

Moreover, proceeding as in~\eqref{def:Kernel}, we define $K^\eps$ and $\hat K^\eps$ in such a way that $P^\eps=K^\eps+\hat K^\eps$ and $K^\eps$ is compactly supported in a ball centered at 0 and coincides with $P^\eps$ in a neighborhood of the origin, while $\hat K^\eps$ is even and can be assumed to be compactly supported (see Lemma~\ref{l:DKernelDec} for the details). 

We are now ready to build the discrete controlling processes, whose definition is analogous to~\eqref{Control} but accommodates the structure of~\eqref{e:DiscreteSBEMild}.
Let $\eta^\eps$ be a function on $\Lambda_\eps^\s$, then, upon setting  $X^{\<tree1>,\,\eps}(\eta^\eps) = D_{x,\eps}K^\eps \ast \eta^\eps$ and given constants $a,\,b \in \R$, we have
\begin{subequations}\label{DControl}
\begin{align}
&X^{\<tree11>,\,\eps} = B_\eps(1,D_{x,\eps}K^\eps\ast_\eps X^{\<tree1>,\,\eps})\;,&  
&X^{\<tree21>,\,\eps} = B_\eps \big(X^{\<tree11>,\,\eps},\, X^{\<tree1>,\,\eps}\big)-b\;,& \notag\\
&X^{\<tree2>,\,\eps} = B_\eps\bigl( X^{\<tree1>,\,\eps}, X^{\<tree1>,\,\eps} \bigr)-a\;, &
&X^{\<tree12>,\,\eps}= D_{x,\eps}K^\eps\ast_\eps X^{\<tree2>,\,\eps}\;,&\label{e:DControl1}
\end{align}
as well as
\begin{align}
&X^{\<tree22>,\,\eps} = B_\eps\big(X^{\<tree12>,\,\eps},\, X^{\<tree1>,\,\eps}\big) - 2  b\, X^{\<tree1>,\,\eps}\;, && X^{\<tree122>,\,\eps} = D_{x,\eps}K^\eps\ast_\eps X^{\<tree22>,\,\eps}\;,\notag\\
&X^{\<tree124>,\,\eps} = D_{x,\eps}P^\eps \ast_\eps B_\eps\bigl(X^{\<tree12>,\,\eps},\,X^{\<tree12>,\,\eps}\bigr)\;,&&X^{\<tree1222>,\,\eps} = D_{x,\eps}P^\eps \ast_\eps \Bigl(B_\eps\big( X^{\<tree122>,\,\eps},\, X^{\<tree1>,\,\eps}\big) -  b X^{\<tree12>,\,\eps}\Bigr)\;,\label{e:DControl2}
\end{align}
Furthermore, for $t\in\Lambda_{\eps^2}$ and $x,\,y\in\Lambda_\eps$, we set
\begin{align}
R^{\<tree21>,\,\eps}(t, x; y) &\eqdef X^{\<tree21>,\,\eps}(t,y)-\int_{\R^2} X^{\<tree11>,\,\eps}(t, x+ y_1)X^{\<tree1>,\,\eps}(t, y+ y_2)\mu_\eps(\dd y_1,\,\dd y_2)\;,\label{e:DRemainders}\\
R^{\<tree1222>,\,\eps}(z; \bar z) &\eqdef X^{\<tree1222>,\,\eps} (\bar z) - \int_{\R^2} X^{\<tree122>,\,\eps}(z+ y_1)\, D_{x,\eps}P^\eps \ast_\eps X^{\<tree1>,\,\eps} (\bar z + y_2)\mu_\eps(\dd y_1,\dd y_2)\notag\;,
\end{align}
\end{subequations}
where we omitted the dependence of the previous terms on $\eta^\eps$ and where we wrote $z+ y_1 = z+ (0,y_1)$. 

We are now ready for the following definition and the subsequent proposition, representing the discrete version of Definition~\ref{def:ControlProc} and Proposition~\ref{p:Control}. 

\begin{definition}[Discrete Controlling Processes]\label{def:DControlProc}
Let $\alpha_{\<tree1>}\in \bigl(-\frac{3}{5}, -\frac{1}{2}\bigr)$ and $\alpha_\tau$, $\beta_{\bar \tau}$, for $\tau\in\SL$ and $\bar \tau\in\SL_R$, be the same as in Definition~\ref{def:ControlProc} (see also Table~\ref{table:reg}). 
For $\eta^\eps$, a function on $\Lambda_\eps^\s$, and two non-zero constants $a,b\in\R$, we set 
\[
\X^\eps(\eta^\eps)=\left(\eta^\eps, X^{\<tree2>,\,\eps} ,  X^{\<tree22>,\,\eps}, X^{\<tree11>,\,\eps}, X^{\<tree1>,\,\eps}, X^{\<tree21>,\,\eps}, X^{\<tree12>,\,\eps}, X^{\<tree122>,\,\eps}, X^{\<tree124>,\,\eps}, X^{\<tree1222>,\,\eps}, R^{\<tree21>,\,\eps}, R^{\<tree1222>,\,\eps}\right)\;,
\]
where the elements appearing in the previous are given by~\eqref{DControl} (and we hid the dependence on the constants $a,\,b$ to lighten the notation). Then we define the space $\CX^\eps$ of \textit{discrete controlling processes} as the set of families $\X^\eps(\eta^\eps,a,b)$, parametrized by $\eps$, such that $\X^\eps(\eta^\eps,a,b)\in\CW^\eps$, where 
\[
\CW^\eps \eqdef \CC_1^{\alpha_{\<tree1>} - 1,\s,\eps} \oplus \Bigl( \bigoplus_{\tau \in \SL_1} \CC^{\alpha_\tau,\s,\eps}_1 \Bigr)\oplus \Bigl( \bigoplus_{\tau \in \SL_2} \CC^{\alpha_\tau,\eps}_1 \Bigr)\oplus\Bigl( \bigoplus_{\tau \in \SL_3} \CC^{\alpha_\tau,\s,\eps}_1 \Bigr)\oplus \CL_1^{\beta_{\<tree21>},\eps} \oplus \CL_1^{\beta_{\<tree1222>}, \s, \eps}
\] 
endowed with the usual norm\footnote{As pointed out in the introduction, we here recall that we say that a family of functions $f^\eps$ on the grid, parametrized by $\eps$, belongs to (for example) $\CC^{\alpha, \eps}$ if $\|f^\eps\|_{\CC^{\alpha}}^{(\eps)}<\infty$ {\it uniformly} in $\eps$.}, where $\SL_1\eqdef\{\<tree2>,\,\<tree22>,\,\<tree11>\}$, $\SL_2\eqdef\{\<tree1>,\,\<tree21>,\,\<tree12>,\,\<tree122>\}$ and $\SL_3\eqdef\{\<tree124>,\,\<tree1222>\}$. 

We denote by $\X^\eps$ a generic element of this space and, if $\eta^\eps$ coincides with the first component of $\X^\eps\in\CX^\eps$, we say that $\X^\eps$ is an {\it enhancement} (or {\it lift}) of $\eta^\eps$. 
\end{definition}

\begin{proposition}\label{p:DControl}
Let $\{\xi^\eps(z)\}_{z\in\Lambda_\eps^\s}$ be a family of i.i.d. normal random variables with mean $0$ and variance $\eps^{-3}$ on a probability space $(\Omega,\SF,\Prob)$. Let $\nu,\,\pi$ and $\mu$ be three signed measures satisfying Assumptions~\ref{a:nu},~\ref{a:pi},~\ref{a:MeasureMu} and $\hat\nu,\,\hat\pi,\,\hat\mu$ be their Fourier transforms. Then there exists a sequence of constants $\BC^\eps\eqdef(C^{\<tree2>,\,\eps},\,C^{\<tree21>,\,\eps})$ such that for all $p\geq 1$
\begin{equ}
\E \left[\left(\left\|\X^\eps(\xi^\eps,\BC^\eps)\right\|_{\CX}^{(\eps)}\right)^p\right]\lesssim 1
\end{equ}
uniformly in $\eps$, where $C^{\<tree2>,\,\eps}$ behaves asymptotically as $\eps^{-1}$ and $C^{\<tree21>,\,\eps}$ is independent of $\eps$. Precisely, they are given by
\begin{equs}
C^{\<tree2>,\,\eps}&=\eps^{-1}\int_{-\frac{1}{2}}^{\frac{1}{2}}g(k)g(-k)\frac{4\bar \nu^2}{f(k)\big(4\bar\nu+\hat\nu(k)\big)}\hat\mu(- k , k)\dd k\,,\\
C^{\<tree21>,\,\eps}&=-\int_{-\frac{1}{2}}^{\frac{1}{2}}\frac{\Imma(g(-k)\hat\mu(- k , 0))}{k} |g(k)|^2 \frac{4\bar\nu^2(2\bar\nu+\hat\nu(k))^2}{f(k)^2(4\bar\nu+\hat\nu(k))^2} \hat\mu(- k , k)\dd k\;,
\end{equs}
where $f(k)\eqdef -\hat{\nu}(k)/k^2$, $g(k)\eqdef\hat{\pi}(k)/ (i k)$, $\Imma$ denotes the imaginary part and both the integrals  are finite. Furthermore, let $\varrho$ be a symmetric compactly supported smooth function integrating to 1 and, for $\bar \eps\geq \eps$, $\varrho_{\bar \eps}(t,x)\eqdef \bar \eps^{-3}\varrho(\bar \eps^{-2} t, \bar \eps^{-1} x)$ be its rescaled version. Set  $\xi^{\eps}_{\bar \eps}\eqdef \xi^\eps\ast_\eps\varrho_{\bar\eps}$ and let $\BC^{\eps}_{\bar\eps}$ be the sequence of constants introduced above associated to $\xi^{\eps}_{\bar \eps}$. Then, there exists $\theta>0$ such that, for all $p\geq 1$ one has uniformly in $\eps$:
\[
\E\left[\left(\|\X^\eps(\xi^\eps, \BC^\eps);\X^\eps(\xi^{\eps}_{\bar\eps}, \BC^{\eps}_{\bar\eps})\|_{\CX}^{(\eps)}\right)^p\right]\lesssim \bar \eps^{\theta p}\;.
\] 
\end{proposition}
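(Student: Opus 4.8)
The plan is to follow the by-now-standard scheme for building (discrete) models through Wiener chaos, adapting it to the twisted product $B_\eps$ and to the fully discrete space-time setting. Each of the processes $X^{\tau,\eps}$ and $R^{\bar\tau,\eps}$ in~\eqref{DControl} is obtained from the i.i.d. Gaussian family $\{\xi^\eps(z)\}_{z\in\Lambda_\eps^\s}$ by finitely many discrete space-time convolutions against the elementary kernels $D_{x,\eps}K^\eps$, $D_{x,\eps}P^\eps$ (and the discrete heat kernel) together with applications of $B_\eps$; hence it is a polynomial of fixed degree in $\xi^\eps$ and lies in a finite sum of discrete Wiener chaoses. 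By equivalence of all $L^p$-norms on a fixed inhomogeneous Wiener chaos (Nelson's hypercontractivity estimate), it suffices to control second moments: of $\langle X^{\tau,\eps}_t,\varphi_x^\lambda\rangle_\eps$ uniformly over $\lambda\in[\eps,1]$, of the spatial, temporal and space-time increments entering the seminorms of $\CW^\eps$, and of the evaluations of $R^{\<tree21>,\eps}$ and $R^{\<tree1222>,\eps}$. Combined with a Kolmogorov-type continuity criterion in the form adapted to the discrete parabolic H\"older spaces of Section~1 (as in~\cite{HM15}), this reduces the first assertion to purely analytic second-moment bounds.

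The renormalisation enters only through $X^{\<tree2>,\eps}$ and $X^{\<tree21>,\eps}$: among the products in~\eqref{DControl} only $B_\eps(X^{\<tree1>,\eps},X^{\<tree1>,\eps})$ and $B_\eps(X^{\<tree11>,\eps},X^{\<tree1>,\eps})$ carry a nonzero $0$-th chaos component, and I would take $C^{\<tree2>,\eps}$, $C^{\<tree21>,\eps}$ to be (representatives of) these deterministic components, which are independent of $z\in\Lambda^\s_\eps$ by stationarity of $\xi^\eps$ and translation covariance of the kernels. To obtain the explicit formulas I would pass to the spatial Fourier transform on the dual torus: the symbol of the discrete heat semigroup generated by $\Delta_\eps$ is $\bigl(1+\hat\nu(\eps k)/(2\bar\nu)\bigr)^n$ at time $n\eps^2$, that of $D_{x,\eps}$ is $\eps^{-1}\hat\pi(\eps k)$, and averaging against $\mu_\eps$ inserts a factor $\hat\mu(-\eps k,\eps k)$, using $\mu(A\times B)=\mu(B\times A)$. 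Summing the resulting geometric-type series in the discrete time variable produces, after the substitution $k\mapsto\eps k$, the factor $4\bar\nu^2/\bigl(f(k)(4\bar\nu+\hat\nu(k))\bigr)$ together with one surviving $\eps^{-1}$ in $C^{\<tree2>,\eps}$; the analogous computation for $C^{\<tree21>,\eps}$ carries an additional convolution against $D_{x,\eps}K^\eps$ hidden in $X^{\<tree11>,\eps}=B_\eps(1,\cdot)$, yielding $|g(k)|^2$, the factor $4\bar\nu^2(2\bar\nu+\hat\nu(k))^2/\bigl(f(k)^2(4\bar\nu+\hat\nu(k))^2\bigr)$ and the extra $\hat\mu(-k,0)$, and upon discarding the even part of the integrand (which integrates to zero) only the antisymmetric piece $\Imma\bigl(g(-k)\hat\mu(-k,0)\bigr)/k$ survives, giving a bounded, $\eps$-independent constant. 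Finiteness of both integrals follows since $f(k)>0$ and $4\bar\nu+\hat\nu(k)\geq 3\bar\nu>0$ on $[-\tfrac12,\tfrac12]$ ($\hat\nu$ vanishing only on $\Z$), while $g$ and $\hat\mu$ are smooth and $\Imma\bigl(g(-k)\hat\mu(-k,0)\bigr)$ vanishes at $k=0$.

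For the analytic second-moment bounds I would expand each fixed-chaos component of $X^{\tau,\eps}-\1_{\{\tau\in\{\<tree2>,\<tree21>\}\}}C^{\tau,\eps}$, and of $R^{\<tree21>,\eps}$ and $R^{\<tree1222>,\eps}$, as an iterated discrete space-time convolution of the elementary kernels, and estimate it by repeatedly applying the uniform-in-$\eps$ decay and regularity bounds on $P^\eps$, $D_{x,\eps}P^\eps$, $K^\eps$ and $D_{x,\eps}K^\eps$ furnished by the kernel lemmas (in particular Lemma~\ref{l:DKernelDec}): $\Delta_\eps$-smoothing improves parabolic regularity by $2$ and $D_{x,\eps}$-smoothing by $1$, uniformly down to scale $\eps$. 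The recursive definition of the $\alpha_\tau$ and $\beta_{\bar\tau}$ (Table~\ref{table:reg}) is precisely the bookkeeping that makes every resulting iterated convolution summable at the rate needed to land in the corresponding space of $\CW^\eps$; subtracting the $0$-chaos constants is what turns the a priori divergent $X^{\<tree2>,\eps}$ and $X^{\<tree21>,\eps}$ into objects of the claimed negative, resp. zero, regularity, and the remainders $R^{\bar\tau,\eps}$ are better behaved exactly because the ill-posed diagonal has been removed. The twisted structure of $B_\eps$ only multiplies these bounds by fixed constants and translates the kernels by $O(\eps)$, which is harmless; its genuine effect is on the combinatorics of the Wiener contractions and on which diagonal must be subtracted, which is where the analysis departs from the pointwise-product setting of~\cite{HM15}.

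For the quantitative convergence statement I would work on the common probability space on which $\xi^\eps_{\bar\eps}=\xi^\eps\ast_\eps\varrho_{\bar\eps}$, so that $\X^\eps(\xi^\eps_{\bar\eps},\BC^\eps_{\bar\eps})$ is obtained from $\X^\eps(\xi^\eps,\BC^\eps)$ by replacing, in every building block, each occurrence of the noise by its mollification — equivalently, one elementary kernel $Q\in\{D_{x,\eps}K^\eps,\,D_{x,\eps}P^\eps\}$ by $Q\ast_\eps\varrho_{\bar\eps}$ — and the constants by the correspondingly mollified ones (the Fourier computation above with $\hat\varrho(\bar\eps k)$ inserted shows $\BC^\eps-\BC^\eps_{\bar\eps}=O(\bar\eps^\theta)$). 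In each chaos component of the difference at least one kernel thus enters as $Q-Q\ast_\eps\varrho_{\bar\eps}$; since all the $\alpha_\tau$, $\beta_{\bar\tau}$ sit strictly inside their admissible ranges there is a little regularity to spare, and averaging $Q$ over scale $\bar\eps\geq\eps$ costs a factor $\bar\eps^\theta$ for some $\theta>0$ in the associated second-moment bound, which the same hypercontractivity-plus-Kolmogorov argument upgrades to $\E\bigl[\bigl(\|\X^\eps(\xi^\eps,\BC^\eps);\X^\eps(\xi^\eps_{\bar\eps},\BC^\eps_{\bar\eps})\|_\CX^{(\eps)}\bigr)^p\bigr]\lesssim\bar\eps^{\theta p}$. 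I expect the crux to be this analytic module: establishing the uniform-in-$\eps$ kernel estimates and pushing the homogeneity bookkeeping through the iterated convolutions in the presence of the twisted product $B_\eps$, together with the discrete-time geometric-sum bookkeeping needed to pin down the exact form of $C^{\<tree2>,\eps}$ and $C^{\<tree21>,\eps}$ and, in particular, to exhibit the antisymmetric $\Imma(\cdots)/k$ structure that renders the second constant finite and $\eps$-independent.
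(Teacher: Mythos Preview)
Your proposal is correct and follows essentially the same approach as the paper: reduce to second-moment bounds via hypercontractivity, control each chaos component through the discrete singular-kernel calculus (the paper's Section~\ref{sec:DKernels} and Proposition~\ref{p:DChaosKernelsConv}), and compute the two renormalisation constants by passing to the discrete Fourier side and summing the resulting geometric-type time series exactly as you outline. The paper's proof is in fact terser than yours—it simply observes that every ingredient of the continuous proof of Proposition~\ref{p:Control} has a discrete analogue with uniform-in-$\eps$ bounds, and then carries out only the Fourier computation of $C^{\<tree2>,\eps}$ and $C^{\<tree21>,\eps}$ in detail, matching your description (including the parity argument isolating $\Imma(g(-k)\hat\mu(-k,0))/k$).
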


We prove this theorem in Section~\ref{sec:dnif_bounds_disc}, and at this point we can focus on the analytical aspects of the discrete equation~\eqref{e:DiscreteSBEMild} for which we will follow once again the same procedure described in the previous paragraph. 
Let $\X^\eps\in\CX^\eps$ be an enhancement of $\xi^\eps$ according to Definition~\ref{def:DControlProc} and set 
\begin{equ}
v^\eps\eqdef u^\eps-X^{\<tree1>,\,\eps}-X^{\<tree12>,\,\eps}-2X^{\<tree122>,\,\eps}\;,
\end{equ}
where $u^\eps$ solves~\eqref{e:DiscreteSBEMild}, so that $v^\eps$ satisfies
\begin{equ}\label{e:DTruncBurgers}
v^\eps = P^\eps u^\eps_0 +4 X^{\<tree1222>,\,\eps} +  D_{x,\eps}P^\eps\ast_\eps \Bigl( 2 B_\eps\big(v^\eps,\, X^{\<tree1>,\,\eps}\big) +  F^{(\eps)}_{v^\eps}\Bigr) +  Q^{(\eps)}
\end{equ}
with $P^\eps u^\eps_0 \eqdef \bigl(P_t \ast_\eps u^\eps_0\bigr)(x)$, the discrete convolution involving only the space-variable, and 
\begin{equs}[e:DF]
 F^{(\eps)}_{v^\eps} &\eqdef 2 B_\eps\bigr(X^{\<tree12>,\,\eps}, 2 X^{\<tree122>,\,\eps} + v^\eps \bigr) + B_\eps\bigl( 2 X^{\<tree122>,\,\eps} + v^\eps\bigr)\;, \\  
Q^{(\eps)} &\eqdef  X^{\<tree124>,\,\eps}  +D_{x,\eps} \hat K^\eps\ast_\eps \Bigl( \xi_\eps + X^{\<tree2>,\,\eps} + 2 X^{\<tree22>,\,\eps} \Bigr)\;,
\end{equs}
and where we write $B_\eps (f)\eqdef B_\eps(f,f)$. The term that we have to describe through our regularity structure is $B_\eps\big(v^\eps,\, X^{\<tree1>,\,\eps}\big)$, i.e. a twisted product in whose expression $v^\eps$ and $X^{\<tree1>,\,\eps}$ are evaluated at two different points. 
If we look at the expression~\eqref{e:DTruncBurgers}, it seems reasonable to expect that $v^\eps$ admits an expansion of the form
\begin{equ}[e:DVExpansion]
 \delta_{z, \bar z} v^\eps = v'^{,\,\eps}(z)\, \delta_{z, \bar z} X^{\<tree11>,\,\eps} + R^\eps(z, \bar z)\;,
\end{equ}
for $z,\bar z\in\Lambda_{\eps,T}^\s$, $v'^{,\,\eps} = 4 X^{\<tree122>,\,\eps} + 2 v^\eps$ and a suitable remainder $R^\eps$. It is important to keep in mind this expansion in the definition of the abstract setting, and in the next section we will see how to adapt, in this discrete situation, the construction carried out in the continuous case. 

\subsection{The regularity structure, discrete models and modelled distributions}

As we pointed out at the end of the previous section, we want to keep track, at the abstract level, of the fact that the product we are considering is {\it twisted}. 
To do so, we introduce a slightly different regularity structure. Let $\alpha_\tau$, $\beta_{\bar \tau}$, for $\tau\in\SL$ and $\bar \tau\in\SL_R$, be given as in Definition~\ref{def:DControlProc} (see also the Table~\ref{table:reg}), and let $\FI = \Z\cap B(0,R_\mu)$, then we define $(\CA^D,\CT^D,\CG^D)$ (where $D$ stands for ``discrete"), as
\begin{equ}\label{e:DRegularityStructure}
\CA^D=\{\alpha_{\<tree1>},\,\beta_{\<tree21>},\,0,\,\alpha_{\<tree11>}  \}\,,\qquad \CT^D=\langle \<bigtree1>_{k}\,:\,k\in \FI \rangle\oplus\langle\<bigtree21>\rangle\oplus\langle \one\rangle\oplus\langle\<bigtree11>\rangle\;.
\end{equ}
Moreover we set the map $Z^\eps=(\Pi^\eps,\Gamma^\eps, \Sigma^\eps)$, for all $x,y\in\Lambda_\eps$, $s,t\in\Lambda_{\eps^2}$ and $k\in\FI$, to be given by
\begin{subequations}\label{e:DTModel}
\begin{equ}
\Pi^{\eps,t}_x\<bigtree1>_{k}(\cdot)\eqdef X^{\<tree1>,\,\eps}(t,\cdot+\eps k)\,,\qquad\Pi^{\eps,t}_x\<bigtree21>(\cdot)\eqdef R^{\<tree21>,\,\eps}(t, x;\cdot)\,,\qquad\Pi^{\eps,t}_x\<bigtree11>(\cdot)\eqdef\delta^{(t)}_{x,\cdot}X^{\<tree11>,\,\eps}\label{e:DTModelPi}\;,
\end{equ}
and consequently the maps $\Gamma^\eps$ are given by
\begin{equ}
\Gamma^{\eps,t}_{x,\,y }\<bigtree1>_{k}\eqdef \<bigtree1>_{k}\,,\qquad \Gamma^{\eps,t}_{x,\,y }\<bigtree11>\eqdef\<bigtree11> -\delta^{(t)}_{x,y}X^{\<tree11>,\,\eps}\one\,,\qquad\Gamma^{\eps,t}_{x,\,y }\<bigtree21>\eqdef\<bigtree21>\; -\int_{\R^2}  \delta^{(t)}_{x+y_1,y+y_1}X^{\<tree11>,\,\eps}\,\<bigtree1>_{y_2/\eps}\mu_\eps(\dd y_1,\,\dd y_2)\,,
\label{e:DTModelGamma}
\end{equ}
and $\Sigma^\eps$ are equal to
\begin{equ}
\Sigma^{\eps,st}_x\<bigtree1>_{k}\eqdef \<bigtree1>_{k}\,,\qquad \Sigma^{\eps,st}_x\<bigtree11>\eqdef\<bigtree11> -\delta^{(s,t)}_{x}X^{\<tree11>,\,\eps}\one\,, \qquad \Sigma^{\eps,st}_x\<bigtree21>\eqdef\<bigtree21>\; -\int_{\R^2}  \delta^{(s,t)}_{x+y_1}X^{\<tree11>,\,\eps}\,\<bigtree1>_{y_2/\eps}\mu_\eps(\dd y_1,\,\dd y_2)\,.
\label{e:DTModelSigma}
\end{equ}
\end{subequations}
In the following lemma, whose proof is rather immediate, we show that, for every $\eps>0$, $Z^\eps$ defined above is indeed a discrete model. 

\begin{lemma}\label{l:DModel}
Let $\X^\eps\in\CX^\eps$, $\ST^D=(\CA^D,\CT^D,\CG^D)$ be the regularity structure given in~\eqref{e:DRegularityStructure} and $Z^\eps=(\Pi^\eps,\Gamma^\eps, \Sigma^\eps)$ be defined according to~\eqref{e:DTModel}. Then, $Z^\eps$ is a discrete model for $\ST^D$ in the sense of  Definition~\ref{d:DModel}. 
\end{lemma}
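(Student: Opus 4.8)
The plan is to verify, one by one, the algebraic and analytic conditions of Definition~\ref{d:DModel}, reducing everything to properties of the discrete controlling process $\X^\eps\in\CX^\eps$ and the measure $\mu$. I would first dispose of the \emph{algebraic} identities in~\eqref{ModelAlgProp}. The relations $\Gamma^{\eps,t}_{xx}=\Id$, $\Sigma^{\eps,tt}_x=\Id$ and the action on $\one$ are immediate from~\eqref{e:DTModelGamma}--\eqref{e:DTModelSigma} since $\delta^{(t)}_{x,x}X^{\<tree11>,\,\eps}=0$ and $\delta^{(s,s)}_xX^{\<tree11>,\,\eps}=0$. The cocycle identities $\Gamma^{\eps,t}_{xy}\Gamma^{\eps,t}_{yz}=\Gamma^{\eps,t}_{xz}$ and $\Sigma^{\eps,sr}_x\Sigma^{\eps,rt}_x=\Sigma^{\eps,st}_x$ follow from the additivity of the increment operators, i.e. $\delta^{(t)}_{x,y}X^{\<tree11>,\,\eps}+\delta^{(t)}_{y,z}X^{\<tree11>,\,\eps}=\delta^{(t)}_{x,z}X^{\<tree11>,\,\eps}$ (and similarly in time), once one checks that the extra $\<bigtree1>_{k}$-term produced when composing two $\Gamma$'s acting on $\<bigtree21>$ telescopes correctly — here one uses that $\Gamma^{\eps,t}_{xy}\<bigtree1>_{k}=\<bigtree1>_{k}$, so the shift in the argument of $\delta^{(t)}$ by $y_1$ is consistent across the composition. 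The compatibility relation $\Sigma^{\eps,st}_x\Gamma^{\eps,t}_{xy}=\Gamma^{\eps,s}_{xy}\Sigma^{\eps,st}_y$ is the only slightly delicate algebraic point: acting on $\<bigtree21>$ both sides produce $\<bigtree21>$ plus a $\<bigtree1>_{\cdot}$-term and a $\one$-term coming from the nested $\<bigtree11>$ in $\Gamma\<bigtree11>$; one checks these match by splitting $\delta_x^{(s,t)}\delta^{(t)}_{x+y_1,y+y_1}X^{\<tree11>,\,\eps}$ appropriately, again using only linearity and the elementary identity $\delta^{(s,t)}_{x}+\delta^{(t)}_{x,y}=\delta^{(s)}_{x,y}+\delta^{(s,t)}_y$ for the increments of $X^{\<tree11>,\,\eps}$. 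The relations $\Pi^{\eps,t}_y=\Pi^{\eps,t}_x\Gamma^{\eps,t}_{xy}$ and $(\Pi^{\eps,t}_x\one)(y)=1$ are then checked symbol by symbol using~\eqref{e:DRemainders}: for $\<bigtree21>$ this is precisely the defining relation of $R^{\<tree21>,\,\eps}$ in~\eqref{e:DRemainders}, for $\<bigtree11>$ it is the trivial telescoping of increments, and for $\<bigtree1>_{k}$ it is the identity $\Gamma^{\eps,t}_{xy}\<bigtree1>_k=\<bigtree1>_k$ together with the fact that $\Pi^{\eps,t}_x\<bigtree1>_k$ does not depend on $x$. Finally $(\Pi^{\eps,t}_x\tau)(x)=0$ for $|\tau|>0$ holds by construction for $\<bigtree11>$ (since $\delta^{(t)}_{x,x}=0$) and is vacuous for the negatively-homogeneous symbols.

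For the \emph{analytic} bounds~\eqref{ModelAnaProp} (with the discrete pairing and $\lambda\in[\eps,1]$), I would translate each of them directly into a norm of a component of $\X^\eps$. The bound on $\Pi^\eps$ applied to $\<bigtree1>_k$ reduces, after the change of variable absorbing the shift $\eps k$ (which costs only a bounded factor since $|k|\le R_\mu$), to $\|X^{\<tree1>,\,\eps}\|^{(\eps)}_{\CC^{\alpha_{\<tree1>}-1,\s}}<\infty$, i.e. to the membership $\X^\eps\in\CW^\eps$ — here one must be mildly careful that the shifted test function $\varphi^\lambda_{x+\eps k}$ is still an admissible rescaled test function and that $|\eps k|\le R_\mu\eps\le R_\mu\lambda$, so the shift is harmless at scales $\lambda\ge\eps$. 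Applied to $\<bigtree21>$ it is exactly the definition of $\|R^{\<tree21>,\,\eps}\|^{(\eps)}_{\CL^{\beta_{\<tree21>}}}$, and applied to $\<bigtree11>$ it follows from $X^{\<tree11>,\,\eps}\in\CC^{\alpha_{\<tree11>},\s,\eps}_1$ by testing the increment $\delta^{(t)}_{x,\cdot}X^{\<tree11>,\,\eps}$ against $\varphi^\lambda_x$ and using $\int\varphi=O(1)$ together with the H\"older bound on $X^{\<tree11>,\,\eps}$ to gain $\lambda^{\alpha_{\<tree11>}}$ — this is the standard ``a function testing to a distribution of its own regularity'' computation. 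The bounds on $\Gamma^\eps$ and $\Sigma^\eps$ amount to estimating the coefficients $\delta^{(t)}_{x,y}X^{\<tree11>,\,\eps}$ and $\delta^{(s,t)}_xX^{\<tree11>,\,\eps}$ (and their $y_1$-shifted, $\mu_\eps$-averaged versions entering $\Gamma^\eps\<bigtree21>$, $\Sigma^\eps\<bigtree21>$), all of which are controlled by $\|X^{\<tree11>,\,\eps}\|^{(\eps)}_{\CC^{\alpha_{\<tree11>},\s}_1}$ giving the required powers $|x-y|^{\alpha_{\<tree11>}-m}$ and $|t-s|^{(\alpha_{\<tree11>}-m)/2}$ for $m\in\{0,\alpha_{\<tree1>}\}$; the integration against $\mu_\eps$ contributes only the finite total mass $\bar\mu$ since $\mu$ is a finitely supported signed measure. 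Uniformity in $\eps$ is automatic because every bound has been reduced to $\|\X^\eps\|^{(\eps)}_{\CW}$, which is finite uniformly in $\eps$ by the assumption $\X^\eps\in\CX^\eps$.

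The only point that requires genuine (if still routine) care — and the step I would flag as the main obstacle — is the \textbf{bookkeeping for the $\<bigtree21>$ symbol}, because its $\Gamma^\eps$- and $\Sigma^\eps$-actions involve the nonlocal $\mu_\eps$-average $\int_{\R^2}\delta^{(t)}_{x+y_1,\,y+y_1}X^{\<tree11>,\,\eps}\,\<bigtree1>_{y_2/\eps}\,\mu_\eps(\dd y_1,\dd y_2)$, and one has to make sure that (i) this really is the increment-structure dictated by the twisted product $B_\eps$ appearing in the definition~\eqref{e:DRemainders} of $R^{\<tree21>,\,\eps}$, so that the identity $\Pi^{\eps,t}_y\<bigtree21>=\Pi^{\eps,t}_x\Gamma^{\eps,t}_{xy}\<bigtree21>$ holds on the nose, and (ii) the composition laws and the $\Sigma\Gamma=\Gamma\Sigma$ compatibility close, given that the ``coefficient'' multiplying $\<bigtree1>_{y_2/\eps}$ now itself depends on $y_1$. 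Concretely, I would expand $R^{\<tree21>,\,\eps}(t,y;\cdot)-R^{\<tree21>,\,\eps}(t,x;\cdot)$ using~\eqref{e:DRemainders}, write $X^{\<tree11>,\,\eps}(t,x+y_1)-X^{\<tree11>,\,\eps}(t,y+y_1)=\delta^{(t)}_{y+y_1,\,x+y_1}X^{\<tree11>,\,\eps}$, recognise the remaining factor $\int X^{\<tree1>,\,\eps}(t,\cdot+y_2)\,\mu_\eps$-piece as (the evaluation of) $\Pi^{\eps,t}_x\<bigtree1>_{y_2/\eps}$, and thereby read off exactly the formula in~\eqref{e:DTModelGamma}. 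Once this single identification is pinned down, the rest of the proof is the mechanical verification sketched above, and I would present it compactly rather than writing out every increment manipulation.
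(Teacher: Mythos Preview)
Your proposal is correct and follows the same approach as the paper, which is simply to verify the algebraic identities directly from the definitions~\eqref{e:DTModel} and read off the analytic bounds from the norms of the components of $\X^\eps\in\CW^\eps$; the paper's proof is in fact much terser than yours, writing out only the shift argument for $\<bigtree1>_k$ and declaring the rest immediate. One small slip: the bound on $\Pi^{\eps,t}_x\<bigtree1>_k$ should be reduced to $\|X^{\<tree1>,\,\eps}\|^{(\eps)}_{\CC^{\alpha_{\<tree1>}}_1}$ (recall $\<tree1>\in\SL_2$ in Definition~\ref{def:DControlProc}), not to the noise norm $\CC^{\alpha_{\<tree1>}-1,\s}$ --- the shift $\varphi^\lambda_x\mapsto\varphi^\lambda_{x+\eps k}$ is exactly how the paper handles it.
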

\begin{proof}
The validity of the analytic properties come from the fact that $Z^\eps$ is built on $\X^\eps$, so that for $\Gamma^\eps$ and $\Sigma^\eps$ there is nothing to verify. 
Concerning $\Pi^{\eps,t}_x\<bigtree11>(\cdot)$ and $\Pi^{\eps,t}_x\<bigtree21>(\cdot)$ the correct bounds hold by assumption (indeed by definition the latter is given by $ R^{\<tree21>,\,\eps}(t, x;\cdot)$). For the other notice that, given $\lambda \in [\eps,1]$, $\varphi \in \CB^1_0(\R)$, $x\in\Lambda_\eps$, $t\in\Lambda_{\eps^2}$ and $k\in\FI$, we have
\begin{equ}
\big| \langle \Pi^{\eps,t}_x\<bigtree1>_k,\,\varphi^\lambda_x\rangle_\eps\big| = \big|\langle X^{\<tree1>,\,\eps}(t,\cdot+\eps k),\,\varphi^\lambda_x\rangle_\eps\big| =\big|\langle X^{\<tree1>,\,\eps}(t,\cdot),\,\varphi^\lambda_{x+\eps k}\rangle_\eps \big| \leq C \lambda^{\alpha_{\<tree1>}}\,.
\end{equ}
At last, the algebraic properties can be verified directly applying the very definitions of the maps $\Pi^\eps$, $\Gamma^\eps$ and $\Sigma^\eps$. 
\end{proof}

At this point, we are ready to define the discrete modelled distributions we will use in order to describe $v^\eps$ and $B_\eps(v^\eps, X^{\<tree1>,\,\eps})$. 
While for the first we want to recover the controlled structure given in~\eqref{e:DVExpansion}, for the second we would like an expansion better adapted to our twisted product. 
Let $\X^\eps\in\CX^\eps$ and $Z^\eps$ be defined as in~\eqref{e:DTModel}. Given two functions, $v^\eps$ and $v'^{,\,\eps}$, on $\Lambda_{\eps,T}^\s$, we define
\begin{equs}
\BV^\eps_t(x) &\eqdef v^\eps(t,x)\, \one + v'^{,\,\eps}(t,x)\,\<bigtree11>\;, \label{e:DTV}\\
\big(\BV \<bigtree1>\big)^\eps_t(x) &\eqdef \int_{\R^2}v^\eps(t,x+ y_1) \,\<bigtree1>_{y_2/\eps}\mu_\eps(\dd y_1,\dd y_2) + v'^{,\,\eps}(t,x)\, \<bigtree21>\;.\label{e:DTVDot}
\end{equs}
As before, we now would like to replace the twisted product between $v^\eps$ and $X^{\<tree1>,\,\eps}$ appearing in~\eqref{e:DTruncBurgers} by the discrete recontruction of the modelled distribution $\big(\BV \<bigtree1>\big)^\eps$, so to obtain a quantity which is {\it uniformly} well-defined as $\eps$ goes to $0$. 
\newline

Let us fix $\alpha_{\star}>0$, $\gamma > \alpha_{\<tree11>}$ and $\eta \in \R$. Given a triplet of functions $V^\eps=(v^\eps,v'^{,\,\eps},R^\eps)$, where $v^\eps$, $v'^{,\,\eps}$ are on $\Lambda_{\eps,T}^\s$ and $R^\eps$ is on $(\Lambda_{\eps,T}^\s)^2$, we define the norm
\begin{equ}[e:DNorms]
\Vert V^\eps \Vert_{\eta, \gamma; T}^{(\eps)} \eqdef \Vert v^\eps\Vert _{\CC^{\alpha_\star,\s}_{\eta, T}}^{(\eps)} + \Vert v'^{,\,\eps} \Vert _{\CC^{\gamma - \alpha_{\<tree11>},\s}_{\eta - \alpha_{\<tree11>}, T}}^{(\eps)} + \sup_{z \in \Lambda_{\eps,T}^\s} \pnorm{R^\eps(z, \cdot)}_{\eta - \gamma, \gamma; T, z}^{(\eps)}\;,
\end{equ}
and we say that a family of triplets $V^\eps$, parametrized by $\eps$ belongs to $\CH^{\eta, \gamma}_{\eps,T}$ if $\Vert V^\eps \Vert_{\eta, \gamma; T}^{(\eps)}<\infty$ uniformly in $\eps$. 
The discrete controlling processes can then be defined as follows.
\begin{definition}\label{d:DCP} 
Let $\alpha_{\star}>0$, $\gamma>-\alpha_{\<tree1>}$ and $\eta\in(-1,0)$. Let $\CX^\eps$ be the space of discrete controlling processes as in Definition~\ref{def:DControlProc}. Then, the space of {\it discrete controlled processes} $\CX^{\eta, \gamma}_{\eps,T} \subset \CX^\eps \oplus \CH^{\eta, \gamma}_{\eps, T}$ is the algebraic variety of $\X^\eps \in \CX^\eps$ and $V^\eps=\bigl(v^\eps, v'^{,\,\eps}, R^\eps\bigr) \in \CH^{\eta, \gamma}_{\eps, T}$ such that for $z,\bar z\in\Lambda_{\eps,T}^\s$ the identity \eqref{e:DVExpansion} holds. Moreover, we define the set of all processes controlled by $\X^\eps \in \CX^\eps$ as
\begin{equ}
\CH^{\eta, \gamma}_{\eps, \X^\eps,T} \eqdef \Bigl\{ V^\eps \in \CH^{\eta, \gamma}_{\eps,T} : \bigl(\X^\eps, V^\eps\bigr) \in \CX^{\eta, \gamma}_{\eps,T} \Bigr\}\;,
\end{equ}
endowed with the norms introduced in \eqref{e:DNorms}. 
\end{definition}

The next proposition shows that the twisted product defined above, behaves sufficiently similarly to the usual one. 

\begin{proposition}\label{p:DProduct}
Let $\alpha_{\star}>0$, $\gamma>-\alpha_{\<tree1>}$ and $\eta\in(-1,0)$. Let $\X^\eps\in\CX^\eps$, $\ST^D=(\CA^D,\CT^D,\CG^D)$ be the regularity structure given in~\eqref{e:DRegularityStructure} and $Z^\eps$ be defined according to~\eqref{e:DTModel}. 
Let $V^\eps=(v^\eps,v'^{,\,\eps},R^\eps)\in\CH^{\gamma,\eta}_{\eps,\X^\eps,T}$ and $\BV^\eps$, $(\BV\<bigtree1>)^\eps$ be given by~\eqref{e:DTV} and~\eqref{e:DTVDot} respectively. 
Then $\BV^\eps\in\CD^{\gamma,\eta}_{\eps,T}(Z^\eps)$, $(\BV\<bigtree1>)^\eps\in\CD^{\gamma+\alpha_{\<tree1>},\eta+\alpha_{\<tree1>}}_{\eps,T}(Z^\eps)$ and $\CR^\eps_t(\BV\<bigtree1>)^\eps\in\CC^{\alpha_{\<tree1>},\eps}$ for all $t\in\Lambda_{\eps^2,T}$, where the latter is the discrete reconstruction operator. 
Moreover, if $\X^\eps(\eta^\eps)\eqdef \X^\eps(\eta^\eps,\,a,\,b)$, with $a, b\in\R$, then for all $(t,x)\in\Lambda_{\eps,T}^\s$ the following equality holds 
\begin{equ}[e:DSmoothReconstruction]
\CR^\eps_t(\BV\<bigtree1>)^\eps(x)=B^\eps(v_t^\eps, X_t^{\<tree1>,\,\eps})(x)-b\, v'^{,\,\eps}(t,x).
\end{equ} 
\end{proposition}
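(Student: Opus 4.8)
The plan is to follow, step by step, the proof of Proposition~\ref{p:ModelledDistribution}, the only genuine novelty being that the product $(\BV\<bigtree1>)^\eps$ in~\eqref{e:DTVDot} is \emph{twisted}, so that no ready-made multiplication lemma applies and the discrete modelled-distribution bounds must be verified by hand and, crucially, uniformly in $\eps$. The structural fact that makes this work is that $\mu_\eps$ is supported on $(\eps\FI)^2$ with $\FI$ a \emph{fixed} finite set, so every integral against $\mu_\eps$ is a finite sum over spatial offsets of size $O(\eps)$; since distinct grid points obey $|x - y| \ge \eps$ and the discrete reconstruction bound~\eqref{e:DReconstruction} is used only for $\lambda \ge \eps$, these offsets are always $\lesssim |x - y|$ (resp.\ $\lesssim \lambda$), and this is exactly what lets the H\"older seminorms swallow the twist. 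Throughout one uses that, by Lemma~\ref{l:DModel}, $Z^\eps$ is a discrete model (so Theorem~\ref{t:DReconstruct} applies) and the identity $\beta_{\<tree21>} = \alpha_{\<tree1>} + \alpha_{\<tree11>}$ from Definition~\ref{def:ControlProc} (see Table~\ref{table:reg}); note that these force $\gamma - \alpha_{\<tree11>} > 0$ and $\gamma + \alpha_{\<tree1>} > 0 > \beta_{\<tree21>}$.

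First I would check that $\BV^\eps \in \CD^{\gamma, \eta}_{\eps, T}(Z^\eps)$, which, exactly as in the continuous case, is essentially immediate: inserting~\eqref{e:DTModelGamma} into $\BV^\eps_t(x) - \Gamma^{\eps, t}_{x y} \BV^\eps_t(y)$, its $\<bigtree11>$-component is the increment $\delta^{(t)}_{x, y} v'^{,\,\eps}$ (controlled by the $\CC^{\gamma - \alpha_{\<tree11>}, \s}_{\eta - \alpha_{\<tree11>}, T}$-norm of $v'^{,\,\eps}$), while its $\one$-component collapses, by the defining relation~\eqref{e:DVExpansion}, to $R^\eps((t, y), (t, x))$ (controlled by $\sup_z \pnorm{R^\eps(z, \cdot)}^{(\eps)}_{\eta - \gamma, \gamma; T, z}$); the $\Sigma^\eps$-comparison in~\eqref{e:DModelledDistributionNorm} is the same computation with a time increment and the parabolic norm of $v^\eps$, and the weight-matching in~\eqref{e:DModelledDistributionNormAbs} uses $\beta_{\<tree21>} = \alpha_{\<tree1>} + \alpha_{\<tree11>}$. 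This gives $\VERT \BV^\eps \VERT^{(\eps)}_{\gamma, \eta; T} \lesssim \Vert V^\eps \Vert_{\eta, \gamma; T}^{(\eps)}$, uniformly in $\eps$.

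The core of the argument is establishing $(\BV\<bigtree1>)^\eps \in \CD^{\gamma + \alpha_{\<tree1>}, \eta + \alpha_{\<tree1>}}_{\eps, T}(Z^\eps)$. The $\<bigtree1>_k$-component of $(\BV\<bigtree1>)^\eps_t(x)$ is $\sum_{k_1 \in \FI} \mu(\{(k_1, k)\})\, v^\eps(t, x + \eps k_1)$ and its $\<bigtree21>$-component is $v'^{,\,\eps}(t, x)$, so the size bounds in the first summand of~\eqref{e:DModelledDistributionNormAbs} follow from those of $v^\eps$ and $v'^{,\,\eps}$ (again using $\beta_{\<tree21>} = \alpha_{\<tree1>} + \alpha_{\<tree11>}$ to match $\enorm{t}$-weights). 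For the $\Gamma^\eps$-part, $\Gamma^{\eps, t}_{x y} \<bigtree1>_k = \<bigtree1>_k$ together with~\eqref{e:DTModelGamma} shows that the $\<bigtree21>$-component of $(\BV\<bigtree1>)^\eps_t(x) - \Gamma^{\eps, t}_{x y} (\BV\<bigtree1>)^\eps_t(y)$ is again $\delta^{(t)}_{x, y} v'^{,\,\eps}$ (now with exponent $(\gamma + \alpha_{\<tree1>}) - \beta_{\<tree21>} = \gamma - \alpha_{\<tree11>}$), while its $\<bigtree1>_k$-component is $\sum_{k_1} \mu(\{(k_1, k)\})$ times the bracket
\[
\delta^{(t)}_{y + \eps k_1, x + \eps k_1} v^\eps - v'^{,\,\eps}(t, y)\, \delta^{(t)}_{y + \eps k_1, x + \eps k_1} X^{\<tree11>,\,\eps}\,,
\]
which by~\eqref{e:DVExpansion} (applied between $(t, y + \eps k_1)$ and $(t, x + \eps k_1)$) equals $\bigl(v'^{,\,\eps}(t, y + \eps k_1) - v'^{,\,\eps}(t, y)\bigr)\, \delta^{(t)}_{y + \eps k_1, x + \eps k_1} X^{\<tree11>,\,\eps} + R^\eps\bigl((t, y + \eps k_1), (t, x + \eps k_1)\bigr)$. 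The first term is $\lesssim \enorm{t}^{\eta - \gamma + 1 - \alpha_{\<tree11>}} |\eps k_1|^{\gamma - \alpha_{\<tree11>}} |x - y|^{\alpha_{\<tree11>}}$ by the $\CC^{\gamma - \alpha_{\<tree11>}, \s}_{\eta - \alpha_{\<tree11>}, T}$-bound on $v'^{,\,\eps}$ and the $\CC^{\alpha_{\<tree11>}, \s}_1$-bound on $X^{\<tree11>,\,\eps}$, hence $\lesssim \enorm{t}^{\eta - \gamma} |x - y|^\gamma$ since $\enorm{t} \lesssim 1$, $1 - \alpha_{\<tree11>} > 0$ and $|\eps k_1| \lesssim |x - y|$; the second term is $\lesssim \enorm{t}^{\eta - \gamma} |x - y|^\gamma$ directly from $\pnorm{R^\eps(z, \cdot)}^{(\eps)}_{\eta - \gamma, \gamma; T, z}$. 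These are precisely the bounds required on the homogeneity-$\alpha_{\<tree1>}$ component, and the $\Sigma^\eps$-comparison is the identical computation with the spatial shifts replaced by a time increment; summing over the finitely many $k_1 \in \FI$ preserves everything, and one obtains $\VERT (\BV\<bigtree1>)^\eps \VERT^{(\eps)}_{\gamma + \alpha_{\<tree1>}, \eta + \alpha_{\<tree1>}; T} \lesssim \Vert V^\eps \Vert_{\eta, \gamma; T}^{(\eps)}$ uniformly in $\eps$.

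Finally, since $\gamma + \alpha_{\<tree1>} > 0$, Theorem~\ref{t:DReconstruct} applies to $(\BV\<bigtree1>)^\eps$; combining~\eqref{e:DReconstruction} with the model bounds~\eqref{ModelAnaProp} for $\Pi^\eps$ on $\<bigtree1>_k$ and $\<bigtree21>$ — whose homogeneities are $\ge \alpha_{\<tree1>}$, so $\lambda^{|\cdot|} \le \lambda^{\alpha_{\<tree1>}}$ for $\lambda \le 1$ — and the coefficient bounds from the previous step, yields $|\langle \CR^\eps_t (\BV\<bigtree1>)^\eps_t, \varphi^\lambda_x \rangle_\eps| \lesssim \lambda^{\alpha_{\<tree1>}}$, uniformly in $\eps$ and $\lambda \in [\eps, 1]$, i.e.\ the grid function $\CR^\eps_t (\BV\<bigtree1>)^\eps$ belongs to $\CC^{\alpha_{\<tree1>}, \eps}$ for every fixed $t \in \Lambda_{\eps^2, T}$. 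For the identity~\eqref{e:DSmoothReconstruction} I would evaluate~\eqref{e:DReconstructDef} directly: by~\eqref{e:DTModelPi},
\[
\CR^\eps_t (\BV\<bigtree1>)^\eps(x) = \bigl(\Pi^{\eps, t}_x (\BV\<bigtree1>)^\eps_t(x)\bigr)(x) = \int_{\R^2} v^\eps(t, x + y_1)\, X^{\<tree1>,\,\eps}(t, x + y_2)\, \mu_\eps(\dd y_1, \dd y_2) + v'^{,\,\eps}(t, x)\, R^{\<tree21>,\,\eps}(t, x; x)\,,
\]
where the first term equals $B^\eps(v^\eps_t, X^{\<tree1>,\,\eps}_t)(x)$ by~\eqref{e:Dproduct}, while $R^{\<tree21>,\,\eps}(t, x; x) = X^{\<tree21>,\,\eps}(t, x) - B^\eps(X^{\<tree11>,\,\eps}_t, X^{\<tree1>,\,\eps}_t)(x) = -b$ by~\eqref{e:DRemainders} and the definition of $X^{\<tree21>,\,\eps}$ in~\eqref{e:DControl1}; this is exactly~\eqref{e:DSmoothReconstruction}. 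The main obstacle is the central step, namely the membership $(\BV\<bigtree1>)^\eps \in \CD^{\gamma + \alpha_{\<tree1>}, \eta + \alpha_{\<tree1>}}_{\eps, T}(Z^\eps)$: one must be certain that the $O(\eps)$ shifts introduced by convolving with $\mu_\eps$ never degrade any bound, and this is precisely where the grid constraint $|x - y| \ge \eps$ (equivalently, the restriction $\lambda \ge \eps$ in the discrete reconstruction) and the $\eps$-uniform support of $\mu$ are genuinely needed; everything else is a faithful transcription of the continuous argument.
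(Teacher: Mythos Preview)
Your proof is correct and follows essentially the same route as the paper's own argument: both reduce to checking the $\CQ_{\alpha_{\<tree1>}}$-component of $(\BV\<bigtree1>)^\eps_t(x)-\Gamma^{\eps,t}_{xy}(\BV\<bigtree1>)^\eps_t(y)$ (and the analogous $\Sigma^\eps$-term), rewrite it via the controlled relation~\eqref{e:DVExpansion} as a sum of the shifted remainder $R^\eps$ and the cross term $\delta^{(t)}_{y,y+y_1}v'^{,\,\eps}\,\delta^{(t)}_{y+y_1,x+y_1}X^{\<tree11>,\,\eps}$, and then use $\eps\leq|x-y|$ to absorb the $O(\eps)$ offsets from $\mu_\eps$. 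The reconstruction identity is obtained identically, by direct evaluation of $\Pi^{\eps,t}_x$ at $x$ and the observation that $R^{\<tree21>,\,\eps}(t,x;x)=-b$.
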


\begin{proof}
It is immediate to see, by the definition of the model $Z^\eps$, that $\BV^\eps\in\CD^{\gamma,\eta}_{\eps,T}(Z^\eps)$, so we focus on $(\BV\<bigtree1>)^\eps$. Each of the summands in~\eqref{e:ModelledDistributionNorm} can be easily shown to be bounded uniformly in $\eps$, apart from two terms, whose argument is given by 
\[
\CQ_{\alpha_{\<tree1>}}\Big((\BV\<bigtree1>)_t^\eps(x)-\Gamma^{\eps,t}_{xy}(\BV\<bigtree1>)_t^\eps(y)\Big)\quad \text{and}\quad \CQ_{\alpha_{\<tree1>}}\Big((\BV\<bigtree1>)_t^\eps(x)- \Sigma^{\eps,ts}_x(\BV\<bigtree1>)_s^\eps(x)\Big)\,. 
\]
They can be treated exploiting the same techniques, so we will consider only the first. Notice that for $x \neq y\in\Lambda_\eps$ and $t\in\Lambda_{\eps^2,T}$ the identity \eqref{e:DVExpansion} yields
\begin{equs}
\CQ_{\alpha_{\<tree1>}}\Big((\BV\<bigtree1>)_t^\eps(x)&-\Gamma^{\eps,t}_{xy}(\BV\<bigtree1>)_t^\eps(y)\Big)=\int_{\R^2} \Big(\delta^{(t)}_{y+y_1,x+y_1} v^\eps -v'^{,\,\eps}(t,y)\delta^{(t)}_{y+y_1,x+y_1} X^{\<tree11>,\,\eps}\Big)\<bigtree1>_{y_2/\eps}\mu_\eps(\dd y_1,\dd y_2)\\
&=\int_{\R^2}\Big( R^\eps((t,y+y_1), (t,x+y_1)) + \delta_{y, y + y_1}^{(t)} v'^{,\,\eps}\, \delta^{(t)}_{y+y_1,x+y_1} X^{\<tree11>,\,\eps}\Big)\<bigtree1>_{y_2/\eps}\mu_\eps(\dd y_1,\dd y_2)\;.
\end{equs}
Using our assumptions on $v'^{,\,\eps}, R^\eps$, $X^{\<tree11>,\,\eps}$ and $\mu$, it follows
\begin{equs}
\|(\BV\<bigtree1>)_t^\eps(x)&-\Gamma^{\eps,t}_{xy}(\BV\<bigtree1>)_t^\eps(y)\|_{\alpha_{\<tree1>}} \leq \int_{\R^2}\Big(\big|R^\eps((t,y+y_1), (t,x+y_1))\big|+\big|\delta_{y, y + y_1}^{(t)} v'^{,\,\eps}\big| \big|\delta^{(t)}_{y+y_1,x+y_1} X^{\<tree11>,\,\eps}\big|\Big) |\mu_\eps|(\dd y_1,\dd y_2)\\
&\lesssim \int_{\R^2}\Big(|t|_0^{\eta-\gamma}|x-y|^\gamma + |t|_0^{\eta-\gamma} (\eps |y_1|)^{\gamma-\alpha_{\<tree11>}} |x-y|^{\alpha_{\<tree11>}}\Big) |\mu|(\dd y_1,\dd y_2) \lesssim |t|_0^{\eta-\gamma}|x-y|^\gamma\;,
\end{equs}
where we used the facts that $\gamma>\alpha_{\<tree11>}$ and $\eps\leq|x-y|$. Moreover, the constants hidden in the previous chain of inequalities are clearly uniform in $\eps$. 
Hence, we can conclude that $(\BV\<bigtree1>)^\eps\in\CD^{\gamma+\alpha_{\<tree1>},\eta+\alpha_{\<tree1>}}_{\eps,T}(Z^\eps)$ and, by Definition~\ref{d:DReconstruct} and Lemma~\ref{l:DModel}, we also have
\begin{equs}
\CR^\eps_t(\BV\<bigtree1>)^\eps(x)=\Pi^{\eps,t}_x(\BV\<bigtree1>)^\eps_t(x)(x)&=\int_{\R^2}v^\eps(t,x+y_1) \bigl(\Pi^{\eps,t}_x\<bigtree1>_{y_2/\eps}\bigr)(x)\mu_\eps(\dd y_1,\dd y_2)+v'^{,\,\eps}(t,x) \bigl(\Pi^{\eps,t}_x\<bigtree21>\bigr)(x)\\
&=B^\eps(v^\eps_t, X^{\<tree1>,\,\eps}_t)(x)-b\, v'^{,\,\eps}(t,x)\;,
\end{equs}
which in turn completes the proof.
\end{proof}

Thanks to the previous proposition, we are ready to define the map $\CM^\eps$, which represents the discrete counterpart of~\eqref{FPMap}.  
More precisely, let $\CM^\eps$ be the map on $\CC^{\eta,\eps} \times \CX^{\eta, \gamma,\eps}_{T}$ given by $\CM^\eps \bigl(u_0^\eps, \X^\eps, V^\eps\bigr) = \bigl(\tilde{v}^\eps, \tilde{v}'^{,\,\eps}, \tilde{R}^\eps\bigr)$, such that for $z,\,\bar z\in\Lambda_{\eps,T}^\s$ one has
\minilab{DFPMap}
\begin{equs}
\tilde{v}^\eps &= P^\eps u_0^\eps + 4 X^{\<tree1222>,\,\eps} + D_{x,\eps}P^\eps \ast_\eps \bigl(2 \CR^\eps \bigl(\BV\<bigtree1>\bigr)^\eps
+ F_{v^\eps}^{(\eps)}\bigr) + Q^{(\eps)}\;,\label{e:DFPMapV}\\
\tilde{v}'^{,\,\eps} &= 4 X^{\<tree122>,\,\eps} + 2 v^\eps\;, \qquad \tilde{R}^\eps(z, \bar z) = \delta_{z, \bar z} \tilde{v}^\eps - \tilde{v}'^{,\,\eps}(z)\, \delta_{z, \bar z} X^{\<tree11>,\,\eps}\;,\label{e:DFPMapR}
\end{equs}
where the modelled distribution $(\BV\<bigtree1>)^\eps$ is defined in~\eqref{e:DTVDot}, the reconstruction operator $\CR^\eps$ corresponds to the model introduced above for the controlling process $\X^\eps \in \CX^\eps$, and $F_{v^\eps}^{(\eps)}$ and $Q^{(\eps)}$ are as in \eqref{e:DF}. The following theorem, which we prove in Section~\ref{sec:unif_bounds}, represents the discrete version of Theorem~\ref{t:FixedPoint}.

\begin{theorem}\label{t:DFixedPoint}
Under the assumptions of Theorem~\ref{t:FixedPoint}, let $\X^\eps\in\CX^\eps$ be a sequence of controlling processes and let $u_0^\eps\in\CC^{\eta,\eps}$ be a sequence of periodic functions on $\Lambda_\eps$. Then there exists $T^d_\infty\in(0,+\infty]$, independent of $\eps$, such that for all $T<T^d_\infty$ the sequence of solution maps $\CS_T^\eps$ that assigns to $(u_0^\eps, \X^\eps)$ the solution $u^\eps$ of~\eqref{e:DiscreteSBE} is locally Lipschitz continuous uniformly in $\eps$ with respect to the discrete norms. 
\end{theorem}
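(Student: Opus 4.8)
The plan is to transcribe the fixed point argument of Theorem~\ref{t:FixedPoint} to the discrete setting, the one genuinely delicate point being that every estimate must be carried out with constants independent of $\eps$. Setting $v^\eps \eqdef u^\eps - X^{\<tree1>,\,\eps}-X^{\<tree12>,\,\eps}-2X^{\<tree122>,\,\eps}$ as in \eqref{e:DTruncBurgers}, the solution $u^\eps$ of the (renormalised) discrete mild equation produced by $\CS^\eps_T$ — which reduces to \eqref{e:DiscreteSBEMild} when the constant $b$ of $\X^\eps$ vanishes, and otherwise to the discrete analogue of \eqref{e:RenSBE}, exactly as in Remark~\ref{rem:RenEqu} — corresponds to a fixed point of the map $\CM^\eps$ of \eqref{e:DFPMapV}--\eqref{e:DFPMapR}, with $v'^{,\,\eps}=4X^{\<tree122>,\,\eps}+2v^\eps$ and $R^\eps$ the defect in \eqref{e:DVExpansion}. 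So it suffices to run a Banach fixed point argument for $\CM^\eps$ on a ball of $\CH^{\eta,\gamma}_{\eps,\X^\eps,T}$ whose radius $M$ and existence time $T^d_\infty$ depend only on the uniform bounds $\sup_\eps\|u_0^\eps\|_{\CC^\eta}^{(\eps)}$ and $\sup_\eps\|\X^\eps\|_{\CX}^{(\eps)}$, both finite by hypothesis.

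The analytic inputs, each available uniformly in $\eps$, are: (i) the discrete Schauder estimates of Section~\ref{sec:Analytic} for the convolutions $D_{x,\eps}P^\eps\ast_\eps(\cdot)$, $D_{x,\eps}\hat K^\eps\ast_\eps(\cdot)$ and the discrete heat semigroup $u_0^\eps\mapsto P^\eps u_0^\eps$, which respectively gain (almost) one, arbitrarily many, and $\alpha_\star-\eta$ degrees of parabolic regularity and — crucially for closing the fixed point — carry the discrete reconstruction of a modelled distribution into a process controlled by $X^{\<tree11>,\,\eps}$ in the sense of \eqref{e:DVExpansion}, with a gain of a positive power $T^\theta$ of the horizon; (ii) Proposition~\ref{p:DProduct}, giving $\CR^\eps(\BV\<bigtree1>)^\eps\in\CC^{\alpha_{\<tree1>},\eps}$ with the blow-up rate coming from $\eta+\alpha_{\<tree1>}$ and norm bounded by $\|V^\eps\|^{(\eps)}_{\eta,\gamma;T}$ and $\|\X^\eps\|^{(\eps)}_{\CX}$, together with Theorem~\ref{t:DReconstruct}, which makes this bound $\eps$-uniform; (iii) the discrete analogue of Proposition~\ref{p:ClassicalProduct} for the twisted product $B_\eps$, applicable to the terms of $F^{(\eps)}_{v^\eps}$ in \eqref{e:DF} precisely because of the standing assumptions on $\alpha_\star$, $\gamma$ and $\eta$. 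Estimating \eqref{e:DFPMapV} term by term with these tools gives, uniformly in $\eps$,
\[
\Vert \CM^\eps(u_0^\eps,\X^\eps,V^\eps)\Vert_{\eta,\gamma;T}^{(\eps)} \;\leq\; C_0\bigl(\|u_0^\eps\|_{\CC^\eta}^{(\eps)},\,\|\X^\eps\|_{\CX}^{(\eps)}\bigr) \;+\; C\,T^\theta\bigl(1+\Vert V^\eps\Vert_{\eta,\gamma;T}^{(\eps)}\bigr)^2\;,
\]
together with the matching difference bound $\Vert \CM^\eps(u_0^\eps,\X^\eps,V^\eps)-\CM^\eps(u_0^\eps,\X^\eps,\bar V^\eps)\Vert_{\eta,\gamma;T}^{(\eps)}\leq C\,T^\theta(1+M)\,\Vert V^\eps-\bar V^\eps\Vert_{\eta,\gamma;T}^{(\eps)}$ on the closed ball of radius $M$.

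Choosing $M\eqdef 2C_0$ and then $T^d_\infty$ small enough (only in terms of the uniform data bounds) that $C\,T^\theta(1+M)^2\leq M/2$ and $C\,T^\theta(1+M)\leq\tfrac12$ makes $\CM^\eps$ a contraction of this ball into itself for every $T<T^d_\infty$ and every $\eps$, so it has a unique fixed point $V^\eps$ there, with $\|V^\eps\|^{(\eps)}_{\eta,\gamma;T}\leq M$ uniformly in $\eps$; the corresponding $u^\eps = v^\eps+X^{\<tree1>,\,\eps}+X^{\<tree12>,\,\eps}+2X^{\<tree122>,\,\eps}$ is the solution of~\eqref{e:DiscreteSBE} assigned by $\CS^\eps_T$, since unfolding \eqref{e:DSmoothReconstruction} and the definitions \eqref{DControl} shows it solves the discrete mild equation. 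For the local Lipschitz continuity uniformly in $\eps$: on any ball of data $\{\|u_0^\eps\|^{(\eps)}_{\CC^\eta}+\|\X^\eps\|^{(\eps)}_{\CX}\leq R\}$ the fixed points $V^\eps$ lie in a ball of radius depending only on $R$; the map $(u_0^\eps,\X^\eps)\mapsto\CM^\eps(u_0^\eps,\X^\eps,V^\eps)$ is Lipschitz uniformly in $\eps$ — using the second bound of Theorem~\ref{t:DReconstruct}, \eqref{e:DContReconstruct}, for the $\CR^\eps(\BV\<bigtree1>)^\eps$ term and uniform continuity of the convolution operators and of $B_\eps$ for the rest — and the standard lemma on Lipschitz dependence of fixed points of uniform contractions on parameters then closes the argument, the comparison being made with the ``$;$''-distance on $\CH^{\eta,\gamma}_{\eps,T}$ since the two discrete models $Z^\eps$, $\bar Z^\eps$, hence the two controlled-structure constraints, differ.

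I expect the main obstacle to be not a new inequality but the uniformity bookkeeping: one must verify that each constant entering $C_0$, $C$ and the exponent $\theta$ above is independent of $\eps$, which rests on the $\eps$-uniformity of the discrete Schauder/heat-kernel bounds of Section~\ref{sec:Analytic} (ultimately on sufficiently fast off-the-grid decay of $P^\eps$ and $\hat K^\eps$) and of the discrete reconstruction estimate \eqref{e:DReconstruction}. The only structurally new feature over the continuous case, the twisted product $B_\eps$ and the modified model \eqref{e:DTModel} it forces, has already been absorbed into Proposition~\ref{p:DProduct}; once that and the discrete Schauder estimates are in place, $M$ and $T^d_\infty$ are functions of the uniform data bounds alone and the scheme does not degenerate as $\eps\to0$.
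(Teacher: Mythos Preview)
Your proposal is correct and follows essentially the same route as the paper: cite the discrete analogue of Proposition~\ref{prop:FixedPoint} (this is Proposition~\ref{prop:DFixedPoint}, proved by transcribing the continuous argument using the discrete Schauder estimates of Section~\ref{sec:DConv}, Lemma~\ref{l:DReconstructBound}, and Lemma~\ref{l:DRemV}), then run the Banach fixed point argument with $M$ and $T^d_\infty$ depending only on $\sup_\eps\|u_0^\eps\|^{(\eps)}_{\CC^\eta}$ and $\sup_\eps\|\X^\eps\|^{(\eps)}_{\CX}$.

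One point you gloss over that the paper handles explicitly: the discrete Schauder bounds (Proposition~\ref{p:DHeatBounds}, Corollary~\ref{c:DSchauder}, and hence Proposition~\ref{prop:DFixedPoint}) produce a factor $\enorm{T}^\theta=(|T|^{1/2}\vee\eps)^\theta$, not $T^\theta$. So shrinking $T$ alone does \emph{not} make $\CM^\eps$ a contraction ``for every $\eps$'' as you write; one must also impose $\eps$ small enough, say $\eps<(1+M)^{-2/\theta}$, so that $\enorm{T}^\theta=T^{\theta/2}$ on the relevant range. The paper makes this restriction explicit. Since only the limit $\eps\to0$ matters and $M$ depends only on the uniform data bounds, this costs nothing, but it is the one genuinely discrete wrinkle in the bookkeeping and should be stated.
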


\begin{remark}\label{rem:DRenEqu}
As in Remark~\ref{rem:RenEqu}, if the discrete controlling process $\X^\eps\eqdef\X(\eta^\eps,a^\eps,b^\eps)$ depends on a non-trivial $b^\eps$ (which is going to be the case in the setting of Proposition~\ref{p:DControl}), then, by the equality~\eqref{e:DSmoothReconstruction} the equation for $u^\eps$ would read
\begin{equ}[e:DRenSBE]
\bar D_{t,\eps^2}u^{\eps}=\Delta_\eps u^{\eps} +D_{x,\eps} B_\eps(u^{\eps},u^{\eps}) - 4 b^\eps D_{x,\eps} u^\eps+D_{x,\eps}\eta^\eps\,,\qquad u^\eps(0,\cdot)=u^\eps_0(\cdot)\;,
\end{equ}
which is the {\it discrete renormalized} equation. 
\end{remark}

\section{Continuous and discrete Schauder estimates and fixed points}\label{sec:Analytic}

The aim of this section is twofold. 
We will begin by proving the Schauder estimates needed to treat the convolution with the heat kernel and see how they can be put into practice so to build a solution for \eqref{e:SBE}. Then we will translate this construction in the discrete setting and obtain uniform bounds on the discrete solution maps. 

\subsection{Continuous convolutions}\label{sec:ContConv}

As already mentioned in the introduction, our approach is based on the ability of treating the convolution with the heat kernel as a testing against a recentered and rescaled test function, where the time variable plays the role of the scaling parameter. 
After all, the convolution is nothing but a ``centering" around a specific point, as $t$ tends to 0 the heat kernel converges weakly to the Dirac delta function and its $L^1$ norm is constant in $t$. 
The missing ingredient is the compactness of the support which is however not essential. Indeed, for $t>0$, the heat kernel is a Schwarz function and the following direct generalization of~\cite[Lem.~6.3]{GIP15} shows that this is enough to guarantee the validity of what claimed above (actually we prove that it suffices that the function decays sufficiently fast at $\infty$).  

\begin{lemma}\label{l:Schwartz}
Let $\alpha \in \R$, $\gamma \geq 0$, $\lambda \in (0,1]$, and let the map $\T \ni x \mapsto \zeta_x \in \CD'(\T)$ satisfy
\begin{equ}[e:SchwartzCond]
|\langle \zeta_x, \varphi_x^\lambda\rangle | \leq C \lambda^{\alpha}\;, \qquad |\langle \zeta_x - \zeta_y, \varphi_x^\lambda\rangle | \leq C \lambda^{\alpha - \gamma} |x-y|^{\gamma}\;,
\end{equ}
for some $C > 0$, all $\varphi \in \CB^r_0(\R)$ with integer $r > |\alpha|$, and locally uniformly over $x, y \in \R$ such that $|x - y|\geq \lambda $, where we have identified $\zeta$ with its periodic extension to $\R$. Let $\delta>0$ be such that $\gamma-\delta<-1$. Then, for any $\psi \in \CC^r(\R)$ such that $\sup_{j\leq r}\sup_x |x|^\delta|\partial^j\psi(x)|<\infty$, one has
\begin{equ}\label{ineq:Schwartz}
 |\langle \zeta_x, \psi_x^\lambda\rangle | \lesssim C \lambda^{\alpha}\;,
\end{equ}
(locally) uniformly over $x\in\R$, $\lambda \in (0,1]$ and where the proportionality constant depends only on $\psi$.
\end{lemma}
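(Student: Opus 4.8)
The plan is to decompose the test function $\psi$ into a dyadic sum of compactly supported bumps and apply the hypothesis \eqref{e:SchwartzCond} to each piece, using the decay of $\psi$ to control the tail. Concretely, fix a smooth partition of unity $\{\chi_n\}_{n\geq 0}$ on $\R$ such that $\chi_0$ is supported in $B(0,2)$, each $\chi_n$ for $n\geq 1$ is supported in the annulus $\{2^{n-1}\leq |x|\leq 2^{n+1}\}$, and $\sum_n \chi_n\equiv 1$. Then write $\psi = \sum_{n\geq 0}\psi^{(n)}$ with $\psi^{(n)}\eqdef \psi\chi_n$, so that $\psi^{(n)}$ is supported in a ball of radius $\sim 2^n$, and by the decay assumption on $\psi$ and its derivatives together with Leibniz one gets $\|\psi^{(n)}\|_{\CC^r}\lesssim 2^{-n\delta}$ (the derivatives of $\chi_n$ contribute negative powers of $2^n$, which only help). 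Each rescaled piece $(\psi^{(n)})_x^\lambda$ is supported in $B(x, 2^{n+1}\lambda)$.

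The next step is to further cut each $\psi^{(n)}$ into order $2^n$ translated bumps, each supported in a ball of the \emph{correct} size for a direct application of \eqref{e:SchwartzCond}. Precisely, cover the support of $\psi^{(n)}$ by $O(2^n)$ balls of radius comparable to $1$ (before rescaling), with an associated smooth partition of unity, writing $\psi^{(n)} = \sum_{k} \psi^{(n,k)}$ where each $\psi^{(n,k)}$ is a $\CC^r$ function with $\|\psi^{(n,k)}\|_{\CC^r}\lesssim 2^{-n\delta}$, supported in $B(y_{n,k}, c)$ with $|y_{n,k} - 0|\sim 2^n$. After rescaling by $\lambda$ and recentering at $x$, the function $(\psi^{(n,k)})_x^\lambda$ is (up to a bounded multiplicative constant) an admissible test function in $\CB^r_0$ centered at a point $x_{n,k}$ with $|x_{n,k}-x|\sim 2^n\lambda$. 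For $n=0$ we may apply the first bound in \eqref{e:SchwartzCond} directly; for $n\geq 1$ we write $\langle\zeta_x,(\psi^{(n,k)})_x^\lambda\rangle = \langle\zeta_{x_{n,k}},(\psi^{(n,k)})_x^\lambda\rangle + \langle\zeta_x-\zeta_{x_{n,k}},(\psi^{(n,k)})_x^\lambda\rangle$ and invoke respectively the first and second bounds in \eqref{e:SchwartzCond} (the second is available since $|x-x_{n,k}|\gtrsim 2^n\lambda \geq \lambda$), paying a factor $2^{-n\delta}$ from the $\CC^r$ norm and, in the second term, a factor $|x-x_{n,k}|^\gamma \sim (2^n\lambda)^\gamma$.

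Summing everything up, the contribution of scale $n$ is bounded by
\begin{equ}
\sum_{k}\bigl(|\langle\zeta_{x_{n,k}},(\psi^{(n,k)})_x^\lambda\rangle| + |\langle\zeta_x-\zeta_{x_{n,k}},(\psi^{(n,k)})_x^\lambda\rangle|\bigr)\lesssim 2^n\cdot 2^{-n\delta}C\lambda^\alpha\bigl(1 + (2^n\lambda)^\gamma\lambda^{-\gamma}\bigr)\lesssim C\lambda^\alpha 2^{n(1+\gamma-\delta)}\;,
\end{equ}
where the extra $2^n$ counts the number of pieces $k$ at scale $n$ and we used $\lambda\leq 1$, $\gamma\geq 0$. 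Since by hypothesis $\gamma-\delta<-1$, i.e. $1+\gamma-\delta<0$, the geometric series $\sum_{n\geq 0}2^{n(1+\gamma-\delta)}$ converges, yielding $|\langle\zeta_x,\psi_x^\lambda\rangle|\lesssim C\lambda^\alpha$ with a constant depending only on $\psi$ (through $\sup_{j\leq r}\sup_x|x|^\delta|\partial^j\psi(x)|$ and the fixed partitions of unity). A minor point to check is that one needs $r>|\alpha|$ for \eqref{e:SchwartzCond} to apply to all the rescaled bumps, which is exactly the hypothesis; and one must ensure the normalization constants relating $(\psi^{(n,k)})_x^\lambda$ to genuine elements of $\CB^r_0$ are uniform, which follows since all the bump functions live at a fixed (unit) scale after factoring out $\lambda$.

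The main obstacle is purely bookkeeping: making the double decomposition (dyadic in $n$, then unit-scale in $k$) precise enough that the rescaled pieces are honestly of the form $c\,\tilde\varphi_{x_{n,k}}^\lambda$ with $\tilde\varphi\in\CB^r_0$ and $c\lesssim 2^{-n\delta}$ uniformly, and tracking that the recentering points $x_{n,k}$ satisfy $|x-x_{n,k}|\geq\lambda$ so that the increment bound in \eqref{e:SchwartzCond} is legitimately applicable. There is nothing deep here — it is the standard argument that a fast-decaying test function can be traded for a compactly supported one at the cost of a convergent geometric series — but one has to be careful that the periodic identification of $\zeta$ does not cause trouble, which it does not since on the torus only finitely many translates are distinct and the bounds are assumed locally uniform in $\R$.
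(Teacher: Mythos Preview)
Your proof is correct and follows essentially the same approach as the paper's: decompose $\psi_x^\lambda$ into a sum of $\lambda$-scale bumps centered at points $x_{n,k}$ at distance $\sim 2^n\lambda$ from $x$, use the decay of $\psi$ to bound their $\CC^r$ norms, and apply the two parts of \eqref{e:SchwartzCond} after recentering. The only difference is organizational: the paper decomposes directly on the lattice $\tfrac{\lambda}{2}\Z$ (so the sum over your pairs $(n,k)$ is replaced by a single sum over lattice points $y$, with the key bound $\|\varphi_{x+y}^\lambda\|_{\CC^r}\lesssim \lambda^{-1-r+\delta}|y|^{-\delta}$), whereas you first localize to dyadic annuli and then cut at unit scale; unwinding your double sum recovers exactly the paper's lattice sum $\sum_{y}|y|^{\gamma-\delta}$, convergent by $\gamma-\delta<-1$.
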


\begin{proof}
Fix $\lambda\in (0,1]$. We first notice that, since $\zeta_x$ is periodic, the second bound in \eqref{e:SchwartzCond} holds uniformly over $x, y \in \R$ such that $|x-y| \geq \lambda$. 
Let $Y\eqdef \frac{\lambda}{2}\Z\setminus B$, where $B=\{y\in\frac{\lambda}{2}\Z\,: |y|< \lambda\}$. 
Now, for $\psi \in \CS(\R)$, we can write $\psi_x^\lambda = \sum_{y \in Y \cup \{0\}} \varphi^\lambda_{x+y}$, where $\varphi^\lambda_{x+y}$ is supported in a ball of radius $\lambda$ centered around $x+y$ and scales like $\lambda$ (see \cite[Rem. 2.21]{Hai14}). 
Moreover, since $\psi$ is such that $\sup_{j\leq r}\sup_x |x|^\delta|\partial^j\psi(x)|<\infty$,  for each point $y \in Y$ one has the bound 
\begin{equ}[e:Crucial]
\Vert \varphi_{x+y}^\lambda \Vert_{\CC^r} \lesssim \lambda^{-1-r+\delta} |y|^{-\delta}\,.
\end{equ}
Thus, using \eqref{e:SchwartzCond} we get
\begin{equs}
|\langle \zeta_x, \psi_x^\lambda\rangle| &\leq |\langle \zeta_x, \varphi_{x}^\lambda\rangle|  + \sum_{y \in Y} |\langle \zeta_{x + y}, \varphi_{x+y}^\lambda\rangle| + \sum_{y \in Y} |\langle \zeta_{x} - \zeta_{x + y}, \varphi_{x+y}^\lambda\rangle| \\
&\lesssim C \lambda^\alpha + C \lambda^\alpha \sum_{y \in Y} \bigl(|y| \lambda^{-1}\bigr)^{-\delta}+ C \lambda^{\alpha - \gamma} \sum_{y \in Y} |y|^{\gamma} \bigl(|y| \lambda^{-1}\bigr)^{-\delta} \lesssim C \lambda^\alpha \sum_{y \in \Z} |y|^{\gamma -\delta} \lesssim C \lambda^\alpha\;,
\end{equs}
as soon as $\gamma - \delta < - 1$, which is the required bound.
\end{proof}

\begin{remark}
The reason why we take into account maps $x\mapsto\zeta_x$, where $\zeta_x$ \textit{depends} on a base point $x$, is that we want to be able to replace $\zeta$ with the image of a model for a certain regularity structure or the term appearing on the left hand side of the reconstruction bound~\eqref{e:Reconstruction}. 
In other words, $\zeta_x$ will represent a local generalized Taylor expansion around the point $x$. 
\end{remark}

\begin{remark}\label{rem:Schwartz}
The proof of Lemma~\ref{l:Schwartz}, as well as~\cite[Rem. 2.21]{Hai14}, show that we could have formulated the statement in a slightly different fashion. Namely, instead of taking a function $\psi\in \CC^r(\R)$ such that $\sup_{j\leq r}\sup_x |x|^\delta|\partial^j\psi(x)|<\infty$, and prove~\eqref{ineq:Schwartz} where $\psi^\lambda$ is the rescaled version of $\psi$, we could have considered a family of maps $\psi^\lambda\in\CC^r(\R)$, where $\lambda\in(0,1]$, such that $\sup_{j\leq r}\sup_x |x|^\delta|\partial^j\psi^\lambda(x)|\lesssim\lambda^{-1-r+\delta}$ and~\eqref{ineq:Schwartz} would still hold.   
\end{remark}

The next proposition, joint with the previous lemma represent, as we will shortly see, the Schauder estimates we need.

\begin{proposition}\label{p:HeatBounds}
Let $\beta > -2$, $\rho\in(-2,0]$, $\alpha \leq \beta$ and $n\in\N$, $n \in [\beta + 1, \alpha + 2)$. 
Let $T\in(0,1]$ and consider a map $(0,T]\times\T \ni (t,x)= z \mapsto \zeta_z \in L^\infty\bigl([0, T], \CD'(\T)\bigr)$ for which there exists a constant $C>0$ such that
\begin{subequations}\label{e:HeatBound}
\begin{equs}
\big|\langle \zeta_{z}(t, \cdot), \varphi_x^\lambda\rangle\big| &\leq C \lambda^{\beta} \onorm{t}^{\rho}\;, \label{e:HeatBound1}\\
\big|\langle \zeta_{\bar z}(t, \cdot)-\zeta_z(t,\cdot), \varphi_x^\lambda\rangle\big| &\leq C \lambda^{\alpha}|t,\bar t|_0^{\rho}|\bar z-z|_\s^{\beta-\alpha}\;,\label{e:HeatBound2}
\end{equs}
\end{subequations}
uniformly over $z=(t,x),\,\bar z=(\bar t, \bar x) \in (0,T]\times\T$ such that $|\bar t-t|\leq\onorm{\bar t, t}^2$, $\lambda \in (0,1]$ and $\varphi \in \CB^r_0(\R)$ with integer $r > |\alpha|$. 
Then, for any $\theta_1\in(0,\alpha-n+2)$ and $\theta_2\in\R$ such that $\theta\eqdef\theta_1-\theta_2>0$, the following bound holds
\begin{equ}[e:ConvBound]
\sup_{z} \onorm{t}^{-(\bar\rho+\bar \beta)} \bigl|\bigl(P^{(n)} \ast \zeta_{z}\bigr)(z)\bigr| +  \sup_{z }\pnorm{P^{(n)} \ast \zeta_{z}}_{ \bar\rho, \bar \beta; T, z} \lesssim C T^{\frac{\theta}{2}}\;,
\end{equ}
where, the suprema run over $z=(t,x)\in (0,T]\times\T$, $P$ is the heat kernel, $P^{(n)}$ is the $n$-th spatial derivative of $P$, $\bar \beta = \beta + 2 - n-\theta_1$ and $\bar \rho = \rho+\theta_2$. 
\end{proposition}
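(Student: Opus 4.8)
\emph{Plan of the proof.} One estimates separately the ``on--diagonal'' quantity $(P^{(n)}\ast\zeta_z)(z)$ and the increment $\delta_{z,\bar z}(P^{(n)}\ast\zeta_z)$, in both cases extracting the gain $T^{\theta/2}$; throughout we write the space--time convolution as $(P^{(n)}\ast\zeta_z)(t_w,x_w)=\int_0^{t_w}\langle\zeta_z(s,\cdot),P^{(n)}_{t_w-s}(x_w-\cdot)\rangle\,\dd s$. The starting observation is that, setting $\phi(y)=\tfrac1{\sqrt{4\pi}}e^{-y^2/4}$, one has $P_\tau=\phi_0^{\sqrt\tau}$, hence $P^{(n)}_\tau(x-\cdot)=\tau^{-n/2}(\phi^{(n)})_x^{\sqrt\tau}$ with $\phi^{(n)}$ a Schwartz function (of vanishing integral for $n\ge1$), and that $T\le1$ forces every time--lag $\tau=t_w-s$ occurring in the convolution to satisfy $\sqrt\tau\le1$. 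Thus Lemma~\ref{l:Schwartz} (with $r>|\alpha|$ and $\delta$ large enough that $\gamma-\delta<-1$ for $\gamma\in\{0,\beta-\alpha\}$) upgrades the hypotheses \eqref{e:HeatBound1}--\eqref{e:HeatBound2}, which concern compactly supported test functions, to the kernel bounds
\begin{equs}
\bigl|\langle\zeta_z(s,\cdot),P^{(n)}_\tau(x-\cdot)\rangle\bigr| &\lesssim C\,\tau^{(\beta-n)/2}\onorm{s}^{\rho}\;,\label{e:ker1}\\
\bigl|\langle(\zeta_{\bar z}-\zeta_z)(s,\cdot),P^{(n)}_\tau(\bar x-\cdot)\rangle\bigr| &\lesssim C\,\tau^{(\alpha-n)/2}\onorm{s}^{\rho}\,|\bar z-z|_\s^{\beta-\alpha}\;,\label{e:ker2}
\end{equs}
valid for $\tau\le1$, together with the analogous bounds for first--order increments of $P^{(n)}_\tau$ in its arguments, where a spatial increment gains a factor $|\bar x-x|\,\tau^{-1/2}$ and a temporal one $|\bar t-t|\,\tau^{-1}$ (using $\partial_\tau P^{(n)}_\tau=P^{(n+2)}_\tau$).

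\emph{On--diagonal bound.} Integrating \eqref{e:ker1} gives $|(P^{(n)}\ast\zeta_z)(z)|\lesssim C\int_0^t(t-s)^{(\beta-n)/2}\onorm s^{\rho}\,\dd s$, which is finite since $n<\alpha+2\le\beta+2$ and $\rho>-2$, and, after the substitution $s=tu$, equals $C\,t^{(\beta+2-n+\rho)/2}$ up to a constant. As $\beta+2-n+\rho=(\bar\beta+\bar\rho)+\theta$ and $t\le T\le1$, this is $\lesssim C\,T^{\theta/2}\onorm t^{\bar\beta+\bar\rho}$, i.e. the first term in \eqref{e:ConvBound}.

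\emph{H\"older bound.} Fix $z,\bar z$ with $|\bar t-t|\le\onorm{t,\bar t}^2$, so that $\onorm t\sim\onorm{\bar t}\sim\onorm{t,\bar t}$, and set $\ell=|\bar z-z|_\s$. If $\ell\le\onorm t$, write $\delta_{z,\bar z}(P^{(n)}\ast\zeta_z)$ as the (short) integral over times between $t$ and $\bar t$, controlled by \eqref{e:ker1}, plus the integral over $[0,t]$ of $\langle\zeta_z(s,\cdot),P^{(n)}_{\bar t-s}(\bar x-\cdot)-P^{(n)}_{t-s}(x-\cdot)\rangle$; split the latter at the parabolic scale, using the first--order kernel increments on $\{\tau=t-s>\ell^2\}$ and bounding the two kernels separately (rewriting $\zeta_z=\zeta_{\bar z}+(\zeta_z-\zeta_{\bar z})$ and invoking \eqref{e:ker2} for the summand with mismatched centre, when needed) on $\{\tau\le\ell^2\}$. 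Summing the resulting $\tau$--integrals — which converge only once the two regimes are combined, since $n\ge\beta+1$ — yields $C\,\onorm t^{\rho}\ell^{\beta+2-n}=C\,\onorm t^{\rho}\ell^{\bar\beta+\theta_1}\lesssim C\,\onorm t^{\rho+\theta_1}\ell^{\bar\beta}=C\,\onorm t^{\bar\rho+\theta}\ell^{\bar\beta}\lesssim C\,T^{\theta/2}\onorm{t,\bar t}^{\bar\rho}\ell^{\bar\beta}$, using $\ell\le\onorm t\le1$ and $\onorm{t,\bar t}^2\le T$. If instead $\ell>\onorm t$, then $\ell=|\bar x-x|$, and we bound the increment by $|(P^{(n)}\ast\zeta_z)(z)|+|(P^{(n)}\ast\zeta_z)(\bar z)|$. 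The on--diagonal bound controls the first summand by $C\,T^{\theta/2}\onorm t^{\bar\beta+\bar\rho}\le C\,T^{\theta/2}\onorm t^{\bar\rho}\ell^{\bar\beta}$, since $\bar\beta>\beta-\alpha\ge0$ (here $\theta_1<\alpha-n+2$ is used) and $\onorm t\le\ell$; for the second we split $\zeta_z=\zeta_{\bar z}+(\zeta_z-\zeta_{\bar z})$, bound the $\zeta_{\bar z}$--part by the on--diagonal estimate at $\bar z$, and bound the $(\zeta_z-\zeta_{\bar z})$--part by \eqref{e:ker2} as $C\,\onorm{\bar t}^{\alpha+2-n+\rho}\ell^{\beta-\alpha}\lesssim C\,\onorm{\bar t}^{\rho+\theta_1}\ell^{\bar\beta}=C\,\onorm{\bar t}^{\bar\rho+\theta}\ell^{\bar\beta}\lesssim C\,T^{\theta/2}\onorm{t,\bar t}^{\bar\rho}\ell^{\bar\beta}$, where we used $n<\alpha+2$, $\bar\beta-(\beta-\alpha)=\alpha+2-n-\theta_1>0$ and $\ell>\onorm{\bar t}$. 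Taking suprema over $z,\bar z$ bounds $\sup_z\pnorm{P^{(n)}\ast\zeta_z}_{\bar\rho,\bar\beta;T,z}$.

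\emph{Main difficulty.} The delicate point is the H\"older bound in the regime $\ell\le\onorm t$: because $n\ge\beta+1$, no single Taylor expansion of the heat kernel produces an integrable singularity, so one is forced to glue a crude near--diagonal estimate to a smoothness--gained far estimate at the scale $\ell^2$ and to track precisely how the input exponents $\beta,\alpha,\rho,n$ interact with the free parameters $\theta_1,\theta_2$, so that exactly the slack $T^{\theta/2}$ survives; the parabolic counting of time, together with the restriction $|\bar t-t|\le\onorm{t,\bar t}^2$, is what keeps this bookkeeping tight.
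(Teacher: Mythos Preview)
Your proof is correct and follows essentially the same approach as the paper's: both treat the heat kernel as a rescaled Schwartz test function (via Lemma~\ref{l:Schwartz}), split the time integral at the parabolic scale $\ell^2$ of the increment, apply Taylor's formula to the kernel difference on the far side while estimating crudely on the near side, and recenter $\zeta_z$ via the triangle inequality when the test point does not match the base point. The paper additionally makes explicit a further split at $t-s=t/2$ to control the blow-up $\onorm{s}^{\rho}$ as $s\to0$, which your sketch absorbs into the line ``Summing the resulting $\tau$--integrals \dots\ yields $C\,\onorm t^{\rho}\ell^{\beta+2-n}$''; this is harmless in the regime $\ell\le\onorm t$, but it is the one place where your write-up compresses a genuine case distinction.
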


\begin{remark}
The statement would still hold in case one took $n\in(\beta+1,\alpha+2)$ and $\theta=\theta_1=\theta_2=0$. The reason why we introduce these parameters is that we want to use the last result in order to close the fixed point argument and we need to be able to ``play" with the H\"older regularity (given by the second summand of~\eqref{e:ConvBound}) and the explosion rate as time approaches $0$ (see the proof of Proposition~\ref{FixedPointMap}). 
\end{remark}

\begin{remark}
In the previous proposition, the assumptions~\eqref{e:HeatBound}, in a sense, require the possibility to trade the regularity of the distribution $\zeta_z$, for fixed $z\in(0,T]\times\T$ with the one of the map $(0,T]\times\T\ni z\mapsto\zeta_z$. 
As we will see in the following lemma, this is indeed the case for both the model and the reconstruction operator. 
\end{remark}

\begin{proof}
Let $z=(t,x),\,w=(s,y)\in(0,T]\times\T$, then, for a function $\varphi \in \CB^r_0(\R)$ and $\lambda\in(0,1]$, the triangular inequality and~\eqref{e:HeatBound} immediately give
\begin{equ}[e:HeatFour]
\big|\langle \zeta_z(s,\cdot),\varphi_y^\lambda\rangle\big|\leq \big|\langle \zeta_w(s,\cdot),\varphi_y^\lambda\rangle\big| +\big|\langle \zeta_z(s,\cdot)-\zeta_w(s\cdot),\varphi_y^\lambda\rangle\big|\lesssim s^{\frac{\rho}{2}} \big(\lambda^\beta + \lambda^{\alpha}  |z-w|_\s^{\beta-\alpha}\big)
\end{equ}
under the assumption that $s\leq t$. 
We apply the previous to bound the first summand at the left hand side of~\eqref{e:ConvBound} and obtain
\begin{equ}
|P^{(n)}\ast \zeta_z(z)|\leq \int_0^t \big|\langle\zeta_{z}(s,\cdot), P^{(n)}_{t-s}(x-\cdot)\rangle\big|\dd s\lesssim \int_0^t (t-s)^{\frac{\beta-n}{2}}s^{\frac{\rho}{2}}\dd s \lesssim |t|_0^{\bar\rho+\bar \beta} T^{\frac{\theta}{2}}\;,
\end{equ}
where the last passage holds because $\rho>-2$, $n<\beta+2$ and $\theta>0$. 

For the second term in~\eqref{e:ConvBound}, we can separately deal with space and time increments.
So, take $\bar z=(t,\bar x)\neq (t,x)=z$ and the quantity of interest is 
\begin{equ}[e:SpaceIncrements]
P^{(n)}\ast \zeta_z(\bar z)-P^{(n)}\ast \zeta_z(z)=\int_0^t \langle\zeta_{z}(s,\cdot), P^{(n)}_{t-s}(\bar x-\cdot)\rangle-\langle\zeta_{z}(s,\cdot), P^{(n)}_{t-s}(x-\cdot)\rangle \dd s\;.
\end{equ}
Now, in case $t-s<|x-\bar x|^2$ we treat  each of the two summands separately. 
For $t-s<\frac{t}{2}$, the second summand at the integrand can be bounded as before (since it equals $P^{(n)}\ast \zeta_z(z)$), thus giving a contribution of
\begin{equ}\label{e:TimeIntegralA}
\int_{t-s<|x-\bar x|^2\wedge\frac{t}{2}} (t-s)^{\frac{\beta-n}{2}}s^{\frac{\rho}{2}} \dd s\lesssim t^{\frac{\rho+\theta_1}{2}}\int_{t-s<|x-\bar x|^2} (t-s)^{\frac{\beta-n-\theta_1}{2}}\dd s\lesssim T^{\frac{\theta}{2}}|t|_0^{\bar\rho} |z-\bar z|_\s^{\bar \beta}\;,
\end{equ}
where we restricted the integral to the set of $s$ for which $t-s<|x-\bar x|^2$ and used the facts $s\in(\frac{t}{2},t]$ and $0<\theta_1<\beta+2-n$. 
For the other we apply~\eqref{e:HeatFour} to the integrand, and obtain two summands, the first of which corresponds to~\eqref{e:TimeIntegralA} and can therefore be analogously bounded while the second is
\begin{equ}\label{e:TimeIntegralB}
t^{\frac{\theta_1}{2}}|x-\bar x|^{\beta-\alpha}\int_{t-s<|x-\bar x|^2\wedge\frac{t}{2}} (t-s)^{\frac{\alpha-n-\theta_1}{2}}s^{\frac{\rho}{2}}\lesssim T^{\frac{\theta}{2}}|t|_0^{\bar\rho} |z-\bar z|_\s^{\bar \beta}
\end{equ}
where the last bound holds since $s\in(\frac{t}{2},t]$ and $0<\theta_1<\alpha-n+2$. 
If instead $|x-\bar x|^2>t-s\geq\frac{t}{2}$, we follow the same procedure outlined above to bound the integrands, but now the integrals in~\eqref{e:TimeIntegralA} and~\eqref{e:TimeIntegralB} are over $s<\frac{t}{2}$, thus respectively giving a contribution of order
\begin{equ}[e:TimeIntegral2]
t^{\frac{\rho+\beta-n+2}{2}}\lesssim T^{\frac{\theta}{2}} |t|_0^{\bar\rho}|z-\bar z|_\s^{\bar\beta}\qquad\text{and}\qquad|x-\bar x|^{\beta-\alpha}t^{\frac{\rho+\alpha-n+2}{2}}\lesssim T^{\frac{\theta}{2}}|t|_0^{\bar\rho}|z-\bar z|_\s^{\bar\beta}
\end{equ}
since $\rho>-2$, $\alpha+2>n$ and $\beta\geq\alpha$. 
We can now turn to the case $t-s\geq|x-\bar x|^2$. 
In this situation, we apply Taylor's theorem to the spatial increment of the heat kernel, so that
\begin{equ}
\int \langle\zeta_{z}(s,\cdot), P^{(n)}_{t-s}(\bar x-\cdot)- P^{(n)}_{t-s}(x-\cdot)\rangle \dd s=(x-\bar x)\int_0^1\int \langle\zeta_{z}(s,\cdot), P^{(n+1)}_{t-s}(\tilde x-\cdot)\rangle\dd s\dd \nu
\end{equ}
where we hid the dependence of the integrand on $\nu$ in $\tilde x=\tilde x(\nu)= x+\nu (\bar x-x)$ and the integral in $s$ is taken over $t-s\geq|x-\bar x|^2$. For $t-s\leq \frac{t}{2}$, we apply~\eqref{e:HeatFour} and bound the previous by
\begin{equ}
|x-\bar x|\int_{\frac{t}{2}\geq t-s\geq|x-\bar x|^2} (t-s)^{\frac{\beta-n-1}{2}} s^{\frac{\rho}{2}} \dd s
\lesssim  t^{\frac{\rho+\theta_1}{2}}|x-\bar x|\int_{t-s\geq|x-\bar x|^2} (t-s)^{\frac{\beta-n-1-\theta_1}{2}}\dd s\lesssim T^{\frac{\theta}{2}}|t|_0^{ \bar\rho} |z-\bar z|_\s^{\bar \beta}
\end{equ}
where we implicitly exploited that, for any $\nu\in[0,1]$, $|x-\tilde x|^2\leq|x-\bar x|^2\leq t-s$ and $\beta\geq \alpha$, and the last inequalities are a consequence of $s\in(\frac{t}{2},t]$ and $n\geq\beta+1>\beta+1-\theta_1$. 
If instead, $\frac{t}{2}\leq t-s \,(\leq t)$, then the quantity to bound is the same as the first term in the last chain, but this time the integral is over $s\in[0,\frac{t}{2}]$, so that 
\begin{equs}
|x-\bar x|\int_{t-s\geq|x-\bar x|^2\vee \frac{t}{2}} (t-s)^{\frac{\beta-n-1}{2}} s^{\frac{\rho}{2}} \dd s\lesssim t^{\frac{\beta-n-1}{2}}|x-\bar x|\int_0^{\frac{t}{2}} s^{\frac{\rho}{2}}\dd s\lesssim T^{\frac{\theta}{2}}|t|_0^{ \bar\rho} |z-\bar z|_\s^{\bar \beta}
\end{equs}
since $n\geq\beta+1$ and $\rho>-2$. 
At last we need to investigate the time regularity of the convolution of $\zeta_\cdot$ with the heat kernel. 
To do so, we take $t,\,\bar t\in(0,T]$ such that $|t-\bar t|\leq\onorm{t,\bar t}^2$, we assume, without loss of generality, $t<\bar t$ and set $\bar z=(\bar t,x)$ and $z=(t,x)$. 
Then, we notice that the time increment of the quantity at study can be written in the following way
\begin{equ}[e:HeatTimeBound]
\int_{t}^{\bar t} \langle \zeta_{z}(s, \cdot), P^{(n)}_{\bar t-s}(x - \cdot)\rangle\, ds + \int_{0}^t \langle \zeta_{z}(s, \cdot), \big(P^{(n)}_{\bar t-s} - P^{(n)}_{t-s}\big)(x - \cdot)\rangle\, ds \;.
\end{equ}
For the first summand, we proceed as usual, i.e. we apply~\eqref{e:HeatFour} to the integrand and bound the resulting integral by
\begin{multline*}
\int_t^{\bar t} (\bar t-s)^{\frac{\beta-n}{2}} s^{\frac{\rho}{2}} \dd s+(\bar t-t)^{\frac{\beta-\alpha}{2}}\int_t^{\bar t}(\bar t-s)^{\frac{\alpha-n}{2}} s^{\frac{\rho}{2}} \dd s\\
\lesssim t^{\frac{\rho+\theta_1}{2}}\Big(\int_t^{\bar t}(\bar t-s)^{\frac{\beta-n-\theta_1}{2}}\dd s
+(\bar t-t)^{\frac{\beta-\alpha}{2}}\int_t^{\bar t}(\bar t-s)^{\frac{\alpha-n-\theta_1}{2}}\dd s\Big)\lesssim T^{\frac{\theta}{2}} \onorm{t}^{\bar \rho}|z-\bar z|_\s^{\bar \beta}
\end{multline*}
which holds since for $s\in[t,\bar t]$, $s-t\leq\bar t-t$ and $\theta_1\in(0,\alpha-n+2)$.  
The analysis of the second summand in \eqref{e:HeatTimeBound} is similar to the one we carried out for the spatial increments, with some special arrangements that we will now point out. 
For $t-s<\bar t-t$, we treat each term separately. Thanks to~\eqref{e:HeatFour}, we have
\begin{equs}
\langle \zeta_{z}(s, \cdot), P^{(n)}_{\bar t-s}(x - \cdot)\rangle&\lesssim (\bar t-s)^{\frac{\beta-n}{2}}s^{\frac{\rho}{2}}+(\bar t-s)^{\frac{\alpha-n}{2}}s^{\frac{\rho}{2}}(t-s)^{\frac{\beta-\alpha}{2}}\lesssim ( t-s)^{\frac{\beta-n}{2}}s^{\frac{\rho}{2}} \\
\langle \zeta_{z}(s, \cdot), P^{(n)}_{t-s}(x - \cdot)\rangle&\lesssim ( t-s)^{\frac{\beta-n}{2}}s^{\frac{\rho}{2}}
\end{equs}
where in the first line we exploited the facts $\bar t-s>t-s$ and $\beta\geq \alpha$. 
If $t-s<\frac{t}{2}$, the integral to bound is the same as in~\eqref{e:TimeIntegralA} but with a different domain of integration, namely those $s$ such that $t-s<\bar t-t$, and the bound is analogous. If instead $\frac{t}{2}\leq t-s<\bar t-t$, one proceeds as in the first term of~\eqref{e:TimeIntegral2}. 
In case $t-s\geq\bar t-t$, we apply Taylor's formula and the equality $\partial_t P_t=\Delta P_t$, so that the integrand of the second summand in~\eqref{e:HeatTimeBound} becomes
\begin{equ}
\int_t^{\bar t} \langle \zeta_{z}(s, \cdot), P^{(n+2)}_{q -s}(x - \cdot)\rangle\, \dd q \lesssim s^{\frac{\rho}{2}} \int_t^{\bar t}  (q-s)^{\frac{\beta-n-2}{2}}\dd q\lesssim (\bar t-t) \,s^{\frac{\rho}{2}}  (t-s)^{\frac{\beta-n-2}{2}}
\end{equ}
where we made use of~\eqref{e:HeatFour} and the fact that, trivially, $q-s\geq t-s$ for all $q\in[t,\bar t]$.  
If $t-s<\frac{t}{2}$, we integrate the previous over $s$ such that $t-s\geq\bar t-t$ and get 
\begin{equ}
(\bar t-t)\int_{t-s\geq \bar t-t}s^{\frac{\rho}{2}}  (t-s)^{\frac{\beta-n-2}{2}}\dd s
\lesssim  t^{\frac{\rho+\theta_1}{2}}(\bar t-t)\int_{t-s\geq \bar t-t} (t-s)^{\frac{\beta-n-2-\theta_1}{2}}\dd s\lesssim T^{\frac{\theta}{2}}|t|_0^{ \bar\rho}|z-\bar z|_\s^{\bar\beta}
\end{equ}
since $\beta<n$. At last, for $(t\geq )\,t-s\geq \frac{t}{2}\vee (\bar t-t)$, we have
\begin{equ}
(\bar t-t)\int_0^{\frac{t}{2}}s^{\frac{\rho}{2}}  (t-s)^{\frac{\beta-n-2}{2}}\dd s\lesssim t^{\frac{\beta-n-2}{2}}(\bar t-t)\int_0^{\frac{t}{2}} s^{\frac{\rho}{2}}\dd s\lesssim T^{\frac{\theta}{2}} |t|_0^{\bar \rho}|z-\bar z|_\s^{\bar\beta}
\end{equ}
which concludes the proof. 
\end{proof}

 As a straightforward corollary of the previous proposition combined with Lemma~\ref{l:Schwartz}, we have the Schauder estimate for H\"older distributions. 

\begin{corollary}\label{c:Schauder}
Let $T>0$, $\beta > -2$, $\rho\in(-2,0]$ and $n$ an integer in $[\beta+1,\beta+2)$. If  $\zeta\in\CC^\beta_{\rho,T}$, then for any $\theta_1\in(0,\beta-n+2)$ and $\theta_2\in\R$ such that $\theta\eqdef\theta_1-\theta_2>0$, we have
\begin{equ}[e:ConvBound1]
\sup_{z} \onorm{t}^{-(\bar \rho+\bar \beta)} \bigl|\bigl(P^{(n)} \ast \zeta\bigr)(z)\bigr| +  \sup_{z }\pnorm{P^{(n)} \ast \zeta}_{ \bar\rho, \bar \beta; T, z} \lesssim C T^{\frac{\theta}{2}}
\end{equ}
where, the suprema run over $z=(t,x)\in (0,T]\times\T$, $P$ is the heat kernel, $P^{(n)}$ is the $n$-th spatial derivative of $P$, $\bar \beta = \beta + 2 - n-\theta_1$ and $\bar \rho=\rho+\theta_2$. 
\end{corollary}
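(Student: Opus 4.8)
The plan is to deduce Corollary~\ref{c:Schauder} directly from Proposition~\ref{p:HeatBounds} by exhibiting, for a function $\zeta \in \CC^\beta_{\rho,T}$, a suitable constant family $z \mapsto \zeta_z$ of distributions and checking that it satisfies the two hypotheses \eqref{e:HeatBound1} and \eqref{e:HeatBound2}. The natural choice is $\zeta_z(t,\cdot) \eqdef \zeta(t,\cdot)$, i.e.\ the family that does \emph{not} depend on the base point $z$, so that $P^{(n)} \ast \zeta_z = P^{(n)}\ast\zeta$ and the conclusion \eqref{e:ConvBound} becomes exactly \eqref{e:ConvBound1}. With this choice, \eqref{e:HeatBound1} is nothing but the defining bound of $\CC^\beta_{\rho,T}$ (with blow-up rate $\rho \le 0$, so $|t|_0^{-(\rho \wedge 0)} = |t|_0^{\rho}$ does not appear on the wrong side), while the increment bound \eqref{e:HeatBound2} is trivially satisfied since $\zeta_{\bar z}(t,\cdot) - \zeta_z(t,\cdot) = 0$, so the right-hand side of \eqref{e:HeatBound2} can be taken to be anything; in particular the hypothesis holds with the stated $\alpha$ for any $\alpha \le \beta$.

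Next I would observe that the hypotheses of Proposition~\ref{p:HeatBounds} require the parameter constellation $\beta > -2$, $\rho \in (-2,0]$, $\alpha \le \beta$ and an integer $n \in [\beta+1, \alpha+2)$. Given only $n \in [\beta+1, \beta+2)$ as in the corollary, I would simply set $\alpha \eqdef \beta$; then $n \in [\beta+1,\beta+2) = [\beta+1, \alpha+2)$ as needed, and the range $\theta_1 \in (0, \alpha - n + 2) = (0, \beta - n + 2)$ and the definitions $\bar\beta = \beta + 2 - n - \theta_1$, $\bar\rho = \rho + \theta_2$ match verbatim those in the corollary. The condition $r > |\alpha| = |\beta|$ for the order of the test functions is exactly the admissibility condition implicit in the definition of $\CC^\beta_{\rho,T}$ (see \eqref{e:HolderNegative}), so no extra work is needed there. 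One subtlety to flag: the integrability/decay used inside Proposition~\ref{p:HeatBounds} comes from treating $P^{(n)}_{t-s}(x-\cdot)$ as a rescaled test function and invoking Lemma~\ref{l:Schwartz}; since that machinery is already internal to the proof of Proposition~\ref{p:HeatBounds}, I do not need to re-invoke Lemma~\ref{l:Schwartz} here — the citation in the corollary's statement is just bookkeeping.

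Concretely, the proof reads: \emph{Fix $\zeta \in \CC^\beta_{\rho,T}$ and define $\zeta_z(t,\cdot) \eqdef \zeta(t,\cdot)$ for every $z = (t,x)$, a family independent of the base point. Then \eqref{e:HeatBound1} holds with $C = \|\zeta\|_{\CC^\beta_{\rho,T}}$ by definition, and \eqref{e:HeatBound2} holds trivially with the same $C$ and $\alpha = \beta$ since the left-hand side vanishes. Apply Proposition~\ref{p:HeatBounds} with $\alpha = \beta$, noting $n \in [\beta+1,\beta+2) = [\beta+1,\alpha+2)$, to obtain \eqref{e:ConvBound}; since $P^{(n)}\ast\zeta_z = P^{(n)}\ast\zeta$ is independent of $z$, this is precisely \eqref{e:ConvBound1}.} This is essentially a one-line reduction.

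I do not expect any genuine obstacle: the only thing to be careful about is the matching of the parameter ranges (in particular that $\rho \in (-2,0]$ is required, matching the hypothesis, and that one must \emph{choose} $\alpha = \beta$ rather than leave it free), and the minor point that the blow-up exponent in \eqref{e:SpaceHolder}/\eqref{e:HolderNegative} is $|t|_0^{-(\rho\wedge 0)}$ whereas \eqref{e:HeatBound1} uses $|t|_0^{\rho}$; since $\rho \le 0$ these agree. The "hard part," such as it is, has already been done inside Proposition~\ref{p:HeatBounds}, so the corollary is a pure specialization.
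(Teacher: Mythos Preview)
Your proposal is correct and matches the paper's own proof essentially verbatim: define the constant family $\zeta_z \equiv \zeta$, observe that \eqref{e:HeatBound2} is trivial and \eqref{e:HeatBound1} is the defining bound of $\CC^\beta_{\rho,T}$, then apply Proposition~\ref{p:HeatBounds}. The only cosmetic difference is that the paper explicitly cites Lemma~\ref{l:Schwartz} when verifying \eqref{e:HeatBound1} (to pass from compactly supported test functions to the heat kernel), whereas you correctly note this step is already internal to how Proposition~\ref{p:HeatBounds} is applied; either way the argument is a one-line specialization.
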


\begin{proof}
The proof is immediate. Indeed, since $\zeta$ does not depend on the base point,~\eqref{e:HeatBound2} becomes trivially true (the left hand side is simply $0$) while the validity of~\eqref{e:HeatBound1} is guaranteed by Lemma~\ref{l:Schwartz} and the assumption $\zeta\in\CC^\beta_{\rho,T}$. 
\end{proof}

The following is instead a significant application of Proposition~\ref{p:HeatBounds}, in which we show how to apply it in the context of the Reconstruction Theorem~\ref{t:Reconstruction}. 

\begin{lemma}\label{l:ReconstructBound}
Let $\ST=(\CA,\CT, \CG)$ be a regularity structure according to Definition~\ref{def:RS}, $\alpha \eqdef \min \CA$ and let $Z = (\Pi, \Gamma, \Sigma)$ be a periodic model on $\ST$ with the corresponding reconstruction operator $\CR$. Moreover, fix $\gamma > 0$, $\eta>-2$ such that $\eta+2>\gamma$, $\beta \in \bigl[\alpha, \min\{\CA \setminus \{\alpha\}\}\bigr]$ and let $n$ be an integer in $(\beta + 1, \alpha + 2)$. For $z = (t,x) \in (0,T]\times\T$ and $H \in \CD^{\eta, \gamma}_T(Z)$, define the distribution
\begin{equ}[e:ReconstructCut]
\bigl(\CJ H\bigr)_{\beta, z}(s, \cdot) \eqdef \bigl(\CR_s H_s - \Pi^s_x \CQ_{< \beta} H_t(x)\bigr)(\cdot)
\end{equ}
where $\CQ_{<\beta}$ is the projection onto $\CT_{<\beta}$ (see Remark~\ref{r:QOperators}). Then, for any $\theta_1\in(0,\alpha-n+2)$ and $\theta_2\in\R$ such that $\theta\eqdef\theta_1-\theta_2>0$, the following bound holds
\begin{equ}[b:ReconstructionCut]
\sup_z\onorm{t}^{-(\eta-\gamma+\theta_2+\bar \beta)} \bigl|\bigl(P^{(n)} \ast \bigl(\CJ H\bigr)_{\beta, z}\bigr)(z)\bigr| + \sup_z\pnorm{P^{(n)} \ast \bigl(\CJ H\bigr)_{\beta, z}}_{ \eta -  \gamma+\theta_2, \bar \beta; T, z} \lesssim C T^{\frac{\theta}{2}}\;,
\end{equ}
where, the suprema run over $z=(t,x)\in (0,T]\times\T$, $P$ is the heat kernel, $P^{(n)}$ is the $n$-th spatial derivative of $P$ and $\bar \beta = \beta + 2 - n-\theta_1$. 

If furthermore $\bar Z = (\bar \Pi, \bar \Gamma)$ is another model, $\bar \CR_t$ the associated family of reconstruction operators and $\bar H \CD_T^{\gamma, \eta}(\bar Z)$, then~\eqref{e:ReconstructCut} still holds upon replacing $\CJ H$ by $\CJ H-\bar \CJ \bar H$ and with a constant $C$ proportional to 
\[
\bigl(\Vert H; \bar H \Vert_{\gamma, \eta; T} \Vert \Pi \Vert_{\gamma; T} + \Vert \bar H \Vert_{\gamma, \eta; T} \Vert \Pi - \bar \Pi \Vert_{\gamma; T} \bigr)\,.
\]
\end{lemma}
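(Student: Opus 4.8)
The strategy is to verify that the family of distributions $z \mapsto \zeta_z \eqdef (\CJ H)_{\beta,z}$ satisfies the two hypotheses \eqref{e:HeatBound1} and \eqref{e:HeatBound2} of Proposition~\ref{p:HeatBounds} with the parameters $\beta$, $\alpha = \min\CA$ and $\rho = \eta - \gamma$, and then simply quote that proposition. Once this is done, the bound \eqref{b:ReconstructionCut} is exactly the conclusion \eqref{e:ConvBound} with $\bar\rho = \rho + \theta_2 = \eta - \gamma + \theta_2$, and the stated constraints ($n \in (\beta+1, \alpha+2)$, $\theta_1 \in (0, \alpha - n + 2)$) match those of the proposition. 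So the whole content of the lemma is the reduction to the two bounds on $\zeta_z$.

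For the first bound \eqref{e:HeatBound1}: writing $(\CJ H)_{\beta,z}(s,\cdot) = \bigl(\CR_s H_s - \Pi^s_x H_s(x)\bigr) + \Pi^s_x\bigl(H_s(x) - \CQ_{<\beta} H_t(x)\bigr)$, I would bound the first bracket directly by the reconstruction bound \eqref{e:Reconstruction}, which gives $\lesssim \lambda^\gamma \onorm{s}^{\eta-\gamma}$; since $\gamma > \beta$ and $\lambda \le 1$ this is $\lesssim \lambda^\beta \onorm{s}^{\eta-\gamma}$. For the second bracket, $H_s(x) - \CQ_{<\beta}H_t(x) = \CQ_{\ge\beta}H_s(x) + \CQ_{<\beta}(H_s(x) - H_t(x))$; here the only element of homogeneity $\ge\beta$ and $<\gamma$ is the one of homogeneity $\beta$ itself (using $\beta \in [\alpha, \min(\CA\setminus\{\alpha\})]$ and that the only homogeneity below $\beta$ is $\alpha$), so $\CQ_{\ge\beta}H_s(x)$ contributes through the model bound $|\langle \Pi^s_x\tau, \varphi^\lambda_x\rangle| \lesssim \|\tau\|\lambda^\beta$ together with $\|H_s(x)\|_\beta \lesssim \onorm{s}^{-((\eta-\beta)\wedge 0)}$, while $\CQ_{<\beta}(H_s(x)-H_t(x)) = \CQ_\alpha(H_s(x)-H_t(x))$ is controlled by the time-regularity part of $\VERT H\VERT_{\gamma,\eta;T}$ after inserting $\Sigma^{st}_x$ and using the $\Sigma$-bound \eqref{e:SigmaBound}; since $|t-s|^{(\gamma-\alpha)/2}$ with $\gamma > \beta$ beats $\lambda^\alpha$ against $\lambda^\beta$ on the relevant scales, one again lands on $\lambda^\beta\onorm{t}^{\eta-\gamma}$ (absorbing $\onorm{s}$ into $\onorm{t}$ since $s \le t$).

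For the second bound \eqref{e:HeatBound2}: $\zeta_{\bar z}(t,\cdot) - \zeta_z(t,\cdot) = \Pi^t_x\bigl(\CQ_{<\beta}H_{\bar t}(\bar x)\bigr) \cdot(\text{parallel transport})$ — more precisely it equals $-\bigl(\Pi^t_x\CQ_{<\beta}H_{\bar t}(\bar x) - \Pi^t_x\CQ_{<\beta}H_t(x)\bigr)$ evaluated on $\varphi^\lambda_x$ after the two reconstructions $\CR_t H_t$ cancel. Rewriting $\Pi^t_x\CQ_{<\beta}H_{\bar t}(\bar x) = \Pi^t_{\bar x}\Gamma^t_{\bar x x}\CQ_{<\beta}H_{\bar t}(\bar x)$ and inserting $\Sigma^{t\bar t}_{\bar x}$ to compare $H_{\bar t}(\bar x)$ with $H_t(\bar x)$, the difference decomposes into terms each of which is of the schematic form $\Pi^t_x(\text{element of homogeneity }\alpha)$ times a coefficient bounded by $\onorm{t}^{\eta-\gamma}$ and a power of $|\bar z - z|_\s$ coming from the $\CD^{\gamma,\eta}$ bounds on $H$ (spatial increments giving $|\bar x - x|^{\gamma-l}$, temporal increments giving $|\bar t - t|^{(\gamma-l)/2}$, both dominated by $|\bar z - z|_\s^{\gamma-\alpha} = |\bar z-z|_\s^{\beta-\alpha+(\gamma-\beta)} \le |\bar z - z|_\s^{\beta-\alpha}$ on the constrained region $|\bar t - t| \le \onorm{\bar t,t}^2$). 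Pairing with $\varphi^\lambda_x$ via \eqref{e:PiGammaBound} then yields $\lesssim \lambda^\alpha \onorm{t,\bar t}^{\eta-\gamma} |\bar z - z|_\s^{\beta-\alpha}$, which is \eqref{e:HeatBound2} with $\rho = \eta - \gamma$.

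The statement about two models follows the same template verbatim, with each occurrence of a norm replaced by the corresponding difference-norm and the telescoping identity $AB - \bar A\bar B = (A - \bar A)B + \bar A(B - \bar B)$ used repeatedly; the reconstruction-difference bound from Theorem~\ref{t:Reconstruction} supplies the analogue of \eqref{e:Reconstruction}, and the final constant is the stated combination. The main obstacle is purely bookkeeping: one must carefully check, in the proof of the two bounds on $\zeta_z$, that the only homogeneities that ever appear in $\CQ_{\ge\beta, <\gamma}$ and in $\CQ_{<\beta}$ are, respectively, $\{\beta\}$ and $\{\alpha\}$, so that every exponent produced matches what Proposition~\ref{p:HeatBounds} expects — in particular that $\gamma - \beta > 0$ and $\eta + 2 > \gamma$ are exactly what is needed to absorb the extra powers of $\lambda$ and of $\onorm{t}$. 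No genuinely new estimate is required beyond \eqref{e:Reconstruction}, \eqref{ModelAnaProp} and Proposition~\ref{p:HeatBounds}.
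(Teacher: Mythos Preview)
Your approach is the same as the paper's: verify the hypotheses \eqref{e:HeatBound} of Proposition~\ref{p:HeatBounds} with $\rho=\eta-\gamma$, then invoke that proposition. Two corrections, though.

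First, for \eqref{e:HeatBound1} you work with general $s$ and thereby pick up an extra term $\CQ_{<\beta}\bigl(H_s(x)-H_t(x)\bigr)=\CQ_\alpha\bigl(H_s(x)-H_t(x)\bigr)$, which paired with $\Pi^s_x$ and tested against $\varphi^\lambda_x$ yields only $\lambda^\alpha$, not $\lambda^\beta$; your claim that the factor $|t-s|^{(\gamma-\alpha)/2}$ ``beats $\lambda^\alpha$ against $\lambda^\beta$'' has no basis, since $s$ and $\lambda$ are independent variables. The point is that \eqref{e:HeatBound1} is stated for $\zeta_z(t,\cdot)$ with $z=(t,x)$, i.e.\ at $s=t$, where this term vanishes identically. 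The paper writes directly
\[
\zeta_z(t,\cdot)=\bigl(\CR_tH_t-\Pi^t_xH_t(x)\bigr)+\Pi^t_x\CQ_{\ge\beta}H_t(x)
\]
and bounds both pieces by $\lambda^\beta\onorm{t}^{\eta-\gamma}$. The extension to general $s$ (needed inside the convolution) is then obtained \emph{inside} the proof of Proposition~\ref{p:HeatBounds} via the triangle inequality \eqref{e:HeatFour}, not as a separate hypothesis.

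Second, your assertion that the only homogeneity in $[\beta,\gamma)$ is $\beta$ itself is not implied by $\beta\in[\alpha,\min(\CA\setminus\{\alpha\})]$ --- that condition only guarantees $\CQ_{<\beta}=\CQ_\alpha$. It does not matter for the bound: one simply sums $\sum_{\beta\le l<\gamma}\lambda^l\onorm{t}^{(\eta-l)\wedge0}\lesssim\lambda^\beta\onorm{t}^{\eta-\gamma}$, as the paper does.

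For \eqref{e:HeatBound2} the paper treats pure-space increments (via $\Gamma$, using the identity $\Gamma^t_{x\bar x}\CQ_{<\beta}=\CQ_{<\beta}\Gamma^t_{x\bar x}-\CQ_{<\beta}\Gamma^t_{x\bar x}\CQ_{\ge\beta}$) and pure-time increments (via $\Sigma$) separately, then observes that each increment is of the form $\Pi^t_x\tau$ with $\tau\in\CT_\alpha$, so Lemma~\ref{l:Schwartz} applied to the model extends the bound from $\varphi\in\CB^r_0$ to Schwartz test functions, and in particular to the heat kernel. Your joint space-time treatment is workable but messier, and you should be aware that this passage through Lemma~\ref{l:Schwartz} is what justifies testing against $P^{(n)}_{t-s}$ rather than a compactly supported function.
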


\begin{remark}
The reason why we focus on~\eqref{e:ReconstructCut} and not, as might seem more natural, on the whole left hand side of~\eqref{e:Reconstruction} will be clarified in the upcoming section. 
The point is that we only care about the ill-posed part of the product that, in our (and, also, more general) context, will be the one coming from the element in the regularity structure with lowest homogeneity. 
\end{remark}

\begin{proof}
We begin with~\eqref{b:ReconstructionCut}. In order to establish the connection with Proposition~\eqref{p:HeatBounds}, we define, for $z=(t,x)\in(0,T]\times\T$ and $s\in(0,T]$, $\zeta_z(s,\cdot)$ as the right hand side of~\eqref{e:ReconstructCut}. 
We want to verify that~\eqref{e:HeatBound} hold for $z\mapsto\zeta_z$. 
Let us begin by showing~\eqref{e:SchwartzCond} since it will imply~\eqref{e:HeatBound1}. Let $\varphi$ be a function in $\CB_0^r(\R)$ and $\lambda\in(0,1]$. 
Notice that
\begin{equation}
\bigl|\langle \zeta_{z}(t, \cdot), \varphi_x^\lambda\rangle\bigr| \leq \bigl| \langle \CR_t H_t - \Pi^t_x H_t(x), \varphi_x^\lambda \rangle\bigr| + \bigl| \langle \Pi^t_x \CQ_{\geq \beta} H_t(x), \varphi_x^\lambda \rangle\bigr|
\lesssim \lambda^\gamma \onorm{t}^{\eta - \gamma} + \sum_{\beta \leq l < \gamma} \lambda^l \onorm{t}^{(\eta - l) \wedge 0} \lesssim \lambda^\beta \onorm{t}^{\eta-\gamma}\label{b:BoundRecCut}
\end{equation}
where $\CQ_{\geq \beta}$ is the projection onto $\bigoplus_{l\geq\beta} \CT_l$ (recall that the components of $H$ in $\CT_l$ for $l>\gamma$ are $0$ by definition) and the bound on the first term follows by~\eqref{e:Reconstruction} while the second by the properties of the model and the definition of modelled distribution. 
Let $x\neq\bar x$, $\bar z=(t,\bar x)$ and $z=(t,x)$. Then
\begin{equ}[b:ModelEquality1]
\langle \zeta_{\bar z}(t, \cdot)-\zeta_z(t,\cdot), \varphi_x^\lambda\rangle=\langle \Pi_x^t\CQ_{<\beta}\big( H_t(x) -\Gamma^t_{x \bar x} H_t(\bar x)\big)+\Pi_x^t\CQ_{<\beta}\Gamma^t_{x \bar x} \CQ_{\geq \beta} H_t(\bar x), \varphi_x^\lambda\rangle
\end{equ}
where we made use of the algebraic properties of the model and of the easily verifiable equality $\Gamma^t_{x \bar x}\CQ_{<\beta} H_t(\bar x)=\CQ_{<\beta}\Gamma^t_{x \bar x} H_t(\bar x)-\CQ_{<\beta}\Gamma^t_{x \bar x} \CQ_{\geq \beta} H_t(\bar x)$.
At this point, we exploit the analytical properties of the model and the definition of modelled distributions to bound the first by
\begin{equ}
\lambda^\alpha\|H_t(x) -\Gamma^t_{x \bar x} H_t(\bar x)\|_\alpha\lesssim \lambda^\alpha |t|_0^{\eta-\gamma}|x-\bar x|^{\gamma-\alpha}\lesssim \lambda^{\alpha}|t|_0^{\eta-\gamma}|\bar x-x|^{\beta-\alpha}
\end{equ}
since $z$ and $\bar z$ live in a compact and $\gamma\geq \beta$, and the second by
\begin{equ}
\lambda^\alpha \|\Gamma^t_{x \bar x} \CQ_{\geq \beta} H_t(\bar x)\|_\alpha\lesssim \lambda^\alpha\sum_{ \beta\leq l<\gamma} |t|^{(\eta-l)\wedge 0}|x-\bar x|^{l-\alpha}\lesssim \lambda^{\alpha}|t|_0^{\eta-\gamma}|\bar x-x|^{\beta-\alpha}
\end{equ}
Then,~\eqref{b:BoundRecCut} and the previous allow to apply Lemma~\ref{l:Schwartz}, which in turn implies~\eqref{e:HeatBound1}. 

Consider now $t\neq\bar t$ such that $|\bar t-t|\leq \onorm{t,\bar t}^2$ and let this time $\bar z=(\bar t,x)\neq(t,x)=z$. Then, we have
\begin{equ}[b:ModelEquality2]
\langle \zeta_{\bar z}(t, \cdot)-\zeta_z(t,\cdot), \varphi_x^\lambda\rangle = \langle \Pi_x^t\CQ_{<\beta}\big( H_t(x) -\Sigma^{t \bar t}_x  H_{\bar t}( x)\big)+\Pi_x^t\CQ_{<\beta}\Sigma^{t \bar t}_x \CQ_{\geq \beta} H_t(\bar x), \varphi_x^\lambda\rangle
\end{equ}
which presents essentially the same terms as~\eqref{b:ModelEquality1}. Thus, following the same procedure as above but exploiting the properties of the map $\Sigma$ (instead of the ones of $\Gamma$), one immediately obtains
\begin{equ}
\bigl| \langle \zeta_{\bar z}(t, \cdot)-\zeta_z(t,\cdot), \varphi_x^\lambda\rangle\bigr| \lesssim\lambda^\alpha |t,\bar t|_0^{\eta-\gamma} |t-\bar t|^{\frac{\beta-\alpha}{2}}
\end{equ}
Concerning~\eqref{e:HeatBound2}, notice at first that the analytical and algebraic properties of a model and Lemma~\ref{l:Schwartz} guarantee that for any $\tau\in\CT_\alpha$ and $\psi\in\CS(\R)$ we have
\[
\langle\Pi_x^t\tau,\psi_x^\lambda\rangle\lesssim \lambda^\alpha\|\tau\|
\]
where the proportionality constant hidden in the previous bound depends only on the norm of $\psi$. Now, since in~\eqref{b:ModelEquality1} and~\eqref{b:ModelEquality2}, we showed that the increment $\zeta_{\bar z}(t, \cdot)-\zeta_z(t,\cdot)$ can be written as $\Pi_x^t\tau$ for some $\tau\in\CT_\alpha$, the validity of~\eqref{e:HeatBound2} follows at once.

The last part of the statement can be obtained applying the same scheme as before but exploiting the local Lipschitz continuity of the reconstruction operator, Theorem~\ref{t:Reconstruction}. 
\end{proof}

Before proceeding, we state the following lemma, whose proof is provided in~\cite[Lem.~7.5]{Hai14} or can be easily obtained using Lemma~\ref{l:Schwartz} and some of the tools exploited in Proposition~\ref{p:HeatBounds}.

\begin{lemma}\label{l:InitialCondition}
Let $\eta\in\R\setminus\Z$ and $u_0 \in \CC^{\eta}(\T)$ be periodic. Then $P u_0 \in \CC^{\gamma,\s}_{\eta, T}$ for any $\gamma \in (0,1)$, and one has the bound
\begin{equ}
 \Vert P u_0 \Vert_{\CC^{\gamma}_{\eta, T}} \lesssim \Vert u_0 \Vert_{\CC^{\eta}}\;.
\end{equ}
\end{lemma}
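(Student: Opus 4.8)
The statement to prove is Lemma~\ref{l:InitialCondition}: for periodic $u_0 \in \CC^\eta(\T)$ with $\eta \in \R \setminus \Z$, the spatial convolution $P u_0$ (meaning $(Pu_0)_t(x) = (P_t \ast u_0)(x)$, where $P_t$ is the Gaussian kernel at time $t$) lies in $\CC^{\gamma,\s}_{\eta,T}$ for every $\gamma \in (0,1)$, with $\|Pu_0\|_{\CC^\gamma_{\eta,T}} \lesssim \|u_0\|_{\CC^\eta}$. The plan is to verify directly the three bounds implicit in the norm $\|\cdot\|_{\CC^{\gamma,\s}_{\eta,T}}$ (see~\eqref{e:SpaceHolder} and~\eqref{e:ParabolicHolder}): the weighted sup bound $\onorm{t}^{-(\eta\wedge 0)}|(Pu_0)_t(x)|$, the weighted spatial H\"older bound, and the weighted time-increment bound. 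Throughout I would treat the two regimes $t \lesssim 1$ and $t \gtrsim 1$ separately, since for $t$ bounded away from $0$ the Gaussian is smooth with all derivatives controlled and everything is trivial; the content is in small $t$, where $\onorm{t} = t^{1/2}$.

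For the first bound, I would use that testing $u_0 \in \CC^\eta$ against the rescaled kernel $P_t = (P_1)^{\sqrt t}_{\,\cdot}$ (writing $\sqrt t$ for the scaling parameter $\lambda$) gives, by Lemma~\ref{l:Schwartz} applied to $\psi = P_1$ (which is Schwartz, hence decays faster than any polynomial, so the hypothesis $\sup_{j\le r}\sup_x |x|^\delta|\partial^j\psi(x)| < \infty$ holds for any $\delta$), the estimate $|\langle u_0, (P_1)^{\sqrt t}_x\rangle| \lesssim t^{\eta/2}\|u_0\|_{\CC^\eta}$ when $\eta < 0$, and a uniform bound when $\eta \ge 0$ — this matches the weight $\onorm{t}^{\eta\wedge 0}$. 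Strictly speaking Lemma~\ref{l:Schwartz} is stated for $\alpha$-H\"older-in-the-reconstruction-sense families, but applied to the constant family $\zeta_x \equiv u_0$ (with $\gamma = 0$, so the second condition in~\eqref{e:SchwartzCond} is vacuous) it gives exactly the pairing bound needed; this is the same device used in Corollary~\ref{c:Schauder}. For the spatial H\"older bound, the increment $\delta^{(t)}_{x,\bar x}(Pu_0) = \langle u_0, (P_t)_{\bar x} - (P_t)_x\rangle$; when $|x - \bar x| \ge \sqrt t$ one bounds the two terms separately using the previous step, picking up $t^{(\eta-\gamma)/2}|x-\bar x|^\gamma$ from $t^{\eta/2} = t^{(\eta-\gamma)/2}t^{\gamma/2} \le t^{(\eta-\gamma)/2}|x-\bar x|^\gamma$, and when $|x-\bar x| < \sqrt t$ one writes the difference of translated kernels via Taylor expansion and tests $u_0$ against the rescaled derivative $\partial_x P_t$, which carries an extra factor $t^{-1/2}$, again yielding $t^{(\eta-\gamma)/2}|x-\bar x|^\gamma$ after using $|x-\bar x|^{1-\gamma} t^{-(1-\gamma)/2} \le 1$. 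The time-increment bound for $|t - \bar t| \le \onorm{t,\bar t}^2$ is handled by the identity $\partial_t P_t = \partial_x^2 P_t$: writing $(P_{\bar t} - P_t) u_0 = \int_t^{\bar t} \partial_x^2 P_s u_0 \, ds$, testing against the rescaled second derivative gives a factor $s^{-1}$, and integrating $\int_t^{\bar t} s^{\eta/2 - 1}\,ds$ over an interval of length $\le \onorm{t,\bar t}^2$ with $s \gtrsim \onorm{t,\bar t}^2$ produces $\onorm{t,\bar t}^{\eta - \gamma}|t - \bar t|^{\gamma/2}$ after splitting $|t-\bar t| = |t-\bar t|^{1-\gamma/2}|t-\bar t|^{\gamma/2} \le \onorm{t,\bar t}^{2-\gamma}|t-\bar t|^{\gamma/2}$.

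I do not expect a serious obstacle here — this is a classical heat-semigroup smoothing estimate and the reference~\cite[Lem.~7.5]{Hai14} is cited. The one point requiring mild care is the bookkeeping of the periodicity: $u_0$ is a distribution on $\T$, so $P_t \ast u_0$ should be understood via the periodized heat kernel, or equivalently one applies Lemma~\ref{l:Schwartz} (which is already stated for periodic $\zeta$ identified with its periodic extension) to $\zeta_x \equiv u_0$ and the Schwartz function $P_1$; the constant $\delta$ in that lemma can be taken arbitrarily large since $P_1$ and all its derivatives decay super-polynomially, so the constraint $\gamma - \delta < -1$ is automatic with $\gamma = 0$. The other mild subtlety is ensuring the constant is uniform over $T \in (0,1]$ (or bounded $T$), which is immediate since all the $s$-integrals above converge at $0$ precisely because $\eta > -2$ in the relevant exponents — but in fact we only need $\eta/2 > -1$, i.e. $\eta > -2$, which holds since $\eta \in (-1,0)$ in all applications; and the mild regularity gain to $\gamma < 1$ is exactly what the single derivative (resp. two derivatives in time) of the Gaussian affords.
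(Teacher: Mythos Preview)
Your proposal is correct and matches exactly what the paper suggests: the paper does not actually give a proof of this lemma but simply points to \cite[Lem.~7.5]{Hai14} and notes that it ``can be easily obtained using Lemma~\ref{l:Schwartz} and some of the tools exploited in Proposition~\ref{p:HeatBounds}'' --- which is precisely your strategy of treating $P_t$ as a rescaled Schwartz function (via Lemma~\ref{l:Schwartz} applied to the constant family $\zeta_x\equiv u_0$, as in Corollary~\ref{c:Schauder}) and handling increments by the case-splitting and Taylor-expansion arguments used in the proof of Proposition~\ref{p:HeatBounds}.
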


\subsection{The fixed point argument: proof of Theorem~\ref{t:FixedPoint}}\label{sec:FP}

Thanks to the results in the previous section, we now have all the necessary ingredients to suitably bound the map $\CM$ introduced in~\eqref{FPMap} and that we here recall. 
For $u_0\in\CC^\eta$ and $\X\in\CX$ we set, with a slight abuse of notation, $\CM(\cdot) \eqdef \CM (v^0, \X, \cdot)$. Now, let $V=(v,v',R)\in\CH^{\eta,\gamma}_{\X,T}$, then $\CM(V)=\tilde V=(\tilde v,\tilde v',\tilde R)$ is given by
\begin{subequations}\label{FixedPointMap}
\begin{align}
\tilde{v} &= P u_0 + 4 X^{\<tree1222>} + P' \ast \bigl( 2 \CR \bigl(\BV\<bigtree1>\bigr) + F_v\bigr) + Q\;,\label{e:FPMapv} \qquad \tilde{v}' = 4 X^{\<tree122>} + 2 v\;,\\
\tilde{R}(z, \bar z) &= \delta_{z, \bar z} \big(P u_0\big) + 4 R^{\<tree1222>}( z,\bar z)+\delta_{z, \bar z}\Big( P' \ast \big(2 \bigl(\CJ \BV\<bigtree1>\bigr)_{\beta_{\<tree21>}, z} + F_v  \big)+ Q+ \tilde v'(z) \hat X^{\<tree11>}\Big)\label{e:FPMapr}
\end{align}
\end{subequations}
where $\hat X^{\<tree11>}\eqdef \hat K'\ast X^{\<tree1>}$, $F_v$ and $Q$, as in~\eqref{e:F}, equal
\begin{equ}[e:Q]
 F_{v} \eqdef 2 X^{\<tree12>} \bigl(2 X^{\<tree122>} + v \bigr) + \bigl( 2 X^{\<tree122>} + v\bigr)^2\;, \qquad  Q \eqdef  X^{\<tree124>} +\hat K' \ast \Bigl( \xi + X^{\<tree2>} + 2 X^{\<tree22>} \Bigr)\;,
\end{equ}
and in~\eqref{e:FPMapr} we unwrapped relation~\eqref{e:VExpansion}, making use of the definition of $\tilde v'$, $X^{\<tree11>}$ and~\eqref{e:ReconstructCut}. 
We are now ready to state and prove the next proposition which represents the core of the fixed point argument. 

\begin{proposition}\label{prop:FixedPoint}
Let $\alpha_\star \in \bigl(-\alpha_{\<tree12>}, \alpha_{\<tree11>}\bigr)$, $\gamma \in \bigl(\alpha_{\<tree11>}, \alpha_{\<tree11>} + \alpha_\star \wedge \alpha_{\<tree122>}\bigr)$ and $\eta \in (-1,0)$. Let $u_0 \in \CC^{\eta}$ be periodic and $\X \in \mathcal{X}$. Then there exists $\theta>0$ such that the map $\CM$, defined in~\eqref{FixedPointMap}, satisfies, for all $V$, $\bar V \in \CH^{\eta, \gamma}_{\X,T}$ the following bounds
\begin{subequations}\label{FixedPointBounds}
\begin{gather}
\Vert \CM (V) \Vert _{\eta, \gamma; T} \lesssim \Vert u_0 \Vert _{\CC^\eta} + T^\theta \bigl(1+\Vert \X \Vert _{\CX}\bigr)^2\bigl(1 + \Vert V \Vert _{\eta, \gamma; T} \bigr)^2\;,\label{e:FixedPointBoundsFirst}\\
\Vert \CM (V) - \CM (\bar V) \Vert _{\eta, \gamma; T} \lesssim T^\theta \Vert V - \bar V \Vert _{\eta, \gamma; T} (1+\|\X\|_\CX)^2\;.\label{e:FixedPointBoundsSecond}
\end{gather}
\end{subequations}
where the second bound holds provided that $\Vert V\Vert _{\eta, \gamma; T},\,\Vert \bar V\Vert _{\eta, \gamma; T}\leq M<\infty$ and the hidden constant depends only on $M$. 
\end{proposition}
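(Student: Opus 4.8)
The plan is to bound separately the three components of $\CM(V)=(\tilde v,\tilde v',\tilde R)$ in \eqref{FixedPointMap}, sorting the summands into ``data'' coming only from $\X$, ``classical'' convolutions $P'\ast(\cdot)$ of well-posed terms, and the single ill-posed term $P'\ast2\CR(\BV\<bigtree1>)$. The underlying mechanism producing the factor $T^\theta$ is that the various pieces live in H\"older spaces with strictly ordered indices: every process built from the controlling process (namely $X^{\<tree1222>}$, $X^{\<tree122>}$, $X^{\<tree124>}$, $\hat K'\ast X^{\<tree1>}$, and the remainders $R^{\<tree21>},R^{\<tree1222>}$) lies by Definition~\ref{def:ControlProc} in a space $\CC^{\alpha_\tau,\s}_1$ (resp. $\CL^{\beta_{\bar\tau}}_1$) with no blow-up at $t=0$ and with regularity exceeding $\alpha_\star$, $\gamma-\alpha_{\<tree11>}$ and $\gamma$; since the target spaces $\CC^{\alpha_\star,\s}_{\eta,T}$, $\CC^{\gamma-\alpha_{\<tree11>},\s}_{\eta-\alpha_{\<tree11>},T}$ and the bracket seminorm $\pnorm{\cdot}_{\eta-\gamma,\gamma;T,z}$ all have strictly smaller H\"older exponent and strictly more negative explosion rate, one has for $T\le1$ the elementary interpolation $\|\zeta\|_{\CC^{\alpha_2,\s}_{\eta_2,T}}\lesssim T^{\theta}\|\zeta\|_{\CC^{\alpha_1,\s}_{\eta_1,T}}$ whenever $\alpha_1>\alpha_2$, $\eta_1>\eta_2$, $\eta_1-\alpha_1>\eta_2-\alpha_2$ (and the analogue for the bracket norms). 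In particular this already disposes of $\tilde v'=4X^{\<tree122>}+2v$, the $v$-contribution being handled by the embedding $\CC^{\alpha_\star,\s}_{\eta,T}\hookrightarrow\CC^{\gamma-\alpha_{\<tree11>},\s}_{\eta-\alpha_{\<tree11>},T}$, which is of the above type precisely because $\gamma-\alpha_{\<tree11>}<\alpha_\star$, i.e. $\gamma<\alpha_{\<tree11>}+\alpha_\star$. The initial datum gives $Pu_0\in\CC^{\gamma',\s}_{\eta,T}$ with norm $\lesssim\|u_0\|_{\CC^\eta}$ for any $\gamma'\in(0,1)$ by Lemma~\ref{l:InitialCondition}, and since $\gamma<1$ we may take $\gamma'\ge\gamma$, so its contributions are all bounded by $\|u_0\|_{\CC^\eta}$ (no $T^\theta$ needed). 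Finally $F_v$ of \eqref{e:Q} is a sum of products to which Proposition~\ref{p:ClassicalProduct} applies: $\alpha_\star>-\alpha_{\<tree12>}$ is exactly what makes $X^{\<tree12>}(2X^{\<tree122>}+v)$ well-defined and $\alpha_{\<tree1>}>-3/5$ what makes $X^{\<tree12>}X^{\<tree122>}$ well-defined, yielding $F_v$ of regularity $\alpha_{\<tree12>}$ with norm $\lesssim(1+\|\X\|_\CX)^2(1+\|V\|_{\eta,\gamma;T})^2$; and $Q=X^{\<tree124>}+\hat K'\ast(\xi+X^{\<tree2>}+2X^{\<tree22>})$ is the sum of a term of regularity $1^-$ and a term smoothed by convolution against the smooth compactly supported $\hat K'$, hence of regularity $>\gamma$ with norm $\lesssim\|\X\|_\CX$. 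All these contributions are closed up by Corollary~\ref{c:Schauder} applied with $n=1$ to $P'\ast F_v$ (note $P'\ast F_v$ has regularity $\alpha_{\<tree12>}+1=2\alpha_{\<tree11>}>\gamma$, and the explosion rate requirement $\rho=2\eta>-2$ is where $\eta>-1$ enters).

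The core of the argument is the ill-posed contribution $P'\ast2\CR(\BV\<bigtree1>)$, which I handle through Proposition~\ref{p:ModelledDistribution} and Lemma~\ref{l:ReconstructBound}. By Proposition~\ref{p:ModelledDistribution}, $\BV\<bigtree1>\in\CD^{\gamma+\alpha_{\<tree1>},\eta+\alpha_{\<tree1>}}_T(Z)$ with norm $\lesssim\|V\|_{\eta,\gamma;T}$, where $Z=(\Pi,\Gamma,\Sigma)$ is the model \eqref{e:Model} attached to $\X$, satisfying $\VERT Z\VERT_{\gamma+\alpha_{\<tree1>};T}\lesssim1+\|\X\|_\CX$ by construction; here $\gamma+\alpha_{\<tree1>}>0$ by the standing assumption $\gamma>-\alpha_{\<tree1>}$. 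Since $\CT_{<\beta_{\<tree21>}}=\langle\<bigtree1>\rangle$ and $(\BV\<bigtree1>)_t(x)=v(t,x)\,\<bigtree1>+v'(t,x)\,\<bigtree21>$, the cut reconstruction \eqref{e:ReconstructCut} reads $(\CJ\,\BV\<bigtree1>)_{\beta_{\<tree21>},z}(s,\cdot)=\CR_s(\BV\<bigtree1>)_s-v(z)\,X^{\<tree1>}(s,\cdot)$, so for every fixed $z$ one has the exact identity $P'\ast\CR(\BV\<bigtree1>)=P'\ast(\CJ\,\BV\<bigtree1>)_{\beta_{\<tree21>},z}+v(z)\bigl(X^{\<tree11>}+\hat X^{\<tree11>}\bigr)$, using $P'\ast X^{\<tree1>}=K'\ast X^{\<tree1>}+\hat K'\ast X^{\<tree1>}=X^{\<tree11>}+\hat X^{\<tree11>}$. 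Expanding also $4X^{\<tree1222>}$ through the remainder $R^{\<tree1222>}$ of \eqref{e:Remainders} — which satisfies $R^{\<tree1222>}(z;z)=0$ since $\beta_{\<tree1222>}=1^->0$, whence $X^{\<tree1222>}(\bar z)=X^{\<tree122>}(z)(X^{\<tree11>}+\hat X^{\<tree11>})(\bar z)+R^{\<tree1222>}(z;\bar z)$ — one checks that in $\delta_{z,\bar z}\tilde v-\tilde v'(z)\,\delta_{z,\bar z}X^{\<tree11>}$ \emph{all} the increments $\delta_{z,\bar z}X^{\<tree11>}$ cancel because $\tilde v'(z)=4X^{\<tree122>}(z)+2v(z)$, leaving precisely the expression \eqref{e:FPMapr} for $\tilde R$, in which $X^{\<tree11>}$ has been traded for the smooth $\hat X^{\<tree11>}$. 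Now apply Lemma~\ref{l:ReconstructBound} to $H=\BV\<bigtree1>$ with $n=1$ and $\beta=\beta_{\<tree21>}$ (admissible: $1\in(\beta_{\<tree21>}+1,\alpha_{\<tree1>}+2)$ since $\beta_{\<tree21>}+1=2\alpha_{\<tree11>}<1<\alpha_{\<tree1>}+2$ for $\alpha_{\<tree1>}\in(-3/5,-1/2)$): for suitable $\theta_1\in(0,\alpha_{\<tree1>}+1)$ and $\theta_2\ge0$ with $\theta\eqdef\theta_1-\theta_2>0$ it gives a bound on $P'\ast(\CJ\,\BV\<bigtree1>)_{\beta_{\<tree21>},z}$ in $\pnorm{\cdot}_{\eta-\gamma+\theta_2,\bar\beta;T,z}$ with $\bar\beta=\beta_{\<tree21>}+1-\theta_1=2\alpha_{\<tree11>}-\theta_1$ and constant $\lesssim T^{\theta/2}\|V\|_{\eta,\gamma;T}(1+\|\X\|_\CX)$, together with the matching bound on the value at $z$. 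The requirement $\bar\beta\ge\gamma$, i.e. $\theta_1\le2\alpha_{\<tree11>}-\gamma$, is compatible with $\theta_1>0$ and $\theta>0$ precisely because $\gamma<2\alpha_{\<tree11>}$, which follows from $\gamma<\alpha_{\<tree11>}+\alpha_\star\wedge\alpha_{\<tree122>}$ and $\alpha_\star\wedge\alpha_{\<tree122>}<\alpha_{\<tree11>}$; this is where the constraint on $\gamma$ is used.

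With the ill-posed term in hand, the value and H\"older seminorms of $\tilde v$ and the bracket seminorm of $\tilde R$ are assembled from the pieces above: the seminorm $\pnorm{\tilde R(z,\cdot)}_{\eta-\gamma,\gamma;T,z}$ receives $4R^{\<tree1222>}(z;\cdot)$ (direct, gain from $\beta_{\<tree1222>}>\gamma$), $2\delta_{z,\cdot}\bigl(P'\ast(\CJ\,\BV\<bigtree1>)_{\beta_{\<tree21>},z}\bigr)$ (Lemma~\ref{l:ReconstructBound}, as above), $\delta_{z,\cdot}(P'\ast F_v)$ and $\delta_{z,\cdot}Q$ (Corollary~\ref{c:Schauder}), $\delta_{z,\cdot}(Pu_0)$ (Lemma~\ref{l:InitialCondition}), and $\tilde v'(z)\,\delta_{z,\cdot}\hat X^{\<tree11>}$ (direct, using smoothness of $\hat X^{\<tree11>}$, $\alpha_{\<tree11>}<$ the regularity of $\hat X^{\<tree11>}$ and the $T^\theta$-embedding for $\tilde v'$); and the H\"older seminorm of $\tilde v$ is controlled through the controlled-structure identity $\delta_{z,\bar z}\tilde v=\tilde v'(z)\,\delta_{z,\bar z}X^{\<tree11>}+\tilde R(z,\bar z)$ — for small increments the $\tilde R$-part supplies the gain and the $\tilde v'(z)\,\delta_{z,\bar z}X^{\<tree11>}$-part is handled using $\alpha_{\<tree11>}>\alpha_\star$, for large increments one reduces to the value bound on $\tilde v$. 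Summing up yields \eqref{e:FixedPointBoundsFirst}. For the Lipschitz estimate \eqref{e:FixedPointBoundsSecond} the controlling process $\X$, hence the model $Z$ and the operators $\CR,\CJ$, is fixed, so every map entering $\CM$ is affine in $V$ except $F_v$, which is quadratic; therefore $\CM(V)-\CM(\bar V)$ has exactly the structure above with $V$ replaced by $V-\bar V$, with $F_v-F_{\bar v}$ expanded as $2X^{\<tree12>}(v-\bar v)+\bigl(4X^{\<tree122>}+2X^{\<tree122>}+v+\bar v\bigr)(v-\bar v)$ and estimated by $M\|V-\bar V\|_{\eta,\gamma;T}$ under $\|V\|_{\eta,\gamma;T},\|\bar V\|_{\eta,\gamma;T}\le M$, and with the ill-posed term using linearity of reconstruction, $\CR(\BV\<bigtree1>)-\CR(\bar\BV\<bigtree1>)=\CR\bigl((\BV-\bar\BV)\<bigtree1>\bigr)$, together with the linear part of Lemma~\ref{l:ReconstructBound}.

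The main obstacle is the ill-posed term: one must simultaneously arrange (i) the cancellation of all $\delta_{z,\bar z}X^{\<tree11>}$-increments in $\delta_{z,\bar z}\tilde v-\tilde v'(z)\,\delta_{z,\bar z}X^{\<tree11>}$ — which is what replaces $X^{\<tree11>}$ by the smooth $\hat X^{\<tree11>}$ in $\tilde R$ and is forced by the precise shape of $\tilde v'$ together with the remainder identities for $R^{\<tree21>}$ and $R^{\<tree1222>}$ — and (ii) a choice of $\theta_1,\theta_2$ in Lemma~\ref{l:ReconstructBound} making $\bar\beta=2\alpha_{\<tree11>}-\theta_1\ge\gamma$, the explosion index $\eta-\gamma+\theta_2$ no worse than $\eta-\gamma$, and $\theta=\theta_1-\theta_2>0$; this is possible exactly under the hypotheses $\gamma>-\alpha_{\<tree1>}$, $\gamma<\alpha_{\<tree11>}+\alpha_\star\wedge\alpha_{\<tree122>}$ and $\alpha_\star>-\alpha_{\<tree12>}$, and it is the delicate interplay of these inequalities — rather than any individual estimate — that makes the fixed point close.
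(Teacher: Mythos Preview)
Your outline is essentially the paper's own proof: same decomposition of $\tilde v,\tilde v',\tilde R$, same appeal to Lemma~\ref{l:InitialCondition}, Proposition~\ref{p:ClassicalProduct}/Corollary~\ref{c:Schauder} for $F_v$, smoothness of $\hat K'$ for $Q$, and Lemma~\ref{l:ReconstructBound} for the cut reconstruction $(\CJ\,\BV\<bigtree1>)_{\beta_{\<tree21>},z}$; the cancellation producing \eqref{e:FPMapr} is exactly how the paper derives the remainder formula.

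There is one genuine difference worth flagging. For the ``low'' piece $P'\ast\Pi_x^{\cdot}\CQ_{<\beta_{\<tree21>}}(\BV\<bigtree1>)_t(x)$ the paper does \emph{not} identify it explicitly as $v(z)(X^{\<tree11>}+\hat X^{\<tree11>})$; it invokes Lemma~\ref{l:RemV}, which bounds this convolution abstractly via Proposition~\ref{p:HeatBounds} and delivers the $T^\theta$ factor directly for both the value and the bracket seminorm. Your explicit identity $P'\ast X^{\<tree1>}=X^{\<tree11>}+\hat X^{\<tree11>}$ conflates two different convolutions: in the fixed-point map (and in all the Schauder-type lemmas) the convolution runs over $[0,t]$ because $\CR_s=0$ for $s\le 0$, whereas $X^{\<tree11>}=K'\ast X^{\<tree1>}$ and $\hat X^{\<tree11>}=\hat K'\ast X^{\<tree1>}$ are full space-time convolutions. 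The discrepancy is smooth and harmless for the $\tilde R$-cancellation and for the increment bounds, but it matters for the \emph{value} bound on $\tilde v$: the term $v(z)X^{\<tree11>}(z)$ as you wrote it carries no $T^\theta$ (since $X^{\<tree11>}$ does not vanish at $t=0$), while the correct object $v(z)\cdot(P'\ast_{[0,\cdot]}X^{\<tree1>})(z)$ does, precisely because the truncated convolution produces a factor $\onorm{t}^{\alpha_{\<tree1>}+1-\theta_1}$ via Corollary~\ref{c:Schauder}. This is exactly what Lemma~\ref{l:RemV} encodes; once you either cite that lemma or rephrase your identity with the truncated convolution, your argument closes and coincides with the paper's.
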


In the proof of the previous we will need the following lemma. 

\begin{lemma}\label{l:RemV}
Let $\alpha_{\star}\in(0,\alpha_{\<tree1>}+1)$, $\gamma>-\alpha_{\<tree1>}$ and $\eta\in(-1,0)$. Let $\X\in\CX$ and $Z=(\Pi,\Gamma,\Sigma)$ be the model given in~\eqref{e:Model}. Let $V=(v,v',R)\in\CH^{\gamma,\eta}_{\X, T}$ and $\BV\<bigtree1>$ be defined as in~\eqref{e:VDot}. 
For $z=(t,x)\in(0,T]\times\T$, let
\[
\zeta_z(s,\cdot)\eqdef \Pi_x^s\CQ_{<\beta}\big(\BV\<bigtree1>\big)_t(x)(s,\cdot)
\]
where $\CQ_{<\beta}$ is the projection onto $\CT_{<\beta}$. Then the following bound holds
\begin{equ}[b:PiVBeta]
\sup_z\onorm{t}^{-\eta} \bigl|\bigl(P^{(n)} \ast \bigl(\zeta_z\bigr)\bigr)(z)\bigr| + \sup_z\pnorm{P^{(n)} \ast \bigl(\zeta_z\bigr)}_{ \eta-\alpha_\star, \alpha_\star; T, z} \lesssim C T^{\frac{\theta}{2}} \;,
\end{equ}
where, the suprema run over $z=(t,x)\in (0,T]\times\T$, $P$ is the heat kernel, and $\theta$ can be chosen to be $\alpha_{\<tree1>}+1-\alpha_\star$. 

Moreover, if $\bar \X\in\CX$, $\bar Z = (\bar \Pi, \bar \Gamma)$, $\bar V\in\CH^{\eta,\gamma}_{\bar \X, T}$ and $\bar{\BV\<bigtree1>}$ are another controlling process, model, controlled process and modelled distribution respectively,  then, for $\bar \zeta$ defined as above,~\eqref{b:PiVBeta} still holds upon replacing $\zeta$ with $\zeta-\bar \zeta$. 
\end{lemma}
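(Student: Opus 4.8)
The plan is to notice first that the truncation $\CQ_{<\beta}$ collapses $\BV\<bigtree1>$ to a scalar multiple of $X^{\<tree1>}$, and then to read off the bound from the continuous Schauder estimate of Corollary~\ref{c:Schauder}. Since $\min\CA=\alpha_{\<tree1>}$ and the only homogeneity of $\CA$ strictly below $\beta$ (taking $\beta=\beta_{\<tree21>}$, or any $\beta\in(\alpha_{\<tree1>},\beta_{\<tree21>}]$) is $\alpha_{\<tree1>}$, applying $\CQ_{<\beta}$ to $\bigl(\BV\<bigtree1>\bigr)_t(x)=v(t,x)\,\<bigtree1>+v'(t,x)\,\<bigtree21>$ from~\eqref{e:VDot} leaves only $v(t,x)\,\<bigtree1>$; and by~\eqref{e:ModelPi} the distribution $\Pi^s_x\<bigtree1>=X^{\<tree1>}(s,\cdot)$ does not depend on the base point $x$. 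Hence, for every $z=(t,x)$, $\zeta_z(s,\cdot)=v(t,x)\,X^{\<tree1>}(s,\cdot)$; that is, for fixed $z$ the object $\zeta_z$ is the \emph{number} $v(t,x)$ times the fixed distribution $X^{\<tree1>}$. Consequently $P^{(n)}\ast\zeta_z=v(t,x)\,(P^{(n)}\ast X^{\<tree1>})$ and $\delta_{z,\bar z}(P^{(n)}\ast\zeta_z)=v(t,x)\,\delta_{z,\bar z}(P^{(n)}\ast X^{\<tree1>})$, so each term in~\eqref{b:PiVBeta} equals $|v(t,x)|$ times the corresponding quantity for $g\eqdef P^{(n)}\ast X^{\<tree1>}$, where $n=1$ is the only admissible integer (matching the $P'$ appearing in~\eqref{e:FPMapr}).

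Next I would apply Corollary~\ref{c:Schauder} to $X^{\<tree1>}$. By Definition~\ref{def:ControlProc}, $X^{\<tree1>}$ is a component of $\X\in\CX$ lying in $\CC^{\alpha_{\<tree1>}}_1$, which — the explosion rate being non-negative — coincides with $\CC^{\alpha_{\<tree1>}}_T$, with $\Vert X^{\<tree1>}\Vert_{\CC^{\alpha_{\<tree1>}}_T}\lesssim\Vert\X\Vert_{\CX}$. Corollary~\ref{c:Schauder} applies with $\beta=\alpha_{\<tree1>}$, $\rho=0$, $n=1$ (admissible since $\alpha_{\<tree1>}>-2$ and $\alpha_{\<tree1>}\in\bigl(-\tfrac{3}{5},-\tfrac{1}{2}\bigr)$ forces $1\in[\alpha_{\<tree1>}+1,\alpha_{\<tree1>}+2)$), and with $\theta_1=\alpha_{\<tree1>}+1-\alpha_\star$, $\theta_2=0$; the requirements $\theta_1\in(0,\alpha_{\<tree1>}+1)$ and $\theta\eqdef\theta_1-\theta_2>0$ hold precisely because $\alpha_\star\in(0,\alpha_{\<tree1>}+1)$. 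This produces $\bar\beta=\alpha_\star$, $\bar\rho=0$, $\theta=\alpha_{\<tree1>}+1-\alpha_\star$, and $\sup_w\onorm{q}^{-\alpha_\star}|g(w)|+\sup_w\pnorm{g}_{0,\alpha_\star;T,w}\lesssim\Vert\X\Vert_{\CX}\,T^{\theta/2}$.

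It then remains to recombine. Using $|v(t,x)|\le\onorm{t}^{\eta}\Vert v\Vert_{\CC^{\alpha_\star,\s}_{\eta,T}}$ (valid since $\eta\wedge0=\eta$), together with $\onorm{t}\le1$ and $\onorm{t,q}\le\onorm{t}$, the first term of~\eqref{b:PiVBeta} is $\onorm{t}^{-\eta}|v(t,x)|\,|g(z)|\le\Vert v\Vert_{\CC^{\alpha_\star,\s}_{\eta,T}}\onorm{t}^{\alpha_\star}\Vert\X\Vert_{\CX}T^{\theta/2}\lesssim\Vert V\Vert_{\eta,\gamma;T}\Vert\X\Vert_{\CX}T^{\theta/2}$, while the seminorm is $\pnorm{P^{(n)}\ast\zeta_z}_{\eta-\alpha_\star,\alpha_\star;T,z}=|v(t,x)|\,\pnorm{g}_{\eta-\alpha_\star,\alpha_\star;T,z}\le|v(t,x)|\onorm{t}^{\alpha_\star-\eta}\pnorm{g}_{0,\alpha_\star;T,z}\lesssim\Vert V\Vert_{\eta,\gamma;T}\Vert\X\Vert_{\CX}T^{\theta/2}$, where I bounded $\onorm{t,q}^{\alpha_\star-\eta}\le\onorm{t}^{\alpha_\star-\eta}$ inside the supremum since $\alpha_\star-\eta>0$. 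This is~\eqref{b:PiVBeta}, with the constant $C$ proportional to $\Vert V\Vert_{\eta,\gamma;T}\Vert\X\Vert_{\CX}$.

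For the continuity statement I would write $\zeta_z-\bar\zeta_z=v(t,x)\bigl(X^{\<tree1>}-\bar X^{\<tree1>}\bigr)+\bigl(v(t,x)-\bar v(t,x)\bigr)\bar X^{\<tree1>}$; both summands are again a scalar times a fixed distribution, so the estimate above applies to each — to the first using $\Vert X^{\<tree1>}-\bar X^{\<tree1>}\Vert_{\CC^{\alpha_{\<tree1>}}_T}\lesssim\Vert\X;\bar\X\Vert_{\CX}$ and $|v(t,x)|\le\onorm{t}^{\eta}\Vert v\Vert_{\CC^{\alpha_\star,\s}_{\eta,T}}$, to the second using $\Vert\bar X^{\<tree1>}\Vert_{\CC^{\alpha_{\<tree1>}}_T}\lesssim\Vert\bar\X\Vert_{\CX}$ and $|v(t,x)-\bar v(t,x)|\le\onorm{t}^{\eta}\Vert v-\bar v\Vert_{\CC^{\alpha_\star,\s}_{\eta,T}}\le\onorm{t}^{\eta}\Vert V-\bar V\Vert_{\eta,\gamma;T}$ — and summing yields the asserted locally Lipschitz bound. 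The only genuinely delicate point is the parameter choice in Corollary~\ref{c:Schauder}: one must simultaneously arrange $\bar\beta=\alpha_\star$ and ensure that the $\onorm{t}^{\eta}$ blow-up carried by $v(t,x)$ as $t\to0$ is exactly reabsorbed by the explosion rate $\eta-\alpha_\star$ built into the target seminorm $\pnorm{\cdot}_{\eta-\alpha_\star,\alpha_\star;T,z}$; once the collapse $\zeta_z=v(t,x)X^{\<tree1>}$ is observed, nothing else requires real work.
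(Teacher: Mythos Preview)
Your argument is correct and shares the same opening observation as the paper: since $\CQ_{<\beta}(\BV\<bigtree1>)_t(x)=v(t,x)\,\<bigtree1>$ and $\Pi^s_x\<bigtree1>=X^{\<tree1>}(s,\cdot)$ is independent of the base point, $\zeta_z(s,\cdot)=v(t,x)\,X^{\<tree1>}(s,\cdot)$.

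From there the routes diverge slightly. The paper keeps $\zeta_z$ intact and verifies the hypotheses~\eqref{e:HeatBound} of Proposition~\ref{p:HeatBounds} directly: it bounds $\langle\zeta_z(t,\cdot),\varphi_x^\lambda\rangle$ and the increment $\langle\zeta_{\bar z}(t,\cdot)-\zeta_z(t,\cdot),\varphi_x^\lambda\rangle$ (splitting into the cases $|x-\bar x|\lessgtr\onorm{t}$ and $|t-\bar t|\lessgtr\onorm{t,\bar t}^2$), checks~\eqref{e:SchwartzCond} so as to invoke Lemma~\ref{l:Schwartz}, and then applies Proposition~\ref{p:HeatBounds}. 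You instead factor out the scalar $v(t,x)$ at once and apply the simpler Corollary~\ref{c:Schauder} to the base-point-independent distribution $X^{\<tree1>}\in\CC^{\alpha_{\<tree1>}}_T$; the blow-up $\onorm{t}^{\eta}$ is then reintroduced a posteriori through $|v(t,x)|$ and absorbed exactly by the weight $\onorm{t,\bar t}^{\eta-\alpha_\star}$ in the target seminorm. Your route is more economical here precisely because the factorisation makes the increment hypothesis~\eqref{e:HeatBound2} unnecessary; the paper's route, on the other hand, mirrors the template of Lemma~\ref{l:ReconstructBound} and so fits more uniformly with the rest of the argument. Both produce the same $\theta=\alpha_{\<tree1>}+1-\alpha_\star$ via $\theta_1=\alpha_{\<tree1>}+1-\alpha_\star$, $\theta_2=0$.
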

\begin{proof}
The proof proceeds along the same lines of Lemma~\ref{l:ReconstructBound}. We will focus at first on~\eqref{e:SchwartzCond}. 
Let $\varphi\in\CB_0^r(\R)$, $\lambda\in(0,1]$ and $z=(t,x)\in(0,T]\times\T$, then
\begin{equ}[b:PiInfty]
\big|\langle\zeta_z(t,\cdot),\varphi_x^\lambda\rangle\big|=\big|\langle \Pi_x^t\CQ_{<\beta_{\<tree21>}}\big(\BV\<bigtree1>\big)_t(x)(t,\cdot),\varphi_x^\lambda\rangle\big|=|v(t,x)|\big|\langle \Pi_x^t \<bigtree1>,\varphi_x^\lambda\rangle\big|\lesssim \|V\|_{\eta,\gamma;T}\onorm{t}^{\eta}\lambda^{\alpha_{\<tree1>}}\;.
\end{equ}
On the other hand, for $\bar z=(t,\bar x)$, with $\bar x\neq x$ we have
\begin{equ}
\langle\delta_{\bar z, z}\zeta(t,\cdot),\varphi_x^\lambda\rangle=\langle \Pi_x^t\CQ_{<\beta_{\<tree21>}}\big(\BV\<bigtree1>\big)_t(x)(t,\cdot)-\Pi_{\bar x}^t\CQ_{<\beta_{\<tree21>}}\big(\BV\<bigtree1>\big)_t(\bar x)(t,\cdot),\varphi_x^\lambda\rangle
= \delta^{(t)}_{\bar x, x}v\,\langle \Pi_x^t \<bigtree1>,\varphi_x^\lambda\rangle
\end{equ}
where the last equality is a consequence of the fact that the realization of $\<bigtree1>$ through the model does not depend on the base point. Now, if $|x-\bar x|>\onorm{t}$, we bound each summand of the previous separately using~\eqref{b:PiInfty}.
Otherwise, we get
\begin{equ}
\big|\delta^{(t)}_{\bar x, x}v\big|\big|\langle \Pi_x^t \<bigtree1>,\varphi_x^\lambda\rangle\big|\lesssim \|V\|_{\eta,\gamma;T} |t|^{\eta-\alpha_\star}\lambda^{\alpha_{\<tree1>}}|x-\bar x|^{\alpha_\star}\leq \|V\|_{\eta,\gamma;T}\onorm{t}^{\eta}\lambda^{\alpha_{\<tree1>}}\;.
\end{equ}
Hence, the assumption~\eqref{e:SchwartzCond} is matched and consequently~\eqref{e:HeatBound1} with $\rho=\eta-\alpha_\star$. 
Moreover notice that, taking $t\neq\bar t$ such that $|\bar t-t|\leq \onorm{t,\bar t}^2$ and $\bar z=(\bar t,x)\neq(t,x)=z$, we have
\begin{equ}
\langle\delta_{\bar z, z}\zeta(t,\cdot),\varphi_x^\lambda\rangle= \langle \Pi_x^t\CQ_{<\beta}\big(\BV\<bigtree1>\big)_{\bar t}(x)(t,\cdot)-\Pi_{ x}^t\CQ_{<\beta}\big(\BV\<bigtree1>\big)_t(x)(t,\cdot),\varphi_x^\lambda
= \delta^{(\bar t, t)}_{x}v\,\langle \Pi_x^t \<bigtree1>,\varphi_x^\lambda\rangle\;.
\end{equ}
Proceeding as above, separately taking into account the cases $|t-\bar t|\leq |t,\bar t|_0^2$ and $|t-\bar t|> |t,\bar t|_0^2$,one obtains the same bound. 

But now, also the validity of~\eqref{e:HeatBound2} with $\alpha=\alpha_{\<tree1>}$ and $\beta = \beta_{\<tree21>}$ can be obtained exploiting the same argument presented at the end of the proof of Proposition~\ref{p:HeatBounds}, so that, upon choosing $\theta_1=\alpha_{\<tree1>}+1-\alpha_\star$ and $\theta_2=0$ we are done. 
\end{proof}

\begin{proof}[Proof of Proposition~\ref{prop:FixedPoint}]
In order to prove the statement, we need to bound each of the norms appearing in definition~\eqref{e:Norms}. 
As we will see, this will essentially amount to verify that the assumptions of Proposition~\ref{p:HeatBounds} are satisfied.

We begin with the $\CC^{\alpha_\star,\s}_{\eta,T}$ of $\tilde v$, the latter being defined in \eqref{e:FPMapv}. 
The bound on the initial condition follows immediately from Lemma~\ref{l:InitialCondition}. 
Concerning the terms of the form $X^\tau$, for $\tau\in\{\<tree1222>,\,\<tree124>\}$, the bound follows by the fact that they belong to $\X$. Let us write, $F_v^{(i)}$, $i=1,2$, for the first and second summand in the expression of $F_v$, as given in~\eqref{e:Q}. Then by Proposition~\ref{p:ClassicalProduct}, they satisfy
\begin{equ}[b:Fv]
 \big| \langle F^{(1)}_v(t, \cdot), \varphi_x^\lambda \rangle \big| \lesssim \lambda^{\alpha_{\<tree12>}} \onorm{t}^{\eta - \alpha_\star} \;,\quad
 \big| \langle F^{(2)}_v(t, \cdot), \varphi_x^\lambda \rangle \big| \lesssim \onorm{t}^{2\eta} \big( \Vert \X \Vert_{\CX} + \Vert v \Vert_{\CC_{\eta, T}^{0}} \big)^2\lesssim \lambda^{-\sigma}\onorm{t}^{2\eta}
\end{equ}
for every $t \in (0, T]$ and $\sigma\in(0,\eta+1)$, so that thanks to Proposition~\ref{p:HeatBounds}, upon choosing for the first $\theta_1^1=\alpha_{\<tree12>}+1-\alpha_\star$, $\theta_2^1=0$ and for the second $\theta_1^2=1-\sigma-\alpha_\star$, $\theta_2^2=-\eta-\alpha_\star$, we deduce
\begin{equ}
\|P'\ast F^{(1)}_v\|_{\CC^{\alpha_\star,\s}_{\eta,T}}+\|P'\ast F^{(2)}_v\|_{\CC^{\alpha_\star,\s}_{\eta,T}}\lesssim T^{\frac{\alpha_{\<tree12>}+1-\alpha_\star}{2}}+ T^{\frac{\eta+1-\sigma}{2}}
\end{equ}
where, by assumption, the exponents of $T$ are strictly positive. 
Since the function $\hat K$ is smooth and compactly supported, the convolution with $\hat K'$ is a continuous operation and the required bound on the term $Q$ in \eqref{e:FPMapv} follows straightforwardly. 
We can now deal with the term containing the reconstruction operator. We see that for $z=(t,x)\in(0,T]\times\T$ we have
\[
P'\ast(\CR(\BV\<bigtree1>))=P'\ast\bigl(\CJ \BV\<bigtree1>\bigr)_{\beta_{\<tree21>}, z}+P'\ast\Big(\Pi_x^\cdot\CQ_{<\beta_{\<tree21>}}\big(\BV\<bigtree1>\big)_t(x)\Big)\;.
\]
Now, while the second is taken care by Lemma~\ref{l:RemV}, since by Proposition~\ref{p:ModelledDistribution} we know that $\BV\<bigtree1>\in\CD^{\gamma+\alpha_{\<tree1>},\eta+\alpha_{\<tree1>}}_T$ and $\gamma+\alpha_{\<tree1>}>0$, $\eta>\gamma+2$, $\eta+\alpha_{\<tree1>}>-2$, we can directly apply Lemma~\ref{l:ReconstructBound} to the first summand with $\theta_1=\beta_{\<tree21>}+1-\alpha_\star$ and $\theta_2=\gamma-\alpha_\star$ so that its $\CC^{\alpha_\star,\s}_{\eta,T}$-norm is bounded by $T^{\frac{\beta_{\<tree21>}+1-\gamma}{2}}$. 

For the Gubinelli's derivative $\tilde{v}'$, defined in \eqref{e:FPMapv}, we immediately get
\begin{equs}
 \Vert \tilde v' \Vert _{\CC^{\gamma - \alpha_{\<tree11>},\s}_{\eta - \alpha_{\<tree11>}, T}} \lesssim \Vert X^{\<tree122>} \Vert _{\CC^{\gamma - \alpha_{\<tree11>},\s}_{\eta - \alpha_{\<tree11>}, T}} + \Vert v \Vert _{\CC^{\gamma - \alpha_{\<tree11>},\s}_{\eta - \alpha_{\<tree11>}, T}} \lesssim T^\theta \big(\Vert \X\Vert _{\CX} + \Vert v \Vert _{\CC^{\alpha_{\<tree12>},\s}_{\eta, T}}\big)\;,
\end{equs}
for some $\theta > 0$, as soon as $\alpha_{\<tree11>} < \gamma < \alpha_{\<tree11>} + \alpha_\star$.

At last we look at the seminorm involving $R$, whose expression is given in~\eqref{e:FPMapr}. The increment of the terms containing initial condition can be dealt with as before. 
The bound on $R^{\<tree1222>}$ follows by definition, while for $\delta_{z, \bar z} P' \ast \bigl(\CJ \BV\<bigtree1>\bigr)_{\beta_{\<tree21>}, z}$, Lemma~\ref{l:ReconstructBound} and in particular~\eqref{b:ReconstructionCut}, along with the observations made before, gives
\begin{equ}
\sup_z\pnorm{P' \star \bigl(\CJ \BV\<bigtree1>\bigr)_{\beta_{\<tree21>}, z}}_{ \eta -  \gamma, \gamma; T, z}\lesssim T^{\frac{\beta_{\<tree21>}+1-\gamma}{2}}\sup_z\pnorm{P' \star \bigl(\CJ \BV\<bigtree1>\bigr)_{\beta_{\<tree21>}, z}}_{ \eta -  \gamma, \beta_{\<tree21>}+1; T, z}\;.
\end{equ}
For $F_v^{(i)}$, $i=1,2$, we make use once more of~\eqref{b:Fv} but with $\theta_1^1=\alpha_{\<tree12>}+1-\gamma$, $\theta_2^1=\alpha_\star-\gamma$, $\theta_1^2=1-\sigma-\gamma$, $\theta_2^2=-\eta-\gamma$, while the increments of $Q$ are again bounded straightforwardly. 
The last summand is such that $\delta_{z,\bar z} \hat X^{\<tree11>}$ is smooth and $|\tilde v'(z)|\lesssim \onorm{z}^{\eta-\alpha_{\<tree11>}}\leq T^{\frac{\gamma-\alpha_{\<tree11>}}{2}}\onorm{t}^{\eta-\gamma}$. 

The inequality~\eqref{e:FixedPointBoundsSecond} can be shown following essentially the same steps exploited so far, so that the proof is complete. 
\end{proof}

Thanks to the previous proposition, we have now all the elements to prove Theorem~\ref{t:FixedPoint}.

\begin{proof}[Proof of Theorem~\ref{t:FixedPoint}]
It follows from \eqref{e:FixedPointBoundsFirst} that for fixed $u_0 \in \CC^{\eta}$, $\X \in \CX$ and a sufficiently small $T > 0$ there exists a ball in $\CH^{\eta, \gamma}_{\X,T}$ which is left invariant by $\CM$. 
Furthermore, \eqref{e:FixedPointBoundsSecond} and the Banach fixed point theorem imply that $\CM$ admits a unique fixed point in this ball. The uniqueness of this fixed point on whole $\CH^{\eta, \gamma}_{\X,T}$ follows from a simple argument as in \cite[Thm.~4.8]{Hai14}. We can now restart the procedure at time $T$ and so and so on, constructing in this way the maximal solution to the fixed point problem. 
The solution to~\eqref{e:SBE} is then built from~\eqref{e:uExp}. 

Concerning the local Lipschitzianity of the map, using the same procedure exploited in the proof of the previous proposition and the local Lipschitz-continuity of all the operations we performed (along with a procedure analog to the proof of~\cite[Prop. 8]{Gub04}), it is not difficult to see that, given $\X$, $\bar{\X}\in\CX$ and $u_0$, $\bar{v}_0\in\CC^{\eta}$ such that
\begin{equ}
\max\left\{\Vert u_0\Vert _{\CC^\eta},\Vert \bar{v}_0\Vert _{\CC^\eta},\Vert \X\Vert _\CX,\Vert \bar{\X}\Vert _\CX\right\}\leq R
\end{equ}
for a fixed $R>0$, and letting $V \in \CH^{\eta, \gamma}_{\X,T}$ and $\bar{V} \in \CH^{\eta, \gamma}_{\bar \X,T}$ be the respective fixed point of the of equation starting at $u_0$ (resp. $\bar{v}_0$) and controlled by $\X$ (resp. $\bar{\X}$) determined above, then the following inequality holds
\begin{equ}
\Vert V - \bar V \Vert_{\eta, \gamma; T} \lesssim \Vert u_0-\bar{v}_0\Vert _{\CC^\eta}+\Vert X-\bar{X}\Vert _\CX
\end{equ}
and, invoking again~\eqref{e:uExp}, the proof of Theorem~\ref{t:FixedPoint} is concluded. 
\end{proof}

\subsection{Discrete heat kernel and Schauder estimates}\label{sec:DConv}

In this section we want to show that it is possible to perform the same operations and obtain the same bounds as in the continuous case but in the discrete setting. As before, we fix $N\in\N$, set $\eps\eqdef 2^{-N}$ and define the grids as in~\eqref{e:Grids}, and $\T_\eps\eqdef \T\cap\Lambda_\eps$. Let us begin with the discrete counterpart of Lemma~\ref{l:Schwartz}. 

\begin{lemma}\label{l:DSchwartz}
Let $\alpha \in \R$, $\gamma \geq 0$, and let the maps $\T_\eps \ni x \mapsto \zeta^\eps_x \in \R^{\T_\eps}$ satisfy
\begin{equ}
|\langle \zeta^\eps_x, \varphi_x^\lambda\rangle_\eps | \leq C \lambda^{\alpha}\;, \qquad |\langle \zeta^\eps_x - \zeta^\eps_y, \varphi_x^\lambda\rangle_\eps | \leq C \lambda^{\alpha - \gamma} |x-y|^{\gamma}\;,
\end{equ}
uniformly in $\lambda \in [\eps,1]$, for some $C > 0$, all $\varphi \in \CB^r_0(\R)$ with integer $r > |\alpha|$, and locally uniformly over $x, y \in \Lambda_\eps$ such that $|x - y|\geq \lambda$, where we have identified $\zeta^\eps$ with its periodic extension to $\Lambda_\eps$. Let $\delta>0$ be such that $\gamma-\delta<-1$. Then, for any family of maps $\psi^\lambda\in\CC^r(\R)$, parametrized by $\lambda\in I_\eps\eqdef \Lambda_{\eps}\cap [\eps,1]$ such that $\sup_{j\leq r}\sup_x |x|^\delta|\partial^j\psi^\lambda(x)|\lesssim \lambda^{-1-r+\delta}$, one has
\begin{equ}
 |\langle \zeta^\eps_x, \psi_x^\lambda\rangle_\eps | \lesssim C \lambda^{\alpha}\;,
\end{equ}
(locally) uniformly over $x\in\Lambda_\eps$, $\lambda \in I_\eps$ and where the proportionality constant depends only on $\psi$.
\end{lemma}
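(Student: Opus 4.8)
The plan is to adapt the proof of Lemma~\ref{l:Schwartz} to the grid, where the only genuine change is that the scales at which we can localize are bounded below by $\eps$, so the decomposition of $\psi^\lambda_x$ into a sum of bump functions must terminate. First I would fix $\lambda\in I_\eps$ and, exactly as in the continuous case, use the periodicity of $\zeta^\eps$ (extended periodically to all of $\Lambda_\eps$) to upgrade the second bound so that it holds uniformly over all $x,y\in\Lambda_\eps$ with $|x-y|\ge\lambda$.

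Next I would write the rescaled test function as a finite sum of rescaled bumps. Set $Y\eqdef\frac{\lambda}{2}\Z\setminus B$ with $B=\{y\in\frac\lambda2\Z:|y|<\lambda\}$, intersected with the period of the lattice so that only finitely many terms survive; then $\psi^\lambda_x=\sum_{y\in Y\cup\{0\}}\varphi^\lambda_{x+y}$ where each $\varphi^\lambda_{x+y}$ is supported in a ball of radius $\lambda$ about $x+y$ and scales like $\lambda$ (this is~\cite[Rem. 2.21]{Hai14}, and since $\lambda\ge\eps$ the centers $x+y$ can be taken on $\Lambda_\eps$ and the pairings $\langle\cdot,\varphi^\lambda_{x+y}\rangle_\eps$ make sense). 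From the hypothesis $\sup_{j\le r}\sup_x|x|^\delta|\partial^j\psi^\lambda(x)|\lesssim\lambda^{-1-r+\delta}$ one obtains, exactly as in~\eqref{e:Crucial}, the bound $\Vert\varphi^\lambda_{x+y}\Vert_{\CC^r}\lesssim\lambda^{-1-r+\delta}|y|^{-\delta}$ for $y\in Y$.

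Then I would run the same three-term split as in the proof of Lemma~\ref{l:Schwartz}:
\begin{equ}
|\langle\zeta^\eps_x,\psi^\lambda_x\rangle_\eps|\le|\langle\zeta^\eps_x,\varphi^\lambda_x\rangle_\eps|+\sum_{y\in Y}|\langle\zeta^\eps_{x+y},\varphi^\lambda_{x+y}\rangle_\eps|+\sum_{y\in Y}|\langle\zeta^\eps_x-\zeta^\eps_{x+y},\varphi^\lambda_{x+y}\rangle_\eps|\;,
\end{equ}
and bound the three pieces using the two hypotheses together with the $\CC^r$-estimate on $\varphi^\lambda_{x+y}$: the first is $\lesssim C\lambda^\alpha$, the second is $\lesssim C\lambda^\alpha\sum_{y\in Y}(|y|\lambda^{-1})^{-\delta}$, and the third (valid because $|y|\ge\lambda$ for $y\in Y$, so the second hypothesis applies) is $\lesssim C\lambda^{\alpha-\gamma}\sum_{y\in Y}|y|^\gamma(|y|\lambda^{-1})^{-\delta}$. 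Summing over $y\in\frac\lambda2\Z$, both series are dominated by $\sum_{k\in\Z}|k|^{\gamma-\delta}<\infty$ since $\gamma-\delta<-1$, which yields $|\langle\zeta^\eps_x,\psi^\lambda_x\rangle_\eps|\lesssim C\lambda^\alpha$ with a constant depending only on $\psi$ (through the implicit constant in the $\CC^r$-bound) and not on $\eps$ or $\lambda$.

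There is essentially no serious obstacle here; the argument is a verbatim transcription of the continuous proof, and the one point requiring (minor) care is bookkeeping the lower scale cutoff---one must check that every bump $\varphi^\lambda_{x+y}$ appearing in the decomposition has scale $\ge\eps$, which holds because $\lambda\ge\eps$, so that the discrete pairings $\langle\cdot,\cdot\rangle_\eps$ are legitimate and the hypotheses (stated uniformly in $\lambda\in[\eps,1]$) can be invoked at each term. No uniformity in $\eps$ is lost since the summation bound $\sum_{k\in\Z}|k|^{\gamma-\delta}$ is $\eps$-independent.
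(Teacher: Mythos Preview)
Your proposal is correct and follows exactly the approach the paper takes: the paper's proof consists of the single sentence ``The proof is identical to that of Lemma~\ref{l:Schwartz}, with the only difference that we need to use the points from the grid,'' and you have written out precisely that transcription, together with the (helpful) bookkeeping remark that $\lambda\ge\eps$ ensures all discrete pairings and hypotheses apply at each bump.
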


\begin{remark}\label{rem:DSchwartz}
The condition imposed on the family $\psi^\lambda$ is the same mentioned in Remark~\ref{rem:Schwartz}, but  we are allowing $\lambda$ to take value in a discrete set. 
\end{remark}

\begin{proof}
The proof is identical to that of Lemma~\ref{l:Schwartz}, with the only difference that we need to use the points from the grid. 
\end{proof}

In order to be able to follow the same steps of Section~\ref{sec:ContConv}, we need to show that the convolution with the discrete heat kernel can be regarded as the testing against the recentered family $\psi^\lambda$ introduced above, in which time will play the role of $\lambda$. 
Recall that the space-time discrete heat kernel, $P^\eps$, is the unique solution of \eqref{eq:DHeat}. From the previous equation, we immediately deduce that for all $(t,x)\in\Lambda_\eps^\s$ one has
\begin{equ}
P^\eps_t(x) = \eps^{-1} \1_{t\geq 0} \bigl((1 + \eps^2 \Delta_\eps)^{t\eps^{-2}} \delta_{0,\cdot}\bigr)(x)\;.
\end{equ}
In order to derive suitable bounds on the kernel just defined we need at first to build a regular extension of $P^\eps$ off the grid $\Lambda_\eps^\s$. 
While the time component can simply be dealt with by taking $t\in\R$ and the integer part of $t\eps^{-2}$ at the exponent, for the space one we briefly recall the construction done in~\cite[Sec. 5.1]{HM15}. 
Let $\tilde{\varphi}(x)\eqdef \frac{\sin(\pi x)}{\pi x}$ and $\bar{\varphi}$ be a smooth function compactly supported in a ball of radius $\frac{1}{4}$ around the origin and integrating to 1. Then, let $\psi$ be the function whose Fourier transform is $\CF\tilde{\varphi}\ast \bar{\varphi}$. Notice that $\psi$ belongs to $\CS(\R)$, coincides with the Kronecker's function on $\Z$ and $\CF \psi$ is supported on $\{\zeta:|\zeta|\leq \frac{3}{4}\}$. Indicating as usual its rescaled version as $\psi^\eps$, i.e. $\psi^\eps(x)\eqdef \eps^{-1}\psi(x/\eps)$, we obviously have the identity
\begin{equation}\label{e:DSTHeatKernel}
P^\eps_t(x) = \1_{t\geq 0}\bigl((1 + \eps^2 \Delta_\eps)^{\lfloor t\eps^{-2}\rfloor}\psi^\eps \bigr)(x)\;,\qquad(t,x) \in \Lambda_\eps^\s\;.
\end{equation}
But now we can define $\tilde P^\eps$ according to the right hand side of the previous but for $(t,x)$ varying in $\R^2$. Notice that, by construction $\tilde P^\eps_t(x)=P^\eps_t(x)$ for $(t,x)\in\Lambda_\eps^\s$ so that for a function $f$ defined on $\Lambda_\eps^\s$, we have $\tilde P^\eps\ast_\eps f(t,x)=P^\eps\ast_\eps f(t,x)$, where $\ast_\eps$ denotes the space-time discrete convolution. 
From now on, with a slight abuse of notation, we will indicate the extension $\tilde P^\eps$ by $P^\eps$, because no confusion can arise. 

\begin{remark}
Let $D_{x,\eps}$ be the discrete derivative as in~\eqref{e:DLaplacian}, $j\in\N$ and assume that Assumption~\ref{a:pi} is satisfied. The same extension procedure that lead to the definition of $P^\eps$ can be performed on $D_{x,\eps}^jP^\eps$ without any modification and, from now on, we will denote by $D_{x,\eps}^jP^\eps$ such extensions.
\end{remark}

The following lemma provides some bounds on this kernel and its discrete derivative \textit{uniform} in $\eps$, which will allow to apply Lemma~\ref{l:DSchwartz} and use it in the same way we exploited its continuous counterpart. 

\begin{lemma}\label{lemma:DHeatKernels}
Let $N\in\N$, $\eps=2^{-N}$ and Assumptions~\ref{a:nu} and \ref{a:pi} be in place. Let $\Delta_\eps$, $D_{x,\eps}$, be respectively the discrete Laplacian and derivative introduced in~\eqref{e:DLaplacian}, and $P^\eps$ the extension to $\R^2$ of the kernel defined in~\eqref{e:DSTHeatKernel}. Then, for any $c>0$ fixed, and $j,\,k$ and $m\in\N$ such that $m< \lfloor c\rfloor\vee \lfloor c^{-1}\rfloor$, there exists a constant $C>0$, depending only on $j, \,k$ and $m$, such that the following bound 
\begin{equ}[b:DHeatKernel]
\bigl|\partial_x^k D_{x,\eps}^j P^\eps_t (x)\bigr| \leq C   |t|_{\eps}^{-1 -j- k+m}|x|^{-m}
\end{equ}
holds uniformly over $z=(t,x)\in\R^2$ such that $\|z\|_{\s}\geq  c\eps$, where $|t|_\eps= |t|^{1 / 2} \vee \eps$. In case $\|z\|_\s<c\eps$, upon taking $m=0$,~\eqref{b:DHeatKernel} is still valid.
\end{lemma}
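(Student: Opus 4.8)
\textbf{Plan of proof for Lemma~\ref{lemma:DHeatKernels}.}
The strategy is to work on the Fourier side, exploiting the explicit representation \eqref{e:DSTHeatKernel}, and to treat the two regimes $|t|\geq \eps^2$ (so $|t|_\eps = |t|^{1/2}$) and $|t| < \eps^2$ (so $|t|_\eps = \eps$) separately; the latter is essentially trivial since then $P^\eps_t$ is, up to finitely many applications of $(1+\eps^2\Delta_\eps)$, just $\psi^\eps$, a fixed Schwartz function rescaled by $\eps$, and all bounds reduce to the decay of $\psi$ and its derivatives. So I would immediately reduce to the case $|t|\geq \eps^2$. Write $P^\eps_t = (1+\eps^2\Delta_\eps)^{\lfloor t\eps^{-2}\rfloor}\psi^\eps$, take the Fourier transform in space, and note that $\widehat{\psi^\eps}$ is supported in $\{|\zeta|\leq \tfrac{3}{4}\eps^{-1}\}$ and equal to $1$ on $\{|\zeta|\leq\tfrac14\eps^{-1}\}$, while the symbol of $(1+\eps^2\Delta_\eps)$ is $m_\eps(\zeta) = 1 - \tfrac{1}{2\bar\nu}\widehat\nu(\eps\zeta)/ \text{(appropriate normalisation)}$; by Assumption~\ref{a:nu} one has $m_\eps(\zeta) = 1 - c|\eps\zeta|^2 + O(|\eps\zeta|^4)$ near $0$ with $c>0$, and $|m_\eps(\zeta)|<1$ uniformly on the support of $\widehat{\psi^\eps}$ away from $0$ because $\widehat\nu$ vanishes only on $\Z$. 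Hence there are constants $0<c_1\leq c_2$ such that $|m_\eps(\zeta)|\leq e^{-c_1\eps^2|\zeta|^2}$ for $|\eps\zeta|\leq\tfrac34$, and $\bigl|m_\eps(\zeta)^{\lfloor t\eps^{-2}\rfloor}\bigr|\leq e^{-c_1 t|\zeta|^2 /2}$ for $|t|\geq\eps^2$ (the floor costs only a harmless factor). Similarly the symbol of $D_{x,\eps}$ is $\eps^{-1}\widehat\pi(\eps\zeta)$, which by Assumption~\ref{a:pi} equals $i\zeta(1+O(\eps^2|\zeta|^2))$ near $0$ and is bounded by $C|\zeta|$ throughout the relevant frequency range.

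With these symbol bounds in hand, I would write
\begin{equ}
\partial_x^k D_{x,\eps}^j P^\eps_t(x) = \frac{1}{2\pi}\int_{|\eps\zeta|\leq 3/4} (i\zeta)^k \bigl(\eps^{-1}\widehat\pi(\eps\zeta)\bigr)^j\, m_\eps(\zeta)^{\lfloor t\eps^{-2}\rfloor}\, \widehat{\psi^\eps}(\zeta)\, e^{i x\zeta}\,\dd\zeta\;,
\end{equ}
and estimate the integral. For the bound without the $|x|^{-m}$ factor (i.e.\ $m=0$), one uses $|(i\zeta)^k (\eps^{-1}\widehat\pi(\eps\zeta))^j|\lesssim |\zeta|^{k+j}$ on the support (using $|\widehat\pi(\eps\zeta)|\lesssim \eps|\zeta|$) together with $|m_\eps(\zeta)^{\lfloor t\eps^{-2}\rfloor}|\leq e^{-c_1 t|\zeta|^2/2}$ and the elementary Gaussian-type estimate $\int_\R |\zeta|^{k+j} e^{-c_1 t|\zeta|^2/2}\dd\zeta \lesssim |t|^{-(1+j+k)/2}$, valid because $|t|\geq\eps^2$; when $|t|<\eps^2$ the frequency cutoff $|\zeta|\lesssim\eps^{-1}$ gives $\int_{|\zeta|\lesssim\eps^{-1}}|\zeta|^{k+j}\dd\zeta\lesssim \eps^{-(1+j+k)}$, which is exactly $|t|_\eps^{-1-j-k}$. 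To gain the factor $|x|^{-m}$, I would integrate by parts $m$ times in $\zeta$ against $e^{ix\zeta}$, producing $|x|^{-m}$ outside and, inside, at most $m$ derivatives falling on the symbol; the key is that $\partial_\zeta^\ell \bigl(m_\eps(\zeta)^{\lfloor t\eps^{-2}\rfloor}\bigr)$ contributes factors of the form $t|\zeta|$ (or lower order) times $m_\eps^{\lfloor t\eps^{-2}\rfloor - \text{const}}$, so after using $e^{-c_1 t|\zeta|^2/2}$ to absorb these powers of $t|\zeta|$ one again reduces to a Gaussian integral, now yielding $|t|^{(-1-j-k+m)/2}$ up to constants depending on $m$; the requirement $m<\lfloor c\rfloor\vee\lfloor c^{-1}\rfloor$ (equivalently $m$ bounded in terms of how far $z$ is from the grid scale) is what guarantees the integration-by-parts boundary terms vanish and that $\|z\|_\s\geq c\eps$ controls the interplay between $|x|$ and $|t|_\eps$. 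One must also check the boundary terms at $|\eps\zeta|=3/4$ vanish, which they do since $\widehat{\psi^\eps}$ is supported in $|\eps\zeta|\leq 3/4$ and is smooth.

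The routine but slightly delicate bookkeeping is the integration-by-parts step when $m\geq 1$: one has to track that every $\zeta$-derivative hitting $m_\eps(\zeta)^{\lfloor t\eps^{-2}\rfloor}$ spawns a polynomial in $t$ and $\zeta$ of controlled degree, that these are all dominated after pairing with the Gaussian decay, and that derivatives hitting $\widehat\psi^\eps$ or the prefactor $(i\zeta)^k(\eps^{-1}\widehat\pi(\eps\zeta))^j$ are harmless (the former is supported away from where it is non-constant only near $|\eps\zeta|\sim\eps^{-1}$, the latter is a smooth symbol with the expected scaling). The main genuine obstacle, and the point that requires the hypotheses on the Fourier transforms, is establishing the uniform-in-$\eps$ decay estimate $|m_\eps(\zeta)|\leq e^{-c_1\eps^2|\zeta|^2}$ on the \emph{entire} support of $\widehat{\psi^\eps}$: near $\zeta=0$ it follows from the Taylor expansion of $\widehat\nu$ forced by \eqref{e:DLaplacianProps}, but to get it uniformly up to $|\eps\zeta|=\tfrac34$ one needs that $\widehat\nu$ does not vanish except on $\Z$ (Assumption~\ref{a:nu}), so that $|m_\eps|$ stays bounded away from $1$ on compact subsets of frequency space not containing $0$, and then to glue the two regions together quantitatively. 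Once this single lemma about the heat-kernel symbol is in place, everything else is a standard stationary-phase / Fourier-multiplier computation, and the case $\|z\|_\s<c\eps$ with $m=0$ is immediate from $P^\eps$ being a bounded number of lattice-Laplacian iterates applied to the fixed rescaled Schwartz bump $\psi^\eps$.
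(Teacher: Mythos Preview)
Your approach is essentially the same as the paper's: both work on the Fourier side, exploit the explicit symbol representation, establish Gaussian-type decay of $m_\eps(\zeta)^{\lfloor t\eps^{-2}\rfloor}$ using Assumption~\ref{a:nu}, and then trade $\zeta$-derivatives of the Fourier transform for decay in $x$. The paper streamlines the bookkeeping by first passing to the rescaled function $F^\eps_t(x)\eqdef |t|_\eps^{1+j}\bigl(D_{x,\eps}^j P^\eps_t\bigr)(|t|_\eps\, x)$, so that the Fourier variable lives at unit scale and the target bound becomes simply $|\partial_\xi^m\widehat{F^\eps_t}(\xi)|\lesssim (1+|\xi|)^{-k}$; this absorbs all powers of $|t|_\eps$ automatically and avoids the Gaussian integrals you write out explicitly. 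Otherwise the arguments coincide, including the two-regime split $|t|\gtrless c\eps^2$ and the observation that for $|t|<c\eps^2$ the kernel reduces to finitely many iterates of $\psi^\eps$.

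One point where your explanation is slightly off: the constraint $m<\lfloor c\rfloor\vee\lfloor c^{-1}\rfloor$ is \emph{not} about vanishing of boundary terms in the integration by parts (those vanish for free since $\widehat{\psi^\eps}$ is compactly supported and smooth). The real reason, visible in the paper's use of Fa\`a di Bruno, is that when you differentiate $m_\eps(\zeta)^{\lfloor t\eps^{-2}\rfloor}$ up to $m$ times you produce factors of the form $\prod_{l=1}^{\tilde n}\frac{\lfloor t\eps^{-2}\rfloor-l}{t\eps^{-2}}$ with $\tilde n\leq m$, and keeping these uniformly bounded requires $t\eps^{-2}\geq c$ with $m<\lfloor c\rfloor$ (and the dual constraint $m<\lfloor c^{-1}\rfloor$ handles the intermediate range when $c<1$). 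In your integration-by-parts language this is exactly the control on the ``polynomial in $t$ and $\zeta$ of controlled degree'' that you allude to; just be aware that this is where the hypothesis on $m$ actually enters.
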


\begin{proof}
The proof of the statement follows the same lines as the one of~\cite[Lem. 5.3]{HM15} but we need to adapt it to the present situation. Let $j\in\N$ and, for $(t,x)\in\R^2$ and $\eps>0$, set 
\begin{equ}
F^\eps_t(x)\eqdef |t|_\eps^{1+j} \left(D_{x,\eps}^jP^\eps_t\right)(|t|_\eps x)\;.
\end{equ}
At first, we aim at obtaining bounds for $F^\eps$ uniform in $t$, $x$ and $\eps$. To this purpose, consider its spatial Fourier transform, which is given by
\begin{equ}[def:FTDHeatKernel]
\widehat{F^\eps_t}(\xi)=\widehat{\psi}(\eps |t|_\eps^{-1} \xi)\left(\frac{\widehat{\pi}(-\eps|t|_\eps^{-1}\xi)}{\eps|t|_\eps^{-1} }\right)^j \left(1+\frac{\widehat{\nu}(\eps|t|_\eps^{-1}\xi)}{2\bar{\nu}}\right)^{\lfloor t\eps^{-2}\rfloor}\;.
\end{equ}
Take for now $c>1$ and $m\in\N$ such that $m< \lfloor c\rfloor$.  Let us begin with the case $t\geq c\eps^2$ and we first study the situation in which $\xi\in\R$ is such that $\eps|t|_\eps^{-1} |\xi|\leq \frac{3}{4}$. We want to control the $m$-th derivative of~\eqref{def:FTDHeatKernel} and, thanks to the generalized Leibniz rule, we can separately treat each of the factors there appearing. Now, by construction, $\hat{\psi}$ is a $\CC^\infty$-function with compact support hence for any $m\in\N$,  
\[
\left|\partial_\xi^m\left(\widehat{\psi}(\eps |t|_\eps^{-1} \xi)\right)\right|= (\eps|t|_\eps^{-1})^m\Big|\big(\partial_\xi^m\widehat{\psi}\big)(\eps |t|_\eps^{-1} \xi)\Big|\lesssim \|\partial_\xi^m\widehat{\psi}\|_\infty
\]
where the last passage comes from the fact that, since $|t|\geq c\eps^2$, $\eps|t|_\eps^{-1}\lesssim 1$. For the second factor, we point out that, by Assumption~\ref{a:pi}, $\pi$ has compact support, hence, for $l\in\N$, $l\geq 1$, we get
\begin{equation}\label{b:DDer1}
\left|\partial_\xi^l\left(\frac{\widehat{\pi}(-\eps|t|_\eps^{-1}\xi)}{\eps|t|_\eps^{-1} }\right)\right|\lesssim (\eps|t|_\eps^{-1})^{l-1}\int|x|^l|\pi|(\dd x)\lesssim 1
\end{equation}
where once again we are using $\eps|t|_\eps^{-1}\lesssim 1$. On the other hand for $l=0$, thanks to Assumption~\ref{a:pi}, in particular \eqref{e:DDerivativeProps}, and Taylor's formula, we have
\begin{equation}\label{b:DDer2}
\left|\frac{\widehat{\pi}(-\eps|t|_\eps^{-1}\xi)}{\eps|t|_\eps^{-1} }\right|=\left|\int x \int_0^1 e^{i\eps|t|_\eps^{-1}\lambda\xi x}\dd\lambda \pi(\dd x)\right|\lesssim \int|x||\pi|(\dd x)\lesssim 1\;.
\end{equation}
In conclusion this implies that $\widehat{\pi}(-\eps|t|_\eps^{-1}\cdot)/(\eps|t|_\eps^{-1})$ converges to the identity in $\CC^\infty$, therefore its $j$-th power is uniformly bounded by $|\xi|^j$. We now come to the third and last factor, which is the one that guarantees the polynomial decay. Let $m\in\N$ be such that $m< \lfloor c\rfloor$. By Fa\`a di Bruno formula, we can write the $m$-th derivative of the abovementioned term as
\begin{equation}\label{b:ThirdFactor}
\partial_\xi^m \left(\left(1+\frac{\widehat{\nu}(\eps|t|_\eps^{-1}\xi)}{2\bar{\nu}}\right)^{\lfloor t\eps^{-2}\rfloor}\right) =\left(1+\frac{\widehat{\nu}(\eps|t|_\eps^{-1}\xi)}{2\bar{\nu}}\right)^{\lfloor t\eps^{-2}\rfloor}\tilde{T}^{\eps,m}_t(\xi)
\end{equation}
with $\tilde{T}^{\eps,m}$ defined by
\[
\sum \tilde{c}\left(1+\frac{\widehat{\nu}(\eps|t|_\eps^{-1}\xi)}{2\bar{\nu}}\right)^{-\tilde{n}}\prod_{l=1}^{\tilde{n}}\frac{\lfloor t\eps^{-2}\rfloor-l}{t\eps^{-2}}\prod_{l=1}^m \left[(\eps|t|_\eps^{-1})^{l-2}\widehat{\nu}^{(l)}(\eps|t|_\eps^{-1}\xi)\right]^{n_l}
\]
where the sum runs over $(n_1,\cdots,n_m)\in\N^m$ such that $\sum_l l n_l=m$, $\tilde{c}$ is a constant depending on the $m$-tuple and $\bar{\nu}$, $\tilde{n}\eqdef\sum_l n_l\leq m$ and we exploited the fact that, since $t\geq c\eps^2$, $t\eps^{-2}=(\eps|t|_\eps^{-1})^2$. It is not difficult to see that $\tilde{T}$ is uniformly bounded in $\xi$, $t$ and $\eps$. Indeed, by the very definition of $\widehat{\nu}$ and the fact that $\nu$ is symmetric by Assumption~\ref{a:nu}, $1+\widehat{\nu}(\cdot)/(2\bar{\nu})$ is bounded away from $0$ (actually it is greater or equal to $1/2$) hence the first factor does not create any problem. While the second is obvious, for the third we proceed as for the second factor of~\eqref{def:FTDHeatKernel} but exploiting assumption Assumption~\ref{a:nu} instead of Assumption~\ref{a:pi}. To be more precise, for $l\in\N$, $l\geq 2$, the bound is identical to~\eqref{b:DDer1}, and we have
\[
\left|\partial_\xi^l\left(\frac{\widehat{\nu}(\eps|t|_\eps^{-1}\xi)}{\eps^2|t|_\eps^{-2} }\right)\right|=(\eps|t|_\eps^{-1})^{l-2}\widehat{\nu}^{(l)}(\eps|t|_\eps^{-1}\xi)\lesssim (\eps|t|_\eps^{-1})^{l-2}\int|x|^l|\nu|(\dd x)\lesssim 1
\]
for $l=0,\,1$, we follow~\eqref{b:DDer2} and use Assumption~\ref{a:nu}, in particular~\eqref{e:DLaplacianProps}, and Taylor's formula, so that
\[
\left|\frac{\widehat{\nu}^{(l)}(-\eps|t|_\eps^{-1}\xi)}{\eps^{2-l}|t|_\eps^{-(2-l)} }\right|=\left|\int x^2 \int_0^1 e^{i\eps|t|_\eps^{-1}\lambda\xi x}\dd\lambda \nu(\dd x)\right|\lesssim \int|x|^2|\nu|(\dd x)\lesssim 1\;.
\] 
We can now focus on the main part of the proof, i.e. the one concerning the first term on the right hand side of~\eqref{b:ThirdFactor}. Let us rewrite it as
\begin{equs}
\left(1+\frac{\widehat{\nu}(\eps|t|_\eps^{-1}\xi)}{2\bar{\nu}}\right)^{\lfloor t\eps^{-2}\rfloor}&=e^{\frac{\lfloor t\eps^{-2}\rfloor}{t\eps^{-2}} t\eps^{-2} \log\big(1+\widehat{\nu}(\eps|t|_\eps^{-1}\xi)/(2\bar{\nu})\big)}\\
&=e^{-\frac{\lfloor t\eps^{-2}\rfloor}{t\eps^{-2}}t|t|_\eps^{-2}\xi^2f(\eps|t|_\eps^{-1}\xi)\int_0^1 \frac{1}{1+\lambda\widehat{\nu}(\eps|t|_\eps^{-1}\xi)/(2\bar{\nu})}\dd \lambda}\lesssim e^{-\xi^2f(\eps|t|_\eps^{-1}\xi)}\lesssim e^{-2c_f \xi^2}\;,
\end{equs}
where in the passage from the first to the second line we applied Taylor's formula and introduced the function $f(x)\eqdef -x^{-2}\hat{\nu}(x)$,  while  to understand in the inequality right after, it suffices to recall that $t|t|_\eps^{-2}=1$ since $|t|>c\eps^2$ and that, as pointed out above, $\widehat{\nu}(\cdot)/(2\bar{\nu})\geq-\frac{1}{2}$. 
To understand the reason why also the last bound holds, notice that, by Assumption~\ref{a:nu}, there exists a constant $c_f>0$ such that $f(x)$ is bounded from below by $c_f$ for all $|x|\leq\frac{3}{4}$. Summarizing what obtained so far, we have
\begin{equation}\label{bound:FTDHeatKernel}
\left|\partial_\xi^m\widehat{F^\eps_t}(\xi)\right|\lesssim |\xi|^{m+j}e^{-2c_f \xi^2}\lesssim \bigl(1 + |\xi| \bigr)^{-k}\;,
\end{equation}
for any $k\in\N$, $\xi$ such that $\eps|t|_\eps^{-1}|\xi|\leq \frac{3}{4}$ and $|t|\geq c\eps^2$. The very same bound holds trivially also for $\eps |t|_\eps^{-1} |\xi|>\frac{3}{4}$ since, by construction, $\widehat{\psi}\equiv0$. 
At last, in case $|t|<c\eps^2$,~\eqref{def:FTDHeatKernel} becomes
\[
\widehat{F^\eps_t}(\xi)=\hat{\psi}(\xi)\left(\widehat{\pi}(-\xi)\right)^j \left(1+\widehat{\nu}(\xi)/(2\bar{\nu}\right)^{\lfloor t\eps^{-2}\rfloor}
\]
and the second factor can be bounded by the total variation of the measure $\pi$ while the last is trivially bounded by a constant. Ultimately, the polynomial decay of the $m$-th derivative of $\widehat{F^\eps}$ comes from $\widehat{\psi}$, and~\eqref{bound:FTDHeatKernel} holds also in case $|t|< c\eps^2$. 

At this point we exploit the continuity properties of the Fourier transform, that guarantee that $F^\eps$ is $\CC^\infty$ (since its Fourier transform decays faster than any polynomial) 
and that its derivatives decay faster than any polynomial of degree less or equal to $\lfloor c\rfloor$ (since, vice versa, its Fourier transform is $m$ times continuously differentiable). 
In other words we have, for any $k,\,m\in\N$ such that $m< \lfloor c\rfloor$, there exists a constant $C$, depending only on $k$ and $m$, for which
\[
\left|\partial_x^k F^\eps_t(x)\right|\leq C(1+|x|)^{-m}\quad\Longrightarrow\quad \left|\partial_x^k D_{x,\eps}^jP^\eps_t(|t|_\eps x)\right|\leq C|t|_\eps^{-1-j-k}(1+|x|)^{-m}
\] 
uniformly in $z=(t,x)\in\R^2$ and $\eps>0$. The bound~\eqref{b:DHeatKernel} is now a direct consequence of the previous since it suffices to change variables ($x\mapsto |t|_\eps^{-1}|x|$). For $\|z\|_\s< c\eps$, notice that the second factor in the last inequality is bounded by a constant.

At last we remark that, for $c<1$, we need to distinguish another case, namely $|t|\in[c\eps^2,c^{-1}\eps^2)$. While for $|t|\geq c^{-1}\eps^2$ and $|t|< c\eps^2$ we proceed as before, the just mentioned situation can be dealt with as in the latter case.
\end{proof}

At this point we can formulate and prove the following proposition, which represents the discrete analogue of Proposition~\ref{p:HeatBounds}.

\begin{proposition}\label{p:DHeatBounds}
In the same setting as Proposition~\ref{p:HeatBounds}, let $T\in(0,1]$ and consider a map $\Lambda_{\eps^2,T}\times\T_\eps \ni (t,x)= z \mapsto \zeta^\eps_z \in \ell^\infty\bigl(\Lambda_{\eps^2,T}, \R^{\T_\eps}\bigr)$ for which there exists a constant $C>0$ (independent of $\eps$) such that
\begin{subequations}
\begin{equs}
\big|\langle \zeta^\eps_{z}(t, \cdot), \varphi_x^\lambda\rangle_\eps\big| \leq C \lambda^{\beta} \enorm{t}^{\rho}\;, \qquad \big|\langle \zeta^\eps_{\bar z}(t, \cdot)-\zeta^\eps_z(t,\cdot), \varphi_x^\lambda\rangle_\eps\big| \leq C \lambda^{\alpha} |t,\bar t|_\eps^{\rho}|\bar z-z|_\s^{\beta-\alpha}\;,
\end{equs}
\end{subequations}
uniformly over $z=(t,x),\,\bar z=(\bar t, \bar x) \in \Lambda_{\eps^2,T}\times\T_\eps$ such that $|\bar t-t|_\eps\leq\enorm{\bar t, t}^2$, $\lambda \in [\eps,1]$ and $\varphi^\lambda\in\CC^r(\R)$ a family of maps parametrized by $\lambda\in I\eqdef\Lambda_{\eps}\cap [\eps,1]$ for which $\sup_{j\leq r}\sup_x |x|^\delta|\partial^j\varphi^\lambda(x)|\lesssim \lambda^{-1-r+\delta}$, where $\beta-\alpha-\delta<-1$. 
Then, for the same values of $\theta_1$, $\theta_2$, $\theta$, $\bar \beta$ and $\bar \rho$ as in Proposition~\ref{p:HeatBounds} the following bound holds
\begin{equ}[e:DConvBound]
\sup_{z} \enorm{t}^{-(\bar\rho+\bar \beta)} \bigl|\bigl(P^{(\eps, n)} \ast_\eps \zeta^\eps_{z}\bigr)(z)\bigr| +  \sup_{z }\pnorm{P^{(\eps, n)} \ast_\eps \zeta^\eps_{z}}^{(\eps)}_{ \bar\rho, \bar \beta; T, z} \lesssim C \enorm{T}^{\theta}
\end{equ}
uniformly in $\eps$, where the suprema run over $z=(t,x)\in \Lambda_{\eps^2,T}\times\T_\eps$, $P^\eps$ is the discrete heat kernel and $P^{(\eps, n)} = D^n_{x,\eps} P^\eps$ and $\ast_\eps$ is the discrete space-time convolution. 
\end{proposition}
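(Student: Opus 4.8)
The plan is to mirror, line by line, the proof of the continuous estimate in Proposition~\ref{p:HeatBounds}, replacing every ingredient by its discrete analogue. The two structural inputs are: first, the kernel bounds of Lemma~\ref{lemma:DHeatKernels}, which give $|D_{x,\eps}^nP^\eps_t(x)|\lesssim |t|_\eps^{-1-n+m}|x|^{-m}$ uniformly in $\eps$ and in particular play the role of the fact that $P^{(n)}_t$ is a Schwartz function with $L^1$-scaling $|t|^{-n/2}$; second, the discrete Schwartz lemma, Lemma~\ref{l:DSchwartz}, applied with the rescaled kernels $x\mapsto P^{(\eps,n)}_{t-s}(x-\cdot)$, which over dyadic $\eps$-grids satisfy exactly the decay hypothesis $\sup_{j\le r}\sup_x |x|^\delta|\partial^j\varphi^\lambda(x)|\lesssim \lambda^{-1-r+\delta}$ demanded there, with $\lambda\sim|t-s|^{1/2}\vee\eps=|t-s|_\eps$. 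Combining the two hypotheses on $\zeta^\eps$ with the triangle inequality gives, exactly as in~\eqref{e:HeatFour},
\begin{equ}
\big|\langle \zeta^\eps_z(s,\cdot),\varphi^\lambda_y\rangle_\eps\big|\lesssim |s|_\eps^{\rho}\big(\lambda^\beta+\lambda^\alpha|z-w|_\s^{\beta-\alpha}\big)\;,\qquad s\le t\;,
\end{equ}
and hence, via Lemma~\ref{l:DSchwartz} applied to the kernel, $|\langle\zeta^\eps_z(s,\cdot),P^{(\eps,n)}_{t-s}(x-\cdot)\rangle_\eps|\lesssim |t-s|_\eps^{\beta-n}|s|_\eps^{\rho}$ (and the analogous bound with $\alpha$ in place of $\beta$ and an extra factor $|z-w|_\s^{\beta-\alpha}$).

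With this pointwise-in-$s$ bound in hand the argument is identical to the continuous case and I would not rewrite it in full. One writes $P^{(\eps,n)}\ast_\eps\zeta^\eps_z(z)=\eps^2\sum_{0\le s\le t}\langle\zeta^\eps_z(s,\cdot),P^{(\eps,n)}_{t-s}(x-\cdot)\rangle_\eps$, bounds the summand as above, and estimates the Riemann sum $\eps^2\sum_{0\le s\le t}|t-s|_\eps^{\beta-n}|s|_\eps^{\rho}$ by the integral $\int_0^t (t-s)^{(\beta-n)/2}s^{\rho/2}\,\dd s\lesssim |t|_\eps^{\bar\rho+\bar\beta}\enorm{T}^{\theta}$, using $\rho>-2$, $n<\beta+2$, $\theta>0$ exactly as in the continuous proof; here one must be mildly careful that $|s|_\eps$ rather than $|s|^{1/2}$ appears, but since $|s|_\eps\ge\eps$ the number of lattice points with $|s|_\eps=\eps$ is $O(\eps^{-2})$ and their total contribution is $O(\eps^2\cdot\eps^{-2}\cdot\eps^{\rho})=O(\eps^{\rho})$, which is harmless because $\rho>-2$ and gets absorbed; more cleanly one notes $\eps^2\sum_{0\le s\le t}|t-s|_\eps^{a}|s|_\eps^{b}\lesssim\int_0^{t+\eps^2}(t+\eps^2-s)_\eps^{a}s_\eps^{b}\dd s$ whenever $a,b>-2$ and this is $\lesssim|t|_\eps^{a+b+2}$. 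For the spatial increment $P^{(\eps,n)}\ast_\eps\zeta^\eps_z(\bar z)-P^{(\eps,n)}\ast_\eps\zeta^\eps_z(z)$ with $\bar z=(t,\bar x)$ one splits according to whether $t-s\lessgtr|x-\bar x|^2$, applying the difference-of-kernels bound (via a discrete Taylor expansion of $P^{(\eps,n)}_{t-s}$ in the space variable, legitimate because the off-grid extension $\tilde P^\eps$ is smooth in $x$, using $D_{x,\eps}^nP^\eps=P^{(\eps,n)}$ and the bound on $\partial_x^kD_{x,\eps}^nP^\eps$ from Lemma~\ref{lemma:DHeatKernels}) on the regime $t-s\ge|x-\bar x|^2$ and treating the two kernels separately on $t-s<|x-\bar x|^2$, in both cases running the identical chain of elementary integral estimates~\eqref{e:TimeIntegralA}--\eqref{e:TimeIntegral2}. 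For the time increment $\bar z=(\bar t,x)$, $t<\bar t$, $|t-\bar t|\le\enorm{t,\bar t}^2$, one uses the decomposition~\eqref{e:HeatTimeBound}: the first piece $\eps^2\sum_{t\le s\le\bar t}\langle\zeta^\eps_z(s,\cdot),P^{(\eps,n)}_{\bar t-s}(x-\cdot)\rangle_\eps$ is bounded directly, and the second piece uses the discrete heat equation $\bar D_{t,\eps^2}P^\eps=\Delta_\eps P^\eps$ together with a discrete Taylor expansion in time, $P^{(\eps,n)}_{\bar t-s}-P^{(\eps,n)}_{t-s}=\eps^2\sum_{t\le q<\bar t}\bar D_{q,\eps^2}P^{(\eps,n)}_{q-s}=\eps^2\sum_{t\le q<\bar t}\Delta_\eps P^{(\eps,n)}_{q-s}$, so that on $t-s\ge\bar t-t$ one gains $|t-s|_\eps^{-2}$ from the kernel bound with the exponent raised by $2$, exactly as the $P^{(n+2)}$ term in the continuous proof.

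The only genuinely new bookkeeping is the passage from Riemann sums to integrals and from discrete differences to the kernel bounds of Lemma~\ref{lemma:DHeatKernels}; neither is conceptually hard because the lemma is stated precisely so that $D_{x,\eps}^nP^\eps$ and its $x$-derivatives decay like the continuous heat kernel derivatives with $|t|$ replaced by $|t|_\eps$, and because every integral one meets has both exponents $>-2$, so the ``extra'' $\eps$-scale points near $s=0$, $s=t$, $s=\bar t$ contribute at worst a constant times a positive power of $\eps$ times $\enorm{T}^\theta$, which is absorbed into the right-hand side (recall the left-hand side of~\eqref{e:DConvBound} involves $\enorm{t}$, not $|t|$, so no spurious blow-up as $t\downarrow\eps^2$ arises). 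I expect the main obstacle, such as it is, to be purely clerical: one must check that in each of the six or seven regimes the discrete sum is comparable, uniformly in $\eps$, to the corresponding integral, and that the discrete Taylor expansions in space and in time (used respectively for the kernel difference and via the discrete heat equation) produce precisely the same power gains as the classical Taylor/PDE manipulations. Since all hidden constants in Lemma~\ref{lemma:DHeatKernels} and Lemma~\ref{l:DSchwartz} are $\eps$-independent, the final bound~\eqref{e:DConvBound} is uniform in $\eps$, which is the whole point. I would therefore present the proof as: (i) derive the uniform pointwise-in-$s$ pairing bound from the hypotheses plus Lemma~\ref{l:DSchwartz}; (ii) state the elementary discrete-sum-versus-integral comparison lemma; (iii) say ``the remaining estimates are now obtained verbatim as in the proof of Proposition~\ref{p:HeatBounds}, replacing $P^{(n)}$ by $P^{(\eps,n)}$, Taylor expansion in space/time by its discrete counterpart (legitimate by Lemma~\ref{lemma:DHeatKernels} and the discrete heat equation~\eqref{eq:DHeat}), and $|\cdot|_0$ by $|\cdot|_\eps$,'' and leave it at that.
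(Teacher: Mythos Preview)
Your proposal is correct and follows essentially the same route as the paper: both mirror Proposition~\ref{p:HeatBounds} step by step, invoking Lemma~\ref{lemma:DHeatKernels} for the kernel decay, Lemma~\ref{l:DSchwartz} for the pairing bound, Riemann-sum-to-integral comparison, spatial Taylor expansion of the smooth extension, and the telescopic sum plus the discrete heat equation~\eqref{eq:DHeat} for the time increment. The only implementation difference is that the paper explicitly splits the time sum at $t-s<c\eps^2$ (with $c<1$, so this is just the single term $s=t$) and bounds that contribution directly via $P^{(\eps,n)}_0\sim\eps^{-n-1}\delta_{0,\cdot}$, whereas you absorb this boundary term into the uniform scale-$\eps$ kernel bound; both treatments are valid.
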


\begin{proof}
The proof of this proposition is extremely similar to the one of Proposition~\ref{p:HeatBounds} so we limit ourselves to point out the differences and describe the adjustments to be made. 

Let us fix $c<1$ such that $\delta<\lfloor c^{-1}\rfloor$. At first, notice that we can always write the discrete convolution of the discrete heat kernel with a general map $f$ on $\Lambda_\eps^\s$ as 
\begin{equation}\label{e:splitting}
P^{(\eps, n)} \ast_\eps f(z)=\eps^2\sum_{s\in\Lambda_{\eps^2,t}}\langle f(s,\cdot),P^{(\eps,n)}_{t-s}(x-\cdot)\rangle_\eps
=\Big(\eps^2\sum_{\substack{s\in\Lambda_{\eps^2,t}\\t-s\geq c\eps^2}}+\eps^2\sum_{\substack{s\in\Lambda_{\eps^2,t}\\t-s< c\eps^2}}\Big)\langle f(s,\cdot),P^{(\eps,n)}_{t-s}(x-\cdot)\rangle_\eps
\end{equation}
where $z=(t,x)\in\Lambda_\eps^s$. In all the estimates we need to obtain, we will always split the discrete convolution as above and separately bound each of the sums appearing at the last member of the previous equality. 

For the first, we can apply exactly the same strategy as in the proof of Proposition~\ref{p:HeatBounds}, i.e. we exploit the discrete analog of~\eqref{e:HeatFour} (which also holds in this context by assumption) for the discrete pairing in space and then bound the Riemann-sum approximation by the actual integral in time over $[0,t]$. 
Recall that in case $t-s\geq|x-\bar x|^2$ we used Taylor's theorem to rewrite the spatial increments of $P^{(n)}$ and, thanks to Lemma~\ref{lemma:DHeatKernels}, we can do the same here. When on the other hand, we considered the situation $t-s\geq\bar t- t$ we exploited again Taylor's formula and the equality $\partial_t P_t=\Delta P_t$, which of course now means that we have to rewrite the increment $(P^{(\eps,n)}_{\bar t-s}-P^{(\eps,n)}_{t-s})(x-\cdot)$ as a telescopic sum from $t$ to $\bar t$ and use the fact that $P^\eps$ solves the discrete heat equation \eqref{eq:DHeat} and that there exists $\pi$ such that $\Delta_\eps = D_{x,\eps}^2$ (and consequently refer to Lemma~\ref{lemma:DHeatKernels}).

For $t-s<c\eps^2$, $P^{(\eps,n)}_t(x)=\eps^{-n-1}\delta_{0,\cdot}$, where $\delta_{\cdot,\cdot}$ is the Kronecker's delta function on $\Z$, therefore the $L^\infty$ estimate reads
\begin{equ}
\eps^2\sum_{\substack{s\in\Lambda_{\eps^2,t}\\ t-s<c\eps^2}} \big|\langle\zeta_{z}(s,\cdot), P^{(n,\eps)}_{t-s}(x-\cdot)\rangle_\eps|=\eps^{2-n}|\zeta_z(z)|
=\eps^{2-n}\big|\langle\zeta_{z}(t,\cdot), \varphi^\eps_x\rangle_\eps|\lesssim \eps^{2-n+\beta}\enorm{t}^{\varrho}\lesssim\enorm{t}^{\bar \varrho +\bar \beta}\enorm{T}^\theta
\end{equ}
where $\varphi$ is {\it any} function in $\CB^r_0$ so that the passage from the first to the second line is justified by the fact that $\varphi^\eps$ is compactly supported in a ball of radius $\eps$. The same argument works for the spatial increments. Indeed, obviously one has $t-s<c\eps^2\leq |x-\bar x|^2$ for $x\neq\bar x\in\T_\eps$, so that it suffices to bound each of the terms in (the discrete version of)~\eqref{e:SpaceIncrements} and obtain the required estimate. 

For the time increments instead, if one looks at (the discrete version of)~\eqref{e:TimeIntegral2}, the first summand can be treated as before, while for the second notice that, for $\bar t> t\in\Lambda_{\eps^2,T}$, $t-s< c\eps^2\leq \bar t - t$ we have
\begin{multline*}
\eps^2\sum_{\substack{s\in\Lambda_{\eps^2,t}\\t-s< c\eps^2}}\langle \zeta_z^\eps(s,\cdot),(P^{(\eps,n)}_{\bar t-s}-P^{(\eps,n)}_{t-s})(x-\cdot)\rangle\dd s=\eps^2 \langle \zeta_z^\eps(t,\cdot), P^{(\eps,n)}_{\bar t-t}-\eps^{-n} \delta_{x,\cdot}\rangle\\
\lesssim \eps^2t^{\frac{\rho}{2}}\big((\bar t -t)^{\frac{\beta-n}{2}} + (\bar t-t)^{\frac{\alpha-n}{2}} \eps^{\beta-\alpha}\big) + \eps^{\beta+2-n}t^{\frac{\rho}{2}}\lesssim \eps^\theta t^{\frac{\rho}{2}} (\bar t-t)^{\frac{\bar \beta}{2}}
\end{multline*}
which in turn concludes the proof of~\eqref{e:DConvBound}. 
\end{proof}

 And again, as a straightforward corollary of the previous proposition combined with Lemma~\ref{l:DSchwartz}, we have the following discrete Schauder estimate. 

\begin{corollary}\label{c:DSchauder}
Let $T>0$, $\alpha, \beta,\,\varrho$ and $n$ as in the statement of Proposition~\ref{p:HeatBounds}. If $\zeta^\eps$ is a function on $\Lambda_{\eps^2, T}\times \T^\eps$ such that $\|\zeta^\eps\|_{\CC^{\beta}_{\rho,T}}^{(\eps)}\leq M$ for some finite $M>0$ independent of $\eps$, then for the same values of $\theta_1,\,\theta_2,\,\theta,\,\bar \beta$ and $\bar \rho$ as in Corollary~\ref{c:Schauder}, we have
\begin{equ}[e:DConvBound]
\sup_{z} \enorm{t}^{-(\bar \rho+\bar \beta)} \bigl|\bigl(P^{(\eps,n)} \ast_\eps \zeta^\eps\bigr)(z)\bigr| +  \sup_{z }\pnorm{P^{(\eps,n)} \ast_\eps \zeta^\eps}_{ \bar\rho, \bar \beta; T, z}^{(\eps)} \lesssim C \enorm{T}^{\theta}
\end{equ}
uniformly in $\eps$, where the suprema run over $z=(t,x)\in \Lambda_{\eps^2,T}\times\T_\eps$. 
\end{corollary}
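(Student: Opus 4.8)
The plan is to imitate the proof of Corollary~\ref{c:Schauder}, deducing the statement from Proposition~\ref{p:DHeatBounds} together with the discrete Schwartz Lemma~\ref{l:DSchwartz}. First I would set $\zeta^\eps_z \eqdef \zeta^\eps$ for every base point $z$, so that the family is constant in $z$, and apply Proposition~\ref{p:DHeatBounds} with the choice $\alpha=\beta$; this is exactly what makes the admissible ranges of $n$, $\theta_1$, $\theta_2$, $\bar\beta$ and $\bar\rho$ coincide with those prescribed in Corollary~\ref{c:Schauder}. It then remains to verify the two hypotheses of Proposition~\ref{p:DHeatBounds} for this $\zeta^\eps_z$, with some exponent $\delta>1$ (so that $\beta-\alpha-\delta=-\delta<-1$; such a $\delta$ is admissible for the discrete heat kernel by Lemma~\ref{lemma:DHeatKernels}, which is what Proposition~\ref{p:DHeatBounds} ultimately invokes).

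The increment hypothesis is trivial: since $\zeta^\eps_{\bar z}(t,\cdot)-\zeta^\eps_z(t,\cdot)=0$ identically, the corresponding bound holds with vanishing left-hand side for every admissible $\alpha$ — this is the discrete analogue of the observation used in Corollary~\ref{c:Schauder}. For the pointwise hypothesis I would fix $t\in\Lambda_{\eps^2,T}$ and note that $\|\zeta^\eps\|_{\CC^\beta_{\rho,T}}^{(\eps)}\le M$ gives $|\langle\zeta^\eps(t,\cdot),\psi^\lambda_x\rangle_\eps|\le M\,\enorm{t}^{\rho}\,\lambda^{\beta}$ for every $\psi\in\CB^r_0(\R)$ with integer $r>|\beta|$, every $\lambda\in[\eps,1]$ and every $x\in\Lambda_\eps$. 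Because $\zeta^\eps(t,\cdot)$ does not depend on the base point, the second condition in Lemma~\ref{l:DSchwartz} is satisfied vacuously for every $\gamma\ge 0$; choosing $\gamma\in[0,\delta-1)$ that lemma upgrades the above to $|\langle\zeta^\eps(t,\cdot),\varphi^\lambda_x\rangle_\eps|\lesssim M\,\enorm{t}^{\rho}\,\lambda^{\beta}$ for every family $\varphi^\lambda$ with $\sup_{j\le r}\sup_x|x|^{\delta}|\partial^j\varphi^\lambda(x)|\lesssim\lambda^{-1-r+\delta}$, uniformly over $\lambda\in\Lambda_\eps\cap[\eps,1]$ and over $\eps$. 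This is precisely the pointwise hypothesis of Proposition~\ref{p:DHeatBounds} with constant $C\sim M$.

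With both hypotheses in hand, Proposition~\ref{p:DHeatBounds} applied with $\alpha=\beta$ yields~\eqref{e:DConvBound} at once, and the constant is independent of $\eps$ because Lemma~\ref{l:DSchwartz}, Lemma~\ref{lemma:DHeatKernels} and Proposition~\ref{p:DHeatBounds} are all uniform in $\eps$. I expect no genuinely hard step here; the only thing requiring attention is the bookkeeping of the auxiliary parameters — taking $\alpha=\beta$ and an exponent $\delta>1$ compatible simultaneously with the constraint $\gamma-\delta<-1$ of Lemma~\ref{l:DSchwartz} and the constraint $\beta-\alpha-\delta<-1$ of Proposition~\ref{p:DHeatBounds} — together with checking that the restriction $\lambda\ge\eps$ built into the discrete negative-order H\"older norm~\eqref{e:DHolderNegative} is consistent with the range of $\lambda$ used in Lemma~\ref{l:DSchwartz} and Proposition~\ref{p:DHeatBounds}, which it is.
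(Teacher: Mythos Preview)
Your proposal is correct and follows exactly the paper's approach: the paper's proof simply says to mimic Corollary~\ref{c:Schauder} with Lemma~\ref{l:DSchwartz} and Proposition~\ref{p:DHeatBounds} in place of their continuous counterparts, which is precisely what you do. Your additional bookkeeping on $\alpha=\beta$ and $\delta>1$ merely makes explicit what the paper leaves implicit.
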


\begin{proof}
The proof is identical to the one of Corollary~\ref{c:Schauder}, but instead of Lemma~\ref{l:Schwartz} and Proposition~\ref{p:HeatBounds} one has to use Lemma~\ref{l:DSchwartz} and Proposition~\ref{p:DHeatBounds}. 
\end{proof}

In the following we show how to apply Proposition~\ref{p:DHeatBounds} in the context of the Reconstruction Theorem~\ref{t:DReconstruct}. 

\begin{lemma}\label{l:DReconstructBound}
Let $\ST=(\CA,\CT, \CG)$ be a regularity structure according to Definition~\ref{def:RS}, $\alpha \eqdef \min \CA$, $Z^\eps = (\Pi^\eps, \Gamma^\eps, \Sigma^\eps)$ a discrete periodic model on $\ST$ and $\CR^\eps$ the corresponding reconstruction operator. 
Let $\gamma,\,\eta,\,\beta$ and $n$ be as in the statement of Lemma~\ref{l:ReconstructBound} and $T\in(0,1]$. For $z = (t,x) \in \Lambda_{\eps^2,T}\times\T_\eps$ and $H^\eps \in \CD^{\eta, \gamma}_{\eps,T}(Z^\eps)$, define the distribution
\begin{equ}[e:DReconstructCut]
\bigl(\CJ^\eps H^\eps\bigr)_{\beta, z}(s, \cdot) \eqdef \bigl(\CR^\eps_s H^\eps_s - \Pi^{\eps,s}_x \CQ_{< \beta} H^\eps_t(x)\bigr)(\cdot)
\end{equ}
where $\CQ_{<\beta}$ is the projection onto $\CT_{<\beta}$ (see Remark~\ref{r:QOperators}). Then, for $\theta_1,\,\theta_2,\,\theta$ and $\bar\beta$ as in the same Lemma mentioned above, the following bound holds
\begin{equ}[b:DReconstructionCut]
\sup_z\enorm{t}^{-(\eta-\gamma+\theta_2+\bar \beta)} \bigl|\bigl(P^{(\eps,n)} \ast_\eps \bigl(\CJ^\eps H^\eps\bigr)_{\beta, z}\bigr)(z)\bigr| + \sup_z\pnorm{P^{(\eps,n)} \ast_\eps \bigl(\CJ^\eps H^\eps\bigr)_{\beta, z}}_{ \eta -  \gamma+\theta_2, \bar \beta; T, z}^{(\eps)} \lesssim C \enorm{T}^{\theta}\;,
\end{equ}
uniformly in $\eps$, where the suprema run over $z=(t,x)\in \Lambda_{\eps^2,T}\times\T_\eps$. 

If furthermore $\bar Z^\eps = (\bar \Pi^\eps, \bar \Gamma^\eps, \Sigma^\eps)$ is another discrete model, $\bar \CR_t^\eps$ the associated family of discrete reconstruction operators and $\bar H^\eps \in \CD_{\eps,T}^{\gamma, \eta}(\bar Z)$, then~\eqref{e:DReconstructCut} still holds upon replacing $\CJ^\eps H^\eps$ by $\CJ H-\bar \CJ^\eps \bar H^\eps$ and with the constant $C$ proportional to 
\[
\bigl(\Vert H^\eps; \bar H^\eps \Vert^{(\eps)}_{\gamma, \eta; T} \Vert \Pi^\eps \Vert^{(\eps)}_{\gamma; T} + \Vert \bar H^\eps \Vert^{(\eps)}_{\gamma, \eta; T} \Vert \Pi^\eps - \bar \Pi^\eps \Vert^{(\eps)}_{\gamma; T} \bigr)\,.
\]
\end{lemma}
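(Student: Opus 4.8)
\emph{Plan of the proof.} The strategy is to transcribe, line by line, the argument of Lemma~\ref{l:ReconstructBound}, substituting the discrete tools for the continuous ones and carrying all constants uniformly in $\eps$: Lemma~\ref{l:DSchwartz} replaces Lemma~\ref{l:Schwartz}, Proposition~\ref{p:DHeatBounds} replaces Proposition~\ref{p:HeatBounds}, and the discrete reconstruction Theorem~\ref{t:DReconstruct} replaces Theorem~\ref{t:Reconstruction}. Concretely, for fixed $z=(t,x)\in\Lambda_{\eps^2,T}\times\T_\eps$ and $s\in\Lambda_{\eps^2,T}$ I would set $\zeta^\eps_z(s,\cdot)\eqdef\bigl(\CR^\eps_s H^\eps_s-\Pi^{\eps,s}_x\CQ_{<\beta}H^\eps_t(x)\bigr)(\cdot)$, the right-hand side of~\eqref{e:DReconstructCut}, viewed as a map on $\Lambda_\eps$ extended periodically, and then check that the family $z\mapsto\zeta^\eps_z$ satisfies the two hypotheses of Proposition~\ref{p:DHeatBounds} with the parameter identification $\beta\rightsquigarrow\beta$, $\alpha\rightsquigarrow\alpha=\min\CA$ and $\rho\rightsquigarrow\eta-\gamma$. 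Since $\bar\beta=\beta+2-n-\theta_1$ and $\bar\rho=\eta-\gamma+\theta_2$, the output of Proposition~\ref{p:DHeatBounds} is exactly~\eqref{b:DReconstructionCut}.

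\emph{First hypothesis.} To bound $|\langle\zeta^\eps_z(t,\cdot),\varphi^\lambda_x\rangle_\eps|$ for $\varphi\in\CB^r_0$ and $\lambda\in[\eps,1]$, I would split it, as in the continuous case, into the discrete reconstruction error $|\langle\CR^\eps_tH^\eps_t-\Pi^{\eps,t}_xH^\eps_t(x),\varphi^\lambda_x\rangle_\eps|$, which is $\lesssim\lambda^\gamma\enorm{t}^{\eta-\gamma}$ by~\eqref{e:DReconstruction} of Theorem~\ref{t:DReconstruct}, plus $|\langle\Pi^{\eps,t}_x\CQ_{\ge\beta}H^\eps_t(x),\varphi^\lambda_x\rangle_\eps|$, which is $\lesssim\sum_{\beta\le l<\gamma}\lambda^l\enorm{t}^{(\eta-l)\wedge0}$ by the discrete model bound~\eqref{ModelAnaProp} (valid for $\lambda\in[\eps,1]$) and the definition~\eqref{e:DModelledDistributionNormAbs} of $\VERT H^\eps\VERT^{(\eps)}_{\gamma,\eta;T}$; using $T\le1$, $\lambda\le1$ and $\beta\le l<\gamma$ this collapses to $\lambda^\beta\enorm{t}^{\eta-\gamma}$. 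The spatial and temporal increments $|\langle\zeta^\eps_{\bar z}(t,\cdot)-\zeta^\eps_z(t,\cdot),\varphi^\lambda_x\rangle_\eps|$ for $|x-\bar x|\ge\lambda$ (resp. $|t-\bar t|\le\enorm{t,\bar t}^2$) are controlled among these same ingredients exactly as in Lemma~\ref{l:ReconstructBound} using the bounds on $\Gamma^{\eps}$ and $\Sigma^\eps$. This verifies the hypothesis of Lemma~\ref{l:DSchwartz}, which then upgrades the estimate from test functions in $\CB^r_0$ to the rescaled discrete heat-kernel family $\varphi^\lambda=P^{(\eps,n)}_{\lambda^2}(x-\cdot)$, whose $\eps$-uniform decay $\sup_{j\le r}\sup_y|y|^\delta|\partial^j_y\varphi^\lambda(y)|\lesssim\lambda^{-1-r+\delta}$ is supplied by Lemma~\ref{lemma:DHeatKernels}.

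\emph{Second hypothesis and the Lipschitz part.} Using the algebraic identities $\Pi^{\eps,t}_y=\Pi^{\eps,t}_x\Gamma^{\eps,t}_{xy}$ and $\Gamma\CQ_{<\beta}=\CQ_{<\beta}\Gamma-\CQ_{<\beta}\Gamma\CQ_{\ge\beta}$, one writes $\zeta^\eps_{\bar z}(t,\cdot)-\zeta^\eps_z(t,\cdot)=\Pi^{\eps,t}_x\tau$ with $\tau=\CQ_{<\beta}\bigl(H^\eps_t(x)-\Gamma^{\eps,t}_{x\bar x}H^\eps_t(\bar x)\bigr)+\CQ_{<\beta}\Gamma^{\eps,t}_{x\bar x}\CQ_{\ge\beta}H^\eps_t(\bar x)$; since $\alpha=\min\CA$, only the $\CT_\alpha$-component survives, it is basepoint-independent through the model, and $\|\tau\|\lesssim\enorm{t}^{\eta-\gamma}|x-\bar x|^{\beta-\alpha}$ on the compact (using $\gamma\ge\beta$), with the analogous statement for time increments via $\Sigma^\eps$. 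Combined with Lemma~\ref{l:DSchwartz} applied to $\Pi^{\eps,t}_x\tau$ this gives the second bound required by Proposition~\ref{p:DHeatBounds}, and applying that proposition with the stated $\theta_1,\theta_2$ yields~\eqref{b:DReconstructionCut}. The difference statement for two models follows by rerunning the whole argument with $\CJ^\eps H^\eps$ replaced by $\CJ^\eps H^\eps-\bar\CJ^\eps\bar H^\eps$ and invoking the second estimate~\eqref{e:DContReconstruct} of Theorem~\ref{t:DReconstruct} together with the local Lipschitz dependence of the $\Gamma^\eps,\Sigma^\eps$ parts, producing the constant proportional to $\Vert H^\eps;\bar H^\eps\Vert^{(\eps)}_{\gamma,\eta;T}\Vert\Pi^\eps\Vert^{(\eps)}_{\gamma;T}+\Vert\bar H^\eps\Vert^{(\eps)}_{\gamma,\eta;T}\Vert\Pi^\eps-\bar\Pi^\eps\Vert^{(\eps)}_{\gamma;T}$. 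The only genuinely new point compared with the continuous proof is that the heat-kernel ``test function'' $P^{(\eps,n)}_{t-s}(x-\cdot)$ degenerates to a point mass once $\sqrt{t-s}\lesssim\eps$; but this regime is already absorbed into Proposition~\ref{p:DHeatBounds} through the splitting~\eqref{e:splitting} and into the $\|z\|_\s<c\eps$ case of Lemma~\ref{lemma:DHeatKernels}, so the main — and essentially the only — work is the careful $\eps$-uniform bookkeeping rather than any new analytic input.
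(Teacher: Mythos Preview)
Your proposal is correct and follows exactly the approach the paper takes: the paper's own proof is a one-liner stating that the argument is completely analogous to that of Lemma~\ref{l:ReconstructBound} but with the discrete counterparts (Lemma~\ref{l:DSchwartz}, Proposition~\ref{p:DHeatBounds}, Theorem~\ref{t:DReconstruct}) substituted in. You have in fact written out more of the details than the paper does, and your observation that the small-time regime $\sqrt{t-s}\lesssim\eps$ is already handled inside Proposition~\ref{p:DHeatBounds} via the splitting~\eqref{e:splitting} is exactly right.
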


\begin{proof}
Also in this case the proof is completely analogous to the one of Lemma~\ref{l:ReconstructBound}, but using the discrete counterpart of the lemmas and propositions mentioned in the proof of the latter. 
\end{proof}

At last, we recall the analogous of Lemma~\ref{l:InitialCondition} in the discrete setting whose proof can be easily deduced by Lemma~\ref{l:DSchwartz} and some of the tools exploited in Proposition~\ref{p:DHeatBounds}.

\begin{lemma}\label{l:DInitialCondition}
Let $\eta\in\R\setminus\Z$ and $u^\eps_0 \in \CC^{\eta,\eps}(\T^\eps)$ be a periodic family parametrized by $\eps$. Then $P^\eps u^\eps_0 \in \CC^{\gamma,\s,\eps}_{\eta, T}$ for any $\gamma \in (0,1)$, and one has the following bounds uniformly in $\eps$:
\begin{equ}
 \Vert P u_0 \Vert_{\CC^{\gamma,\s}_{\eta, T}}^{(\eps)} \lesssim \Vert u_0 \Vert_{\CC^{\eta}}^{(\eps)}\;.
\end{equ} 
\end{lemma}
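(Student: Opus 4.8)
The final statement to prove is Lemma~\ref{l:DInitialCondition}, the discrete analogue of Lemma~\ref{l:InitialCondition}: for a periodic family $u_0^\eps \in \CC^{\eta,\eps}(\T^\eps)$ with $\eta \in \R\setminus\Z$, one has $P^\eps u_0^\eps \in \CC^{\gamma,\s,\eps}_{\eta,T}$ for every $\gamma\in(0,1)$, with norm bounded by $\|u_0^\eps\|_{\CC^\eta}^{(\eps)}$ uniformly in $\eps$.

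The plan is to mimic the proof of \cite[Lem.~7.5]{Hai14} but working with the off-the-grid extension of the discrete heat kernel $P^\eps$ and its discrete derivatives, using Lemma~\ref{lemma:DHeatKernels} in place of the classical Gaussian bounds and Lemma~\ref{l:DSchwartz} to pass from scales $\lambda\in[\eps,1]$ to the kernel. First I would recall that $(P^\eps u_0^\eps)(t,x) = \eps\sum_{y\in\T_\eps} P^\eps_t(x-y) u_0^\eps(y) = \langle u_0^\eps, P^\eps_t(x-\cdot)\rangle_\eps$, i.e. a purely spatial discrete convolution, and that for $\eta<0$ the pairing of $u_0^\eps$ against a rescaled test function at scale $\lambda\in[\eps,1]$ is controlled by $\lambda^\eta$ times $\|u_0^\eps\|_{\CC^\eta}^{(\eps)}$ (and for $\eta>0$ one has Hölder control plus boundedness). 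The point is to view $P^\eps_t(x-\cdot)$, as a function of the second variable, as an admissible test function for Lemma~\ref{l:DSchwartz} with scale $\lambda = |t|^{1/2}\vee\eps = \enorm{t}$: by Lemma~\ref{lemma:DHeatKernels} (with $j=0$, taking $k\le r$ spatial derivatives and the polynomial decay in $|x|$), one checks that $\psi^{\enorm{t}}\eqdef \enorm{t}\,P^\eps_t(\enorm{t}\,\cdot)$ satisfies $\sup_{j\le r}\sup_x |x|^\delta|\partial^j_x \psi^{\enorm{t}}(x)|\lesssim \enorm{t}^{-1-r+\delta}$ for a suitable $\delta$, hence Lemma~\ref{l:DSchwartz} applies and gives $|(P^\eps u_0^\eps)(t,x)| = |\langle u_0^\eps, \psi^{\enorm{t}}_x\rangle_\eps|\lesssim \enorm{t}^{\eta\wedge 0}\|u_0^\eps\|_{\CC^\eta}^{(\eps)}$, which is the first summand of \eqref{e:DSpaceHolder}.

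For the spatial Hölder seminorm one considers $\delta^{(t)}_{x,\bar x}(P^\eps u_0^\eps) = \langle u_0^\eps, P^\eps_t(\bar x-\cdot) - P^\eps_t(x-\cdot)\rangle_\eps$ and splits into the two regimes $|x-\bar x|\ge \enorm{t}$ and $|x-\bar x| < \enorm{t}$, exactly as in \cite{Hai14}: in the first regime bound the two terms separately by the $L^\infty$ estimate just obtained, gaining $|x-\bar x|^\gamma \enorm{t}^{-\gamma}$ for free since $|x-\bar x|/\enorm{t}\ge 1$; in the second regime apply Taylor's theorem (a discrete-difference version of it in the $x$-variable is harmless, or one works with the already-extended smooth kernel $P^\eps$ in $x$) and use the derivative bound from Lemma~\ref{lemma:DHeatKernels} together with another application of Lemma~\ref{l:DSchwartz} to the rescaled derivative kernel, producing a factor $\enorm{t}^{-1}|x-\bar x|$ which interpolates to $\enorm{t}^{-\gamma}|x-\bar x|^\gamma$ when $\gamma\in(0,1)$. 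This yields the bound $\enorm{t}^{\eta-\gamma}|x-\bar x|^\gamma$ up to a constant times $\|u_0^\eps\|_{\CC^\eta}^{(\eps)}$, uniformly in $\eps$ since all constants in Lemmas~\ref{lemma:DHeatKernels} and~\ref{l:DSchwartz} are $\eps$-independent. The parabolic time-increment seminorm in \eqref{e:DParabolicHolder} is handled by writing, for $t<\bar t$ with $|\bar t-t|\le\enorm{t,\bar t}^2$, $\delta^{(t,\bar t)}_x(P^\eps u_0^\eps) = \langle u_0^\eps, (P^\eps_{\bar t}-P^\eps_t)(x-\cdot)\rangle_\eps$ and using the discrete heat equation \eqref{eq:DHeat} ($\bar D_{t,\eps^2} P^\eps = \Delta_\eps P^\eps$, and $\Delta_\eps=D_{x,\eps}^2$ for an appropriate $\pi$) to convert the time difference into a telescopic sum of $\eps^2\Delta_\eps P^\eps$ factors, each estimated by Lemma~\ref{lemma:DHeatKernels} with $j=2$; integrating (summing) in the intermediate time and combining with Lemma~\ref{l:DSchwartz} gives the $\enorm{t,\bar t}^{\eta-\gamma}|t-\bar t|^{\gamma/2}$ bound, again uniformly in $\eps$.

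The main obstacle I anticipate is a bookkeeping one rather than a conceptual one: one must be careful that the off-the-grid extension $P^\eps$ genuinely behaves like a smooth test function at every scale $\lambda\in[\eps,1]$ — in particular that taking $\enorm{t}$ (not $|t|^{1/2}$) as the scaling parameter does not break the hypotheses of Lemma~\ref{l:DSchwartz} when $t\lesssim\eps^2$, where the kernel is essentially $\eps^{-1}$ times a Kronecker delta spread out on scale $\eps$. This is precisely the regime in which Lemma~\ref{lemma:DHeatKernels} gives the $m=0$ (no decay gain) bound, so one checks separately that for $|t|<c\eps^2$ the convolution reduces to a near-pointwise evaluation and the $\CC^\eta$ bound at scale $\eps$ already suffices. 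Apart from this case distinction the argument is a routine adaptation, which is why, as the statement in the excerpt already indicates, we only sketch it and refer to the continuous proof and to Lemma~\ref{l:DSchwartz} and Proposition~\ref{p:DHeatBounds} for the analytic inputs.
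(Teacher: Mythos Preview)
Your proposal is correct and follows essentially the same approach the paper indicates: the paper does not give a detailed proof either, but simply says the result ``can be easily deduced by Lemma~\ref{l:DSchwartz} and some of the tools exploited in Proposition~\ref{p:DHeatBounds}'', which is precisely the combination of ingredients you invoke (the discrete Schwartz lemma, the kernel bounds of Lemma~\ref{lemma:DHeatKernels}, the regime splitting, the telescopic rewriting via~\eqref{eq:DHeat}, and the separate treatment of the $t\lesssim\eps^2$ case). Your sketch is in fact more detailed than what the paper provides.
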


\subsection{Uniform bounds on the discrete solution map}\label{sec:unif_bounds}

We now want to bound, uniformly in $\eps$, the map $\CM^\eps$ defined   in~\eqref{DFPMap} and that we now recall. 
Let $u_0^\eps$ be a periodic function on $\Lambda_\eps$ and $\X^\eps\in\CX^\eps$ a discrete controlling process according to Definition~\ref{def:DControlProc}. We set $\CM^\eps(\cdot)\eqdef\CM^\eps(u_0^\eps,\,\X^\eps,\,\cdot)$. 
For $V^\eps=(v^\eps,v'^{\,\eps},R^\eps)\in\CH^{\eta,\gamma}_{\eps,\X^\eps,T}$, $\CM^\eps(V^\eps)=(\tilde v^\eps,\tilde v'^{\,\eps},\tilde R^\eps)$ is given by
\begin{subequations}\label{DFixedPointMap}
\begin{align}
\tilde{v}^\eps &= P^\eps u_0^\eps + 4 X^{\<tree1222>,\,\eps} + D_{x,\eps}P^\eps \ast_\eps \bigl(2 \CR^\eps \bigl(\BV\<bigtree1>\bigr)^\eps + F_{v^\eps}^{(\eps)}\bigr) + Q^{(\eps)}\;,\label{e:DFPMapv} \qquad \tilde{v}'^{\,\eps} = 4 X^{\<tree122>,\,\eps} + 2 v^\eps\;,\\
\tilde{R}(z, \bar z) &= \delta_{z, \bar z} \big(P^\eps u^\eps_0\big) +4 R^{\<tree1222>,\,\eps}_z(z,\bar z) + \delta_{z, \bar z} D_{x,\eps}P^\eps \ast \big(2 \bigl(\CJ^\eps (\BV\<bigtree1>)^\eps\bigr)_{\beta, z} + F_{v^\eps}^{(\eps)}  \big) + \delta_{z, \bar z}  Q^{(\eps)}\notag\\
&\qquad\qquad\qquad\qquad\qquad + \int \left(\tilde v'^{\,\eps}(z_1) \delta_{z_2,\bar z_2} \hat X^{\<tree11>,\eps}+ \delta_{z_1,z} \tilde v'^{\,\eps} \delta_{z_2,\bar z_2} \hat X^{\<tree11>,\eps}\right)\mu_\eps(\dd y_1,\dd y_2)\;,\label{e:DFPMapr}
\end{align}
\end{subequations}
where $\hat X^{\<tree11>,\,\eps}\eqdef D_{x,\eps}\hat K^\eps\ast_\eps X^{\<tree1>}$ and the terms $F_{v^\eps}^{(\eps)}$ and $Q^{(\eps)}$ are given in~\eqref{e:DF}. The equation for the remainder $\tilde R^\eps$ was obtained, as for~\eqref{FixedPointMap}, by unwrapping the relation~\eqref{e:DVExpansion}, exploiting the definitions of $\tilde v'^{\,\eps}$, $X^{\<tree11>,\eps}$ and~\eqref{e:DReconstructCut} but in this case an extra term appears which is due to the fact that the product we are dealing with is {\it twisted}. Anyway this term does not create any trouble since it is sufficiently regular and allows us to conclude that bounds similar to the ones of Proposition~\ref{prop:FixedPoint} hold also in the discrete setting. 

\begin{proposition}\label{prop:DFixedPoint}
Let $\alpha_\star ,\,\gamma $ and $\eta$ be as in the statement of Proposition~\ref{prop:FixedPoint}. Let $u_0^\eps \in \CC^{\eta,\eps}$ be a family of periodic functions parametrized by $\eps$ and $\X^\eps \in \CX^\eps$. Then there exists $\theta>0$ such that the map $\CM^\eps$, defined in~\eqref{DFixedPointMap}, satisfies, for all $V^\eps$, $\bar V^\eps \in \CH^{\eta, \gamma}_{\eps,\X^\eps,T}$ the following bounds 
\minilab{DFixedPointBounds}
\begin{gather}
\Vert \CM^\eps (V^\eps) \Vert _{\eta, \gamma; T}^{(\eps)} \lesssim \Vert u_0^\eps \Vert _{\CC^{\eta}}^{(\eps)} + \enorm{T}^\theta \bigl(1+\Vert \X^\eps \Vert _{\CX}^{(\eps)}\bigr)^2\big(1 + \Vert V^\eps \Vert _{\eta, \gamma; T}^{(\eps)} \bigr)^2\;,\label{e:DFixedPointBoundsFirst}\\
\Vert \CM^\eps (V^\eps) - \CM^\eps (\bar V^\eps) \Vert _{\eta, \gamma; T}^{(\eps)} \lesssim \enorm{T}^\theta \Vert V^\eps - \bar V^\eps \Vert _{\eta, \gamma; T}^{(\eps)} (1+\|\X\|^{(\eps)}_{\CX})^2\label{e:DFixedPointBoundsSecond}
\end{gather}
uniformly in $\eps$, where the second bound holds provided that $\Vert V^\eps\Vert^{(\eps)} _{\eta, \gamma; T},\,\Vert \bar V^\eps\Vert^{(\eps)} _{\eta, \gamma; T}\leq M<\infty$, $M$ independent of $\eps$, and the hidden constant depends only on $M$. 
\end{proposition}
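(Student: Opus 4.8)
The plan is to follow the proof of Proposition~\ref{prop:FixedPoint} line by line, substituting for each continuous ingredient its discrete counterpart and, at every step, checking that the implied constants do not degenerate as $\eps\to 0$. Concretely, I would bound separately the three seminorms that make up $\Vert\cdot\Vert^{(\eps)}_{\eta,\gamma;T}$ in~\eqref{e:DNorms}: the $\CC^{\alpha_\star,\s,\eps}_{\eta,T}$-norm of $\tilde v^\eps$ from~\eqref{e:DFPMapv}, the $\CC^{\gamma-\alpha_{\<tree11>},\s,\eps}_{\eta-\alpha_{\<tree11>},T}$-norm of $\tilde v'^{,\,\eps}$, and the bracket seminorm of $\tilde R^\eps$ from~\eqref{e:DFPMapr}. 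The discrete Schauder machinery is exactly what is needed here: Lemma~\ref{lemma:DHeatKernels} guarantees that $D_{x,\eps}^n P^\eps$ (and its off-grid extension) decays fast enough to play the role of the kernel in Proposition~\ref{p:DHeatBounds}, and Corollary~\ref{c:DSchauder} and Lemma~\ref{l:DReconstructBound} then deliver $\enorm{T}^\theta$-gains with $\theta>0$ for the same choices of $\theta_1,\theta_2$ as in the continuous argument.

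For $\tilde v^\eps$: the initial-condition term $P^\eps u_0^\eps$ is controlled by Lemma~\ref{l:DInitialCondition}; the terms $X^{\<tree1222>,\,\eps}$ and (inside $Q^{(\eps)}$) $X^{\<tree124>,\,\eps}$ are bounded by $\Vert\X^\eps\Vert^{(\eps)}_{\CX}$ directly from the definition of $\CW^\eps$; the remaining part $D_{x,\eps}\hat K^\eps\ast_\eps(\cdots)$ of $Q^{(\eps)}$ is estimated by hand using that $\hat K^\eps$ is smooth and compactly supported uniformly in $\eps$ (Lemma~\ref{l:DKernelDec}), so that $D_{x,\eps}\hat K^\eps\ast_\eps$ maps the relevant discrete H\"older spaces into smoother ones with $\eps$-uniform norm. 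The forcing $F^{(\eps)}_{v^\eps}$ splits, as in the continuous case, into the mixed part $2B_\eps(X^{\<tree12>,\,\eps},2X^{\<tree122>,\,\eps}+v^\eps)$ of discrete regularity $\alpha_{\<tree12>}$ and explosion $\eta-\alpha_\star$, and the square part $B_\eps(2X^{\<tree122>,\,\eps}+v^\eps)$ of explosion $2\eta$; both are handled by the $B_\eps$-analogue of the bound~\eqref{b:Fv}, i.e. a discrete twisted-product estimate which one proves exactly as the continuous Proposition~\ref{p:ClassicalProduct} using that $|\mu_\eps|(\R^2)=\bar\mu$ is $\eps$-independent, together with the hypothesis $\alpha_\star>-\alpha_{\<tree12>}$. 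Finally the reconstruction term $D_{x,\eps}P^\eps\ast_\eps\CR^\eps(\BV\<bigtree1>)^\eps$ is split, exactly as in the continuous fixed point map, into $D_{x,\eps}P^\eps\ast_\eps\bigl(\CJ^\eps(\BV\<bigtree1>)^\eps\bigr)_{\beta_{\<tree21>},z}$ — estimated by Lemma~\ref{l:DReconstructBound}, with Proposition~\ref{p:DProduct} supplying $(\BV\<bigtree1>)^\eps\in\CD^{\gamma+\alpha_{\<tree1>},\eta+\alpha_{\<tree1>}}_{\eps,T}(Z^\eps)$ and $\gamma+\alpha_{\<tree1>}>0$ — plus the low-homogeneity remainder $D_{x,\eps}P^\eps\ast_\eps\bigl(\Pi^{\eps,\cdot}_x\CQ_{<\beta_{\<tree21>}}(\BV\<bigtree1>)^\eps_t(x)\bigr)$, which is dealt with by a discrete version of Lemma~\ref{l:RemV}: using the explicit form of $Z^\eps$ in~\eqref{e:DTModel} and Lemma~\ref{l:DSchwartz} one verifies that $z\mapsto\Pi^{\eps,\cdot}_x\CQ_{<\beta_{\<tree21>}}(\BV\<bigtree1>)^\eps_t(x)$ satisfies the hypotheses of Proposition~\ref{p:DHeatBounds} with $\beta=\beta_{\<tree21>}$, $\alpha=\alpha_{\<tree1>}$, $\rho=\eta-\alpha_\star$, and then takes $\theta_1=\alpha_{\<tree1>}+1-\alpha_\star$.

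For $\tilde v'^{,\,\eps}=4X^{\<tree122>,\,\eps}+2v^\eps$ the estimate is immediate from $\X^\eps\in\CX^\eps$ together with the embedding $\CC^{\alpha_\star,\s,\eps}_{\eta,T}\hookrightarrow\CC^{\gamma-\alpha_{\<tree11>},\s,\eps}_{\eta-\alpha_{\<tree11>},T}$, valid with an $\enorm{T}^\theta$-gain when $\alpha_{\<tree11>}<\gamma<\alpha_{\<tree11>}+\alpha_\star$. For $\tilde R^\eps$ in~\eqref{e:DFPMapr}, the contributions of the initial condition, of $R^{\<tree1222>,\,\eps}$, of $D_{x,\eps}P^\eps\ast_\eps\bigl(\CJ^\eps(\BV\<bigtree1>)^\eps\bigr)_{\beta_{\<tree21>},z}$, of $F^{(\eps)}_{v^\eps}$ and of $Q^{(\eps)}$ are treated by the same lemmas as above, with $\theta_2$ shifted by $\alpha_\star-\gamma$ respectively $-\eta-\gamma$, exactly as in the continuous proof. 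The one genuinely new term is the $\mu_\eps$-integral in the last line of~\eqref{e:DFPMapr}, which is the price of the twisted product: since $\hat X^{\<tree11>,\,\eps}=D_{x,\eps}\hat K^\eps\ast_\eps X^{\<tree1>,\,\eps}$ is smooth uniformly in $\eps$ and $|\mu_\eps|$ is supported in a fixed ball with $\eps$-independent total mass, one bounds the increments of $\hat X^{\<tree11>,\,\eps}$ by a power of the parabolic distance, bounds $|\tilde v'^{,\,\eps}(z_1)|$ and the increments of $\tilde v'^{,\,\eps}$ by $\enorm{t}^{\eta-\alpha_{\<tree11>}}$, integrates against $\mu_\eps$, and absorbs a factor $\enorm{T}^{(\gamma-\alpha_{\<tree11>})/2}$; this term obstructs neither the contraction nor the uniformity in $\eps$.

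The difference estimate~\eqref{e:DFixedPointBoundsSecond} is obtained by rerunning the same chain of bounds for $\CM^\eps(V^\eps)-\CM^\eps(\bar V^\eps)$, invoking the $\eps$-uniform local Lipschitz dependence of the discrete $B_\eps$-product, of the discrete reconstruction operator (the second bound of Theorem~\ref{t:DReconstruct}, in the form of Lemma~\ref{l:DReconstructBound}), and of $V^\eps\mapsto F^{(\eps)}_{v^\eps}$ and $V^\eps\mapsto(\BV\<bigtree1>)^\eps$ on the ball $\{\Vert V^\eps\Vert^{(\eps)}_{\eta,\gamma;T}\le M\}$. The main obstacle I anticipate is bookkeeping rather than conceptual: one must track every implicit constant through Proposition~\ref{p:DHeatBounds}, Lemma~\ref{l:DReconstructBound}, the discrete $B_\eps$-product estimate and the discrete analogues of Lemma~\ref{l:RemV} and of~\eqref{b:Fv} — none of these is literally in the excerpt, but each follows verbatim from the corresponding continuous argument once Lemma~\ref{l:DSchwartz} and Lemma~\ref{lemma:DHeatKernels} are available — and must make sure that the twisted-product term and the discrete pairings $\langle\cdot,\cdot\rangle_\eps$ never produce $\eps$-dependent blow-ups, which is precisely what the $[\eps,1]$-range of scales in the discrete norms is designed to prevent.
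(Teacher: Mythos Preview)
Your proposal is correct and follows exactly the approach the paper takes: the paper's own proof is essentially a two-sentence pointer back to Proposition~\ref{prop:FixedPoint}, saying one must bound each of the norms in~\eqref{e:DNorms} by verifying the hypotheses of Proposition~\ref{p:DHeatBounds} for every summand, and that the terms with continuous analogues are handled identically; the discrete analogue of Lemma~\ref{l:RemV} you correctly identify as needed is in fact stated (without proof, as ``identical'') as Lemma~\ref{l:DRemV} just before the proposition. Your treatment of the extra $\mu_\eps$-integral term in~\eqref{e:DFPMapr} matches the paper's remark that this twisted-product correction ``does not create any trouble since it is sufficiently regular''.
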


The following lemma represents the discrete counterpart of Lemma~\ref{l:RemV} and will play its same role in the proof of the previous proposition. Since the proof is identical to the one already given we omit it. 

\begin{lemma}\label{l:DRemV}
Let $\alpha_{\star}, \,\gamma$ and $\eta$ be as in the statement of Lemma~\ref{l:RemV}. Let $\X^\eps\in\CX^\eps$ and $Z^\eps=(\Pi^\eps,\Gamma^\eps,\Sigma^\eps)$ be the model given in~\eqref{e:DTModel}. Let $V^\eps=(v^\eps,v'^{\,\eps},R^\eps)\in\CH^{\gamma,\eta}_{\eps,\X^\eps, T}$ and $(\BV\<bigtree1>)^\eps$ be defined as in~\eqref{e:DTVDot}. 
For $z=(t,x)\in\Lambda_{\eps^2,T}\times\T_\eps$, let
\[
\zeta^\eps_z(s,\cdot)\eqdef \Pi_x^{\eps,s}\CQ_{<\beta_{\<tree21>}}\big(\BV\<bigtree1>\big)^\eps_t(x)(s,\cdot)
\]
where $\CQ_{<\beta_{\<tree21>}}$ is the projection onto $\CT_{<\beta_{\<tree21>}}$. Then the following bound holds
\begin{equ}[b:DPiVBeta]
\sup_z\enorm{t}^{-\eta} \bigl|\bigl(P^{(\eps,n)} \ast_\eps \zeta^\eps_z\bigr)(z)\bigr| + \sup_z\pnorm{P^{(\eps, n)} \ast_\eps \zeta^\eps_z}_{ \eta-\alpha_\star, \alpha_\star; T, z}^{(\eps)} \lesssim C \enorm{T}^{\theta} \;,
\end{equ}
uniformly in $\eps$, where, the suprema run over $z=(t,x)\in \Lambda_{\eps^2,T}\times\T_\eps$, and $\theta$ can be chosen as in Lemma~\ref{l:RemV}. 
Moreover, if $\bar \X^\eps\in\CX^\eps$, $\bar Z^\eps = (\bar \Pi^\eps, \bar \Gamma^\eps,\Sigma^\eps)$, $\bar V^\eps\in\CH^{\eta,\gamma}_{\eps,\bar \X^\eps, T}$, $\bigl(\bar{\BV}\<bigtree1>\bigr)^\eps$ are another controlling process, model, controlled process and modelled distribution respectively,  then, for $\bar \zeta^\eps$ defined as above,~\eqref{b:DPiVBeta} still holds upon replacing $\zeta^\eps$ with $\zeta^\eps-\bar \zeta^\eps$.
\end{lemma}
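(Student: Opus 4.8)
The plan is to transcribe the proof of Lemma~\ref{l:RemV} line by line, replacing every continuous object by its grid counterpart: the pairing $\langle\cdot,\cdot\rangle$ by the discrete pairing $\langle\cdot,\cdot\rangle_\eps$, the kernel $P^{(n)}$ by $P^{(\eps,n)}=D_{x,\eps}^nP^\eps$, Lemma~\ref{l:Schwartz} by Lemma~\ref{l:DSchwartz}, Proposition~\ref{p:HeatBounds} by Proposition~\ref{p:DHeatBounds}, and the model bounds used there by those established in Lemma~\ref{l:DModel} and Theorem~\ref{t:DReconstruct}. First I would unwind $\zeta^\eps_z$ for $z=(t,x)\in\Lambda_{\eps^2,T}\times\T_\eps$: since $\CA^D=\{\alpha_{\<tree1>},\beta_{\<tree21>},0,\alpha_{\<tree11>}\}$ and $\alpha_{\<tree1>}$ is the only homogeneity strictly below $\beta_{\<tree21>}$, the projection $\CQ_{<\beta_{\<tree21>}}$ annihilates the $\<bigtree21>$–component of $(\BV\<bigtree1>)^\eps$, so that by \eqref{e:DTVDot} and \eqref{e:DTModelPi},
\begin{equ}
\zeta^\eps_z(t,\cdot)=\int_{\R^2}v^\eps(t,x+y_1)\,X^{\<tree1>,\,\eps}(t,\cdot+y_2)\,\mu_\eps(\dd y_1,\dd y_2)\;.
\end{equ}

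Next I would verify the two hypotheses of Proposition~\ref{p:DHeatBounds} for the family $z\mapsto\zeta^\eps_z$, with the same values of $\alpha$, $\beta$, $\rho$ as in Lemma~\ref{l:RemV}. For the first, testing against $\varphi^\lambda_x$ with $\lambda\in[\eps,1]$ and using the bound $|\langle\Pi^{\eps,t}_x\<bigtree1>_k,\varphi^\lambda_x\rangle_\eps|\lesssim\lambda^{\alpha_{\<tree1>}}$ from Lemma~\ref{l:DModel}, which is uniform over $k$ in the fixed finite set $\FI$, together with $|v^\eps(t,\cdot)|\lesssim\enorm{t}^{\eta}\Vert v^\eps\Vert_{\CC^{\alpha_\star,\s}_{\eta,T}}^{(\eps)}$ (valid since $\eta<0$) and the uniform total-variation bound $|\mu_\eps|(\R^2)=|\mu|(\R^2)<\infty$, one gets $|\langle\zeta^\eps_z(t,\cdot),\varphi^\lambda_x\rangle_\eps|\lesssim\Vert V^\eps\Vert_{\eta,\gamma;T}^{(\eps)}\,\enorm{t}^{\eta}\lambda^{\alpha_{\<tree1>}}$. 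For the increment bound, since the realisation of $\<bigtree1>_k$ through the model is base-point independent, $\zeta^\eps_{\bar z}(t,\cdot)-\zeta^\eps_z(t,\cdot)=\Pi^{\eps,t}_x\tau$ with $\tau=\int_{\R^2}\bigl(v^\eps(t,\bar x+y_1)-v^\eps(t,x+y_1)\bigr)\<bigtree1>_{y_2/\eps}\,\mu_\eps(\dd y_1,\dd y_2)\in\CT^D_{\alpha_{\<tree1>}}$; the discrete space and time Hölder bounds on $v^\eps$ — splitting, exactly as in Lemma~\ref{l:RemV}, the regimes $|x-\bar x|>\enorm{t}$ and $|x-\bar x|\leq\enorm{t}$ (respectively $|t-\bar t|>\enorm{t,\bar t}^2$ and $|t-\bar t|\leq\enorm{t,\bar t}^2$) — give $\Vert\tau\Vert\lesssim\Vert V^\eps\Vert_{\eta,\gamma;T}^{(\eps)}\,\enorm{t}^{\eta-\alpha_\star}|x-\bar x|^{\alpha_\star}$ and the analogous estimate with $|t-\bar t|^{\alpha_\star/2}$ in time, while the elementary bound $|\langle\Pi^{\eps,t}_x\tau,\varphi^\lambda_x\rangle_\eps|\lesssim\lambda^{\alpha_{\<tree1>}}\Vert\tau\Vert$ (from Lemma~\ref{l:DModel} and Lemma~\ref{l:DSchwartz}) turns this into the second hypothesis of Proposition~\ref{p:DHeatBounds}, with $\lambda\leq1$ absorbing the gap between $\alpha_{\<tree1>}$ and the required exponent. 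Applying Proposition~\ref{p:DHeatBounds} with $\theta_1=\alpha_{\<tree1>}+1-\alpha_\star$, $\theta_2=0$, $\theta=\alpha_{\<tree1>}+1-\alpha_\star$ then yields \eqref{b:DPiVBeta}.

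For the comparison statement I would rerun the same argument with $\zeta^\eps_z$ replaced by $\zeta^\eps_z-\bar\zeta^\eps_z$, expanding it in terms of $v^\eps-\bar v^\eps$ and $\Pi^\eps-\bar\Pi^\eps$ and using the Lipschitz dependence on the discrete model (Theorem~\ref{t:DReconstruct}) in place of the plain model bounds; this produces the natural comparison constant, expressed through $V^\eps,\bar V^\eps$ as in Lemma~\ref{l:RemV}. The step needing the most attention — and the only feature absent from the continuous Lemma~\ref{l:RemV} — is the averaging against $\mu_\eps$ coming from the twisted product; but since $\FI$ is fixed and finite and $\mu_\eps$ has $\eps$–uniformly bounded total variation, this averaging contributes only a harmless multiplicative constant and leaves every exponent untouched, so it is not a genuine obstacle. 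The remaining care is purely bookkeeping: keeping all constants manifestly independent of $\eps$ and systematically replacing the ranges $\lambda\in(0,1]$ and $|t|_0$ by $\lambda\in[\eps,1]$ and $\enorm{t}$, which is precisely what Lemma~\ref{l:DSchwartz} and Proposition~\ref{p:DHeatBounds} were built to accommodate.
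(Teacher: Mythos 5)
Your proposal is correct and follows essentially the same approach as the paper, which for this lemma simply declares the proof "identical to the one already given" (Lemma~\ref{l:RemV}) and omits it; you have carried out exactly the transcription the authors have in mind, including the only genuinely new point, namely that the averaging against $\mu_\eps$ in the twisted product contributes a bounded multiplicative constant because $\mu$ has finite total variation and $\FI$ is a fixed finite set.
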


\begin{proof}[Proof of Proposition~\ref{prop:DFixedPoint}]
As in the case of Proposition~\ref{prop:FixedPoint}, the proof consists in bounding each of the norms in Definition~\ref{e:DNorms}, which in turn boils down to verify that for each summand appearing in the equations for $\tilde V^\eps$ the assumptions of Proposition~\ref{p:DHeatBounds} are satisfied. 
The terms in~\eqref{DFixedPointMap} which have an expression analogous to the corresponding ones in~\eqref{FixedPointMap} can be treated in the same way. 
\end{proof}

We conclude this section with the proof of Theorem~\ref{t:DFixedPoint}. 

\begin{proof}[Proof of Theorem~\ref{t:DFixedPoint}]
Notice that in the discrete context, the problem is not existence and uniqueness of solution for {\it fixed} $\eps$ since one can explicitly construct the solution at all times. The point is to obtain a bound, {\it uniform} in $\eps$ of its $\|\cdot\|_{\eta,\gamma;T}^{(\eps)}$ norm. 

By assumption, $\X^\eps\in\CX^\eps$ and $u_0^\eps\in\CC^{\eta,\,\eps}$, hence the sequences $\{\X^\eps\}_\eps$ and $\{u_0^\eps\}_\eps$ are such that $M_{cp}\eqdef \sup_{\eps}\|\X^\eps\|_{\CX^\eps}<\infty$ and $M_0\eqdef\sup_{\eps}\|u_0^\eps\|_{\CC^{\eta,\eps}}^{(\eps)}<\infty$. Assume $\Vert V^\eps \Vert _{\eta, \gamma; T}^{(\eps)}\leq M$ for all $\eps>0$
Then,~\eqref{e:DFixedPointBoundsFirst} tells us that there exists $C>0$ such that 
\begin{align*}
\Vert \CM^\eps (V^\eps) \Vert _{\eta, \gamma; T}^{(\eps)} &\leq C\Big( M_0 +\enorm{T}^\theta \bigl(1+M_{cp}\bigr)^2\big(1 + M\bigr)^2\Big)\\
&\leq \frac{1}{2} M\Big(1+\enorm{T}^\theta \bigl(1+M\bigr)^2\Big)=\frac{1}{2} M\Big(1+T^{\frac{\theta}{2}} \bigl(1+M\bigr)^2\Big)
\end{align*}
where the passage from the first to the second line holds upon taking $M\eqdef2C(M_0+(1+M_{cp})^2)$ while in the last inequality we are imposing $\eps<(1+M)^{-2/\theta}$. Hence, for $T=(1+M)^{-\theta}=\bar C (M_0 + (1+ M_{cp})^2)^{-\theta}$ and $\eps<(1+M)^{-2/\theta}$, $\CM^\eps$ leaves the ball of radius $M$ in $\CH^{\eta,\gamma}_{\eps,\X^\eps;T}$ invariant, uniformly in $\eps$, and for smaller $T$ and $\eps$, thanks to~\eqref{e:DFixedPointBoundsSecond} we conclude that $\CM^\eps$ is there a contraction. This in turn implies that the unique fixed point (in the whole space $\CH^{\eta,\gamma}_{\eps,\X^\eps;T}$, invoking again the same argument as~\cite[Thm. 4.8]{Hai13}) $V^\eps$ is such that its $\|\cdot\|_{\eta,\gamma;T}^{(\eps)}$ is uniformly bounded in $\eps$ by $2C(M_0+(1+M_{cp})^2)$. 

By iterating the previous argument, we get the maximal time for which the sequence of fixed points $V^\eps$ is uniformly bounded. The proof of the local Lipschitz continuity can then be obtained following the scheme suggested in the proof of Theorem~\ref{t:FixedPoint}.
\end{proof}

\section{Convergence of controlling processes}\label{sec:Stochastic}

The focus of this section is on the controlling processes we introduced in Sections~\ref{sec:ControllingProcesses} and~\ref{sec:DControllingProcesses}. We aim at showing that it is indeed possible to consistently {\it enhance} a space-time white noise $\xi$ to a controlling process and that, if the family of rescaled normal random variables $\{\xi^\eps(z)\}_{z\in\Lambda_\eps^\s}$ converges to $\xi$ in a suitable topology, then so does the discrete controlling process associated to it. 

\subsection{A convergence criterion for random distributions}\label{sec:Criterion}

Let us begin by recalling some basic facts about Wiener-chaos decomposition, on which our subsequent analysis is based. Let $(\Omega, \SF, \Prob)$ be a probability space and $\xi$ be a space-time white noise on it, by which we mean a Gaussian random field whose covariance structure is given by
\[
\E\big[\langle \xi,\varphi\rangle\langle \xi,\psi\rangle\big]=\langle \varphi, \psi\rangle_{L^2}
\]
where $\varphi,\,\psi\in H\eqdef L^2(\R\times\T)$. It is well-known (see, for example,~\cite[Ch. 1]{Nua06}) that $L^2(\Omega, \Prob)=\bigoplus_{n\in\N}\CH_n$, $\CH_n$ being defined as the closure of the linear subspace of $L^2(\Omega, \Prob)$ generated by the random variables $\{H_n(\xi(\varphi)): \varphi\in H\}$, where $H_n$ is the Hermite polynomial of degree $n$ (\cite[Thm. 1.1.1]{Nua06}). 

It is also possible to show that, for every $n\in\N$, there exists a bijective isometry, $I_n$ (which we will refer to as Wiener-It\^o isometry), between the space $H^{\otimes_s n}$ of symmetric functions in $L^2((\R\times\T)^{\otimes n})$ and $\CH_n$. 
Composing the projection that assigns to every element in $H^{\otimes n}$ its symmetrization belonging to $H^{\otimes_s n}$, we obtain a map (that we will still denote by $I_n$) such that 
\begin{equ}[b:ItoIso]
\E[I_n(\varphi)^2]\lesssim \|\varphi\|^2_{H^{\otimes n}}
\end{equ}
for every $\varphi\in H^{\otimes n}$ and $I_n(\varphi)$ is called Wiener-It\^o integral of order $n$. We will say that a random variable $Y\in L^2(\Omega,\Prob)$ belongs to the $n$-th homogeneous Wiener chaos if $Y\in\CH_n$, while if $Y\in\bigoplus_{k\leq n}\CH_k$ we will say that it belongs to the $n$-the inhomogeneous Wiener chaos. 
\newline

Let $\xi_\eps\eqdef \xi\ast\varrho_\eps$, where $\varrho$ a symmetric compactly supported smooth function integrating to 1 and $\varrho_\eps(t,x)\eqdef \eps^{-3}\varrho(\eps^{-2} t, \eps^{-1} x)$. 
At this point, it should be clear that, upon taking $\eta=\xi_\eps$, each of the stochastic objects introduced in~\eqref{Control} lies in a finite inhomogeneous Wiener chaos, whose order is given by the number of occurrences of $\xi$ in their expression (e.g. $X^{\<tree1>}$ is in the first, $X^{\<tree2>}$ in the second and so on). 

In order to show that there exist suitable constants for which $\X(\xi_\eps, C_\eps^{\<tree2>}, C_\eps^{\<tree21>})$ converges to a well-defined object, independent of $\varrho$, in a suitable topology, we will decompose each of its terms in its Wiener-chaos expansion and separately bound its chaos components. From the discussion above, one can deduce that such components will be of the form
\begin{equ}[e:XWienerIntegral]
Y_\eps(\varphi) = I_k\left(\int_{\R^{2}} \varphi(z)\, \CW_{\eps}(z)\, dz\right),
\end{equ}
for a test function $\varphi$, where $I_k$ is the Wiener integral of  $k$-th order with respect to $\xi$, the function $\CW_{\eps}$ takes values in $H^{\otimes k}$ and is such that $\mathcal{W}_\eps(z_1) \mathcal{W}_\eps(z_2) \in L^1((\R\times\T)^{\otimes k})$, for every $z_1 \neq z_2 \in \R^2$. 
Moreover, we define its limit, $Y$, in the same way, but via some $H^{\otimes k}$-valued function $\mathcal{W}$, such that $\mathcal{W}(z_1) \mathcal{W}(z_2)$, $\mathcal{W}(z_1) \mathcal{W}_\eps(z_2) \in L^1((\R\times\T)^{\otimes k})$, for every $z_1 \neq z_2 \in \R^2$. 
By~\eqref{b:ItoIso}, the covariance functions, given by
\begin{equ}[e:KDef]
 \CK(z_1, z_2) \eqdef \langle \mathcal{W}(z_1), \mathcal{W}(z_2) \rangle_{L^2}\;, \qquad \delta \CK_{\eps}(z_1, z_2) \eqdef \langle \delta \mathcal{W}_{\eps}(z_1), \delta \mathcal{W}_{\eps}(z_2) \rangle_{L^2}\;,
\end{equ}
for every $z_1 \neq z_2$, where $\delta \mathcal{W}_{\eps} \eqdef \mathcal{W}_{\eps} - \mathcal{W}$ and the scalar product is taken in $H^{\otimes k}$, will play an extremely important role in our analysis. 
In all the cases we will consider these functions can be written as $\CK(z_1, z_2) = \CK(z_1 - z_2)$, where $\CK$ is a kernel that can have a singularity at the origin and is smooth away from it. 
It is this singularity that determines the regularity and convergence properties of the distributions $Y$ and $Y_\eps$. The following definition provides a quantitative description of such functions.

\begin{definition}\label{d:SingularFunction}
We say that a smooth compactly supported function $\CK : \R^{2} \setminus \{0\} \rightarrow \R$ has a singularity of order $\zeta \in \R$, if for some $m \in \N$ and for all multiindices $k=(k_0,k_1)\in\N^2$, such that $|k|_{\mathfrak{s}}\eqdef 2|k_0|+|k_1| \leq m$, the function $D^k \CK$ is defined on $\R^2$, and there exists a constant $C > 0$ such that the bound
\begin{equ}[e:KBound]
|D^k \CK(z)| \leq C \Vert z \Vert_{\mathfrak{s}}^{\zeta - |k|_{\mathfrak{s}}}\;,
\end{equ}
holds uniformly over $z \neq 0$. Here, for a point $z = (t,x)$, we use the norm $\Vert z \Vert_{\mathfrak{s}} = |t|^{1/2} \vee |x|$.
\end{definition}

\begin{remark}
It is not difficult to see that, if the function $\CK$ has singularity of order $\zeta > 0$ such that $\zeta \notin \N$ and $m \geq \lfloor \zeta \rfloor$ in the sense of Definition \ref{d:SingularFunction}, then the function
\begin{equ}[e:TaylorOperator]
T^{k, \zeta} \CK(z) \eqdef D^{k} \CK(z) - \sum_{|\ell|_{\mathfrak{s}} < \zeta - |k|_{\s}} \frac{z^\ell}{\ell!} D^{k+\ell} \CK(0)
\end{equ}
is well-defined, for $z \neq 0$ and for all multiindices $k$ with $|k|_{\s} < \zeta$, and satisfies the bound
\begin{equ}[e:TKBound]
|T^{k, \zeta} \CK(z)| \leq C \Vert z \Vert_{\mathfrak{s}}^{\zeta - |k|_{\mathfrak{s}}}\;,
\end{equ}
uniformly in $z \neq 0$ (but not in $k$). The sum in the definition of $T^{k, \zeta} \CK$ runs over multiindices $\ell \in \N^2$. (For a reference, see the proof of~\cite[Lem. 10.14]{Hai14}.) 
\end{remark}

The following proposition provides a criterion to show the convergence of random variables $Y_\eps$ as in~\eqref{e:XWienerIntegral} to $Y$. 

\begin{proposition}\label{p:ChaosKernelsConv}
In the setting described above, let $\mathcal{K}$ and $\delta \mathcal{K}_{\eps}$ be compactly supported functions on $\R^2$ defined in \eqref{e:KDef} for distributions $Y$ and $Y_\eps$ of the form \eqref{e:XWienerIntegral}. Then we have the following results
\begin{enumerate}
\item if $\mathcal{K}$ is singular of order $\zeta \in (-3, -1]$, then the process $Y$ almost surely belongs to the space $\CC_1^{\alpha,\s}$, for any $\alpha < \frac{\zeta}{2}$. If furthermore the function $\delta \mathcal{K}_{\eps}$ has a singularity of order $\zeta - 2\theta$, for some $\theta > 0$, and satisfies the estimates \eqref{e:KBound}, with the proportionality constants of order $\eps^{2\theta}$, then, for any $p \geq 1$ and $\alpha < \frac{\zeta}{2}$, the following bound holds
\begin{equ}[e:KEpsConverge]
 \mathbb{E} \left[\Vert Y - Y_\eps \Vert^p_{\CC_1^{\alpha - \theta,\s}}\right] \lesssim \eps^{\theta p}\;.
\end{equ}
 \item\label{item:third} If $\mathcal{K}$ is singular of order $\zeta \in (-1, \infty) \setminus \N$, then the process $Y$ almost surely belongs to the space $\CC^{\beta/2} ([-1,1], \CC^{\alpha - \beta})$, for any $\alpha < \frac{\zeta}{2}$ and any $\beta\in(0,\alpha)$ if $\zeta>0$ and any $\beta\in\big(0, \frac{\zeta + 1}{2}\big)$ if $\zeta<0$. Moreover, if the function $\delta \mathcal{K}_{\eps}$ has singularity of order $\zeta - 2\theta$, for some $\theta \in \big(0, \frac{\zeta+1}{2}\big)$, and satisfies the estimates \eqref{e:KBound}, with the proportionality constants of order $\eps^{2\theta}$, then, for any $p \geq 1$ and $\alpha < \frac{\zeta}{2}$, the following bound holds
\begin{equ}[e:KEpsConvergeTime]
 \mathbb{E} \left[\Vert Y - Y_\eps \Vert^p_{\CC^{\beta/2}([-1,1], \CC^{\alpha - \beta - \theta})}\right] \lesssim \eps^{\theta p}\;,
\end{equ}
where $\beta > 0$ is such that $\beta + \theta\in(0,\alpha)$ if $\zeta>0$ and $\beta+\theta\in\big(0, \frac{\zeta + 1}{2}\big)$ if $\zeta<0$.
\end{enumerate}
\end{proposition}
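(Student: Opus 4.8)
The plan is to reduce the statement to a Kolmogorov-type continuity criterion applied to the Wiener chaos components, following the strategy of \cite[Sec. 10]{Hai14}. First I would recall that for a fixed point $z \in \R^2$ and a test function $\varphi \in \CB^r_{0}$, $\lambda \in (0,1]$, one has by the It\^o isometry bound~\eqref{b:ItoIso}
\begin{equ}
\E\big[|\langle Y, \varphi_z^\lambda\rangle|^2\big] \lesssim \int_{\R^2}\int_{\R^2} \varphi_z^\lambda(z_1)\varphi_z^\lambda(z_2)\, \CK(z_1 - z_2)\, \dd z_1\, \dd z_2\;,
\end{equ}
and similarly for $Y_\eps$ and $Y - Y_\eps$ with $\CK$ replaced respectively by the smooth kernel $\CK_\eps \eqdef \langle \CW_\eps(\cdot), \CW_\eps(\cdot)\rangle$ and by $\delta\CK_\eps$. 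The first task is then a purely analytic one: to estimate the double integral above in terms of $\lambda$ and the order of the singularity. The clean way is to invoke \cite[Lem. 10.14, Lem. 10.17]{Hai14}: if $\CK$ is singular of order $\zeta$ (in the parabolic scaling $\s = (2,1)$ of total dimension $3$), then the above integral is bounded by $C\lambda^{\zeta + 3}$ provided $\zeta > -3$, which by equivalence of Gaussian moments upgrades to $\E[|\langle Y, \varphi_z^\lambda\rangle|^p] \lesssim \lambda^{(\zeta+3)p/2}$ for all $p \geq 1$.

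Next I would turn this pointwise-in-$z$ bound into a H\"older norm bound by also controlling increments in the base point. Using the Taylor-remainder operator $T^{k,\zeta}\CK$ from~\eqref{e:TaylorOperator} (when $\zeta > 0$) one writes, for $z, \bar z$ with $|z - \bar z|_\s$ comparable to $\lambda$ (or rather, splitting into the regimes $|z-\bar z|_\s \leq \lambda$ and $|z - \bar z|_\s > \lambda$ as in the proof of the reconstruction/Kolmogorov lemma),
\begin{equ}
\E\big[|\langle Y, \varphi_z^\lambda - \varphi_{\bar z}^\lambda\rangle|^2\big] \lesssim \lambda^{\zeta + 3 - 2\kappa}|z - \bar z|_\s^{2\kappa}
\end{equ}
for small $\kappa > 0$; this is exactly the content of the Kolmogorov criterion for modelled distributions / random distributions stated in \cite[Thm. 10.7]{Hai14} and is what produces membership in $\CC^{\alpha,\s}_1$ for any $\alpha < \zeta/2$ (the explosion rate $1$ at $t=0$ being harmless since all our kernels are compactly supported and we only test over a compact time window $[-1,1]$). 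For case (2), where $\zeta > -1$ and $Y$ is genuinely a function of time with values in a space of spatial distributions, the same computation is organized so as to separate spatial scaling from temporal scaling: one tests against $\varphi^\lambda_x$ purely in space, which under the parabolic kernel estimate gives a bound of order $\lambda^{\zeta+1}$ (total spatial dimension $1$ now absorbing only the spatial integration while the time integration is handled separately), yielding the spatial regularity $\alpha - \beta$, and then estimates the time increment $\|Y(t,\cdot) - Y(s,\cdot)\|$ by a similar double-integral with one kernel evaluated at shifted times, producing the factor $|t-s|^{\beta/2}$; the constraint $\beta \in (0,\alpha)$ for $\zeta > 0$ versus $\beta \in (0,(\zeta+1)/2)$ for $\zeta < 0$ is precisely what is needed for the relevant time integrals $\int (q - s)^{(\zeta - \text{something})/2}\dd q$ to converge.

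For the convergence statements~\eqref{e:KEpsConverge} and~\eqref{e:KEpsConvergeTime}, I would apply the identical machinery to the random distribution $Y - Y_\eps$, whose covariance kernel is $\delta\CK_\eps$. By hypothesis $\delta\CK_\eps$ is singular of order $\zeta - 2\theta$ \emph{with proportionality constants of order $\eps^{2\theta}$}; feeding this into the same double-integral estimate gives, for every $p \geq 1$,
\begin{equ}
\E\big[|\langle Y - Y_\eps, \varphi_z^\lambda\rangle|^p\big] \lesssim \eps^{\theta p}\,\lambda^{(\zeta - 2\theta + 3)p/2}\;,
\end{equ}
together with the analogous increment bound, and then the Kolmogorov criterion in the form suited to $\CC^{\alpha - \theta,\s}_1$ (resp. $\CC^{\beta/2}([-1,1],\CC^{\alpha-\beta-\theta})$) yields the claimed moment bound; note that we lose $\theta$ of regularity exactly because the singularity of $\delta\CK_\eps$ is $2\theta$ worse than that of $\CK$, while we gain the $\eps^{\theta}$ rate. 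The main obstacle, and the place where care is genuinely required rather than routine, is the double-integral singular-kernel estimate in the time-dependent case (2): one must be careful that the integrable-singularity bookkeeping is done in the correct \emph{mixed} scaling (spatial dimension $1$ for the spatial pairing, with the temporal variable treated as a parameter and integrated separately against a one-dimensional kernel in $\sqrt{t}$), and that the thresholds on $\beta$ and $\theta$ are exactly those making every intermediate integral converge — getting these exponents right, rather than the mechanics of the estimate, is the crux. A secondary subtlety is checking that all the kernels $\CK, \CK_\eps, \delta\CK_\eps$ arising from the concrete processes in~\eqref{Control} really are of the translation-invariant, compactly supported, singular-of-the-stated-order type; but this is deferred to the subsequent verification (Section~\ref{sec:control}) and here may be taken as a hypothesis, so the proof of the proposition itself is essentially the two Kolmogorov arguments above.
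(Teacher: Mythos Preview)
Your proposal is correct in outline and follows essentially the same strategy as the paper: bound second moments via the It\^o isometry in terms of a double integral against the singular kernel $\CK$, upgrade to all moments by Gaussian hypercontractivity, and then conclude via a Kolmogorov-type argument. The differences are in packaging rather than substance, but two points are worth noting.

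First, the paper does not use increments in the base point $z$ or invoke \cite[Thm.~10.7]{Hai14} directly; instead it uses the wavelet characterisation of $\CC^{\alpha,\s}$ (recalled in the Appendix). The key gain is that mother wavelets $\psi$ annihilate polynomials, so one can write $\E|\langle Y,\psi_z^{n,\s}\rangle|^2 = \iint \psi_z^{n,\s}\psi_z^{n,\s}\,T^{0,\zeta}\CK$, with $T^{0,\zeta}\CK$ from~\eqref{e:TaylorOperator}; this absorbs the Taylor subtraction needed when $\zeta>0$ automatically, and the summation over scales is then a geometric series. Your Kolmogorov route is equivalent but requires the extra increment estimate you wrote, which is unnecessary once you go through wavelets.

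Second, for case~(2) the paper separates the two issues more sharply than your sketch. To evaluate $Y$ at a fixed time when $\zeta\in(-1,0)$, it first tests against $\eta_t^\delta(s)\psi_x^\lambda(y)$, bounds the resulting integral by $\lambda^\zeta$ uniformly in $\delta$ (here the simple inequality $\|z_1-z_2\|_\s^\zeta\leq |x_1-x_2|^\zeta$ suffices, using $\zeta>-1$ for integrability), and passes to the limit $\delta\to0$. For the time increment, the paper isolates the computation $\E|\langle Y(t)-Y(s),\psi_x^n\rangle|^2=\iint\psi_x^n\psi_x^n\,\delta^2_{s,t}\CK$ with the second-difference operator~\eqref{e:Deltaz}, and the dedicated Lemma~\ref{l:KDeltaz} yields $|\delta^2_{s,t}\CK(x)|\lesssim |t-s|^\beta|x|^{\zeta-2\beta}$; this is precisely the source of the constraint $\zeta-2\beta>-1$, i.e.\ $\beta<\tfrac{\zeta+1}{2}$. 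Your description of this step is correct in spirit but the exponent bookkeeping you flag as ``the crux'' is exactly what Lemma~\ref{l:KDeltaz} makes explicit.
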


The proof of the previous proposition is based on the following technical lemma. For any two time points $s, t \in \R$, we define the operator 
\begin{equ}[e:Deltaz]
\delta_{s, t}^2 \CK(x) \eqdef \sum_{\epsilon_1, \epsilon_2 \in \{0, 1\}} (-1)^{\epsilon_1 - \epsilon_2} \CK\big((\epsilon_1 - \epsilon_2)(t - s), x\big)\;,
\end{equ}

\begin{lemma}
\label{l:KDeltaz}
Let the function $\CK : \R^{2} \setminus \{0\} \rightarrow \R$ have a singularity of order $\zeta \leq 0$. Then, for any $\beta \in [0,2]$ and any points $s, t \in \R$ and $x \neq 0$, the following bound holds:
\begin{equs}
 |\delta^2_{s, t} \CK(x)| \leq C | s- t |^{\beta} |x|^{\zeta - 2 \beta}\;.
\end{equs}
\end{lemma}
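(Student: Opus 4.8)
The plan is to treat $\delta^2_{s,t}\CK(x)$ as a second-order finite difference in the \emph{time} variable only, at fixed $x\neq 0$, and to interpolate between the two trivial bounds $\beta=0$ and $\beta=2$. First observe that, abbreviating $r\eqdef t-s$, the sum in \eqref{e:Deltaz} collapses to
\begin{equ}
\delta^2_{s,t}\CK(x) = \CK(0,x) - \CK(-r,x) - \CK(r,x) + \CK(0,x) = 2\CK(0,x) - \CK(r,x) - \CK(-r,x)\;,
\end{equ}
since the four pairs $(\epsilon_1,\epsilon_2)\in\{0,1\}^2$ give time-arguments $0,-r,r,0$ with signs $+,-,-,+$. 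Thus $\delta^2_{s,t}\CK(x)$ is exactly the symmetric second difference of the one-variable function $\tau\mapsto \CK(\tau,x)$ at $\tau=0$ with step $r$.

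For the endpoint $\beta=0$ I would bound each term directly using \eqref{e:KBound} with $k=0$: since $\|(\pm r,x)\|_\s = |r|^{1/2}\vee|x| \geq |x|$ and $\zeta\leq 0$, we get $|\CK(\pm r,x)|,|\CK(0,x)| \lesssim |x|^\zeta$, hence $|\delta^2_{s,t}\CK(x)|\lesssim |x|^\zeta = |r|^0 |x|^{\zeta - 0}$, which is the claim for $\beta=0$. For the endpoint $\beta=2$ I would write the symmetric second difference via a double integral of the second time-derivative,
\begin{equ}
2\CK(0,x) - \CK(r,x) - \CK(-r,x) = -\int_0^r\!\!\int_{-u}^{0} \partial_\tau^2 \CK(\tau,x)\,\dd\tau\,\dd u\;,
\end{equ}
(valid because $D^k\CK$ is defined on all of $\R^2$ for $|k|_\s\leq m$, and $\partial_\tau^2$ has parabolic degree $|k|_\s=4\leq m$ for $m$ large enough; one should note where the hypothesis on $m$ is used). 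Applying \eqref{e:KBound} with this $k$ gives $|\partial_\tau^2\CK(\tau,x)|\lesssim \|(\tau,x)\|_\s^{\zeta-4}\lesssim |x|^{\zeta-4}$ for $|\tau|\leq|r|$ \emph{provided} $|r|^{1/2}\leq|x|$, so that the double integral over a region of area $\sim r^2$ is $\lesssim r^2|x|^{\zeta-4} = |r|^2|x|^{\zeta-2\cdot 2}$. In the complementary regime $|r|^{1/2}>|x|$ one instead uses the $\beta=0$ bound combined with $|x|<|r|^{1/2}$: $|\delta^2_{s,t}\CK(x)|\lesssim|x|^\zeta = |x|^{\zeta-2\beta}|x|^{2\beta}\leq |x|^{\zeta-2\beta}|r|^{\beta}$ for any $\beta\in[0,2]$ since $\zeta-2\beta$ may be negative but $|x|^{2\beta}\le|r|^{\beta}$. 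This last observation in fact already disposes of the regime $|r|^{1/2}>|x|$ for \emph{all} $\beta\in[0,2]$ at once.

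It therefore remains only to handle the regime $|r|^{1/2}\leq|x|$ for intermediate $\beta\in(0,2)$, and here I would simply interpolate: from the integral representation, controlling the inner integrand by the second-derivative bound for $\beta$ close to $2$ and by the zeroth-order bound for $\beta$ close to $0$ is not quite enough, so instead one bounds $|\delta^2_{s,t}\CK(x)|$ by $\min\{|x|^\zeta,\ |r|^2|x|^{\zeta-4}\}$ in this regime (the two endpoint estimates just derived, both valid when $|r|^{1/2}\le|x|$) and uses $\min\{a,b\}\leq a^{1-\beta/2}b^{\beta/2}$ to obtain $|\delta^2_{s,t}\CK(x)|\lesssim |x|^{\zeta(1-\beta/2)}(|r|^2|x|^{\zeta-4})^{\beta/2} = |r|^{\beta}|x|^{\zeta-2\beta}$. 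Combining with the complementary regime gives the stated bound uniformly in $s,t$ and $x\neq 0$. The main obstacle is purely bookkeeping: making sure the hypothesis ``$D^k\CK$ is defined on $\R^2$ for $|k|_\s\leq m$'' is invoked with a large enough $m$ (here $m\geq 4$ suffices, since we differentiate twice in time, costing parabolic degree $4$), and correctly splitting into the two regimes $|r|^{1/2}\lessgtr |x|$ so that the factor $\|(\tau,x)\|_\s$ can always be replaced by $|x|$ inside the integrals. No genuinely hard analytic input is needed beyond Definition~\ref{d:SingularFunction}.
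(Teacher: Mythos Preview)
Your approach is essentially identical to the paper's: split into the regimes $|t-s|\gtrless|x|^2$, use the trivial pointwise bound $|x|^\zeta$ in the first regime and convert via $|x|^{2\beta}\le|t-s|^\beta$, and in the second regime express $\delta^2_{s,t}\CK(x)$ as a double time-integral of the second time-derivative to get $|t-s|^2|x|^{\zeta-4}$, then convert (the paper does this directly via $|t-s|^{2-\beta}\le|x|^{2(2-\beta)}$ rather than your $\min\{a,b\}\le a^{1-\beta/2}b^{\beta/2}$, but these are the same manipulation). One slip: your integral identity $-\int_0^r\int_{-u}^0\partial_\tau^2\CK(\tau,x)\,d\tau\,du$ does not actually equal $2\CK(0,x)-\CK(r,x)-\CK(-r,x)$ (compute it and you will find a stray $-r\,\partial_\tau\CK(0,x)$ and no $\CK(r,x)$); the paper uses the clean identity $\delta^2_{s,t}\CK(x)=\int_s^t\int_s^t\partial_{s_1}\partial_{s_2}\CK(s_1-s_2,x)\,ds_1\,ds_2$, which gives exactly the $|t-s|^2|x|^{\zeta-4}$ bound you want.
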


\begin{proof}
First, we consider the regime $|t - s| \geq |x|^2$. In this case we use the brutal bound
\begin{equs}
|\delta^2_{s, t} \CK(x)| \lesssim \sum_{\epsilon \in \{-1, 0, 1\}} |\CK(\epsilon(t - s), x)| \lesssim |x|^{\zeta} \lesssim | t - s |^\beta |x|^{\zeta - 2\beta}\;,
\end{equs}
which holds for any $\beta \geq 0$. In the case $|t-s| < |x|^2$, we use the identity
\begin{equ}
\delta^2_{s, t} \CK(x) = \int_{s}^{t} \int_{s}^{t} \partial_{s_1} \partial_{s_2} \CK(s_1 - s_2, x)\, d s_1\, d s_2\;,
\end{equ}
from which the next estimate for $t > s$ follows
\begin{equs}
|\delta^2_{s, t} \CK(x)| \lesssim \int_{s}^{t} \int_{s}^{t} |x|^{\zeta - 4}\, d s_1\, d s_2 \lesssim |t-s|^2 |x|^{\zeta - 4} \lesssim | t-s |^\beta |x|^{\zeta - 2\beta}\;,
\end{equs}
for any $\beta \leq 2$. This is precisely the required bound.
\end{proof}

\begin{proof}[Proof of Proposition \ref{p:ChaosKernelsConv}]
In the proof of this proposition we will use some elements of wavelet analysis that are briefly recalled in the Appendix. 

Let $\varphi$ be a father wavelet on $\R^2$ of H\"{o}lder regularity $r > 3$ and $\psi \in \Psi$ the corresponding finite set of mother wavelets. Furthermore, we denote by $\varphi_{z}^{n, \mathfrak{s}}$ and $\psi_{z}^{n, \mathfrak{s}}$ the respective rescaled and recentered functions with parabolic scaling $\s=(2,1)$. 
Then we can write
\begin{equ}
\mathbb{E} |\langle Y, \psi_{z}^{n, \mathfrak{s}} \rangle|^2 = \int \psi_{z}^{n, \mathfrak{s}} (z_1) \psi_{z}^{n, \mathfrak{s}}(z_2)\, T^{0, \zeta} \mathcal{K}(z_1 - z_2)\, d z_1\, d z_2\;,
\end{equ}
where we have used the operator \eqref{e:TaylorOperator} and the fact that the functions $\psi$ annihilate polynomials. Exploiting \eqref{e:TKBound}, the last integral can be estimated by
\begin{equation}\label{e:IntegralBound}
\int |\psi_{z}^{n, \mathfrak{s}} (z_1) \psi_{z}^{n, \mathfrak{s}}(z_2)| \Vert z_1 - z_2 \Vert_{\mathfrak{s}}^{\zeta}\, d z_1\, d z_2 \lesssim 2^{3 n} \int_{\substack{\Vert z_1 \Vert_{\mathfrak{s}} \lesssim 2^{-n} \\ \Vert z_2 \Vert_{\mathfrak{s}} \lesssim 2^{-n}}} \Vert z_1 - z_2 \Vert_{\mathfrak{s}}^{\zeta}\, d z_1\, d z_2
\lesssim \int_{\Vert z \Vert_{\mathfrak{s}} \lesssim 2^{-n + 1}} \Vert z \Vert_{\mathfrak{s}}^{\zeta}\, d z \lesssim 2^{-(\zeta + 3) n}\;,
\end{equation}
where, in order to have the integral finite, we require $\zeta > -3$. Keeping in mind that $\varphi$ do not annihilate polynomials, we can similarly obtain the bound $\mathbb{E} |\langle Y, \varphi_{z}^{n, \mathfrak{s}} \rangle|^2 \lesssim 2^{-(\zeta \wedge 0 + 3) n}$. Now, our aim is to use the characterisation of the H\"{o}lder-Besov norm via the wavelet expansion which for $\zeta < 0$ is proved in \cite[Prop.3.20]{Hai14} and for non-integer $\zeta > 0$ is provided in \cite[Thm.~6.4.5]{Mey92}(it is easy to see that the result holds also for the parabolic scaling). Thus, denoting $\Psi_\star \eqdef \Psi \cup \{\varphi\}$ and $\Lambda_n^\s \eqdef \Lambda_{2^{-2n}} \times \Lambda_{2^{-n}}$, using these estimates and the equivalence of moments of Wiener integrals \cite{Nel73} one gets, for a compact set $\mathfrak{K} \subset \R^2$,
\begin{align}
 \mathbb{E} \Vert Y \Vert^{2p}_{\CC^{\alpha,\s}_{\mathfrak{K}}} &\leq \sum_{\psi \in \Psi_\star} \sum_{n \geq 0} \sum_{z \in \Lambda^{\mathfrak{s}}_n \cap \bar{\mathfrak{K}}} 2^{(2 \alpha + 3) np} \mathbb{E} | \langle Y, \psi_z^{n, \mathfrak{s}} \rangle |^{2p}\label{e:XEstim}\\
&\lesssim \sum_{\psi \in \Psi_\star} \sum_{n \geq 0} \sum_{z \in \Lambda^{\mathfrak{s}}_n \cap \bar{\mathfrak{K}}} 2^{(2 \alpha + 3) np} \left(\mathbb{E} | \langle Y, \psi_z^{n, \mathfrak{s}} \rangle |^{2} \right)^p\lesssim \sum_{n \geq 0} \sum_{z \in \Lambda^{\mathfrak{s}}_n \cap \bar{\mathfrak{K}}} 2^{(2 \alpha - \zeta) n p} \lesssim \sum_{n \geq 0} 2^{- np (\zeta - 2 \alpha - \frac{3}{p})}\;,\notag
\end{align}
which is finite if $\alpha < \frac{\zeta}{2}$ and $p$ is large enough, where we indicated with $\CC^{\alpha.\s}_{\mathfrak{K}}$ the space of $\CC^{\alpha,\s}$ functions on $\mathfrak{K}$, $\bar{\mathfrak{K}}$ is the $1$-fattening of $\mathfrak{K}$ and the proportionality constant depends on $p$. Here, we have used the fact that the set $\Psi_\star$ is finite. In case $\zeta < 0$, this is exactly the first claim of Proposition \ref{p:ChaosKernelsConv}.

For $\zeta > 0$, the estimate \eqref{e:XEstim} immediately implies the required result of the second claim of the above mentioned proposition (in fact, we have proved a stronger result, that $X$ belongs to the parabolic H\"{o}lder space). 
If instead $\zeta \in(-1,0)$, then a special argument is required. 
To be able to evaluate $Y$ at a fixed time point, we need the bound
\begin{equ}[e:XPointEval]
 \mathbb{E} |\langle Y, \varphi_{t,x}^{\delta, \lambda} \rangle|^2 \leq C \lambda^{\zeta}
\end{equ}
to hold uniformly over $\delta, \lambda \in (0,1]$, for all functions $\varphi_{t,x}^{\delta, \lambda}(s,y) \eqdef \eta_{t}^{\delta}(s) \psi_{x}^{\lambda}(y)$, where $\eta, \psi \in \bar \CC_0^r(\R)$ with $r > 3$. The expectation at the left hand side of the previous is equal to
\begin{equs}[e:IntPointEval]
\int \varphi_{t,x}^{\delta, \lambda} (z_1) \varphi_{t,x}^{\delta, \lambda}(z_2) \mathcal{K}(z_1, z_2)\, d z_1\, d z_2 \lesssim \int |\varphi_{t,x}^{\delta, \lambda} (z_1) \varphi_{t,x}^{\delta, \lambda}(z_2)| \Vert z_1 - z_2 \Vert_{\mathfrak{s}}^{\zeta}\, d z_1\, d z_2\;.
\end{equs}
Then, we can simply bound $\Vert z_1 - z_2 \Vert_{\mathfrak{s}}^{\zeta} \leq | x_1 - x_2 |^{\zeta}$ (recall that $\zeta < 0$), where as before we treat space and time variables separately, i.e. $z_i = (t_i, x_i)$. Therefore, we can then bound the integral \eqref{e:IntPointEval} by
\begin{equs}
 \int |\psi_{x}^{\lambda}(x_1) \psi_{x}^{\lambda}(x_2)| | x_1 - x_2 |^{\zeta}\, d x_1\, d x_2 \lesssim \lambda^{\zeta}\;,
\end{equs}
where we have estimated the integral in the same way as in \eqref{e:IntegralBound}, under the assumption that $\zeta>-1$. Letting now $\delta$ in \eqref{e:XPointEval} go to $0$ and using the dominated convergence theorem we obtain
\begin{equ}
 \mathbb{E} | \langle Y(t), \psi_{x}^{\lambda} \rangle|^2 \leq C \lambda^{\zeta}\;,
\end{equ}
for any time point $t \in \R$. 
Using this bound one can proceed in the same way as in \eqref{e:XEstim}, but now exploiting the wavelet expansion only in the spatial variable, to obtain for $\alpha < \frac{\zeta}{2}$,
\begin{equs}
 \mathbb{E} \Vert Y(t) \Vert^{2p}_{\CC^{\alpha}} \leq C\;.
\end{equs} 

At last, we  investigate the regularity of $Y$ in time. Let us denote for brevity $\delta_{s,t} \mathcal{W}(x) \eqdef \mathcal{W}(t, x) - \mathcal{W}(s, x)$. Then we can write
\begin{equ}
 \langle \delta_{s,t} \mathcal{W}(x_1), \delta_{s,t} \mathcal{W}(x_2) \rangle = \delta_{s,t}^2 \mathcal{K}(x_1 - x_2)\;,
\end{equ}
where we used the operator $\delta_{s,t}^2$ defined in \eqref{e:Deltaz}. Furthermore, it follows from Lemma~\ref{l:KDeltaz} that
\begin{equ}
 |\delta_{s,t}^2 \mathcal{K}(x_1 - x_2)| \lesssim C |t- s|^\beta |x_1 - x_2|^{\zeta - 2\beta}\;,
\end{equ}
for any $\beta \in [0,2]$. Taking a function $\psi_x^n$ from the wavelet basis on $\R$ and using the previous bound we derive the estimates
\begin{multline*}
\mathbb{E} | \langle Y(t) - Y(s), \psi_x^n \rangle |^2 = \int \psi_x^n (x_1) \psi_x^n(x_2)\, \delta_{s,t}^2 \mathcal{K}(x_1 - x_2)\, d x_1\, d x_2\\
\lesssim |t - s|^\beta \int |\psi_x^n (x_1) \psi_x^n(x_2)| | x_1 - x_2 |^{\zeta - 2\beta}\, d x_1\, d x_2 \lesssim |t - s|^\beta 2^{- (\zeta - 2\beta) n - n}\;,
\end{multline*}
where the integral is estimated as in \eqref{e:XEstim} with the condition $\zeta - 2\beta > - 1$. From this estimate we obtain in the same way as above
\begin{equs}
 \mathbb{E} \Vert X(t) - X(s) \Vert^{2p}_{\CC^{\alpha - \beta}} \lesssim |t-s|^{\beta p}\;,
\end{equs}
for any $\alpha < \frac{\zeta}{2}$. Now one can apply the Kolmogorov continuity criterion for Banach-valued random variables \cite{Kal02}, to prove the existence of a modification of $Y$ which belongs to $\CC^{\beta/2} ([-1,1], \CC^{\alpha - \beta})$, for any $\beta < \frac{\zeta + 1}{2}$.

The required estimates on the approximating processes $Y_\eps$ can be obtained by following the same steps, recalling that the proportionality constants depend on $\eps$.
\end{proof}

In the following corollary we deal with the special case in which a random variable $Y$ can be written as $K\ast\bar Y$, where $K$ is a singular kernel according to Definition~\ref{d:SingularFunction} and $\bar Y$ is a random variable of the form~\eqref{e:XWienerIntegral}.

\begin{corollary}\label{c:KConvol}
In the settings of Proposition~\ref{p:ChaosKernelsConv} with $\zeta \in (-3, 0)$, let a compactly supported function $K$ be singular of order $\chi \in (-\frac{7 + \zeta}{2}, 0)$. Then the distribution $K \ast Y$ belongs almost surely to the space $\CC^{\beta/2} ([-1,1], \CC^{\alpha - \beta}(\R))$ for any $\alpha < \frac{\zeta}{2} + \chi + 3$ and $\beta$ as in the second part of the statement of the above-mentioned proposition with $\zeta + 2\chi + 6$ in place of $\zeta$.
\end{corollary}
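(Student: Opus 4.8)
The plan is to identify $K \ast Y$ as another random variable of the form \eqref{e:XWienerIntegral}, compute its covariance kernel, and then invoke Proposition~\ref{p:ChaosKernelsConv}, part~\eqref{item:third}.

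First I would observe that $K \ast Y$ lies in the same Wiener chaos as $Y$: if $Y$ is represented through the $H^{\otimes k}$-valued function $\mathcal{W}$, then, writing $\tilde K(z) \eqdef K(-z)$ and using that convolution is self-adjoint, $\langle K \ast Y, \varphi\rangle = \langle Y, \tilde K \ast \varphi\rangle = I_k\bigl(\int_{\R^2} \varphi(z)\,(K \ast \mathcal{W})(z)\,dz\bigr)$, where $K \ast \mathcal{W}$ denotes the convolution in the $\R^2$-variable; compact support of $K$ preserves the integrability property of $\mathcal{W}$, so $K \ast Y$ is indeed of the form \eqref{e:XWienerIntegral}. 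By the Wiener--It\^o isometry its covariance kernel is
\[
\tilde\CK(z_1, z_2) = \int_{\R^2\times\R^2} K(z_1 - w_1)\,K(z_2 - w_2)\,\CK(w_1 - w_2)\,dw_1\,dw_2 = (\tilde K \ast K \ast \CK)(z_1 - z_2)\;,
\]
the last equality following from the change of variables $a = z_1 - w_1$, $b = z_2 - w_2$ together with the translation invariance of $\CK$.

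The core step is then the standard estimate on convolutions of singular kernels, with respect to the parabolic scaling $\s = (2,1)$ on $\R^2$, so that $|\s| = 3$ (see \cite[Lem.~10.14 and its proof]{Hai14}): convolving a kernel singular of order $\zeta$ with two kernels singular of order $\chi$ (note that $\tilde K$ has the same singularity order as $K$) yields a kernel singular of order $\zeta + 2\chi + 2|\s| = \zeta + 2\chi + 6$, provided all kernels involved are compactly supported, all input and intermediate orders exceed $-|\s| = -3$, and the output order is not a nonnegative integer. These conditions are met: $\tilde\CK$ is compactly supported; $\zeta \in (-3,0)$ and $\chi > -3$ (which is forced, since a compactly supported function with a singularity of order $\le -3$ is not locally integrable and $K \ast Y$ would be ill-defined); the intermediate order $\zeta + \chi + 3$ exceeds $-3$ because $\zeta, \chi > -3$; and the output order $\zeta + 2\chi + 6$ lies in $(-1, \infty)$ precisely because $\chi > -\tfrac{7+\zeta}{2}$ (we tacitly assume it is non-integer, otherwise one decreases $\chi$ infinitesimally). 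Smoothness of $\tilde\CK$ away from the origin is inherited from that of $K$ and $\CK$.

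It then remains to apply Proposition~\ref{p:ChaosKernelsConv}\,\eqref{item:third} to $K \ast Y$, whose covariance kernel $\tilde\CK$ is singular of order $\tilde\zeta \eqdef \zeta + 2\chi + 6 \in (-1,\infty)$: this gives that $K \ast Y$ belongs almost surely to $\CC^{\beta/2}([-1,1], \CC^{\tilde\alpha - \beta}(\R))$ for every $\tilde\alpha < \tfrac{\tilde\zeta}{2} = \tfrac{\zeta}{2} + \chi + 3$ and every $\beta$ admissible for the exponent $\tilde\zeta$, namely $\beta \in \bigl(0, \tfrac{\tilde\zeta+1}{2}\bigr)$ when $\tilde\zeta < 0$ and $\beta \in (0,\tilde\alpha)$ when $\tilde\zeta > 0$, which is exactly the claim. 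The only genuinely delicate ingredient is the singular-kernel convolution bound, with its bookkeeping of the scaling dimension and of the admissibility of the intermediate orders; everything else is a direct translation of $K \ast Y$ into the framework of Proposition~\ref{p:ChaosKernelsConv}.
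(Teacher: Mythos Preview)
Your proposal is correct and follows essentially the same approach as the paper: represent $K\ast Y$ in the form \eqref{e:XWienerIntegral}, compute its covariance as the double convolution $K\ast\tilde K\ast\CK$, invoke \cite[Lem.~10.14]{Hai14} to obtain singularity order $\zeta+2\chi+6>-1$, and then apply Proposition~\ref{p:ChaosKernelsConv}\,\eqref{item:third}. You actually give more detail than the paper (the explicit change of variables and the verification of the admissibility conditions for the convolution lemma), but the underlying argument is identical.
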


\begin{proof}
The random distribution $K \ast Y$ is of the form \eqref{e:XWienerIntegral} with the respective function as in \eqref{e:KDef} given by
\begin{equ}
\hat{\mathcal{K}}(z_1, z_2) = \hat{\mathcal{K}}(z_1 - z_2) = \int \int K(z_1 - \bar z_1) \mathcal{K}(\bar z_1 - \bar z_2) K(z_2 - \bar z_2)\,d \bar z_1 d\bar z_2\;,
\end{equ}
where $\mathcal{K}$ is as in Proposition~\ref{p:ChaosKernelsConv}. It follows immediately from \cite[Lem.~10.14]{Hai14} that $\hat{\mathcal{K}}$ has singularity of order $\zeta + 2\chi + 6 > -1$. The claim now follows from point \ref{item:third} of Proposition~\ref{p:ChaosKernelsConv}.
\end{proof}

We conclude this section with the following lemma which will come at hand when dealing with the remainder terms introduced in~\eqref{e:Remainders}. 

\begin{lemma}\label{l:Control}
Let us be given a function $\R \ni x \mapsto R_x$ such that $R_x$ is of the form \eqref{e:XWienerIntegral} with the respective function $\mathcal{K}_x$, defined in \eqref{e:KDef}, and, for some $\zeta \in (-1, 0)$, satisfies
\begin{equ}[eControlBound]
|\mathcal{K}_x(z_1, z_2)| \lesssim \sum_{\delta \geq 0} |x_1 - x_2|^{\zeta - \delta} \left( |x_1 - x_2|^{\delta} + |x_1 - x|^{\delta} + |x_2 - x|^{\delta} \right),
\end{equ}
uniformly over the variables $z_i = (t, x_i)$, and where the sum runs over finitely many values of $\delta \in [0, 1 + \zeta)$. Let furthermore $R_x$ has a strictly positive H\"{o}lder regularity in time variable. Then $(t, x) \mapsto R_x(t, \cdot)$ belongs almost surely to the space $\CL_1^\beta(\R)$ for any $\beta < {\zeta \over 2}$.
\end{lemma}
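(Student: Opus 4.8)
The plan is to reduce the statement to the wavelet-based machinery developed in the proof of Proposition~\ref{p:ChaosKernelsConv}, applied now to the family $x \mapsto R_x$ rather than to a single distribution. First I would fix $x \in \R$ and a spatial mother wavelet $\psi_x^n$ at scale $2^{-n}$ centred at $x$ (so the test function is concentrated where the base point sits), and bound the second moment of the pairing by
\begin{equ}
\mathbb{E}|\langle R_x, \psi_x^n\rangle_{\mathrm{space}}|^2 = \int \psi_x^n(x_1)\,\psi_x^n(x_2)\,\mathcal{K}_x\bigl((t,x_1),(t,x_2)\bigr)\,dx_1\,dx_2\;.
\end{equ}
Inserting the hypothesis \eqref{eControlBound}, the integral splits into finitely many pieces of the form $\int |\psi_x^n(x_1)\psi_x^n(x_2)|\,|x_1-x_2|^{\zeta-\delta}\bigl(|x_1-x_2|^{\delta}+|x_1-x|^{\delta}+|x_2-x|^{\delta}\bigr)\,dx_1\,dx_2$. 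Because $\psi_x^n$ is supported in a ball of radius $\sim 2^{-n}$ around $x$, on the support one has $|x_i-x| \lesssim 2^{-n}$ and $|x_1-x_2| \lesssim 2^{-n}$, so every term with exponent $\delta$ contributes at most $2^{-\delta n}$ times $\int_{|y|\lesssim 2^{-n}} |y|^{\zeta-\delta}\,dy \sim 2^{-(\zeta-\delta+1)n}$ (after the usual normalisation $2^{n}$ from the wavelet, as in \eqref{e:IntegralBound}); the $\delta$'s cancel and each term is $\lesssim 2^{-(\zeta+1)n}$, the integral being finite precisely because $\zeta > -1$. Hence $\mathbb{E}|\langle R_x, \psi_x^n\rangle|^2 \lesssim 2^{-(\zeta+1)n}$, and the analogous estimate with the father wavelet $\varphi_x^n$ gives $\lesssim 2^{-(\zeta\wedge 0 + 1)n} = 2^{-(\zeta+1)n}$.

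Next I would upgrade the second-moment bound to all moments via equivalence of moments in a fixed inhomogeneous Wiener chaos (\cite{Nel73}, exactly as in \eqref{e:XEstim}), getting $\mathbb{E}|\langle R_x, \psi_x^n\rangle|^{2p} \lesssim 2^{-(\zeta+1)np}$, and then combine this with the strictly positive H\"older regularity of $t \mapsto R_x(t,\cdot)$ that is assumed, arguing as in the final part of the proof of Proposition~\ref{p:ChaosKernelsConv}: a Lemma~\ref{l:KDeltaz}-type control on the time increments of the covariance $\mathcal{K}_x$ (which follows from the assumed time-H\"older regularity together with \eqref{eControlBound}) yields $\mathbb{E}\|R_x(t,\cdot)-R_x(s,\cdot)\|$-type estimates in negative spatial H\"older norms with a power of $|t-s|$, so that Kolmogorov's continuity criterion for Banach-valued processes \cite{Kal02} produces a jointly measurable modification. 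Feeding the wavelet bounds into the characterisation of the negative H\"older norm (\cite[Prop.~3.20]{Hai14}, spatial version), one obtains for $\beta < \frac{\zeta}{2}$ and $p$ large that
\begin{equ}
\mathbb{E}\Bigl[\sup_{x}\sup_{n\ge 0}\sup_{z\in\Lambda_{2^{-n}}}2^{(2\beta+1)np}\,|\langle R_x(t,\cdot),\psi_z^{n}\rangle|^{2p}\Bigr] < \infty\;,
\end{equ}
which, after summing the geometric series in $n$ exactly as in \eqref{e:XEstim} (finite since $\zeta - 2\beta - \tfrac{1}{p} > 0$ for $p$ large), gives $\mathbb{E}\Vert R\Vert^{2p}_{\CL_1^{\beta}} < \infty$ and in particular $R \in \CL_1^{\beta}$ almost surely.

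The main obstacle I anticipate is bookkeeping the base-point dependence: in the definition of $\CL_1^\beta$ one tests $R_x$ against $\varphi_{\bar x}^\lambda$ for $\bar x$ possibly far from $x$, whereas the clean bound above is obtained by centring the wavelet at the base point $x$ itself. This is handled exactly the way the analogous issue is handled for $\CC^{\alpha,\s}$ in the proof of Proposition~\ref{p:ChaosKernelsConv} — the supremum over centres in a compact set (here $\T$, after using periodicity) is harmless because there are only $O(2^{n})$ relevant centres at scale $2^{-n}$, contributing a single extra factor of $2^{n}$ that is absorbed by taking $p$ large; the point is that \eqref{eControlBound} is uniform in $x$, so the per-centre estimate is uniform and the union bound over centres costs nothing in the end. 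A secondary point worth care is checking that the finitely many $\delta$-terms in \eqref{eControlBound} with $\delta \in [0, 1+\zeta)$ are all integrable after the wavelet contraction; but as noted, each reduces to $\int_{|y|\lesssim 2^{-n}}|y|^{\zeta-\delta}\,dy$ with $\zeta - \delta > -1$ by the constraint $\delta < 1+\zeta$, so no new divergence appears.
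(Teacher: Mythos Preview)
Your approach is correct and is precisely the one the paper takes: the paper's proof says only that ``similarly to \eqref{e:XEstim}, but using the wavelet expansion only in the spatial variable'' one obtains $\mathbb{E}\Vert R(t)\Vert_{\CL^\beta}^p\lesssim 1$ and $\mathbb{E}\Vert R(t)-R(s)\Vert_{\CL^\beta}^p\lesssim |t-s|^{\gamma p}$, and then invokes Kolmogorov's criterion. Your expansion of the second-moment computation is correct, and in particular the observation that each $\delta$-term collapses to $2^{-(\zeta+1)n}$ because $\zeta-\delta>-1$ is exactly the point.

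One small correction on the ``main obstacle'': in the definition of $\CL_1^\beta$ the test function $\varphi_x^\lambda$ is centred at the \emph{same} point $x$ that parametrises the family, not at an independent $\bar x$. So there is no issue of testing far from the base point; the genuine issue is handling the supremum over the (uncountable) set of base points $x\in\T$. Your resolution --- discretise at each scale, use that there are $O(2^n)$ relevant base points per scale, and absorb the extra factor by taking $p$ large, exactly as in \eqref{e:XEstim} --- is the right way to deal with that supremum, so the conclusion is unaffected.
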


\begin{proof}
Similarly to \eqref{e:XEstim}, but using the wavelet expansion only in the spatial variable, we can prove that the bounds
\begin{equ}
\mathbb{E} \Vert R(t) \Vert_{\CL^\beta}^p \lesssim 1\;, \qquad \mathbb{E} \Vert R(t) - R(s) \Vert_{\CL^\beta}^p \lesssim |t-s|^{\gamma p}\;,
\end{equ}
hold for all $p \geq 1$, $s, t \in [-1,1]$ and $\beta < \frac{\zeta}{2}$, and for some $\gamma > 0$. The claim then follows from the Kolmogorov continuity criterion for Banach-valued random variables \cite{Kal02}.
\end{proof}

\subsection{Enhancing the space-time white noise}\label{sec:control}

In the setting of the previous section, let $\xi$ be a periodic space-time white noise and $\xi_\eps\eqdef \xi\ast\varrho_\eps$, where $\varrho$ a symmetric compactly supported smooth function integrating to 1 and $\varrho_\eps(t,x)\eqdef \eps^{-3}\varrho(\eps^{-2} t, \eps^{-1} x)$.
The core of this section is the proof of Proposition~\ref{p:Control}. In order to represent the chaos components of the stochastic processes in $\X(\xi_\eps)$ and $\X(\xi)$, we will use the expression in \eqref{e:XWienerIntegral} and we will denote their covariance by~\eqref{e:KDef}.  
Wick's lemma \cite[Prop.~1.1.3]{Nua06} will allow us to work with the product of multiple Wiener-It\^o integrals, while we will apply Proposition~\ref{p:ChaosKernelsConv} to prove convergence of the smooth controlling processes defined in Section~\ref{sec:ControllingProcesses}. 
Notice that, this latter proposition tells us that it suffices to determine the {\it order} of the singular kernel appearing in~\eqref{e:KDef}, which in turn are given by products, convolutions and derivatives of the kernel $K$, that is the singular part of the heat kernel, as we saw at the beginning of Section~\ref{sec:ControllingProcesses}. 
To deal with these operations, we will extensively use the results of~\cite[Sec.~10.3]{Hai14}, together with the fact that $K$ and $K_\eps\eqdef  K \ast\rho_\eps$ have singularities of order $-1$ in the sense of Definition~\ref{d:SingularFunction} and that, for every $\theta\in[0,1]$, $K - K_\eps$ has a singularity of order $-1 - \theta$ with the constant $C$ in \eqref{e:KBound} proportional to $\eps^\theta$.

\begin{proof}[Proof of Proposition~\ref{p:Control}]

\cite[Lem.~10.2]{Hai14} shows that $\xi$ belongs to $\CC^{-\frac{3}{2} - \kappa,\s}$ a.s. for every $\kappa > 0$ and that, for all $p\geq 1$, $\xi_\eps$ converges to $\xi$ in $L^p(\Omega,\CC^{-\frac{3}{2} - \kappa,\s})$ as $\eps \to 0$. 

The process $X_\eps^{\<tree1>}$ can be written as $X_\eps^{\<tree1>} = \partial_x Y_\eps^{\<tree1>}$, where $Y_\eps^{\<tree1>}$ belongs to the first Wiener chaos, and whose kernel is $\mathcal{W}_\eps^{\<tree1>}(z; y) = K_\eps(z - y)$. Analogously, the limiting process $X^{\<tree1>}$ is given by $X^{\<tree1>} = \partial_x Y^{\<tree1>}$, and $Y^{\<tree1>}$ is defined via the kernel $\mathcal{W}^{\<tree1>}(z; y) = K(z - y)$. 
Then it follows from \cite[Lem.~10.14]{Hai14} that the covariance functions $\mathcal{K}^{\<tree1>}$ and $\delta \mathcal{K}^{\<tree1>}_\eps$, which are defined in \eqref{e:KDef} for $\mathcal{W}_\eps^{\<tree1>}$ and $\mathcal{W}^{\<tree1>}$, satisfy the assumptions of Proposition \ref{p:ChaosKernelsConv} with the parameters $\zeta = 1$ and $\theta < \frac{1}{2}$. This implies that $Y^{\<tree1>} \in \CC^{\beta/2} ([-1,1], \CC^{\alpha - \beta}(\R))$, for any $\alpha \in (0, \frac{1}{2})$ and $\beta \in (0, \alpha)$. The regularity of $X^{\<tree1>}$ is then $\CC^{\beta/2} ([-1,1], \CC^{\alpha - 1 - \beta}(\R))$, and the estimate \eqref{e:KEpsConvergeTime} holds for $\beta < \frac{1}{2} - \theta$.

It follows from this result and Corollary~\ref{c:KConvol} that the distribution $X^{\<tree11>} = K' \star X^{\<tree1>}$ belongs to $\CC^{\beta/2} ([-1,1], \CC^{\alpha - \beta}(\R))$, for any $\alpha \in (0, \frac{1}{2})$ and $\beta \in (0, \alpha)$. The respective bound \eqref{e:KEpsConvergeTime} follows trivially.

To lighten the notation, in what follows we will represent our kernels via diagrams, similarly to what was done in the proof of \cite[Thm.~10.22]{Hai14}. 
By 
$\begin{tikzpicture}[baseline={([yshift=-.5ex]current bounding box.center)}] \node [plainNode] (C) {}; \node [xshift=0.15cm] {$z$}; \node [plainNode] (CL) [left=0.5cm] {} edge [arrow] (C); \node [xshift=-0.15cm] at (CL) {$y$}; \end{tikzpicture}$ and $\begin{tikzpicture}[baseline={([yshift=-.5ex]current bounding box.center)}] \node [plainNode] (C) {}; \node [xshift=0.15cm] {$z$}; \node [plainNode] (CL) [left=0.5cm] {} edge [dashed] (C); \node [xshift=-0.15cm] at (CL) {$y$}; \end{tikzpicture}$ we will denote the kernels $K(z - y)$ and $K_\eps(z - y)$ respectively, while the difference $(K_\eps - K)(z - y)$ by $\begin{tikzpicture}[baseline={([yshift=-.5ex]current bounding box.center)}] \node [plainNode] (C) {}; \node [xshift=0.15cm] {$z$}; \node [plainNode] (CL) [left=0.5cm] {} edge [wavy] (C); \node [xshift=-0.15cm] at (CL) {$y$}; \end{tikzpicture}$. Furthermore, we will use a double-headed arrow for the partial derivative in space of a function, e.g. $\begin{tikzpicture}[baseline={([yshift=-.5ex]current bounding box.center)}] \node [plainNode] (C) {}; \node [xshift=0.15cm] {$z$}; \node [plainNode] (CL) [left=0.5cm] {} edge [arrow, deriv] (C); \node [xshift=-0.15cm] at (CL) {$y$}; \end{tikzpicture}$ will be $\partial_x K(z - y)$. Each node in such diagrams corresponds to a point in $\R^2$, and if a variable is integrated out, we will draw the corresponding node in gray.

Now, we can turn to the process $X_\eps^{\<tree2>}$. Let us begin by defining its $0$-th chaos component, which will be the constant we need to remove in order to obtain a well-defined object in the limit. By shift-invariance, this is given by 
\begin{equ}\label{e:Constant1}
C_\eps^{\<tree2>} \eqdef \E [X_\eps^{\<tree1>}(0)]^2 = \int_{\R^2} \bigl(K_\eps'(w)\bigr)^2\dd w
\end{equ}
and it is not difficult to see that $C_\eps^{\<tree2>} \sim \eps^{-1}$. Then upon subtracting $C_\eps^{\<tree2>}$ in \eqref{e:Control1}, we conclude that $X_\eps^{\<tree2>}$ is defined by the kernel
\begin{equ}
 \mathcal{W}_\eps^{\<tree2>}(z) = 
\begin{tikzpicture}[baseline={([yshift=-.5ex]current bounding box.center)}]
\node [plainNode] (C) {}; \node [yshift=-0.15cm] at (C) {$z$};
\node [plainNode] (CL) [above left=0.5cm of C] {} edge [dashed, deriv] (C);
\node [plainNode] (CR) [above right=0.5cm of C] {} edge [dashed, deriv] (C);
\end{tikzpicture}\;.
\end{equ}
and, correspondingly, the limiting process $X^{\<tree2>}$ is defined by the kernel $\mathcal{W}^{\<tree2>}$ which is obtained by replacing dashed arrows by solid ones in the above diagram. The covariance function $\mathcal{K}^{\<tree2>}$ is then given by
\begin{equ}
\mathcal{K}^{\<tree2>}(z_1, z_2) = 
\begin{tikzpicture}[baseline={([yshift=-.5ex]current bounding box.center)}]
\node [plainNode] (C) {}; 
\node [xshift=-0.2cm] at (C) {$z_1$};
\node [grayNode] (CRU) [above right=0.5cm of C] {} edge [arrow, deriv] (C);
\node [grayNode] (CRD) [below right=0.5cm of C] {} edge [arrow, deriv] (C);
\node [plainNode] (D) [below right=0.5cm of CRU] {};
\draw [arrow, deriv] (CRU) -- (D);
\draw [arrow, deriv] (CRD) -- (D);
\node [xshift=0.25cm] at (D) {$z_2$};
\end{tikzpicture},
\end{equ}
which implies, using \cite[Lem.~10.14]{Hai14}, that it has a singularity of order $\zeta = -2$. Applying now Proposition \ref{p:ChaosKernelsConv} we conclude that $X^{\<tree2>}$ belongs to $\CC_1^{\alpha,\s}$ for $\alpha < -1$. Writing now the kernel $\delta \mathcal{W}_\eps^{\<tree2>} = \mathcal{W}_\eps^{\<tree2>} - \mathcal{W}^{\<tree2>}$ as 
\begin{equ}
\mathcal{W}_\eps^{\<tree2>}(z) = 
\begin{tikzpicture}[baseline={([yshift=-.5ex]current bounding box.center)}]
\node [plainNode] (C) {}; \node [yshift=-0.15cm] at (C) {$z$};
\node [plainNode] (CL) [above left=0.5cm of C] {} edge [wavy, deriv] (C);
\node [plainNode] (CR) [above right=0.5cm of C] {} edge [dashed, deriv] (C);
\end{tikzpicture}
+
\begin{tikzpicture}[baseline={([yshift=-.5ex]current bounding box.center)}]
\node [plainNode] (C) {}; \node [yshift=-0.15cm] at (C) {$z$};
\node [plainNode] (CL) [above left=0.5cm of C] {} edge [arrow, deriv] (C);
\node [plainNode] (CR) [above right=0.5cm of C] {} edge [wavy, deriv] (C);
\end{tikzpicture},
\end{equ}
and treating each term in this sum separately as before, we conclude that the estimate \eqref{e:KEpsConverge} holds for $X^{\<tree2>}$ and $X_\eps^{\<tree2>}$ with $\theta < \frac{1}{2}$ and $\alpha < -1 - \theta$.

Combining the result above concerning $X^{\<tree2>}$ with Corollary~\ref{c:KConvol}, we can show that the process $X^{\<tree12>} = K' \ast X^{\<tree2>}$ belongs almost surely to the space $\CC^{\beta/2} ([-1, 1], \CC^{\alpha - \beta}(\R))$ for $\alpha < 0$ and $\beta \in (0, \frac{1}{2})$. The required bound \eqref{e:KEpsConvergeTime} for $X^{\<tree12>}_\eps$ and $X^{\<tree12>}$ can be obtained straightforwardly.

Now, we will define the process $X^{\<tree22>}$. It follows from Wick's lemma, that we can write $X_{\eps}^{\<tree22>} = X_{\eps,3}^{\<tree22>} + X_{\eps,1}^{\<tree22>}$, where the last two processes belong to the third and first homogeneous Wiener chaoses respectively, and are defined via the kernels
\begin{equs}
\mathcal{W}_{\eps,3}^{\<tree22>}(z) =
\begin{tikzpicture}[baseline={([yshift=-.5ex]current bounding box.center)}]
\node [plainNode] (C) {}; \node [yshift=-0.15cm] at (C) {$z$};
\node [grayNode] (CU) [above=0.5cm of C] {} edge [arrow, deriv] (C);
\node [plainNode] (D) [above right=0.5cm of C] {} edge [dashed, deriv] (C);
\node [plainNode] (CL) [above left=0.5cm of CU] {} edge [dashed, deriv] (CU);
\node [plainNode] (CR) [above right=0.5cm of CU] {} edge [dashed, deriv] (CU);
\end{tikzpicture}, \qquad
\mathcal{W}_{\eps,1}^{\<tree22>}(z) = 2
\begin{tikzpicture}[baseline={([yshift=-.5ex]current bounding box.center)}]
\node [plainNode] (C) {}; \node [yshift=-0.15cm] at (C) {$z$};
\node [grayNode] (CU) [above=0.5cm of C] {} edge [arrow, deriv] (C);
\node [grayNode] (D) [above right=0.4cm of C] {} edge [dashed, deriv] (C);
\node [plainNode] (CL) [above left=0.5cm of CU] {} edge [dashed, deriv] (CU);
\draw [dashed, deriv] (D) -- (CU);
\end{tikzpicture}
\;-\; 2 C^{\<tree21>}_\eps 
\begin{tikzpicture}[baseline={([yshift=.5ex]current bounding box.center)}]
\node [plainNode] (C) {}; \node [yshift=-0.15cm] at (C) {$z$};
\node [grayNode] (CU) [above=0.5cm of C] {} edge [dashed, deriv] (C);
\end{tikzpicture},
\end{equs}
where the renormalization constant $C^{\<tree21>}_\eps$ is given by
\begin{equ}[e:Constant2]
 C^{\<tree21>}_\eps =
\begin{tikzpicture}[baseline={([yshift=0ex]current bounding box.center)}]
\node [plainNode] (C) {}; \node [yshift=-0.2cm] at (C) {\scriptsize$0$};
\node [grayNode] (CR) [above right=0.5cm of C] {} edge [dashed, deriv] (C);
\node [grayNode] (CL) [above left=0.5cm of C] {} edge [arrow, deriv] (C);
\draw [dashed, deriv] (CR) -- (CL);
\end{tikzpicture}
= \int_{\R^2} K'\ast K_\eps'(w)K_\eps'(w)\dd w\;.
\end{equ}
It is evident that $C^{\<tree21>}_\eps=0$ since the quantity inside the integral is odd (because the heat kernel is even and $\varrho$ symmetric). It is natural to set the corresponding kernels of the limiting object $X_{3}^{\<tree22>}$ and $X_{1}^{\<tree22>}$ to be given by the same expression but replacing the dashed arrows by solid ones, and, consequently, $X^{\<tree22>} = X_{3}^{\<tree22>} + X_{1}^{\<tree22>}$. Using \cite[Lem.~10.14]{Hai14}, we conclude that the function $\mathcal{K}_{3}^{\<tree22>}$ is singular of order $\zeta = -1$, which thanks to Proposition \ref{p:ChaosKernelsConv} implies that $X_3^{\<tree22>}$ belongs to the space $\CC_1^{\alpha,\s}$ with $\alpha < -\frac{1}{2}$. Furthermore, we note that $\mathcal{W}_{\eps,1}^{\<tree22>}$ can be written as
\begin{equ}
 \mathcal{W}_{\eps,1}^{\<tree22>}(z) = 2 \int_{\R^2} \int_{\R^2} \SR Q_\eps (z - \bar z)\, K_\eps'(\bar z - z_1) \xi(z_1) \dd \bar z\, \dd z_1\;,
\end{equ}
where the kernel $Q_\eps$ is defined by the diagram
$
Q_\eps (z - \bar z) = 
\begin{tikzpicture}[baseline={([yshift=0ex]current bounding box.center)}]
\node [plainNode] (C) {}; \node [yshift=-0.2cm] at (C) {\scriptsize$z$};
\node [grayNode] (CR) [above right=0.5cm of C] {} edge [dashed, deriv] (C);
\node [plainNode] (CL) [above left=0.5cm of C] {} edge [arrow, deriv] (C); 
\node [xshift=-0.2cm] at (CL) {\scriptsize$\bar z$};
\draw [dashed, deriv] (CR) -- (CL);
\end{tikzpicture}
$,
and the operator $\SR$ acts, for any test function $\psi$, as
\begin{equ}[eq:RQ]
 \langle \SR Q_\eps, \psi \rangle = \langle Q_\eps, \psi - \psi(0)\rangle.
\end{equ}
Now, \cite[Lem.~10.16]{Hai14} implies that the kernel $z \mapsto \int_{\R^2} \SR Q_\eps (z - \bar z)\, K_\eps'(\bar z)\, \dd \bar z$ has singularity of order $-2$. Hence, the function $\mathcal{K}_{1}^{\<tree22>}$ is of order $\zeta = -1$, and Proposition \ref{p:ChaosKernelsConv} yields $X_1^{\<tree22>} \in \CC_1^{\alpha,\s}$ with $\alpha < -\frac{1}{2}$. Thus the process $X^{\<tree22>}$ has the same regularity and the estimate \eqref{e:KEpsConverge} can be obtained as above.

As for $X^{\<tree122>}$, applying Corollary~\ref{c:KConvol} to the result above, we get that $X^{\<tree122>} = K' \ast X^{\<tree22>}$ belongs to $\CC^{\beta/2} ([-1, 1], \CC^{\alpha - \beta}(\R))$ for $\alpha \in (0, \frac{1}{2})$ and $\beta \in (0, \alpha)$. The bound \eqref{e:KEpsConvergeTime} can be shown similarly.

Now, we will define the product $X^{\<tree21>}$. To this end, we use Wick's lemma to write $X_\eps^{\<tree11>} X_\eps^{\<tree1>} = X_\eps^{\<tree21>} + C^{\<tree21>}_\eps$, where the terms $X_\eps^{\<tree21>}$ and $C^{\<tree21>}_\eps$ belong to the second and zeroth homogeneous Wiener chaoses respectively, and they are given by
\begin{equ}
\mathcal{W}_\eps^{\<tree21>}(z) = 
\begin{tikzpicture}[baseline={([yshift=-.5ex]current bounding box.center)}]
\node [plainNode] (C) {}; \node [yshift=-0.15cm] at (C) {$z$};
\node [plainNode] (CR) [above right=0.5cm of C] {} edge [dashed, deriv] (C);
\node [grayNode] (CL) [above left=0.5cm of C] {} edge [arrow, deriv] (C);
\node [plainNode] (CLU) [above=0.5cm of CL] {} edge [dashed, deriv] (CL);
\end{tikzpicture}
\end{equ}
and \eqref{e:Constant2} respectively. In particular, this implies that $X^{\<tree21>}$ is defined via the kernel $\mathcal{W}^{\<tree21>}$ which is obtained from $\mathcal{W}_\eps^{\<tree21>}$ by replacing all the approximating kernels $K_\eps$ by $K$. Then \cite[Lem.~10.14]{Hai14} implies that the respective function $\mathcal{K}^{\<tree21>}$ has singularity of order $\zeta = - \kappa$ for every $\kappa > 0$, which thanks to Proposition~\ref{p:ChaosKernelsConv} yields $X^{\<tree21>} \in \CC^{\beta/2} ([-1,1], \CC^{\alpha - \beta}(\R))$ with $\alpha < 0$ and $\beta \in (0, \frac{1}{2})$. The bound \eqref{e:KEpsConvergeTime} can be proved as before.

In order to treat the process $X^{\<tree124>}$, we will first define the product $(X^{\<tree12>})^2$. Applying Wick's lemma, we conclude that we can write $(X_\eps^{\<tree12>})^2 = X_{\eps, 4}^{\<tree12>\<tree12>} + X_{\eps, 2}^{\<tree12>\<tree12>} + X_{\eps, 0}^{\<tree12>\<tree12>}$, where these terms belong to the $4$th, $2$nd and $0$th Wiener chaoses respectively. Since the spatial derivative will kill the constant term, we will not take it into consideration. The first two processes are defined by the kernels
\begin{equ}
\mathcal{W}_{\eps,4}^{\<tree12>\<tree12>}(z) =
\begin{tikzpicture}[baseline={([yshift=-.5ex]current bounding box.center)}]
\node [plainNode] (C) {}; \node [yshift=-0.15cm] at (C) {$z$};
\node [grayNode] (CU) [above left=0.5cm of C] {} edge [arrow, deriv] (C);
\node [plainNode] (CL) [above left=0.4cm of CU] {} edge [dashed, deriv] (CU);
\node [plainNode] (CR) [above right=0.4cm of CU] {} edge [dashed, deriv] (CU);
\node [grayNode] (D) [above right=0.5cm of C] {} edge [arrow, deriv] (C);
\node [plainNode] (DL) [above left=0.4cm of D] {} edge [dashed, deriv] (D);
\node [plainNode] (DR) [above right=0.4cm of D] {} edge [dashed, deriv] (D);
\end{tikzpicture}, \qquad 
\mathcal{W}_{\eps,2}^{\<tree12>\<tree12>}(z) =
\begin{tikzpicture}[baseline={([yshift=-.5ex]current bounding box.center)}]
\node [plainNode] (C) {}; \node [yshift=-0.15cm] at (C) {$z$};
\node [grayNode] (CU) [above left=0.5cm of C] {} edge [arrow, deriv] (C);
\node [plainNode] (CL) [above left=0.5cm of CU] {} edge [dashed, deriv] (CU);
\node [grayNode] (CR) [above right=0.5cm of CU] {} edge [dashed, deriv] (CU);
\node [grayNode] (D) [above right=0.5cm of C] {} edge [arrow, deriv] (C);
\node [plainNode] (DR) [above right=0.5cm of D] {} edge [dashed, deriv] (D);
\draw [dashed, deriv] (CR) -- (D);
\end{tikzpicture},
\end{equ}
and we define the kernels $\mathcal{W}_{4}^{\<tree12>\<tree12>}$ and $\mathcal{W}_{2}^{\<tree12>\<tree12>}$ of the limiting object via the same diagrams, but replacing all the dashed arrows by solid. Then \cite[Lem.~10.14]{Hai14} implies that the corresponding covariance functions $\mathcal{K}_{4}^{\<tree12>\<tree12>}$ and $\mathcal{K}_{2}^{\<tree12>\<tree12>}$ have singularities of order $\zeta = -\kappa$ for every $\kappa > 0$. Applying now consecutively Proposition~\ref{p:ChaosKernelsConv} first and Corollary~\ref{c:Schauder}, we conclude that the process $P'\ast X^{\<tree12>\<tree12>}$ belongs to $\CC^{\alpha,\s}_1$ with $\alpha \in (0,1)$.

To treat the process $X^{\<tree1222>}$, the same strategy as for $X^{\<tree124>}$ applies so that it suffices to obtain suitable bounds on $X^{\<tree222>}$, formally given by the product of $X^{\<tree122>}$ and $X^{\<tree1>}$. Using Wick's lemma, we can write $X_\eps^{\<tree222>} = X_{\eps,4}^{\<tree222>} + X_{\eps,2}^{\<tree222>}$, where these terms belong to the $4$th and $2$nd homogeneous Wiener chaoses respectively and we omitted the $0$-th chaos component since the spatial derivative will anyway kill it. The kernels defining $X_{\eps,4}^{\<tree222>}$ and $X_{\eps,2}^{\<tree222>}$ are given by
\begin{equ}
\mathcal{W}_{\eps,4}^{\<tree222>}(z) =
\begin{tikzpicture}[baseline={([yshift=-.5ex]current bounding box.center)}]
\node [plainNode] (C) {}; \node [xshift=-0.15cm] at (C) {$z$};
\node [grayNode] (CU) [right=0.5cm of C] {} edge [arrow, deriv] (C);
\node [plainNode] (D) [below right=0.5cm of C] {} edge [dashed, deriv] (C);
\node [grayNode] (CUU) [right=0.5cm of CU] {} edge [arrow, deriv] (CU);
\node [plainNode] (M) [below right=0.5cm of CU] {} edge [dashed, deriv] (CU);
\node [plainNode] (L) [above right=0.5cm of CUU] {} edge [dashed, deriv] (CUU);
\node [plainNode] (R) [below right=0.5cm of CUU] {} edge [dashed, deriv] (CUU);
\end{tikzpicture}\;,
\end{equ}
as well as
\begin{equ}
\mathcal{W}_{\eps,2}^{\<tree222>}(z) =
\begin{tikzpicture}[baseline={([yshift=-.5ex]current bounding box.center)}]
\node [plainNode] (C) {}; \node [xshift=-0.15cm] at (C) {$z$};
\node [grayNode] (CU) [right=0.5cm of C] {} edge [verywavy] (C);
\node [grayNode] (CUU) [right=0.5cm of CU] {} edge [arrow, deriv] (CU);
\node [plainNode] (L) [above right=0.5cm of CUU] {} edge [dashed, deriv] (CUU);
\node [plainNode] (R) [below right=0.5cm of CUU] {} edge [dashed, deriv] (CUU);
\end{tikzpicture}
+ 2 \Big(
\begin{tikzpicture}[baseline={([yshift=-.5ex]current bounding box.center)}]
\node [plainNode] (C) {}; \node [xshift=-0.15cm] at (C) {$z$};
\node [grayNode] (CU) [right=0.5cm of C] {} edge [arrow, deriv] (C);
\node [grayNode] (CUU) [right=0.5cm of CU] {} edge [verywavy] (CU);
\node [plainNode] (L) [above right=0.5cm of CUU] {} edge [dashed, deriv] (CUU);
\node [plainNode] (R) [below right=0.5cm of C] {} edge [dashed, deriv] (C);
\end{tikzpicture}
+
\begin{tikzpicture}[baseline={([yshift=-.5ex]current bounding box.center)}]
\node [plainNode] (C) {}; \node [xshift=-0.15cm] at (C) {$z$};
\node [grayNode] (CU) [right=0.5cm of C] {} edge [arrow, deriv] (C);
\node [grayNode] (CUU) [right=0.5cm of CU] {} edge [arrow, deriv] (CU);
\node [plainNode] (M) [below right=0.5cm of CU] {} edge [dashed, deriv] (CU);
\node [plainNode] (R) [below right=0.5cm of CUU] {} edge [dashed, deriv] (CUU);
\node [grayNode] (N) [above=0.3cm of CU] {} edge [dashed, deriv] (C);
\draw [dashed, deriv] (N) -- (CUU);
\end{tikzpicture}
\;\Big)\;,
\end{equ}
where we use the kernel 
$
\SR Q_\eps (z - \bar z) = 
\begin{tikzpicture}[baseline={([yshift=-0.5ex]current bounding box.center)}]
\node [plainNode] (CL) {};
\node [xshift=-0.2cm] at (CL) {\scriptsize$z$};
\node [plainNode] (CR) [right=0.5cm of CL] {} edge [verywavy] (CL); 
\node [xshift=0.2cm] at (CR) {\scriptsize$\bar z$};
\end{tikzpicture}
$
 defined in \eqref{eq:RQ}. Using \cite[Lem.~10.14, 10.16]{Hai14} we conclude that the orders of the singularities of the corresponding covariance functions are $\zeta = -\kappa$ for every $\kappa > 0$. Proposition \ref{p:ChaosKernelsConv} now yields $X^{\<tree222>} \in \CC^{\beta/2} ([-1,1], \CC^{\alpha - \beta}(\R))$ with $\alpha < 0$ and $\beta \in (0, \frac{1}{2})$. Convergence of the approximate processes we show in the usual way. 

Now, we turn to analysis of the remainders defined in \eqref{e:Remainders}. Since these definitions are continuous with respect to the controlling processes, the identities \eqref{e:Remainders} will automatically hold in the limit $\eps \to 0$. It remains only to show that the processes belong to the required spaces. In order to show that $X^{\<tree21>}$ is ``controlled'' by $X^{\<tree11>}$, we write 
\begin{equ}
R^{\<tree21>}(t, x; y) \eqdef X^{\<tree21>}(t, y) - X^{\<tree11>}(t,x)\, X^{\<tree1>} (t,y) = R^{\<tree21>}_{2, x}(t,y) + R^{\<tree21>}_{0,x}(t,y)\;, 
\end{equ}
where the processes $R^{\<tree21>}_{2,x}$ and $R^{\<tree21>}_{0,x}$ belong to the $2$nd and $0$th homogeneous Wiener chaoses respectively and are characterised by the kernels 
\begin{equ}
\mathcal{W}_{2, x}^{\<tree21>}(t,y) = \Big(
\begin{tikzpicture}[baseline={([yshift=0ex]current bounding box.center)}]
\node [plainNode] (C) {}; \node [yshift=-0.15cm] at (C) {\scriptsize$(t,y)$};
\node [grayNode] (CU) [above=0.5cm of C] {} edge [arrow, deriv] (C);
\node [plainNode] (CUU) [above=0.5cm of CU] {} edge [arrow, deriv] (CU);
\end{tikzpicture} 
-
\begin{tikzpicture}[baseline={([yshift=0ex]current bounding box.center)}]
\node [plainNode] (C) {}; \node [yshift=-0.15cm] at (C) {\scriptsize$(t,x)$};
\node [grayNode] (CU) [above=0.5cm of C] {} edge [arrow, deriv] (C);
\node [plainNode] (CUU) [above=0.5cm of CU] {} edge [arrow, deriv] (CU);
\end{tikzpicture} 
\Big)
\begin{tikzpicture}[baseline={([yshift=0ex]current bounding box.center)}]
\node [plainNode] (C) {}; \node [yshift=-0.15cm] at (C) {\scriptsize$(t,y)$};
\node [plainNode] (CU) [above=0.5cm of C] {} edge [arrow, deriv] (C);
\end{tikzpicture}
\;, \qquad
\mathcal{W}^{\<tree21>}_{0, x}(t,y) =
\begin{tikzpicture}[baseline={([yshift=0ex]current bounding box.center)}]
\node [plainNode] (C) {}; \node [yshift=-0.2cm] at (C) {\scriptsize$(t,x)$};
\node [grayNode] (CU) [above=0.5cm of C] {} edge [arrow, deriv] (C);
\node [grayNode] (CUR) [right=0.5cm of CU] {} edge [arrow, deriv] (CU);
\node [plainNode] (C2) [below=0.5cm of CUR] {}; \node [yshift=-0.2cm] at (C2) {\scriptsize$(t,y)$};
\draw [arrow, deriv] (CUR) -- (C2);
\end{tikzpicture}\;.
\end{equ}
For the variables $z_i = (t, x_i)$, we can write the covariance function of $R_{2,x}^{\<tree21>}$ as
\begin{equ}
\mathcal{K}^{\<tree21>}_{2, x}(z_1, z_2) = \Big(\CK^{\<tree11>}(z_1,z_2)-\CK^{\<tree11>}(z_1,z)-\CK^{\<tree11>}(z,z_2)+\CK^{\<tree11>}(z,z)\Big)\mathcal{K}^{\<tree1>}(z_1, z_2)\;.
\end{equ}
We have already established above that the functions $\mathcal{K}^{\<tree11>}$ and $\mathcal{K}^{\<tree1>}$ have singularities of orders $\zeta_1 = 1-\kappa$ and $\zeta_2=-1$ respectively, which implies, subtracting to each $\CK^{\<tree11>}$, $\CK^{\<tree11>}$, and applying~\cite[Lem. 10.14]{Hai14} the bound
\begin{equ}
|\mathcal{K}_{2, x}^{\<tree21>}(z_1, z_2)| \lesssim \Bigl( |x_1 - x_2|^{\zeta_1} + |x_1 - x|^{\zeta_1} + |x_2 - x|^{\zeta_1} \Bigr) |x_1 - x_2|^{\zeta_2}\;.
\end{equ}
The same result holds for $R_{0,x}^{\<tree21>}$, because it is deterministic and the kernel $\mathcal{W}_{0, x}^{\<tree21>}$ satisfies $|\mathcal{W}^{\<tree21>}_{0, x}(t,y)| \lesssim |y-x|^{-\kappa}$ for every $\kappa > 0$. It follows furthermore from the previous results that $R^{\<tree21>}$ has a strictly positive H\"{o}lder regularity in time, which yields from Lemma~\ref{l:Control} that $R^{\<tree21>}(t) \in \CL_1^\beta(\R)$ almost surely for any $\beta < {\zeta_1 + \zeta_2 \over 2}$.

Finally, we turn to the limiting process of $R^{\<tree1222>}_\eps$. Notice that, by definition $R^{\<tree1222>}_\eps(z, \cdot)=P'\ast R^{\<tree222>}_\eps(z, \cdot)$, where the convolution is taken in the second variable with fixed $z$, and where $R_\eps^{\<tree222>}(z; \bar z) = X_\eps^{\<tree222>} (\bar z) - X_\eps^{\<tree122>}(z)\, X_\eps^{\<tree1>} (\bar z)$. We can define $R^{\<tree222>}$ by removing $\eps$ in the last definition and we can write
\begin{equ}
R^{\<tree222>}(z; \bar z) = R^{\<tree222>}_{4,z}(\bar z) + R^{\<tree222>}_{2,z}(\bar z) \;,
\end{equ}
where the process $R^{\<tree222>}_{i,z}(\bar z)$ belongs to the $i$th Wiener chaos and is of the form \eqref{e:XWienerIntegral}. Notice that we neglected the $0$-th chaos component because it will be anyway killed by the spatial derivative. Acting as before, it is not difficult to show that the respective functions \eqref{e:KDef} satisfy the bound \eqref{eControlBound} with $\zeta < 0$. Applying now Lemma~\ref{l:Control} we conclude that $R^{\<tree222>}_{(t, \cdot)}(t, \cdot) \in \CL_1^\beta(\R)$ almost surely for any $\beta < 0$.  As a consequence, setting $\zeta_z(s,\cdot)\eqdef R^{\<tree222>}_z(s,\cdot)$, we have
\begin{equ}
\big| \langle \zeta_z(t,\cdot), \varphi_x^\lambda \rangle \big| \leq \lambda^{\beta_{\<tree222>}}\Vert \X \Vert_{\CX} \;,
\end{equ}
uniformly over $z=(t,x),\in (0,T]\times\T$, $\lambda \in (0,1]$ and $\varphi \in \CB_0^r(\R)$. 
Moreover, if $\bar z=(\bar t,\bar x)$ is another point in $(0,T]\times\T$, $\lambda \in (0,1]$, then 
\begin{equ}
\big|\langle \zeta_{\bar z}(t,\cdot)-\zeta_z(t,\cdot), \varphi_x^\lambda \rangle \big|= \big|X^{\<tree122>}(\bar z)-X^{\<tree122>}(z)\big|\big|\langle X^{\<tree1>} (t, \cdot), \varphi_x^\lambda \rangle\big|\lesssim \lambda^{\alpha_{\<tree1>}} \snorm{\bar z - z}^{\alpha_{\<tree122>}}\Vert \X \Vert^2_{\CX}
\end{equ}
uniformly over $z=(t,x),\,\bar z=(\bar t,\bar x) \in (0,T]\times\T$ such that $|t-\bar t|\leq \onorm{t,\bar t}$, $\lambda \in (0,1]$ and $\varphi \in \CB_0^r(\R)$. 
Now, notice that, in the expression for the increments, the only term that is tested against $\varphi_x^\lambda$ is $X^{\<tree1>}$, for which it is straightforward to verify that the assumptions of Lemma~\ref{l:Schwartz} are matched so that the previous holds also for $\varphi$ Schwartz. 
In conclusion, upon choosing $\alpha=\alpha_{\<tree1>}$, $\beta=\beta_{\<tree222>}$ and $\rho=0$ in Proposition~\ref{p:HeatBounds} (as well as $\theta_1=\theta_2=0$), the proof of the proposition is concluded. 
\end{proof}

\subsection{Discrete singular kernels}\label{sec:DKernels}

As we saw in the previous section, in order to obtain suitable bounds for the stochastic processes under study the crucial notion we needed is that of kernels with a singularity at $0$ of prescribed order. In the discrete setting, Definition~\ref{d:SingularFunction} becomes

\begin{definition}\label{d:DSingKer}
Let $\{\CK^\eps\}_\eps$ be a sequence of functions on $\Lambda_\eps^\s$ supported in a ball around the origin. We say that it is of order $\zeta\in\R$ if for some $m\in\N$ the quantity
\begin{equation}\label{def:DSingKer}
\VERT \CK^\eps\VERT_{\zeta;m}^{(\eps)}\eqdef \max_{|k|_\s\leq m}\sup_{z\in\Lambda_\eps^\s} \frac{\left|\bar{D}^k_\eps \CK^\eps(z)\right|}{\|z\|_{\s,\eps}^{\zeta-|k|_\s}}
\end{equation}
is bounded uniformly in $\eps$. In the previous, $k=(k_0,k_1)\in\N^2$, $|k|_\s\eqdef 2k_0+k_1$, $\|z\|_{\s,\eps}=\|z\|_\s\vee \eps$ and $\bar D^k_\eps$ is the forward discrete derivative in space and time, i.e. $\bar D^k_\eps\eqdef \bar{D}_{t,\eps^2}^{k_0}\bar{D}_{x,\eps}^{k_1}$, where for $f$ on $\Lambda_\eps^\s$ we set $\bar D_{t,\eps^2}f(x) = \eps^{-2}\big(f(t+\eps^2,x)-f(t,x)\big)$ and $\bar D_{x,\eps}f(x) = \eps^{-1}\big(f(t,x+\eps)-f(t,x)\big)$.
\end{definition}

\begin{remark}
Notice that in~\eqref{def:DSingKer}, we are not taking a general discrete derivative operator as in Lemma~\ref{lemma:DHeatKernels} but we made the specific choice of $\bar{D}$. Even if not crucial, it is a convention that will turn out to be convenient when proving what it means to multiply or convolve singular kernels. Moreover, this does not affect in \textit{any} way the definition of the stochastic objects under study. 
\end{remark}

Before stating a number of lemmas representing the discrete counterpart of those in~\cite[Sec. 10.3]{Hai14}, let us show that the discrete heat kernel (and its discrete derivatives), as given in~\eqref{e:DSTHeatKernel} can indeed be decomposed as the sum of a singular and a ``regular" part. 

\begin{lemma}\label{l:DKernelDec}
The kernel $P^\eps$ can be written as $P^\eps=K^\eps+\hat K^\eps$, in such a way that the identity
\[
P^\eps\ast_\eps u(z)=K^\eps\ast_\eps u(z)+\hat K^\eps\ast_\eps u(z)
\]
holds for all $z\in\Lambda_\eps^\s$ and all spatially periodic functions $u$ on $\Lambda_{\eps^2}^+\times\Lambda_\eps$ (here $\Lambda_{\eps^2}^+\eqdef \Lambda_{\eps^2}\cap \R^+$). In the previous decomposition, $K^\eps$ is a singular kernel of order $-1$ according to Definition~\ref{d:DSingKer} and $\hat K^\eps$ is compactly supported, non-anticipative and such that, for any $r>0$, $\|\hat K^\eps\|_{\CC^{r,\s}}^{(\eps)}$ is bounded uniformly in $\eps$.
\end{lemma}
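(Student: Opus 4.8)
The plan is to mimic the continuous decomposition $P = \chi P + (1-\chi)P$ recalled in~\eqref{def:Kernel}, but exploiting the pointwise kernel bounds of Lemma~\ref{lemma:DHeatKernels} in place of the classical ones. First I would fix the smooth cutoff $\chi:\R^2\to[0,1]$ already used in the continuous case (with $\chi\equiv1$ on $\|z\|_\s\le\tfrac12$ and $\chi\equiv0$ on $\|z\|_\s>1$) and simply set $K^\eps(z)\eqdef \chi(z)P^\eps(z)$ and $\hat K^\eps(z)\eqdef (1-\chi(z))P^\eps(z)$ for $z\in\Lambda_\eps^\s$, where $P^\eps$ is the kernel of~\eqref{e:DSTHeatKernel} (no off-grid extension is needed here since we only evaluate on the grid). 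The identity $P^\eps\ast_\eps u = K^\eps\ast_\eps u + \hat K^\eps\ast_\eps u$ is then trivial by linearity of the discrete convolution $\ast_\eps$ and the pointwise relation $P^\eps = K^\eps + \hat K^\eps$ on $\Lambda_\eps^\s$; here one must note, exactly as in~\cite[Lem.~7.7]{Hai14} and its discrete analogue, that when $u$ is spatially periodic on $\Lambda_{\eps^2}^+\times\Lambda_\eps$ the a priori non-compactly-supported tail of $\hat K^\eps$ can be folded up periodically, so that $\hat K^\eps$ may be taken compactly supported without affecting the convolution — this is the one place where periodicity of $u$ is genuinely used.

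Next I would verify that $K^\eps$ is a singular kernel of order $-1$ in the sense of Definition~\ref{d:DSingKer}. Since $K^\eps$ is supported in $\{\|z\|_\s\le 1\}$ by construction, and since on the ball of radius $\tfrac12$ it agrees with $P^\eps$, the required bound $|\bar D_\eps^k K^\eps(z)|\lesssim \|z\|_{\s,\eps}^{-1-|k|_\s}$ follows from the discrete Leibniz rule for $\bar D_\eps^k$ applied to the product $\chi P^\eps$: each factor either hits $\chi$ (smooth, compactly supported, so its forward differences are bounded, with support in the annulus $\tfrac12\le\|z\|_\s\le1$ where $\|z\|_{\s,\eps}\sim 1$) or hits $P^\eps$, and for the $P^\eps$ factor one invokes Lemma~\ref{lemma:DHeatKernels} with $j=0$, $m=0$ and the observation that the forward spatial/temporal differences $\bar D_{x,\eps}$, $\bar D_{t,\eps^2}$ are particular instances (up to telescoping and shifts) of the operators controlled there — more precisely, $\bar D_{t,\eps^2}P^\eps = \Delta_\eps P^\eps = D_{x,\eps}^2 P^\eps$ by~\eqref{eq:DHeat} and Assumption~\ref{a:pi}, and iterated forward differences in $x$ are bounded by iterated $D_{x,\eps}$-type kernels, so Lemma~\ref{lemma:DHeatKernels} gives $|\bar D_\eps^k P^\eps(z)|\lesssim |t|_\eps^{-1-|k|_\s}\lesssim \|z\|_{\s,\eps}^{-1-|k|_\s}$ uniformly in $\eps$. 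One has to be slightly careful that the support-near-the-origin regime $\|z\|_\s<c\eps$ is covered by the last sentence of Lemma~\ref{lemma:DHeatKernels} (take $m=0$ there), so the bound holds uniformly including the diagonal scale $\|z\|_{\s,\eps}=\eps$.

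Finally, $\hat K^\eps=(1-\chi)P^\eps$ is supported away from the origin (on $\|z\|_\s\ge\tfrac12$, after the periodic folding it is compactly supported in space as well), is non-anticipative because $P^\eps_t=0$ for $t\le0$ and $\chi$ does not spoil this, and on its support $\|z\|_{\s,\eps}\gtrsim 1$ so Lemma~\ref{lemma:DHeatKernels} gives, for every $r$ and every multiindex with $|k|_\s\le r$, $|\bar D_\eps^k\hat K^\eps(z)|\lesssim 1$ uniformly in $\eps$; this is precisely uniform boundedness of $\|\hat K^\eps\|_{\CC^{r,\s}}^{(\eps)}$. The main obstacle is essentially bookkeeping rather than conceptual: one must match the particular forward-difference operators $\bar D_\eps^k$ appearing in Definition~\ref{d:DSingKer} to the derivative operators $\partial_x^k D_{x,\eps}^j$ that Lemma~\ref{lemma:DHeatKernels} actually controls, and handle the cross terms produced by the discrete Leibniz rule on $\chi P^\eps$ near the annulus — plus carry out the periodic-folding argument for $\hat K^\eps$ carefully, which is the discrete transcription of~\cite[Lem.~7.7]{Hai14} and where one should cite that lemma and explain why the folding changes neither the convolution against periodic $u$ nor the $\CC^{r,\s}$-type bounds.
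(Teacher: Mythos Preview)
Your overall plan --- multiply by a smooth cutoff $\chi$, set $K^\eps=\chi P^\eps$ and $\hat K^\eps=(1-\chi)P^\eps$, and fold the tail periodically as in \cite[Lem.~7.7]{Hai14} --- is exactly what the paper does. The only substantive difference is how the singular-kernel bound $|\bar D_\eps^k K^\eps(z)|\lesssim\|z\|_{\s,\eps}^{-1-|k|_\s}$ is obtained, and there your reduction to Lemma~\ref{lemma:DHeatKernels} has two concrete gaps.

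First, the identity $\Delta_\eps=D_{x,\eps}^2$ does not hold in general: $\Delta_\eps$ and $D_{x,\eps}$ are defined through the independent measures $\nu$ and $\pi$, and Assumption~\ref{a:pi} says nothing about their relation. What is true is that $\bar D_{t,\eps^2}P^\eps=\Delta_\eps P^\eps$ on the time grid, and in Fourier space $\Delta_\eps$ contributes a factor $\hat\nu(\eps|t|_\eps^{-1}\xi)/(\eps|t|_\eps^{-1})^2$, which behaves like a second-order multiplier. Second, invoking Lemma~\ref{lemma:DHeatKernels} with $m=0$ gives only $|\bar D_\eps^k P^\eps(z)|\lesssim|t|_\eps^{-1-|k|_\s}$, and the step ``$|t|_\eps^{-1-|k|_\s}\lesssim\|z\|_{\s,\eps}^{-1-|k|_\s}$'' fails precisely when $|x|>|t|^{1/2}$, since then $\|z\|_{\s,\eps}=|x|>|t|_\eps$ and the inequality goes the wrong way. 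You need the polynomial decay in $|x|$ (the $m>0$ part of the bound) to handle the space-dominant regime.

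The paper deals with both points at once by \emph{not} invoking Lemma~\ref{lemma:DHeatKernels} as a black box, but by rerunning its Fourier argument with the specific operators at hand: $\bar D_{x,\eps}$ corresponds to $\pi_d=\delta_1-\delta_0$ (which does satisfy Assumption~\ref{a:pi}), and each $\bar D_{t,\eps^2}$ becomes, on the grid, multiplication by $\hat\nu(\bar\xi)/(\eps|t|_\eps^{-1})^2$. This yields the anisotropic estimate $|\bar D_\eps^k P^\eps_t(x)|\lesssim|t|_\eps^{-1-|k|_\s+m}|x|^{-m}$ for all $m\le$ some threshold, and one then chooses $m=0$ when $|t|^{1/2}\ge|x|$ and $m=1+|k|_\s$ when $|x|>|t|^{1/2}$ to recover the parabolic bound $\|z\|_{\s,\eps}^{-1-|k|_\s}$. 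Incidentally, there is no need for the discrete Leibniz rule on $\chi P^\eps$: since $\chi\equiv1$ near the origin, $\bar D_\eps^k K^\eps=\bar D_\eps^k P^\eps$ for $\|z\|_\s\le 1-\eps|k|_\s$, and on the annulus where $\chi$ varies the bound is trivial because $\|z\|_{\s,\eps}\sim1$ there.
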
 

\begin{proof}
The statement above is similar in spirit to Lemma 5.5 in~\cite{Hai14} and Lemma 5.4 in~\cite{HM15}, but, in a sense, we only need to \textit{localize} our discrete heat kernel around the origin as we did in~\eqref{def:Kernel}. Indeed, it suffices to consider a smooth compactly supported function $\chi:\R^2\to[0,1]$ such that $\chi(z)=1$ for $\|z\|_\s\leq 1$ and $\chi(z)=0$ for $\|z\|_\s> \frac{3}{2}$ (of course the choice of $\frac{3}{2}$ is completely arbitrary). Then we set, for $z\in\Lambda_\eps^\s$
\[
K^\eps (z)=\chi(z)P^\eps(z)\qquad\text{and}\qquad \tilde{K}^\eps(z)= (1-\chi(z))P^\eps(z)\;.
\]
At this point the decomposition in the statement is obvious and it remains to prove that indeed $K^\eps$ and $\hat K^\eps$ satisfy the correct properties. Now, $\tilde{K}^\eps$ is clearly non-anticipative and such that, for any $r>0$, $\|\tilde K^\eps\|_{\CC^{r,\s}}^{(\eps)}$ because so is $P^\eps$ outside of a ball centered at the origin. In order to make it compactly supported, we recall the procedure outlined in Lemma 7.7 of~\cite{Hai14}. Assume for simplicity that $u$ has period 1. Fix a function $\varrho:\R\to[0,1]$, compactly supported in a ball of radius $C$ centered at $0$ such that $\sum_{k\in\Z}\varrho(x+k)=1$ for every $x\in\Lambda_\eps$. Then, one can straightforwardly verify that
\[
\hat K^\eps(t,x)\eqdef \sum_{k\in\Z}\tilde{K}^\eps(t,x+k)\varrho(x)
\] 
does the job. Concerning $K^\eps$ we have to prove that there exists a $c>0$ such that the quantity in~\eqref{def:DSingKer} is bounded upon taking  $\zeta=-1$. Let $k=(k_0,k_1)\in\N^2$ and recall that, for $\|z\|_\s\leq 1 - \eps |k|_\s$, $\bar{D}^k_\eps K^\eps(z)$ coincides with $\bar{D}^k_\eps P^\eps(z)$ so that we can focus on the latter. The proof proceeds along the same steps as the one of Lemma~\ref{lemma:DHeatKernels} hence we will simply point out the main steps. Fix $c>0$ such that $\lfloor c^{-1}\rfloor>1+|k|_\s$ and, for $(t,x)=z\in\Lambda_{\eps^2}\times \R$, set
\[
F^\eps_t(x)\eqdef |t|_\eps^{1+|k|_\s} \left(\bar{D}_\eps^k P^\eps_t\right)(|t|_\eps x)\;.
\]
We want to obtain uniform bounds on its Fourier transform, which is given by
\[
\widehat{F^\eps_t}(\xi)=\hat{\psi}(-\bar{\xi})\left(\frac{\widehat{\nu}(\bar{\xi})}{(\eps|t|_\eps^{-1})^2}\right)^{k_0}\left(\frac{\widehat{\pi_d}(-\bar{\xi})}{\eps|t|_\eps^{-1} }\right)^{k_1} \left(1+\frac{\widehat{\nu}(\bar{\xi})}{2\bar{\nu}}\right)^{\lfloor t\eps^{-2}\rfloor}
\]
where, to streamline the notation, we have set $\bar{\xi}\eqdef \eps |t|_\eps^{-1} \xi$, and  $\pi_d\eqdef \delta_1(\dd x)-\delta_0(\dd x)$, which obviously satisfies Assumption~\ref{a:pi}. Notice that the simplifications that lead to the expression above were possible only because  $t\in\Lambda_{\eps^2}$.

Since all the quantities in the previous expression were already considered in the proof of the aforementioned Lemma, we reach the same conclusions, namely, for any $m\in\N$, $m< \lfloor c^{-1}\rfloor$, there exists $C>0$ such that
\begin{equ}[b:DSingKer]
\bigl|\bar{D}_\eps^k P^\eps_t (x)\bigr| \leq C   |t|_{\eps}^{-1- |k|_\s+m}|x|^{-m}\;.
\end{equ}
uniformly over $z=(t,x)\in\R^2$ such that $\|z\|_{\s}\geq  c\eps$, and $\eps>0$, where $|t|_\eps= |t|^{\frac{1}{2}}$, for $|t|\geq c\eps^2$, and is equal to $\eps$ otherwise. If instead $\|z\|_\s <c \eps$ the previous bound holds with $m=0$. To see how to get~\eqref{def:DSingKer}, we proceed as follows. In case $\|z\|_\s<c \eps$, there is nothing to prove. Otherwise, recall that $\|z\|_\s=|t|^{\frac{1}{2}}\vee |x|$, hence if $|t|^{\frac{1}{2}}\geq |x|$ we choose $m=0$,  while if $|x|>|t|^{\frac{1}{2}}$ we take $m=1+|k|_\s$. In both cases the right-hand side of~\eqref{b:DSingKer} becomes $\|z\|_\s^{-1-|k|_\s}$ which is exactly what requested, hence the proof is complete.  
\end{proof}

\begin{remark}\label{rem:cDSingKer}
It might seem that in the previous proof we showed a slightly different result, namely that there exists a $c>0$ such that, for $\zeta\eqdef -1$ and $k\in\N^2$
\[
|\bar{D}^k K^\eps(z)|\lesssim
\begin{cases}
\|z\|_\s^{\zeta-|k|_\s}, &\text{if $\|z\|_\s\geq c \eps$,}\\
\eps^{\zeta-|k|_\s}, &\text{if $\|z\|_\s< c \eps$.}
\end{cases}
\] 
The point is that if the previous holds for {\it some} $c>0$ than it holds for {\it any} $c$. Indeed, if for example (the other case is completely analogous) we take a $\bar{c}>c$, then for $\|z\|_\s\geq \bar{c} \eps$ and $\|z\|_\s< c \eps$ the previous bound still holds, so that the only situation to discuss is when $c\eps\leq\|z\|_\s< \bar{c}\eps$, i.e. $\|z\|_\s\sim \eps$. But in this case, one immediately sees that $\|z\|_\s^{\zeta-\|k\|_\s}\lesssim \eps^{\zeta-\|k\|_\s}$.
\end{remark}

In the following lemmas, we collect some results that tell us how these discrete singular kernels behave under various operations. Their proof is identical to the one of their continuous counterparts given in~\cite[Sec. 10.3]{Hai14} so that we limit ourselves to point out the differences.

\begin{lemma}\label{lemma:OpDSingKer1}
Let $K_1^\eps$ and $K_2^\eps$ be functions on $\Lambda_\eps^\s$ of order $\zeta_1$ and $\zeta_2\in\R$ respectively, according to Definition~\ref{d:DSingKer}. Let $B^\eps$ be the operator defined in~\eqref{e:Dproduct}, where $\mu$ satisfies Assumption~\ref{a:MeasureMu}. Then for any $m\in\N$ the following bounds hold uniformly in $\eps$:
\begin{equation}\label{b:ProdDSingKer}
\VERT B^\eps (K_1^\eps,K_2^\eps)\VERT^{(\eps)}_{\zeta_1+\zeta_2;m}\lesssim \VERT K_1^\eps\VERT^{(\eps)}_{\zeta_1;m}\VERT K_2^\eps\VERT^{(\eps)}_{\zeta_2;m}\;.
\end{equation}

If $\zeta_1\wedge \zeta_2>-|\s|$ and $\bar{\zeta}\eqdef \zeta_1+\zeta_2+|\s|$ is strictly negative, then $K_1^\eps\ast_\eps K_2^\eps$ is of order $\bar{\zeta}$ and, for $m\in\N$ such that $4m\eps<1$, we have
\begin{equation}\label{b:ConvDSingKer1}
\VERT K_1^\eps\ast_\eps K_2^\eps\VERT^{(\eps)}_{\bar{\zeta};m}\lesssim \VERT K_1^\eps\VERT^{(\eps)}_{\zeta_1;m}\VERT K_2^\eps\VERT^{(\eps)}_{\zeta_2;m}
\end{equation}
uniformly in $\eps$. If instead $\bar{\zeta}\in\R_+\setminus\N$, the function
\[
\bar{K}^\eps(z)\eqdef K_1^\eps\ast_\eps K_2^\eps(z)-\sum_{|k|_\s<\bar{\zeta}} \frac{(z)_{\eps,k}}{k!}\bar{D}_\eps^k (K_1^\eps\ast_\eps K_2^\eps) (0)
\]
where $k=(k_0,k_1)\in\N^2$ and $(z)_{k,\eps}=(z_0,z_1)_{k,\eps}=\prod_{i=0,1}\prod_{0\leq j<k_i}(z_i-\eps^{s_i} j)$, satisfies 
\begin{equation}\label{b:ConvDSingKer2}
\VERT \bar{K}^\eps\VERT^{(\eps)}_{\bar{\zeta};m}\lesssim \VERT K_1^\eps\VERT^{(\eps)}_{\zeta_1;\bar{m}}\VERT K_2^\eps\VERT^{(\eps)}_{\zeta_2;\bar{m}}
\end{equation}
for $4m\eps<1$ and $\bar{m}=m\vee(\lfloor\bar{\zeta}\rfloor +2)$. 
\end{lemma}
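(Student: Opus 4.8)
All three bounds are the discrete counterparts of \cite[Lem.~10.14]{Hai14} and the companion estimates of \cite[Sec.~10.3]{Hai14}, and the plan is to follow those arguments line by line, so we only indicate the points at which the lattice and the twisted product force modifications. For~\eqref{b:ProdDSingKer} the starting point is the discrete Leibniz rule $\bar D_{x,\eps}(fg)=(\bar D_{x,\eps}f)\cdot(g\circ\sigma_x)+f\cdot(\bar D_{x,\eps}g)$ together with its time analogue, where $\sigma_x$ (resp.\ $\sigma_t$) denotes the unit grid shift in the space (resp.\ time) variable of $\Lambda_\eps^\s$. Iterating, one writes $\bar D_\eps^k(fg)$ as a finite sum, indexed by $k_1+k_2=k$, of terms $(\bar D_\eps^{k_1}f)(z+a_1)\,(\bar D_\eps^{k_2}g)(z+a_2)$ with grid shifts $|a_i|\lesssim|k|\eps$. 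Since $B^\eps(K_1^\eps,K_2^\eps)(z)=\int_{\R^2}K_1^\eps(z+y_1)K_2^\eps(z+y_2)\,\mu_\eps(\dd y_1,\dd y_2)$ with $\mu_\eps$ supported in a ball of radius $R_\mu\eps$, one differentiates under the finite integral, inserts the Leibniz expansion, and is left with comparing $\|z'\|_{\s,\eps}^{\zeta_i-|k_i|_\s}$ to $\|z\|_{\s,\eps}^{\zeta_i-|k_i|_\s}$ when $z'$ differs from $z$ by a shift of size $\lesssim(R_\mu+|k|)\eps$. This comparison holds with a constant depending only on $m$: one has $\|z'\|_{\s,\eps}\ge\eps$ always, so it is trivial when $\|z\|_{\s,\eps}\le 2(R_\mu+m)\eps$, and for larger $\|z\|_{\s,\eps}$ the shift moves $\|z\|_\s$ by less than half its size. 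Multiplying the two factors and summing the finitely many terms gives~\eqref{b:ProdDSingKer}.

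For~\eqref{b:ConvDSingKer1}, fix $z=(t,x)\in\Lambda_\eps^\s$ and split the discrete convolution $K_1^\eps\ast_\eps K_2^\eps(z)=\eps^3\sum_{w\in\Lambda_\eps^\s}K_1^\eps(z-w)K_2^\eps(w)$ into the usual scale regions according to the sizes of $\|w\|_{\s,\eps}$ and $\|z-w\|_{\s,\eps}$ relative to $\|z\|_{\s,\eps}$, estimating each region via~\eqref{def:DSingKer} and summing up exactly as in the continuous case. The hypothesis $\zeta_1\wedge\zeta_2>-|\s|=-3$ makes each of the resulting discrete sums convergent uniformly in $\eps$, while $\bar\zeta<0$ ensures that the far region yields the leading homogeneity $\|z\|_{\s,\eps}^{\bar\zeta}$; the derivative bounds then follow because $\bar D_\eps^k$ commutes with $\ast_\eps$ (by translation invariance $\bar D_\eps^k(K_1^\eps\ast_\eps K_2^\eps)=(\bar D_\eps^kK_1^\eps)\ast_\eps K_2^\eps$) and $\bar D_\eps^kK_1^\eps$ is of order $\zeta_1-|k|_\s$, the condition $4m\eps<1$ serving only to keep finite-difference stencils of order $\le m$ inside the supports. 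The one genuinely new point---and the main obstacle---is the lattice scale: in the regions where $\|w\|_{\s,\eps}$ or $\|z-w\|_{\s,\eps}$ is of order $\eps$ the kernels are no longer genuinely singular but only bounded by $\eps^{\zeta_i-|k|_\s}$ (Remark~\ref{rem:cDSingKer}), and one must verify that the prefactor $\eps^3$ together with the $O(1)$ number of lattice points in a ball of radius $O(\eps)$ makes these contributions harmless and uniform in $\eps$; this region-by-region bookkeeping is the bulk of the work.

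Finally,~\eqref{b:ConvDSingKer2} for $\bar\zeta\in\R_+\setminus\N$ is obtained as in \cite[Lem.~10.14]{Hai14}. For $|k|_\s>\bar\zeta$ the decomposition of the previous paragraph---now with a possibly non-negative far exponent, but the origin and intermediate regions still under control---gives $|\bar D_\eps^k(K_1^\eps\ast_\eps K_2^\eps)(z)|\lesssim\|z\|_{\s,\eps}^{\bar\zeta-|k|_\s}$, while for $|k|_\s<\bar\zeta$ the quantities $\bar D_\eps^k(K_1^\eps\ast_\eps K_2^\eps)(0)$ entering the definition of $\bar K^\eps$ are finite uniformly in $\eps$. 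The discrete (Newton forward-difference) Taylor formula---for which the falling-factorial monomials $(z)_{k,\eps}$ are precisely the basis dual to $\bar D_\eps^k$ at the origin---then writes $\bar K^\eps$ in terms of its highest discrete derivatives and yields $|\bar D_\eps^k\bar K^\eps(z)|\lesssim\|z\|_{\s,\eps}^{\bar\zeta-|k|_\s}$ for all $|k|_\s\le m$, which is~\eqref{b:ConvDSingKer2}; the appearance of $\bar m=m\vee(\lfloor\bar\zeta\rfloor+2)$ on the right simply accounts for the extra derivatives consumed by the remainder.
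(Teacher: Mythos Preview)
Your proposal is correct and follows essentially the same approach as the paper: the paper likewise handles~\eqref{b:ProdDSingKer} via the discrete Leibniz rule followed by the comparison $\|z+a\|_{\s,\eps}\sim\|z\|_{\s,\eps}$ for shifts $|a|\lesssim(R_\mu+m)\eps$ (splitting into the cases $\|z\|_\s\geq c\eps$ and $\|z\|_\s<c\eps$ exactly as you do), and defers~\eqref{b:ConvDSingKer1}--\eqref{b:ConvDSingKer2} to the continuous arguments of \cite[Sec.~10.3]{Hai14}. Your treatment is in fact slightly more explicit than the paper's on the convolution bounds, spelling out the lattice-scale bookkeeping and the role of the discrete Newton--Taylor formula that the paper leaves implicit.
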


\begin{proof}
While the second and third bounds can be obtained along the lines of the proof of (10.10) and (10.12) in~\cite{Hai14},~\eqref{b:ProdDSingKer} requires a special treatment. 
So, fix $m\in\N$ and let $k\in\N^2$ be such that $|k|_\s\leq m$. Thanks to the discrete Leibniz rule and Definition~\ref{d:DSingKer}, we have
\begin{multline*}
\bar{D}_\eps^k B^\eps(K_1^\eps,K_2^\eps)(z)=\sum_{l\leq k} \binom{k}{l} \int \bar{D}^{l}_\eps K_1^\eps(z_1)\bar{D}_\eps^{k-l} K_2^\eps(z_2)\mu_\eps(\dd y_1,\dd y_2)\\
\lesssim \VERT K_1^\eps\VERT^{(\eps)}_{\zeta_1;m}\VERT K_2^\eps\VERT^{(\eps)}_{\zeta_2;m} \sum_{l\leq k} \binom{k}{l} \int 
\|z_1\|_{\s,\eps}^{\zeta_1-|l|_\s}\|z_2\|_{\s,\eps}^{\zeta_2-|k-l|_\s}\mu_\eps(\dd y_1,\dd y_2)
\end{multline*}
where the sum runs over $l=(l_0,l_1)\in\N^2$ and, to simplify the notation, we set $z_1\eqdef z+(0,y_1)$ and $z_2\eqdef z+(0,y_2)+(\eps^2 l_0,\eps l_1)$
In order to conclude, it clearly suffices to show that both $\|z_1\|_{\s,\eps}^{\zeta_1-|l|_\s}$ and $\|z_2\|_{\s,\eps}^{\zeta_2-|k-l|_\s}$ can be bounded by the corresponding quantity in which $z_i$, $i=1,2$, is replaced by $z$.
We will prove it for $z_2$, being the proof for $z_1$ similar and easier. 

Fix $c\eqdef 1+ 3m+3R_\mu$. Notice that for $\|z\|_\s\geq c\eps$, $\|z_2\|_\s\geq \eps$ and, since trivially $\eps m$ and $\eps R_\mu$ are less or equal to $\frac{1}{3}\|z\|_\s$, by the triangular inequality we have $\|z_2\|_\s\sim\|z\|_\s$. 
Hence we immediately get $\|z_2\|_\s^{\zeta_2-|k-l|_\s}\lesssim \|z\|_\s^{\zeta_2-|k-l|_\s}$. On the other hand, in case $\|z\|_\s<c\eps$, either $\|z_2\|_\s\geq \eps$ so that $\|z_2\|_\s^{\zeta_2-|k-l|_\s}\lesssim \eps^{\zeta_2-|k-l|_\s}$, or $\|z_2\|_\s<\eps$, but in this situation $\|z_2\|_{\s,\eps}^{\zeta_2-|k-l|_\s}=\eps^{\zeta_2-|k-l|_\s}$. 
By Remark~\ref{rem:cDSingKer} and the fact that, by assumption, $\mu$ has finite mass, the conclusion follows. 
\end{proof}

In the analysis of the stochastic terms, we will run into the convolution of two discrete singular kernels whose order is less or equal than $-|\s|$, hence falling out of the range of applicability of the previous lemma. Nevertheless, we will need to obtain suitable bounds for these objects and, under specific assumptions, this is still possible. Let us begin with the following definition, which represents the discrete counterpart of \cite[Def. 10.15]{Hai14}. 

\begin{definition}\label{def:RenSingKer}
Let $-|\s|-1<\zeta\leq-|\s|$ and $K^\eps$ be a function on $\Lambda_\eps^\s$ of order $\zeta$. We define the discrete renormalized distribution $\SR_\eps K^\eps$ corresponding to $K^\eps$ as 
\[
\langle\SR_\eps K^\eps,\psi\rangle_\eps=\langle K^\eps, \psi-\psi(0)\rangle_\eps
\] 
for every smooth compactly supported test function $\psi$, where the pairing $\langle\cdot,\cdot\rangle_\eps$ is a Riemann sum approximation of the usual $L^2$ scalar product. 
\end{definition}

The proof of the following lemma is analogous to~\cite[Lem. 10.16]{Hai14}.

\begin{lemma}\label{lemma:OpDSingKer2}
Let $K_1^\eps$ and $K_2^\eps$ be two functions on $\Lambda_\eps^\s$ of order $\zeta_1$ and $\zeta_2$ respectively. Assume $-|\s|-1<\zeta_1\leq-|\s|$ and $-2|\s|-\zeta_1<\zeta_2\leq 0$ and set $\bar{\zeta}\eqdef \zeta_1+\zeta_2+|\s|$. Then, the function $\SR_\eps K_1^\eps\ast_\eps K_2^\eps$ is of order $\bar{\zeta}$ and, for $m\in\N$ such that $4m\eps<1$, we have uniformly in $\eps$:
\begin{equation}\label{b:ConvDSingKer3}
\VERT \SR_\eps K_1\ast_\eps K_2\VERT^{(\eps)}_{\bar{\zeta};m}\lesssim \VERT K_1\VERT^{(\eps)}_{\zeta_1;m}\VERT K_2\VERT^{(\eps)}_{\zeta_2;m+2}.
\end{equation} 
\end{lemma}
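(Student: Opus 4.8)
The plan is to mirror, line by line, the proof of \cite[Lem.~10.16]{Hai14}, replacing integrals by Riemann sums $\eps^{|\s|}\sum_{w\in\Lambda_\eps^\s}$, classical derivatives by the forward discrete derivatives $\bar D_\eps^k$, and Taylor expansions by discrete telescoping identities along lattice paths. First one checks that $\SR_\eps K_1^\eps\ast_\eps K_2^\eps$ is a genuine function on $\Lambda_\eps^\s$, namely
\begin{equ}
\bigl(\SR_\eps K_1^\eps\ast_\eps K_2^\eps\bigr)(z)=\eps^{|\s|}\sum_{w\in\Lambda_\eps^\s}K_1^\eps(w)\bigl(K_2^\eps(z-w)-K_2^\eps(z)\bigr)\;,
\end{equ}
the sum being absolutely convergent for fixed $z\neq 0$: near $w=0$ the bracket is $O(\|w\|_{\s,\eps})\cdot O(\|z\|_{\s,\eps}^{\zeta_2-1})$, so the summand is $O(\|w\|_{\s,\eps}^{\zeta_1+1})$, which is summable precisely because $\zeta_1+1>-|\s|$ (this is where the lower bound on $\zeta_1$ enters), while away from the origin $K_2^\eps$ is bounded and $K_1^\eps$ compactly supported. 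Since $\bar D_\eps^k$ in the variable $z$ commutes with $\ast_\eps$ and with the subtraction of the constant $K_2^\eps(z)$, one has $\bar D_\eps^k(\SR_\eps K_1^\eps\ast_\eps K_2^\eps)=\SR_\eps K_1^\eps\ast_\eps\bar D_\eps^k K_2^\eps$, and by the discrete Leibniz rule of Lemma~\ref{lemma:OpDSingKer1} the kernel $L^\eps\eqdef\bar D_\eps^k K_2^\eps$ is of order $\zeta_2-|k|_\s$ with $\VERT L^\eps\VERT^{(\eps)}_{\zeta_2-|k|_\s;\,m'}\lesssim\VERT K_2^\eps\VERT^{(\eps)}_{\zeta_2;\,m'+|k|_\s}$. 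Thus, for $|k|_\s\le m$, it remains to bound
\begin{equ}
\Phi^\eps(z)\eqdef\eps^{|\s|}\sum_{w\in\Lambda_\eps^\s}K_1^\eps(w)\bigl(L^\eps(z-w)-L^\eps(z)\bigr)
\end{equ}
by $\|z\|_{\s,\eps}^{\bar\zeta-|k|_\s}$, uniformly in $\eps$ and $z\in\Lambda_\eps^\s$.

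When $\|z\|_\s$ is comparable to the lattice spacing, say $\|z\|_\s\le c\eps$ for a fixed $c$ (whose value is immaterial by Remark~\ref{rem:cDSingKer}), one argues crudely: $|L^\eps(z)|\lesssim\eps^{\zeta_2-|k|_\s}$, $\eps^{|\s|}\sum_w|K_1^\eps(w)|\lesssim\eps^{\zeta_1+|\s|}$ (up to a logarithm when $\zeta_1=-|\s|$, absorbed by lowering $\bar\zeta$ by an arbitrarily small amount), and the $L^\eps(z-w)$ contribution is handled similarly as a restricted discrete convolution, giving $|\Phi^\eps(z)|\lesssim\eps^{\bar\zeta-|k|_\s}\sim\|z\|_{\s,\eps}^{\bar\zeta-|k|_\s}$. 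For $\|z\|_\s>c\eps$ one splits $\Lambda_\eps^\s$ into $R_1=\{\|w\|_{\s,\eps}\le\tfrac12\|z\|_{\s,\eps}\}$, $R_2=\{\|z-w\|_{\s,\eps}\le\tfrac12\|z\|_{\s,\eps}\}$ and the complement $R_3$. On $R_1$ both $z-w$ and $z$ have size $\|z\|_{\s,\eps}$, and $L^\eps(z-w)-L^\eps(z)$ is expanded by telescoping along a lattice path from $z$ to $z-w$; since every intermediate lattice point stays of size $\|z\|_{\s,\eps}$ (here $4m\eps<1$ guarantees no intermediate point escapes the bulk region), each step contributes $\eps^{|\ell|_\s}\,|\bar D_\eps^{\ell}L^\eps|\lesssim\eps^{|\ell|_\s}\|z\|_{\s,\eps}^{\zeta_2-|k|_\s-|\ell|_\s}$ with $|\ell|_\s\in\{1,2\}$ ($|\ell|_\s=2$ for a time step, which is the reason for the "$m+2$" in the statement), so the bracket is $O(\|w\|_\s\,\|z\|_{\s,\eps}^{\zeta_2-|k|_\s-1})$; summing against $|K_1^\eps(w)|\lesssim\|w\|_{\s,\eps}^{\zeta_1}$ over $R_1$ produces $\|z\|_{\s,\eps}^{\zeta_2-|k|_\s-1}\cdot\|z\|_{\s,\eps}^{\zeta_1+1+|\s|}=\|z\|_{\s,\eps}^{\bar\zeta-|k|_\s}$, the sum converging because $\zeta_1+1>-|\s|$. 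On $R_2$, where $K_1^\eps$ is away from its singularity so $|K_1^\eps(w)|\lesssim\|z\|_{\s,\eps}^{\zeta_1}$, one discrete-Taylor-expands $K_1^\eps$ around $z$ to a finite order (chosen so that, using $\zeta_2>-|\s|$, forced by the hypotheses, all resulting discrete sums of $\|z-w\|_{\s,\eps}^{\zeta_2-|k|_\s+(\text{nonneg.})}$ converge), each term pairing a derivative of $K_1^\eps$ at $z$ with a convergent moment sum of $L^\eps$; the outcome is again $\lesssim\|z\|_{\s,\eps}^{\bar\zeta-|k|_\s}$. On $R_3$, both $\|w\|_{\s,\eps}$ and $\|z-w\|_{\s,\eps}$ are $\gtrsim\|z\|_{\s,\eps}$, one bounds $|L^\eps(z-w)-L^\eps(z)|\lesssim\|z-w\|_{\s,\eps}^{\zeta_2-|k|_\s}+\|z\|_{\s,\eps}^{\zeta_2-|k|_\s}$ and sums against $\|w\|_{\s,\eps}^{\zeta_1}$ over the compact support, the relevant discrete integrals being controlled (as in the proof of Lemma~\ref{lemma:OpDSingKer1}) by the hypothesis $\bar\zeta=\zeta_1+\zeta_2+|\s|>-|\s|$, i.e. $\zeta_2>-2|\s|-\zeta_1$, which yields the power $\|z\|_{\s,\eps}^{\bar\zeta-|k|_\s}$.

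I expect the main obstacle to be the discrete bookkeeping: the classical mean-value estimate $|L(z-w)-L(z)|\lesssim\|w\|_\s\sup_{[z-w,z]}|\nabla L|$ must be replaced by an explicit telescoping identity, and one has to verify both that the intermediate lattice points stay in the correct scale region and that all the extra discrete derivatives of $K_2^\eps$ (and the discrete Taylor remainders of $K_1^\eps$) that appear are controlled by $\VERT K_1^\eps\VERT^{(\eps)}_{\zeta_1;m}$ and $\VERT K_2^\eps\VERT^{(\eps)}_{\zeta_2;m+2}$ with constants independent of $\eps$; the borderline case $\zeta_1=-|\s|$ (logarithms), the lattice-scale regime $\|z\|_\s\lesssim\eps$, and the replacement of sums by integrals are handled exactly as in Lemma~\ref{lemma:OpDSingKer1}, invoking Remark~\ref{rem:cDSingKer} for the various scale cutoffs. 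Everything else is a routine transcription of \cite[Lem.~10.16]{Hai14}.
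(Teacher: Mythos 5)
Your transcription follows the right skeleton: the representation $\bigl(\SR_\eps K_1^\eps\ast_\eps K_2^\eps\bigr)(z)=\eps^{|\s|}\sum_wK_1^\eps(w)\bigl(K_2^\eps(z-w)-K_2^\eps(z)\bigr)$, pushing derivatives onto the second factor, the three-region split plus the lattice-scale regime, and the replacement of the mean-value estimate by discrete telescoping. The $R_1$ and $R_3$ bounds are fine, and you correctly isolate $\zeta_1+1>-|\s|$ as what makes the compensated sum near $w=0$ behave.

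The argument in $R_2$, however, has a real gap. Having put all $|k|_\s\le m$ discrete derivatives on $K_2^\eps$, the kernel appearing near the singularity $w=z$ is $L^\eps=\bar D_\eps^kK_2^\eps$, of order $\zeta_2-|k|_\s$, which can be as low as $\zeta_2-m$. You then Taylor-expand $K_1^\eps$ around $z$ and claim each term pairs with a ``convergent moment sum of $L^\eps$,'' convergence being attributed to $\zeta_2>-|\s|$. But $\zeta_2>-|\s|$ controls $K_2^\eps$, not $L^\eps$: the degree-zero Taylor coefficient yields $K_1^\eps(z)\,\eps^{|\s|}\sum_{\|z-w\|_{\s,\eps}\le\|z\|_{\s,\eps}/2}L^\eps(z-w)$, and whenever $\zeta_2-|k|_\s\le-|\s|$ (which happens as soon as $|k|_\s\ge\zeta_2+|\s|$, comfortably within $|k|_\s\le m$) the naive estimate on this sum gives $\eps^{\zeta_2-|k|_\s+|\s|}$, which blows up as $\eps\to0$ and is \emph{not} $\lesssim\|z\|_{\s,\eps}^{\zeta_2-|k|_\s+|\s|}$; the same failure hits all the low-degree polynomial terms. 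The control of these moments is by cancellation, not by absolute size: one must perform discrete summation by parts to transfer the derivatives in $L^\eps=\bar D_\eps^kK_2^\eps$ onto the smooth factor $K_1^\eps$ (which on $R_2$ is of scale $\|z\|_{\s,\eps}$ and carries up to $m$ controlled discrete derivatives, enough since $|k|_\s\le m$), while tracking the boundary contributions along $\partial R_2$, where all quantities live at scale $\|z\|_{\s,\eps}$ and are harmless. Your sketch does not mention summation by parts; this is precisely the delicate step in $R_2$ and the reason the statement is asymmetric in the number of controlled derivatives ($m$ on $K_1^\eps$, $m+2$ on $K_2^\eps$). A lesser concern: in the $\|z\|_\s\lesssim\eps$ regime your $\kappa=1$ increment bound plus the remark about ``absorbing a logarithm by lowering $\bar\zeta$'' is not consistent with the stated bound at exact $\bar\zeta$; one should instead choose $\kappa<-\zeta_1-|\s|$ so that $\eps^{|\s|}\sum\|w\|_{\s,\eps}^{\zeta_1+\kappa}$ concentrates at scale $\eps$ and produces $\eps^{\zeta_1+\kappa+|\s|}$, and the borderline $\zeta_1=-|\s|$ needs a separate argument rather than a hand-wave.
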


In the next lemma, whose proof is analogous to \cite[Lem. 10.18]{Hai14}, we show how it is possible to control the increment of a discrete singular kernel. 

\begin{lemma}\label{lemma:OpDSingKer3}
Let $K^\eps$ be a function on $\Lambda_\eps^\s$ of order $\zeta\leq 0$. Then for every $\kappa\in[0,1]$ and $z,\,\bar{z}\in\Lambda_\eps^\s$ we have uniformly in $\eps$:
\begin{equation}\label{b:IncDSingKer}
|K^\eps(z)-K^\eps(\bar{z})|\lesssim \|z-\bar{z}\|_{\s,\eps}^\kappa \left(\|z\|_{\s,\eps}^{\zeta-\kappa}+\|\bar{z}\|_{\s,\eps}^{\zeta-\kappa}\right)\VERT K^\eps\VERT_{\zeta,2}^{(\eps)}.
\end{equation} 
\end{lemma}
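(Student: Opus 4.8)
The plan is to mimic the proof of the continuous statement \cite[Lem.~10.18]{Hai14}, splitting the argument according to whether $z$ and $\bar z$ are far apart or close relative to their distance to the origin, and replacing the mean value theorem by a telescoping sum over the discrete forward derivatives $\bar D_{x,\eps}$ and $\bar D_{t,\eps^2}$ entering Definition~\ref{d:DSingKer}. Write $z=(t,x)$, $\bar z=(\bar t,\bar x)$, set $d\eqdef\|z-\bar z\|_{\s,\eps}$ and $r\eqdef\|z\|_{\s,\eps}\wedge\|\bar z\|_{\s,\eps}$, and record the elementary facts that $d\geq\eps$ and $r\geq\eps$ (since $z,\bar z\in\Lambda_\eps^\s$), that $|x-\bar x|$ and $|t-\bar t|$ are integer multiples of $\eps$ and $\eps^2$ respectively, and that the triangle inequality holds for $\|\cdot\|_\s$ up to an additive $\eps$-slack coming from the regularisation $\|w\|_{\s,\eps}=\|w\|_\s\vee\eps$. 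Since $\zeta-\kappa\leq 0$ one has $r^{\zeta-\kappa}=\|z\|_{\s,\eps}^{\zeta-\kappa}\vee\|\bar z\|_{\s,\eps}^{\zeta-\kappa}$, so it suffices to prove \eqref{b:IncDSingKer} with $r^{\zeta-\kappa}$ in place of $\|z\|_{\s,\eps}^{\zeta-\kappa}+\|\bar z\|_{\s,\eps}^{\zeta-\kappa}$. In the \emph{far regime} $d\geq\tfrac12 r$, the triangle inequality together with $\eps\leq d$ gives $\|z\|_{\s,\eps},\|\bar z\|_{\s,\eps}\lesssim d$; then Definition~\ref{d:DSingKer} with $k=0$ yields $|K^\eps(z)-K^\eps(\bar z)|\leq|K^\eps(z)|+|K^\eps(\bar z)|\lesssim\VERT K^\eps\VERT^{(\eps)}_{\zeta;2}\big(\|z\|_{\s,\eps}^{\zeta}+\|\bar z\|_{\s,\eps}^{\zeta}\big)$, and since $\|z\|_{\s,\eps}^{\zeta}=\|z\|_{\s,\eps}^{\zeta-\kappa}\|z\|_{\s,\eps}^{\kappa}\lesssim r^{\zeta-\kappa}d^{\kappa}$ (using $\|z\|_{\s,\eps}\geq r$ with $\zeta-\kappa\leq 0$, and $\|z\|_{\s,\eps}\lesssim d$ with $\kappa\geq 0$), and likewise for $\bar z$, this case is done.

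In the \emph{close regime} $d<\tfrac12 r$, the triangle inequality forces $\|z\|_{\s,\eps}\sim\|\bar z\|_{\s,\eps}\sim r$, and moreover every grid point $w$ with $\|w-z\|_\s\leq d$ or $\|w-\bar z\|_\s\leq d$ satisfies $\|w\|_{\s,\eps}\sim r$. Decompose $K^\eps(z)-K^\eps(\bar z)=\big(K^\eps(t,x)-K^\eps(\bar t,x)\big)+\big(K^\eps(\bar t,x)-K^\eps(\bar t,\bar x)\big)$. The spatial increment $K^\eps(\bar t,x)-K^\eps(\bar t,\bar x)$ is the telescoping sum over the $|x-\bar x|\eps^{-1}$ unit steps joining $\bar x$ to $x$, each of the form $\eps\,\bar D_{x,\eps}K^\eps(w)$ with $w$ a grid point at $\|\cdot\|_\s$-distance at most $|x-\bar x|\leq d$ from $\bar z$; bounding $|\bar D_{x,\eps}K^\eps(w)|\lesssim\VERT K^\eps\VERT^{(\eps)}_{\zeta;2}\,r^{\zeta-1}$ gives $|K^\eps(\bar t,x)-K^\eps(\bar t,\bar x)|\lesssim|x-\bar x|\,r^{\zeta-1}\,\VERT K^\eps\VERT^{(\eps)}_{\zeta;2}$. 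Likewise, the temporal increment $K^\eps(t,x)-K^\eps(\bar t,x)$ is the telescoping sum over the $|t-\bar t|\eps^{-2}$ steps of size $\eps^2$ joining $\bar t$ to $t$, each of the form $\eps^2\,\bar D_{t,\eps^2}K^\eps(w)$; since $|(1,0)|_\s=2$ one has $|\bar D_{t,\eps^2}K^\eps(w)|\lesssim\VERT K^\eps\VERT^{(\eps)}_{\zeta;2}\,r^{\zeta-2}$, so this increment is $\lesssim|t-\bar t|\,r^{\zeta-2}\,\VERT K^\eps\VERT^{(\eps)}_{\zeta;2}$. Finally, using $|x-\bar x|\leq d$, $|t-\bar t|\leq d^{2}$, $d/r<1$ and $0\leq\kappa\leq 1$, one estimates $|x-\bar x|\,r^{\zeta-1}\leq(d/r)^{\kappa}(d/r)^{1-\kappa}r^{\zeta}\leq(d/r)^{\kappa}r^{\zeta}=d^{\kappa}r^{\zeta-\kappa}$ and $|t-\bar t|\,r^{\zeta-2}\leq(d/r)^{\kappa}(d/r)^{2-\kappa}r^{\zeta}\leq(d/r)^{\kappa}r^{\zeta}=d^{\kappa}r^{\zeta-\kappa}$; summing the two increments yields \eqref{b:IncDSingKer}.

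The only genuinely discrete point, and the one requiring care, is the bookkeeping around the regularisation $\|\cdot\|_{\s,\eps}=\|\cdot\|_\s\vee\eps$: one must check that all the intermediate grid points in the two telescoping sums keep $\|\cdot\|_{\s,\eps}$ comparable to $r$ — including the degenerate situation in which $z$ and $\bar z$ themselves sit within distance $O(\eps)$ of the origin, where $\|\cdot\|_{\s,\eps}$ is frozen at $\eps$ — and that the additive $\eps$-slack in the triangle inequality is harmless because $d\geq\eps$. This is the discrete counterpart of the (trivial) remark that, in the continuous proof, a segment joining $z$ to $\bar z$ stays away from a neighbourhood of the origin; once it is in place the rest is the same elementary interpolation as in \cite[Lem.~10.18]{Hai14}.
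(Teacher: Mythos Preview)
Your proposal is correct and takes essentially the same approach as the paper, which does not give an explicit proof but simply states that it is analogous to \cite[Lem.~10.18]{Hai14}. Your adaptation---splitting into far/close regimes, replacing the mean value theorem by telescoping sums over $\bar D_{x,\eps}$ and $\bar D_{t,\eps^2}$, and checking that intermediate grid points keep $\|\cdot\|_{\s,\eps}\sim r$---is exactly the discrete translation the paper has in mind; the only redundant remark is the ``degenerate situation'' in your last paragraph, since in the close regime $d<\tfrac12 r$ and $d\geq\eps$ force $r>2\eps$, so the regularisation is never active there.
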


We conclude this section with a useful lemma which shows how a discrete singular kernel behaves when convolved with a discrete mollifier. Its proof is identical to that of \cite[Lem. 10.17]{Hai14}. 

\begin{lemma}\label{lemma:OpDSingKer4}
Let $\bar{\eps}\in[\eps,1]$ and $\psi^{\eps,\bar{\eps}}:\Lambda_\eps^\s\to\R_+$ be a function supported in a ball of radius $\bar{R}\eps$, $\bar{R}>0$, such that
\[
\eps^{|\s|}\sum_{w\in\Lambda_\eps^\s}\psi^{\eps,\bar{\eps}}(w)=1,\qquad\text{and}\qquad |\bar{D}_\eps^k\psi^{\eps,\bar{\eps}}(z)|\lesssim \bar{\eps}^{-|\s|-|k|_\s}
\]
for all $z\in\Lambda_\eps^\s$ and all $k\in\N^2$, $|k|_\s\leq m+2$ for some $m\in\N$. Let $K^\eps$ be a function on $\Lambda_\eps^\s$ of order $\zeta\in(-|\s|,0]$ and set $K^\eps_{\bar{\eps}}(z)\eqdef K^\eps\ast_\eps\psi^{\eps,\bar{\eps}}(z)$. Then, for all $\kappa\in[0,1]$, we have uniformly in $\eps$:
\begin{equation}\label{b:ApproxDSingKer}
\VERT K^\eps-K^\eps_{\bar{\eps}}\VERT_{\zeta-\kappa,m}^{(\eps)}\lesssim \bar{\eps}^\kappa \VERT K\VERT_{\zeta,m+2}
\end{equation} 
\end{lemma}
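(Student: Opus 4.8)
The argument follows the continuous statement \cite[Lem.~10.17]{Hai14}, with the grid-specific modifications already used in Lemmas~\ref{lemma:OpDSingKer1}--\ref{lemma:OpDSingKer3}. Fix $k=(k_0,k_1)\in\N^2$ with $|k|_\s\le m$. Since the discrete convolution commutes with the forward difference operators, $\bar D_\eps^k K^\eps_{\bar\eps}=(\bar D_\eps^k K^\eps)\ast_\eps\psi^{\eps,\bar\eps}$, and using $\eps^{|\s|}\sum_w\psi^{\eps,\bar\eps}(w)=1$ we may write
\begin{equ}
\bar D_\eps^k\bigl(K^\eps-K^\eps_{\bar\eps}\bigr)(z)=\eps^{|\s|}\sum_{w\in\Lambda_\eps^\s}\Bigl(\bar D_\eps^k K^\eps(z)-\bar D_\eps^k K^\eps(z-w)\Bigr)\psi^{\eps,\bar\eps}(w)\;.
\end{equ}
Note that by Definition~\ref{d:DSingKer} the kernel $\bar D_\eps^k K^\eps$ has order $\zeta-|k|_\s\le 0$ with $\VERT \bar D_\eps^k K^\eps\VERT^{(\eps)}_{\zeta-|k|_\s;2}\le \VERT K^\eps\VERT^{(\eps)}_{\zeta;m+2}$, so it suffices to bound the right hand side by $\bar\eps^\kappa\|z\|_{\s,\eps}^{(\zeta-\kappa)-|k|_\s}\VERT K^\eps\VERT^{(\eps)}_{\zeta;m+2}$.

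Fix a constant $c>0$ (large compared with $\bar R$ and $m$; by Remark~\ref{rem:cDSingKer} the precise choice is irrelevant) and split according to the two regimes $\|z\|_{\s,\eps}\ge c\bar\eps$ and $\|z\|_{\s,\eps}< c\bar\eps$. In the first regime, on the support of $\psi^{\eps,\bar\eps}$ one has $\|w\|_{\s,\eps}\lesssim\bar\eps\le \tfrac12\|z\|_{\s,\eps}$, hence $\|z-w\|_{\s,\eps}\sim\|z\|_{\s,\eps}$; applying the increment bound \eqref{b:IncDSingKer} of Lemma~\ref{lemma:OpDSingKer3} to $\bar D_\eps^k K^\eps$ with exponent $\kappa$ gives
\begin{equ}
\bigl|\bar D_\eps^k K^\eps(z)-\bar D_\eps^k K^\eps(z-w)\bigr|\lesssim \|w\|_{\s,\eps}^\kappa\|z\|_{\s,\eps}^{(\zeta-\kappa)-|k|_\s}\VERT K^\eps\VERT^{(\eps)}_{\zeta;m+2}\lesssim \bar\eps^\kappa\|z\|_{\s,\eps}^{(\zeta-\kappa)-|k|_\s}\VERT K^\eps\VERT^{(\eps)}_{\zeta;m+2}\;,
\end{equ}
and summing against $\psi^{\eps,\bar\eps}$ (which integrates to $1$) yields the claim in this regime. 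In the second regime we bound the two terms separately: the bound on $\bar D_\eps^k K^\eps(z)$ is immediate from Definition~\ref{d:DSingKer} together with $\|z\|_{\s,\eps}^\kappa\le(c\bar\eps)^\kappa$ (valid since $\kappa\ge0$), while for $\bar D_\eps^k K^\eps_{\bar\eps}(z)=(\bar D_\eps^k K^\eps)\ast_\eps\psi^{\eps,\bar\eps}(z)$ one estimates
\begin{equ}
\bigl|\bar D_\eps^k K^\eps_{\bar\eps}(z)\bigr|\lesssim \bar\eps^{-|\s|}\,\eps^{|\s|}\!\!\sum_{\|w\|_{\s,\eps}\lesssim\bar\eps}\!\!\|z-w\|_{\s,\eps}^{\zeta-|k|_\s}\,\VERT K^\eps\VERT^{(\eps)}_{\zeta;m}\;,
\end{equ}
and controls the dyadic sum over the shells $\|z-w\|_{\s,\eps}\sim 2^j\eps$ with $\eps\le 2^j\eps\lesssim\bar\eps$; using $-|\s|<\zeta-|k|_\s$ or the lower grid cutoff $\eps$ in the opposite case, this is $\lesssim\bar\eps^{-|\s|}\cdot\bar\eps^{\,\zeta-|k|_\s+|\s|}\vee(\bar\eps^{-|\s|}\eps^{\,\zeta-|k|_\s+|\s|})$, which in both cases is dominated by $\bar\eps^\kappa\|z\|_{\s,\eps}^{(\zeta-\kappa)-|k|_\s}$ because $\|z\|_{\s,\eps}\in[\eps,c\bar\eps]$ and $\kappa\in[0,1]$. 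Combining the two regimes gives \eqref{b:ApproxDSingKer}.

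\textbf{Main obstacle.} Everything except the near-origin regime $\|z\|_{\s,\eps}<c\bar\eps$ is a direct transcription of the continuous proof. The delicate point is that, unlike in the continuous case where the mollification scale is $\bar\eps$ only, here the grid introduces a second scale $\eps\le\bar\eps$, so one must carefully track the interplay of the two cutoffs in $\|\cdot\|_{\s,\eps}=\|\cdot\|_\s\vee\eps$ when summing $\|z-w\|_{\s,\eps}^{\zeta-|k|_\s}\psi^{\eps,\bar\eps}(w)$: depending on the sign of $\zeta-|k|_\s+|\s|$ the dyadic sum is dominated either by its coarsest shell ($\sim\bar\eps$) or by its finest shell ($\sim\eps$), and in the latter case one must still verify that the resulting expression, after dividing by the target power $\|z\|_{\s,\eps}^{(\zeta-\kappa)-|k|_\s}$, stays bounded by $\bar\eps^\kappa$ uniformly over $\eps\le\|z\|_{\s,\eps}\le c\bar\eps$; this is where the hypotheses $\zeta\in(-|\s|,0]$, $\bar\eps\ge\eps$ and $\kappa\in[0,1]$ are used.
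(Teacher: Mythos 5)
Your decomposition and the treatment of the regime $\|z\|_{\s,\eps}\ge c\bar\eps$ (commuting $\bar D^k_\eps$ through the convolution, writing $\bar D^k_\eps(K^\eps-K^\eps_{\bar\eps})$ as an average of increments of $\bar D^k_\eps K^\eps$, applying the discrete increment bound from Lemma~\ref{lemma:OpDSingKer3}) are all correct and faithfully mirror the proof of \cite[Lem.~10.17]{Hai14}. However, your handling of the near-origin regime $\|z\|_{\s,\eps}< c\bar\eps$ contains a genuine error: after writing $\bar D^k_\eps K^\eps_{\bar\eps}=(\bar D^k_\eps K^\eps)\ast_\eps\psi^{\eps,\bar\eps}$ and bounding $|\bar D^k_\eps K^\eps(z-w)|$ via the order of the kernel, the dyadic sum produces $\bar\eps^{-|\s|}\eps^{\zeta-|k|_\s+|\s|}$ whenever $\zeta-|k|_\s+|\s|\le0$ (which occurs as soon as $|k|_\s\ge\zeta+|\s|$, i.e.\ for $|k|_\s\ge1$ when $\zeta$ is close to $-|\s|$). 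You then claim this quantity is dominated by the target $\bar\eps^\kappa\|z\|_{\s,\eps}^{\zeta-\kappa-|k|_\s}$ uniformly over $\|z\|_{\s,\eps}\in[\eps,c\bar\eps]$, but this fails at the coarse end of that range: taking $\|z\|_{\s,\eps}\sim\bar\eps$ the target reads $\bar\eps^{\zeta-|k|_\s}$, so the required inequality reduces to $\eps^{\zeta-|k|_\s+|\s|}\lesssim\bar\eps^{\zeta-|k|_\s+|\s|}$, which is \emph{equivalent} to $\bar\eps\lesssim\eps$ (since the exponent is negative) — the opposite of the hypothesis $\bar\eps\ge\eps$.

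The correct way to close the near-origin regime, and the way the continuous proof proceeds, is to transfer the difference operator to the mollifier rather than the kernel: $\bar D^k_\eps K^\eps_{\bar\eps}=K^\eps\ast_\eps(\bar D^k_\eps\psi^{\eps,\bar\eps})$. Then using $|\bar D^k_\eps\psi^{\eps,\bar\eps}|\lesssim\bar\eps^{-|\s|-|k|_\s}$ (available for $|k|_\s\le m+2$) and $|K^\eps(z-w)|\lesssim\|z-w\|_{\s,\eps}^{\zeta}\VERT K^\eps\VERT_{\zeta;0}^{(\eps)}$, the dyadic sum now involves the exponent $\zeta+|\s|>0$ — which is exactly where the hypothesis $\zeta\in(-|\s|,0]$ enters — and hence converges to $\bar\eps^{\zeta+|\s|}$ (the grid cutoff at $\eps$ only contributing a subdominant $\eps^{\zeta+|\s|}$). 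This yields $|\bar D^k_\eps K^\eps_{\bar\eps}(z)|\lesssim\bar\eps^{-|\s|-|k|_\s}\bar\eps^{\zeta+|\s|}=\bar\eps^{\zeta-|k|_\s}$, which is then dominated by $\bar\eps^\kappa\|z\|_{\s,\eps}^{\zeta-\kappa-|k|_\s}$ for all $\|z\|_{\s,\eps}\le c\bar\eps$ precisely because $\zeta-\kappa-|k|_\s\le0$. With this replacement, the rest of your argument goes through.
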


\subsection{Uniform bounds on the discrete controlling processes}
\label{sec:dnif_bounds_disc}

Let $\{\xi^\eps(z)\}_{z\in\Lambda_\eps^\s}$ be a family of i.i.d. normal random variables with mean $0$ and variance $\eps^{-3}$ on a probability space $(\Omega,\SF,\Prob)$ and consider the discrete controlling process $\X^\eps(\xi^\eps, \BC^\eps)$ introduced in Definition~\ref{def:DControlProc}. Arguing as in Section~\ref{sec:Criterion}, it is clear that each component of $\X^\eps$ belongs to a finite inhomogeneous Wiener chaos and each chaos component will be of the form
\begin{equ}[e:DXWienerIntegral]
Y^\eps(\varphi) = I^\eps_k\left(\langle \varphi\,, \CW^{\eps}\rangle_\eps\right),
\end{equ}
for a suitable discrete kernel $\CW^\eps$ on $H^{\otimes k}_\eps$, where $I^\eps_k$ is the $k$-th order Wiener-It\^o integral (sum) with respect the family $\{\xi^\eps(z)\}_z$, $\langle \cdot,\cdot\rangle_\eps$ is the usual discrete pairing (read, Riemann-sum approximation of the integral) and $H_\eps\eqdef \ell^2(\Lambda_{\eps^2}\times \T_\eps)$. 
Let $\varrho$ be a symmetric compactly supported smooth function integrating to 1 and, for $\bar \eps\geq \eps$, $\varrho_{\bar \eps}(t,x)\eqdef \bar \eps^{-3}\varrho(\bar \eps^{-2} t, \bar \eps^{-1} x)$ be its rescaled version. Set  $\xi^{\eps,\bar \eps}\eqdef \xi^\eps\ast_\eps\varrho_{\bar \eps}$ and define $Y^{\eps}_{\bar \eps}$ as in~\eqref{e:DXWienerIntegral} but with the kernel $\CW^\eps$ replaced by $\CW^{\eps,\,\bar\eps}$. 
For $z_1\neq z_2\in\Lambda_{\eps}^\s$, we also introduce
\begin{equ}[e:DKDef]
 \CK^\eps(z_1, z_2) \eqdef \langle \mathcal{W}^\eps(z_1), \mathcal{W}^\eps(z_2) \rangle_{\eps}\;, \qquad \delta \CK^{\eps,\bar\eps}(z_1, z_2) \eqdef \langle \delta \mathcal{W}^{\eps,\bar \eps}(z_1), \delta \mathcal{W}^{\eps,\bar\eps}(z_2) \rangle_{\eps}\;,
\end{equ}
where $\delta \mathcal{W}^{\eps,\bar\eps} \eqdef \mathcal{W}^{\eps,\bar \eps} - \mathcal{W}^\eps$.  In all the cases we will consider these functions can be written as $\CK^\eps(z_1, z_2) = \CK^\eps(z_1 - z_2)$, where $\CK^\eps$ is a kernel on $\Lambda_\eps^\s$ and $\{\CK^\eps\}_\eps$ is a family of discrete singular kernels according to Definition~\ref{d:DSingKer}. 
Thanks to the notations introduced above, we now have everything we need to state and prove the following proposition, representing the discrete counterpart of Proposition~\ref{p:ChaosKernelsConv}.

\begin{proposition}\label{p:DChaosKernelsConv}
In the setting described above, for all $N\in\N$, $\eps\eqdef 2^{-N}$ and $\bar\eps\geq\eps$, let $\CK^\eps$ and $\delta\CK^{\eps,\bar\eps}$ be the compactly supported kernels on $\Lambda_\eps^\s$ defined in~\eqref{e:DKDef} for distributions $Y^\eps$ and $Y^{\eps,\bar\eps}$ of the form~\eqref{e:DXWienerIntegral}. Then the following results hold
\begin{enumerate}
\item if $\{\CK^\eps\}_\eps$ is a family singular of order $\zeta \in (-3, -1]$, then for any $\alpha< \frac{\zeta}{2}$, $Y^\eps\in\CC_1^{\alpha,\s,\eps}$ almost surely. If furthermore the family $\delta \mathcal{K}^{\eps,\bar\eps}$ has a singularity of order $\zeta - 2\theta$, for some $\theta > 0$, and the quantity in~\eqref{def:DSingKer} is bounded by $\bar\eps^{2\theta}$, then for any $p\geq 1$ and $\alpha < \frac{\zeta}{2}$, the following bound holds uniformly in $\eps$:
\begin{equ}[e:DKEpsConverge]
 \mathbb{E} \left[\left(\Vert Y^\eps - Y^{\eps,\bar\eps} \Vert^{(\eps)}_{\CC_1^{\alpha - \theta,\s}}\right)^p\right] \lesssim \bar\eps^{\theta p}\;.
\end{equ} 
\item\label{item:Dthird} If $\{\mathcal{K}^\eps\}_\eps$ is a family singular of order $\zeta \in (-1, \infty) \setminus \N$, then the process $Y^\eps\in\CC^{\beta/2,\eps} ([-1,1]\cap\Lambda_{\eps^2}, \CC^{\alpha - \beta,\eps})$, for any $\alpha < \frac{\zeta}{2}$ and any $\beta\in(0,\alpha)$ if $\zeta>0$ and any $\beta\in\big(0, \frac{\zeta + 1}{2}\big)$ if $\zeta<0$. Moreover, if the family $\delta \mathcal{K}^{\eps,\bar\eps}$ is singular of order $\zeta - 2\theta$, for some $\theta \in \big(0, \frac{\zeta+1}{2}\big)$, and the quantity in~\eqref{def:DSingKer} is bounded by $\bar\eps^{2\theta}$, then, for any $p \geq 1$ and $\alpha < \frac{\zeta}{2}$, the following bound holds uniformly in $\eps$:
\begin{equ}[e:DKEpsConvergeTime]
 \mathbb{E} \left[\left(\Vert Y^\eps - Y^{\eps,\bar\eps} \Vert^{(\eps)}_{\CC^{\beta/2}([-1,1]\cap\Lambda_{\eps^2}, \CC^{\alpha - \beta - \theta})}\right)^p\right] \lesssim \bar\eps^{\theta p}\;,
\end{equ}
where $\beta > 0$ is such that $\beta + \theta\in(0,\alpha)$ if $\zeta>0$ and $\beta+\theta\in\big(0, \frac{\zeta + 1}{2}\big)$ if $\zeta<0$.
\end{enumerate}
\end{proposition}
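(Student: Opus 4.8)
The strategy is to mirror the proof of Proposition~\ref{p:ChaosKernelsConv} step by step, replacing the continuous wavelet characterisation of H\"older--Besov norms by a discrete version and keeping track of the fact that all scales $\lambda$ and all base points are now confined to the dyadic grid $\Lambda_\eps^\s$. Since $\zeta > -3$ in both cases, the key moment bounds come again from the equivalence of moments in a fixed Wiener chaos \cite{Nel73} together with the estimate $\mathbb{E}|\langle Y^\eps, \psi_z^{n,\s}\rangle_\eps|^2 \lesssim 2^{-(\zeta\wedge 0+3)n}$, which now follows from Definition~\ref{d:DSingKer}: writing this expectation as a double discrete sum against the singular kernel $\CK^\eps$, and using that $\|z\|_{\s,\eps}\geq\eps$ on the grid, the Riemann sum is controlled by $2^{3n}\int_{\|z_i\|_\s\lesssim 2^{-n}} \|z_1-z_2\|_{\s,\eps}^\zeta$ whenever $2^{-n}\geq\eps$ (for scales $2^{-n}<\eps$ the convolution is trivial since $\CK^\eps$ is constant on the grid at scale $\eps$), and this integral is finite precisely because $\zeta>-3$. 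For the approximating kernels $\delta\CK^{\eps,\bar\eps}$ the same computation produces an extra factor $\bar\eps^{2\theta}$ from the hypothesis on the proportionality constant in~\eqref{def:DSingKer}.

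\textbf{Main steps.} First I would record the discrete analogue of the wavelet characterisation: for $\alpha<0$, a function on $\Lambda_\eps^\s$ belongs to $\CC^{\alpha,\s,\eps}$ with norm controlled (uniformly in $\eps$) by the supremum over $\psi\in\Psi_\star$, over $n\geq 0$ with $2^{-n}\geq\eps$, and over grid points $z$, of $2^{(2\alpha+3)n/2}\,|\langle\cdot,\psi_z^{n,\s}\rangle_\eps|$ --- this is exactly the truncation of the continuous expansion \cite[Prop.~3.20]{Hai14} at scale $\eps$, and it is legitimate because the norms~\eqref{e:DParabolicHolderNegative},~\eqref{e:DHolderNegative} only test against scales $\lambda\in[\eps,1]$. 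Second, plug the moment bound above into the discrete version of \eqref{e:XEstim}: $\mathbb{E}\|Y^\eps\|_{\CC^{\alpha,\s}_{\mathfrak K}}^{2p,(\eps)}\lesssim\sum_{n\geq 0}2^{-np(\zeta-2\alpha-3/p)}$, finite for $\alpha<\zeta/2$ and $p$ large, then invoke equivalence of $L^p$-norms on the chaos to upgrade to all $p\geq 1$; this gives the first sentence of each of the two claims, and with $\delta\CK^{\eps,\bar\eps}$ in place of $\CK^\eps$ yields~\eqref{e:DKEpsConverge}. Third, for the case $\zeta\in(-1,0)$ of item~\ref{item:Dthird} one needs the pointwise-in-time evaluation: the discrete analogue of \eqref{e:XPointEval} is obtained by the same argument with $\|z_1-z_2\|_{\s,\eps}^\zeta\leq|x_1-x_2|^\zeta$ (valid since $\zeta<0$), integrating only in space, which requires $\zeta>-1$; then one runs the spatial wavelet expansion to get $\mathbb{E}\|Y^\eps(t)\|_{\CC^\alpha}^{2p,(\eps)}\lesssim 1$ uniformly over $t\in\Lambda_{\eps^2}\cap[-1,1]$. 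Fourth, the time regularity: one needs a \emph{discrete} substitute for $\delta^2_{s,t}\CK$ and Lemma~\ref{l:KDeltaz}. Since $s,t\in\Lambda_{\eps^2}$, I would define $\delta^2_{s,t}\CK^\eps(x)$ exactly as in \eqref{e:Deltaz} --- it remains a well-defined expression on the grid --- and re-prove Lemma~\ref{l:KDeltaz} in the discrete setting: for $|t-s|\geq|x|^2$ the brutal bound $\lesssim\|z\|_{\s,\eps}^\zeta\lesssim|t-s|^\beta|x|^{\zeta-2\beta}$ works, and for $|t-s|<|x|^2$ the continuous identity $\delta^2_{s,t}\CK=\int_s^t\int_s^t\partial_{s_1}\partial_{s_2}\CK$ is replaced by a double telescoping discrete sum $\delta^2_{s,t}\CK^\eps(x)=\eps^4\sum\sum\bar D_{t,\eps^2}^{(1)}\bar D_{t,\eps^2}^{(2)}\CK^\eps$, which by Definition~\ref{d:DSingKer} (with $|k|_\s=4$, so we need $m\geq 4$, harmless since the kernels in question are smooth away from $0$ with arbitrarily many derivatives controlled) is bounded by $|t-s|^2\|z\|_{\s,\eps}^{\zeta-4}$; interpolating gives the claimed $|t-s|^\beta|x|^{\zeta-2\beta}$ for $\beta\in[0,2]$. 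From here $\mathbb{E}\|Y^\eps(t)-Y^\eps(s)\|_{\CC^{\alpha-\beta}}^{2p,(\eps)}\lesssim|t-s|^{\beta p}$ follows as in the continuous case (spatial wavelet expansion, $\zeta-2\beta>-1$), and one concludes by the Kolmogorov continuity criterion for Banach-valued random variables \cite{Kal02} applied on the time grid $\Lambda_{\eps^2}\cap[-1,1]$ --- one must check the Kolmogorov constant is $\eps$-uniform, which it is since all the moment bounds above are. The approximating estimates~\eqref{e:DKEpsConvergeTime} are obtained identically, carrying the factor $\bar\eps^\theta$ from the hypothesis on $\delta\CK^{\eps,\bar\eps}$.

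\textbf{Main obstacle.} The only genuinely new point, as opposed to transcription, is making sure that the discrete wavelet/Besov characterisation truncated at scale $\eps$ is faithful and $\eps$-uniform, and that the Kolmogorov criterion can be applied on a grid whose mesh shrinks with $\eps$ while producing constants independent of $\eps$. Concretely, the delicate step is verifying that the discrete pairing $\langle\cdot,\psi_z^{n,\s}\rangle_\eps$ at scales $2^{-n}<\eps$ contributes nothing new --- i.e. that for such $n$ the discrete scalar product against a rescaled wavelet is, up to a harmless error absorbed into the scales $2^{-n}\sim\eps$, already controlled by the $\lambda=\eps$ information --- together with the fact that the singular kernel $\CK^\eps$ is effectively constant at scale $\eps$ so that the convolution/Riemann-sum manipulations in the analogue of \eqref{e:IntegralBound} do not blow up. Once this bookkeeping is in place, every inequality in the proof of Proposition~\ref{p:ChaosKernelsConv} has a line-by-line discrete counterpart using Lemmas~\ref{lemma:OpDSingKer1}--\ref{lemma:OpDSingKer4} in place of \cite[Sec.~10.3]{Hai14}, and the proof is complete.
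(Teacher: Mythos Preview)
Your proposal is correct and follows essentially the same route as the paper. The paper's own proof is a two-line reduction: it invokes the discrete wavelet characterisation of $\CC^{\alpha,\s,\eps}$ (packaged as Proposition~\ref{prop:DCalpha} in the appendix, which handles precisely the ``main obstacle'' you flag about truncation at scale $\eps$) together with the discrete analogue of Lemma~\ref{l:KDeltaz} (stated as Lemma~\ref{l:DKDeltaz}, proved via the same double telescoping sum you describe), and then says that one retraces the proof of Proposition~\ref{p:ChaosKernelsConv} verbatim with integrals replaced by discrete pairings.
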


In the proof of the previous proposition we will need the discrete counterpart of Lemma~\ref{l:KDeltaz}. For any two time points $s, t \in \Lambda_\eps$, we define the operator 
\begin{equ}[e:DDeltaz]
\delta_{s, t}^2 \CK^\eps(x) \eqdef \sum_{\epsilon_1, \epsilon_2 \in \{0, 1\}} (-1)^{\epsilon_1 - \epsilon_2} \CK^\eps\big((\epsilon_1 - \epsilon_2)(t - s), x\big)\;,
\end{equ}
and then we have
\begin{lemma}
\label{l:DKDeltaz}
Let the family $\{\CK^\eps\}_\eps$ have a singularity of order $\zeta \leq 0$. Then, for any $\beta \in [0,2]$ and any points $s, t \in \Lambda_\eps$ and $x \neq 0$, the following bound holds uniformly in $\eps$:
\begin{equs}
 |\delta^2_{s, t} \CK^\eps(x)| \leq C | s- t |^{\beta} |x|^{\zeta - 2 \beta}\;.
\end{equs} 
\end{lemma}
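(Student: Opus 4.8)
The plan is to transpose the proof of Lemma~\ref{l:KDeltaz} almost verbatim, keeping its two-regime structure (according to whether $|t-s|\ge |x|^2$ or $|t-s|<|x|^2$) and adapting only two continuous ingredients to the grid. First, the bound $\|z\|_{\s}^{\zeta}$ of Definition~\ref{d:SingularFunction} is replaced by $\|z\|_{\s,\eps}^{\zeta}=(\|z\|_\s\vee\eps)^{\zeta}$ of Definition~\ref{d:DSingKer}; this is harmless because in every evaluation below one has $\|(\,\cdot\,,x)\|_{\s,\eps}\ge |x|$ and $u\mapsto u^{\zeta}$ is non-increasing for $\zeta\le 0$, so $\|(\,\cdot\,,x)\|_{\s,\eps}^{\zeta}\le |x|^{\zeta}$. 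Second, the double integral representation of the second time increment is replaced by a discrete telescoping identity over the time grid. Throughout we assume $t>s$ (the case $t=s$ being trivial), write $\delta^2_{s,t}\CK^\eps(x)=2\CK^\eps(0,x)-\CK^\eps(t-s,x)-\CK^\eps(s-t,x)$, abbreviate $C\eqdef\VERT\CK^\eps\VERT_{\zeta;4}^{(\eps)}$, and use implicitly that the bound of Definition~\ref{d:DSingKer} is available up to $|k|_\s=4$ (automatic for the kernels to which the lemma will be applied, i.e.\ products and convolutions of discrete heat kernels).

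In the regime $|t-s|\ge |x|^2$ I would apply Definition~\ref{d:DSingKer} with $k=0$ to each of the three terms above, getting $|\CK^\eps(\pm(t-s),x)|\vee|\CK^\eps(0,x)|\lesssim C\,\|(\,\cdot\,,x)\|_{\s,\eps}^{\zeta}\lesssim C\,|x|^{\zeta}$, hence $|\delta^2_{s,t}\CK^\eps(x)|\lesssim C\,|x|^{\zeta}\le C\,|t-s|^{\beta}|x|^{\zeta-2\beta}$ for every $\beta\ge 0$, the last step because $|t-s|^{\beta}\ge |x|^{2\beta}$ in this regime.

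In the regime $|t-s|<|x|^2$ I would set $f(k)\eqdef\CK^\eps(k\eps^2,x)$, write $t-s=m\eps^2$ with $m\ge 1$ an integer (if $m=0$ the quantity vanishes), and use the telescoping identity
\[
f(m)-2f(0)+f(-m)=\sum_{k=1}^{m}\sum_{l=-k}^{k-2}\bigl(f(l+2)-2f(l+1)+f(l)\bigr)=\eps^{4}\sum_{k=1}^{m}\sum_{l=-k}^{k-2}\bigl(\bar D_{t,\eps^2}^{2}\CK^\eps\bigr)(l\eps^2,x),
\]
a sum of $m^{2}$ terms, verified by a short induction on $m$. For every index occurring we have $|l|\eps^{2}\le m\eps^{2}=|t-s|<|x|^{2}$, so $\|(l\eps^{2},x)\|_{\s,\eps}\ge |x|$, and since $\zeta-4\le 0$ Definition~\ref{d:DSingKer} with $k=(2,0)$ gives $|(\bar D_{t,\eps^2}^{2}\CK^\eps)(l\eps^2,x)|\lesssim C\,|x|^{\zeta-4}$. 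Hence $|\delta^2_{s,t}\CK^\eps(x)|\lesssim C\,\eps^{4}m^{2}|x|^{\zeta-4}=C\,|t-s|^{2}|x|^{\zeta-4}$, and since $\beta\le 2$ and $|t-s|<|x|^{2}$ we bound $|t-s|^{2}=|t-s|^{\beta}|t-s|^{2-\beta}\le |t-s|^{\beta}|x|^{2(2-\beta)}$, which yields $|\delta^2_{s,t}\CK^\eps(x)|\lesssim C\,|t-s|^{\beta}|x|^{\zeta-2\beta}$, as claimed.

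I do not anticipate a genuine obstacle: the estimate is deterministic, local, and each step mirrors Lemma~\ref{l:KDeltaz}. The only points requiring a moment's care are checking that the $\vee\,\eps$ regularisation in $\|\cdot\|_{\s,\eps}$ does not spoil the power counting—which it does not, precisely because in the second regime $|x|$ dominates $\eps$ and $|l|^{1/2}\eps$—and checking the telescoping identity; uniformity in $\eps$ is then automatic, the only $\eps$-dependent constant entering being $\VERT\CK^\eps\VERT_{\zeta;4}^{(\eps)}$, which is bounded by hypothesis.
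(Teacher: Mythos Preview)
Your proof is correct and follows essentially the same approach as the paper: the same two-regime split, the same brutal bound in the regime $|t-s|\ge |x|^2$, and in the regime $|t-s|<|x|^2$ a telescoping reduction to second discrete time derivatives. The only cosmetic difference is the specific telescoping identity: the paper writes $\delta^2_{s,t}\CK^\eps(x)=\eps^4\sum_{s_1=s}^{t}\sum_{s_2=s}^{t}\bar D_{s_1,\eps^2}\bar D_{s_2,\eps^2}\CK^\eps(s_1-s_2,x)$ (a direct discrete analogue of the double integral in Lemma~\ref{l:KDeltaz}), whereas you use a triangular sum over $k,l$; both yield $m^2$ terms each bounded by $C|x|^{\zeta-4}$, and the remainder of the argument is identical.
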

\begin{proof}
The proof is identical to the of Lemma~\ref{l:KDeltaz}. The only difference being that in case $|t-s|<|x|^2$, one has to rewrite $\delta_{s, t}^2 \CK^\eps(x)$ as a double telescopic sum
\[
\delta_{s, t}^2 \CK^\eps(x)=\eps^4\sum_{ s_1=s}^t\sum_{ s_2=s}^t \bar D_{s_1,\eps^2}\bar D_{s_2,\eps^2}\CK^\eps(s_1-s_2, x)
\]
and then proceed as in the above mentioned proof.
\end{proof}

\begin{proof}[Proof of Proposition~\ref{p:DChaosKernelsConv}]
Thanks to the characterization of the discrete H\"older spaces through the discrete wavelets given in Proposition~\ref{prop:DCalpha} and the lemma above, it is immediate to see that in order to prove the result it suffices to follow the same steps and use the same arguments as in the proof of Proposition~\ref{p:ChaosKernelsConv}, replacing the integrals with the discrete pairing and exploiting the analogous bounds.
\end{proof}

\begin{corollary}\label{c:DKConvol}
In the settings of Proposition~\ref{p:DChaosKernelsConv} with $\zeta \in (-3, 0)$, let a compactly supported family of functions $\{\CK^\eps\}_\eps$ be singular of order $\chi \in (-\frac{7 + \zeta}{2}, 0)$. Then the family of distributions $\CK^\eps \ast_\eps Y^\eps$ belongs almost surely to the space $\CC^{\beta/2,\eps} ([-1,1]\cap\Lambda_{\eps^2}, \CC^{\alpha - \beta,\eps})$ for any $\alpha < \frac{\zeta}{2} + \chi + 3$ and $\beta$ as in the second part of the statement of the above-mentioned proposition with $\zeta + 2\chi + 6$ in place of $\zeta$.
\end{corollary}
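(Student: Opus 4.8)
The plan is to transcribe the proof of Corollary~\ref{c:KConvol} verbatim into the discrete setting, the only substitutions being that the convolution identities and singular-kernel estimates of~\cite[Sec.~10.3]{Hai14} are replaced by their discrete counterparts from Section~\ref{sec:DKernels}. First I would note that $\CK^\eps \ast_\eps Y^\eps$ is again a random variable of the form~\eqref{e:DXWienerIntegral}: if $\CW^\eps$ is the $H^{\otimes k}_\eps$-valued kernel representing $Y^\eps$, then $\CK^\eps \ast_\eps Y^\eps$ is represented by $\tilde\CK^\eps \ast_\eps \CW^\eps$, the convolution acting on the base-point variable and $\tilde\CK^\eps(z)\eqdef\CK^\eps(-z)$. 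Writing $\CG^\eps$ for the covariance kernel of $Y^\eps$ associated through~\eqref{e:DKDef} (a family of discrete singular kernels of order $\zeta$, since we are in the setting of Proposition~\ref{p:DChaosKernelsConv} — note that, in contrast, the $\CK^\eps$ of the present statement is the convolution kernel, of order $\chi$), bilinearity of $\langle\cdot,\cdot\rangle_\eps$ and associativity of $\ast_\eps$ give that the covariance function of $\CK^\eps \ast_\eps Y^\eps$ is
\begin{equ}
\hat\CK^\eps(z_1,z_2)=\hat\CK^\eps(z_1-z_2)=\bigl(\tilde\CK^\eps \ast_\eps \CG^\eps \ast_\eps \CK^\eps\bigr)(z_1-z_2)\;.
\end{equ}

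Next I would identify the order of $\{\hat\CK^\eps\}_\eps$ in the sense of Definition~\ref{d:DSingKer} by applying the discrete convolution estimates of Lemmas~\ref{lemma:OpDSingKer1} and~\ref{lemma:OpDSingKer2} twice: convolving $\CG^\eps$ (order $\zeta$) with $\CK^\eps$ (order $\chi$) produces a family of order $\zeta+\chi+|\s|=\zeta+\chi+3$, and convolving the result once more with $\tilde\CK^\eps$ (again order $\chi$) produces a family of order $\zeta+2\chi+6$, which by the hypothesis $\chi>-\tfrac{7+\zeta}{2}$ is $>-1$. Depending on whether the two kernels entering a given convolution have orders above or below $-|\s|$, one invokes either the plain bound~\eqref{b:ConvDSingKer1} (or its positive-order variant~\eqref{b:ConvDSingKer2}) or the renormalised version~\eqref{b:ConvDSingKer3}, exactly the case split of the continuous argument; the renormalisation $\SR_\eps$ only adds a smoother correction and does not change the order, and all constants are uniform in $\eps$. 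With $\{\hat\CK^\eps\}_\eps$ identified as singular of order $\zeta+2\chi+6\in(-1,\infty)$, point~\ref{item:Dthird} of Proposition~\ref{p:DChaosKernelsConv}, applied with $\zeta+2\chi+6$ in place of $\zeta$, yields that $\CK^\eps \ast_\eps Y^\eps$ belongs almost surely, with moments bounded uniformly in $\eps$, to $\CC^{\beta/2,\eps}([-1,1]\cap\Lambda_{\eps^2},\CC^{\alpha-\beta,\eps})$ for every $\alpha<\tfrac{\zeta}{2}+\chi+3$ and every $\beta$ admissible in the sense of that proposition with $\zeta+2\chi+6$ in place of $\zeta$. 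If one also wants the accompanying $\bar\eps$-convergence estimate~\eqref{e:DKEpsConvergeTime} (as is used in Proposition~\ref{p:DControl}), it follows along the same route: $\delta\hat\CK^{\eps,\bar\eps}$ is again a triple $\ast_\eps$-convolution, one of whose factors is of the form $\CK^\eps-\CK^\eps_{\bar\eps}$ and carries the factor $\bar\eps^{2\theta}$ by Lemma~\ref{lemma:OpDSingKer4}, and bilinearity passes this factor onto $\delta\hat\CK^{\eps,\bar\eps}$.

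The only step that needs genuine care is the order-tracking for the two convolutions: since $\chi$ may be as negative as $-\tfrac{7+\zeta}{2}$, which can lie below $-|\s|$, one is forced into the regime of Lemma~\ref{lemma:OpDSingKer2}, and one must check at each step that the hypotheses relating the two orders ($-|\s|-1<\zeta_1\leq-|\s|$ and $-2|\s|-\zeta_1<\zeta_2\leq0$) are satisfied and that the intermediate kernel $\CG^\eps\ast_\eps\CK^\eps$ lands in the admissible range. This is exactly the bookkeeping already carried out for~\cite[Lems.~10.14 and 10.16]{Hai14}, so no new idea is required beyond verifying the relevant inequalities; everything else is a mechanical translation of the continuous proof.
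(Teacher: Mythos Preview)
Your proposal is correct and follows the same route as the paper: write the covariance of $\CK^\eps\ast_\eps Y^\eps$ as a triple discrete convolution, read off its order as $\zeta+2\chi+6>-1$ via the discrete singular-kernel calculus, and apply point~\ref{item:Dthird} of Proposition~\ref{p:DChaosKernelsConv}. The paper's own proof is a one-liner that simply cites Lemma~\ref{lemma:OpDSingKer1} (the discrete analogue of \cite[Lem.~10.14]{Hai14}) and Proposition~\ref{p:DChaosKernelsConv}; your additional discussion of Lemma~\ref{lemma:OpDSingKer2} for the borderline regime $\chi\leq-|\s|$ is extra care that the paper omits (in the applications one always has $\chi=-2>-3$), but it is not a different approach.
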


\begin{proof} 
Analogous to the one of Corollary~\ref{c:KConvol}, applying though Lemma~\ref{lemma:OpDSingKer1} and Proposition~\ref{p:DChaosKernelsConv} in place of~\cite[Lem.~10.14]{Hai14} and Proposition~\ref{p:ChaosKernelsConv}.
\end{proof}

We conclude this section with the following lemma, whose proof is identical to that of Lemma~\ref{l:Control}, which allows to bound the stochastic processes defined  in~\eqref{e:DRemainders}. 

\begin{lemma}\label{l:DControl}
Let us be given a family of functions (parametrized by $\eps$) $\Lambda_\eps \ni x \mapsto R^\eps_x$ such that $R^\eps_x$ is of the form \eqref{e:DXWienerIntegral} with the respective function $\mathcal{K}^\eps_x$, defined in \eqref{e:DKDef}, and, for some $\zeta \in (-1, 0)$, satisfies
\begin{equ}[eDControlBound]
|\mathcal{K}^\eps_x(z_1, z_2)| \lesssim \sum_{\delta \geq 0} |x_1 - x_2|^{\zeta - \delta} \left( |x_1 - x_2|^{\delta} + |x_1 - x|^{\delta} + |x_2 - x|^{\delta} \right),
\end{equ}
uniformly over the variables $z_i = (t, x_i)$, and where the sum runs over finitely many values of $\delta \in [0, 1 + \zeta)$. Let furthermore $R^\eps_x$ have a strictly positive (discrete) H\"{o}lder regularity in time. Then $R^\eps$ belongs almost surely to the space $\CC^\eps([-1,1], \CL^{\beta, \eps})$ for any $\beta < {\zeta \over 2}$.
\end{lemma}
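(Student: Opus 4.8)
The plan is to follow the proof of Lemma~\ref{l:Control} line by line, replacing the continuous wavelet analysis by its discrete counterpart (Proposition~\ref{prop:DCalpha}) and the integrals against test functions by the discrete pairing $\langle\cdot,\cdot\rangle_\eps$, with all scales restricted to $\lambda\geq\eps$. First I would fix $t\in[-1,1]\cap\Lambda_{\eps^2}$ and a base point $x\in\Lambda_\eps$, and for a spatial wavelet $\psi^n_x$ at scale $2^{-n}\in[\eps,1]$ (recentred at the base point, as required by the definition of $\CL^{\beta,\eps}$) estimate the second moment of the Wiener integral \eqref{e:DXWienerIntegral}:
\begin{equ}
\mathbb{E}\big|\langle R^\eps_x(t,\cdot),\psi^n_x\rangle_\eps\big|^2 \lesssim \eps^2\sum_{\bar x_1,\bar x_2\in\Lambda_\eps}|\psi^n_x(\bar x_1)\psi^n_x(\bar x_2)|\,|\mathcal{K}^\eps_x(t,\bar x_1;t,\bar x_2)|\;.
\end{equ}
Now I use \eqref{eDControlBound}: since $\psi^n_x$ is supported in a ball of radius $\sim 2^{-n}$ around $x$, on its support $|\bar x_i-x|\lesssim 2^{-n}$, so each summand $|\bar x_1-\bar x_2|^{\zeta-\delta}(|\bar x_1-\bar x_2|^\delta+|\bar x_1-x|^\delta+|\bar x_2-x|^\delta)$ reduces to $|\bar x_1-\bar x_2|^\zeta$ up to a harmless power of $2^{-n}$ — this is precisely why the definition of $\CL^{\beta,\eps}$ only tests against functions centred at the base point, and it turns the base-point-dependent terms into terms on the same footing as $|\bar x_1-\bar x_2|^\zeta$. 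The resulting discrete double sum is then bounded by $2^{-\zeta n}$ exactly as the spatial analogue of \eqref{e:IntegralBound}, the estimate being finite since $\zeta>-1$ and uniform in $\eps$ because $\{\mathcal K^\eps\}_\eps$ is singular of order $\zeta$ with $\eps$-independent constant.

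Next I would upgrade these second-moment bounds to $p$-th moment bounds for every $p\geq 1$ using the equivalence of moments inside a fixed inhomogeneous Wiener chaos \cite{Nel73}, and then apply the discrete wavelet characterisation of $\CL^{\beta,\eps}$ (the purely spatial version of Proposition~\ref{prop:DCalpha}, already used in the proof of Proposition~\ref{p:DChaosKernelsConv}) to sum over the scales $n$ with $2^{-n}\in[\eps,1]$, which is convergent provided $\beta<\tfrac{\zeta}{2}$. This yields, uniformly in $\eps$,
\begin{equ}
\mathbb{E}\big[\big(\Vert R^\eps(t)\Vert_{\CL^{\beta,\eps}}\big)^p\big]\lesssim 1\;,\qquad
\mathbb{E}\big[\big(\Vert R^\eps(t)-R^\eps(s)\Vert_{\CL^{\beta,\eps}}\big)^p\big]\lesssim |t-s|^{\gamma p}\;,
\end{equ}
for some $\gamma>0$, the second bound coming from the assumed strictly positive discrete H\"older regularity of $R^\eps_x$ in time: one writes the time increment of the covariance kernel $\mathcal K^\eps_x$ via the operator $\delta^2_{s,t}$ and bounds it as in Lemma~\ref{l:DKDeltaz} (rewriting it as a double telescopic sum in time when $|t-s|<|\bar x_1-\bar x_2|^2$), thereby gaining a positive power of $|t-s|$ at the cost of lowering the spatial order, which remains $>-1$ after the loss, so the wavelet summation still converges.

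Finally, since $R^\eps$ is a process with values in the Banach space $\CL^{\beta,\eps}$ indexed by the discrete time interval $[-1,1]\cap\Lambda_{\eps^2}$, the two moment estimates above allow me to invoke the Kolmogorov continuity criterion for Banach-valued random variables \cite{Kal02}, producing a modification in $\CC^\eps([-1,1],\CL^{\beta,\eps})$ with moments bounded uniformly in $\eps$. The only genuine subtlety, and the main point to get right, is the $\eps$-uniformity of every constant: this is what forces the use of the discrete singular-kernel calculus of Section~\ref{sec:DKernels} — in particular the fact that the covariance kernels $\mathcal K^\eps_x$, built from the discrete heat kernel through the discrete product and convolution operations, inherit bounds of the form \eqref{eDControlBound} with constants that do not depend on $\eps$, via Lemma~\ref{l:DKernelDec} and Lemmas~\ref{lemma:OpDSingKer1}--\ref{lemma:OpDSingKer2} — in place of their continuous counterparts from \cite[Sec.~10.3]{Hai14}. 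Once this bookkeeping is carried out, every estimate is literally the one appearing in the proof of Lemma~\ref{l:Control}.
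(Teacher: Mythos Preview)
Your proposal is correct and takes essentially the same approach as the paper, which simply states that the proof is identical to that of Lemma~\ref{l:Control}; you have spelled out exactly the discrete replacements (Proposition~\ref{prop:DCalpha}, discrete pairings, scales $\geq\eps$, Lemma~\ref{l:DKDeltaz}) that this entails. If anything, your write-up is more detailed than what the paper provides.
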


Now that we derived the discrete version of all the results we needed in the proof of Proposition~\ref{p:Control}, we are ready for the following. 

\begin{proof}[Proof of Proposition~\ref{p:DControl}]
Let $\{\xi^\eps(z)\}_{z\in\Lambda_\eps^\s}$ be a family of i.i.d. normal random variables with mean $0$ and variance $\eps^{-3}$ and set, for $\bar\eps\geq\eps$, $\xi^{\eps}_{\bar \eps}\eqdef \xi^\eps\ast_\eps\varrho_{\bar\eps}$, where $\varrho$ is a symmetric compactly supported smooth function integrating to 1 and $\varrho_{\bar \eps}(t,x)\eqdef \bar \eps^{-3}\varrho(\bar \eps^{-2} t, \bar \eps^{-1} x)$ its rescaled version. 
Notice that the definition of the discrete stochastic processes in $\X^\eps(\xi_{\bar\eps}^\eps)$ (or in $\X^\eps(\xi^\eps)$) can be obtained by that of the ones in $\X(\xi^\eps)$ (or $\X(\xi)$) simply by replacing every occurrence of $K$ with $K^\eps$, $K_\eps$ with $K^\eps_{\bar\eps}\eqdef K^\eps\ast_\eps \varrho_{\bar\eps}$, $\partial_x$ with $D_{x,\eps}$ and the pointwise product with $B_\eps$. Moreover, all the estimates involving these quantities, as well as those we exploited in the proof of Proposition~\ref{p:Control} are available in this discrete setting (Section~\ref{sec:DKernels}, Proposition~\ref{p:DChaosKernelsConv} and Corollary~\ref{c:DKConvol}) and are uniform in $\eps$. This ensures that the validity of Proposition~\ref{p:DControl} can be shown by retracing the steps followed in the proof of Proposition~\ref{p:Control}. 

The only difference lies in the constants we need in order to ``renormalize" two objects (the other constants coming from the $0$-th chaos component of the intermediate objects we introduced in the other proof disappear also in this case because of the presence the discrete spatial derivative). More precisely, consider $X^{\<tree2>,\,\eps}$ and $X^{\<tree21>,\,\eps}$, and let us look at the definition of $C^{\<tree2>,\,\eps}$ and $C^{\<tree21>,\,\eps}$. Similarly to~\eqref{e:Constant1}, after Wick's contraction we have\footnote{In the definition of the constants, for convenience, we will use the full discrete heat kernel, and not the $K^\eps$ introduced in Lemma~\ref{l:DKernelDec} but, since they agree in a neighbourhood of $0$, this does not change anything.}
\begin{equ}
C^{\<tree2>,\,\eps}=\E\left[B_\eps(X^{\<tree1>,\,\eps},X^{\<tree1>,\,\eps})(0)\right]=\eps^2\sum_{s\in\Lambda_{\eps^2}}\eps \sum_{x\in\Lambda_\eps}B_\eps(D_{x,\eps}P^\eps(s,\cdot),D_{x,\eps}P^\eps(s,\cdot))(x)\;,
\end{equ}
where we split the space and time convolution for reasons that will be clear later. We now apply~\eqref{e:DFTProd} to the right hand side of the previous so that it becomes
\begin{equs}
\eps^2\sum_{s\in\Lambda_{\eps^2}^+}\int_{-\frac{1}{2\eps}}^{\frac{1}{2\eps}}\frac{\hat\pi(\eps k)\hat\pi(-\eps k)}{\eps^2}&\left(1+\frac{\hat\nu(\eps k)}{2\bar\nu}\right)^{2s\eps^{-2}}\hat\mu(-\eps k ,\eps k)\dd k\\
&=\eps^{-1}\sum_{n\in\N}\int_{-\frac{1}{2}}^{\frac{1}{2}}\hat\pi( k)\hat\pi(- k)\left(1+\frac{\hat\nu( k)}{2\bar\nu}\right)^{2n}\hat\mu(- k , k)\dd k\;,
\end{equs}
where $\Lambda_{\eps^2}^+\eqdef\Lambda_{\eps^2}\cap[0,+\infty)$ and the equality is obtained via change of variables. Notice that, for $k\neq 0$, we can write
\begin{equ}
\hat\pi( k)\hat\pi(- k)\sum_{n\in\N}\left(1+\frac{\hat\nu( k)}{2\bar\nu}\right)^{2n} = g(k)g(-k)\frac{4\bar \nu^2}{f(k)\big(4\bar\nu+\hat\nu(k)\big)}\;,
\end{equ}
where we have set $g(k)\eqdef \hat\pi( k)/(i k)$ and, as in the proof of Lemma~\ref{lemma:DHeatKernels}, $f(k)\eqdef -\hat\nu( k)/k^2$. But now, thanks to Assumption~\ref{a:nu}, we know that there exists a constant $c>0$ such that, for $k\in[-1/2,1/2]$, $f(k)>c$ hence we can exchange sum and integral, and get
\begin{equ}\label{e:DConstant1}
C^{\<tree2>,\,\eps}=\eps^{-1}\int_{-\frac{1}{2}}^{\frac{1}{2}}g(k)g(-k)\frac{4\bar \nu^2}{f(k)\big(4\bar\nu+\hat\nu(k)\big)}\hat\mu(- k , k)\dd k\;,
\end{equ}
where the last integral is of course finite. Let us now turn to the second constant. Acting as before and similar to~\eqref{e:Constant2}, we have
\begin{equ}
C^{\<tree21>,\,\eps}=\eps^2\sum_{s\in\Lambda_{\eps^2}}\eps \sum_{x\in\Lambda_\eps}B_\eps(B_\eps(1,D_{x,\eps}P^\eps(s,\cdot)),D_{x,\eps}P^\eps\ast_\eps D_{x,\eps}P^\eps(s,\cdot))(x)\;.
\end{equ}
Thanks to~\eqref{e:DFTProd} and~\eqref{e:DFTConv}, the previous coincides with
\begin{equs}
\eps^2\sum_{s\in\Lambda_{\eps^2}^+} s \int_{-\frac{1}{2\eps}}^{\frac{1}{2\eps}}\frac{\hat\pi(-\eps k)^2\hat\pi(\eps k)}{\eps^3} &\left(1+\frac{\hat\nu(\eps k)}{2\bar\nu}\right)^{2s\eps^{-2}} \hat\mu(-\eps k ,0)\hat\mu(-\eps k ,\eps k)\dd k\\
&=\sum_{n\in\N} n\int_{-\frac{1}{2}}^{\frac{1}{2}}\hat\pi(- k)^2 \hat\pi( k) \left(1+\frac{\hat\nu( k)}{2\bar\nu}\right)^{2n} \hat\mu(- k , 0) \hat\mu(- k , k)\dd k\;.
\end{equs}
Now, notice that for $x\in(0,1)$, we have
\begin{equ}
\sum_{n\in\N}n(1-x)^{2n}=\frac{1}{2}(1-x)\sum_{n\in\N}2n(1-x)^{2n-1}=\frac{1}{2}(1-x)\partial_x\sum_{n\in\N}(1-x)^{2n}=\frac{(1-x)^2}{x^2(2-x)^2}\;,
\end{equ}
so that, using the notations introduced above, the integrand can be rewritten as
\begin{equ}
\hat\pi(- k)^2 \hat\pi( k)\sum_{n\in\N} n \left(1+\frac{\hat\nu( k)}{2\bar\nu}\right)^{2n} = \frac{g(k)}{k}g(-k)^2 \frac{4\bar\nu^2(2\bar\nu+\hat\nu(k))^2}{f(k)^2(4\bar\nu+\hat\nu(k))^2}=i\frac{g(-k)}{k}|g(k)|^2 \frac{4\bar\nu^2(2\bar\nu+\hat\nu(k))^2}{f(k)^2(4\bar\nu+\hat\nu(k))^2}\;.
\end{equ}
where the last equality is due to the fact that the complex conjugate of $g(k)$ is $g(-k)$. Now, arguing as before, apart from the first, all the other factors at the right hand side are clearly bounded uniformly in $k\in[-\frac{1}{2},\frac{1}{2}]$. For the first instead it suffices to notice that $g$, by Assumption~\ref{a:pi} is differentiable and such that $g(0)=0$, which in particular implies that $g(k)/k$ is a bounded continuous function. Therefore we conclude that
\begin{equs}\label{e:DConstant2}
C^{\<tree21>,\,\eps} &= i\int_{-\frac{1}{2}}^{\frac{1}{2}}\frac{g(-k)\hat\mu(- k , 0)}{k}\hat\mu(- k , 0) |g(k)|^2 \frac{4\bar\nu^2(2\bar\nu+\hat\nu(k))^2}{f(k)^2(4\bar\nu+\hat\nu(k))^2} \hat\mu(- k , k)\dd k\\
&=-\int_{-\frac{1}{2}}^{\frac{1}{2}}\frac{\Imma(g(-k)\hat\mu(- k , 0))}{k} |g(k)|^2 \frac{4\bar\nu^2(2\bar\nu+\hat\nu(k))^2}{f(k)^2(4\bar\nu+\hat\nu(k))^2} \hat\mu(- k , k)\dd k
\end{equs}
where $\Imma$ denotes the imaginary part and we used parity in order to pass from the first to the second line (see also~\cite[Lemma 10.2]{Reloaded} where a similar computation is carried out). Notice that the constant is indeed finite and independent of $\eps$.
\end{proof}

\section{Convergence of discrete solutions}\label{sec:Conv}

In this section we want to prove the main result of this paper, namely Theorem~\ref{t:Convergence}, which states that, upon choosing the sequence of constants $\BC^\eps$ as in Proposition~\ref{p:DControl}, the sequence of solutions to our family of discrete models converge to the solution of the renormalized SBE. 

\begin{proof}[Proof of Theorem~\ref{t:Convergence}]
Given the stability results obtained in Sections~\ref{sec:Analytic} and~\ref{sec:Stochastic}, in order to prove the statement we will exploit a diagonal argument similar to the one, used in analogous contexts, in~\cite{HM15,HQ15, HS15} and many others. 
To be more specific, let $\psi$ be a smooth, symmetric  compactly supported function on $\R^2$ which integrates to $1$ and, for some $\bar\eps\geq \eps$, define $\xi_{\bar\eps}\eqdef \xi\ast\psi^{\bar\eps}$, where $\psi^{\bar\eps}(t,x)=\bar\eps^{-3}\psi(t/\bar\eps^2,x/\bar\eps)$. Let $u_{\bar\eps}$ be the unique smooth solution to 
\begin{equ}\label{e:SBEfinal}
\partial_t u_{\bar\eps}=\Delta u_{\bar\eps} +\partial_x (u_{\bar\eps})^2 +C\partial_x u_{\bar\eps} +\partial_x\xi_{\bar\eps}\,,\qquad u_{\bar\eps}(0,\cdot)=u_0(\cdot)\;,
\end{equ}
and define the family of controlling processes $\X(\xi_{\bar\eps};\BC_{\bar\eps})\eqdef \X(\xi_{\bar\eps}, C_{\bar\eps}^{\<tree2>},\,C_{\bar\eps}^{\<tree21>})$ as in the proof of Proposition~\ref{p:Control}, where $ C_{\bar\eps}^{\<tree2>}$ is as in~\eqref{e:Constant1} and $C_{\bar\eps}^{\<tree21>}$ is given by \eqref{e:DConstant2}. Let $\X(\xi;\BC)$ be the enhancement of white noise determined in the just mentioned proposition, in which though we subtract to the term $X^{\<tree21>}$ the constant $C$. 
Thanks to Theorem~\ref{t:FixedPoint}, we know that the solution map $\CS$, assigning to any $(u_0,\X)\in\CC^\eta\times\CX$ the solution to~\ref{e:SBEfinal} is jointly locally Lipschitz continuous, which in particular implies that if $T_\infty$ is the explosion time for $u\eqdef \CS(u_0,\X(\xi;\BC))$ then, for all $T<T_\infty$, we have
\begin{equ}
\E\left[\|u;u_{\bar\eps}\|_{\CC^{\alpha}_{\eta,T}}^p\right]\lesssim \E\left[\|\X(\xi;\BC);\X(\xi_{\bar\eps};\BC_{\bar\eps})\|_{\CX}^p\right]
\end{equ}
and, by Proposition~\ref{p:Control}, for all $p\geq1$, the right hand side converges to $0$ uniformly in $\eps$. 
In order to discretize the noise $\xi_{\bar\eps}$, for $\bar\eps\geq\eps$, we set 
\begin{equ}
\xi_{\bar\eps}^\eps(z)\eqdef\psi_{\bar\eps}\ast_\eps\xi^\eps(z)\;,
\end{equ}
for $z\in\Lambda_{\eps^2,T}\times\T_\eps$, where we recall that $\xi^\eps(z)\eqdef \eps^{-3} \langle \xi, \1_{|\eps^{-\s}(\cdot-z)| \leq 1/2}\rangle$. Let $u_{\bar\eps}^\eps$ and $u^\eps$ be the unique solutions to~\eqref{e:DiscreteSBE} both with initial condition $u_0^\eps$ but with respect to the noises $\xi_{\bar\eps}^\eps$ and $\xi^\eps$, and with the respective renormalization constants $C^\eps_{\bar \eps}$ and $C^\eps$. 
Let $\X^\eps(\xi^\eps_{\bar\eps}, \BC^\eps_{\bar\eps})\eqdef\X^\eps(\xi_{\bar\eps}^\eps, C_{\bar\eps}^{\<tree2>,\,\eps},\,C_{\bar\eps}^{\<tree21>,\,\eps})$ and $\X^\eps(\xi^\eps, \BC^\eps)\eqdef\X^\eps(\xi^\eps, C^{\<tree2>,\,\eps},\,C^{\<tree21>,\,\eps})$ be the discrete enhancement of $\xi^\eps_{\bar\eps}$ and $\xi^\eps$ respectively, where, in each case, the constants are as in~\eqref{e:DConstant1} and~\eqref{e:DConstant2} respectively. 
Arguing as before but applying Theorem~\ref{t:DFixedPoint} and the (uniform) local Lipschitz continuity of the discrete solution map $\CS^\eps$, we have that, for all $T< T^d_\infty\wedge T_\infty$, 
\begin{equ}
\E\left[\Big(\|u^\eps;u_{\bar\eps}^\eps\|_{\CC^{\alpha}_{\eta,T}}^{(\eps)}\Big)^p\right]\lesssim \E\left[\Big(\|\X^\eps(\xi^\eps;\BC^\eps);\X^\eps(\xi_{\bar\eps}^\eps;\BC_{\bar\eps}^\eps)\|_{\CX}^{(\eps)}\Big)^p\right]
\end{equ}
where $T_\infty^d$ is the stopping time determined in Theorem~\ref{t:DFixedPoint} and which is independent of $\eps$, and, by Proposition~\ref{p:DControl}, for all $p\geq1$, the latter converges to $0$ uniformly in $\eps$. Hence, we obtain, for all $T< T^d_\infty\wedge T_\infty$, 
\begin{equ}
\E\left[\Big(\|u;u^\eps\|_{\CC^{\alpha}_{\eta,T}}^{(\eps)}\Big)^p\right]\lesssim \E\left[\|u;u_{\bar\eps}\|_{\CC^{\alpha}_{\eta,T}}^p\right] + \E\left[\Big(\|u_{\bar\eps};u_{\bar\eps}^\eps\|_{\CC^{\alpha}_{\eta,T}}^{(\eps)}\Big)^p\right]+\E\left[\Big(\|u^\eps;u_{\bar\eps}^\eps\|_{\CC^{\alpha}_{\eta,T}}^{(\eps)}\Big)^p\right]\;,
\end{equ}
and taking the limit $\eps\to0$ and then $\bar\eps\to 0$, the above discussion guarantees that the first and the last summand at the right hand side converge to $0$.  For the second summand instead, it suffices to notice that, for $\bar\eps$ fixed, $u_{\bar\eps}$ is the solution of a parabolic semilinear PDE driven by the smooth noise $\xi_{\bar\eps}$ (hence, smooth itself) therefore the convergence of its discretization is ensured by the convergence of the discrete noise (see~\cite{BS08} and references therein). But now, by our definitions we have that
\begin{equ}
\xi_{\bar\eps}=\psi_{\bar\eps}\ast\xi\qquad\text{and}\qquad \xi_{\bar\eps}^\eps=\psi_{\bar\eps}^\eps\ast\xi\;,
\end{equ}
where we set $\psi_{\bar\eps}^\eps(z)\eqdef \eps^{-3} \langle \psi_{\bar\eps}, \1_{|\cdot-z|_\s\leq \eps/2}\rangle$. It is easy to see that
\[
|\bar D^k(\psi_{\bar\eps}(z)-\psi_{\bar\eps}^\eps(z))|\lesssim \eps\,\bar\eps^{-|\s|-|k|_\s-1}\;,
\]
for all $k\in\N^2$ and uniformly in $z\in\Lambda_\eps^\s$, which implies the required convergence. 
\end{proof}

\appendix

\section{Elements of wavelet analysis and its discrete counterpart}\label{Appen:Wavelet}

In this appendix we want to present the basic elements of wavelet analysis we need in the definition of the stochastic terms appearing in the description of the solution to our equation and its discrete version. We make no claim of exhaustiveness and the interested reader is addressed to the relevant literature, namely~\cite{Dau},~\cite{Mey92}. In the discrete case, we recall the construction in~\cite{HM15} and explicitly state and prove the results necessary in our context. 
\newline

A multiresolution analysis of $L^2(\R)$ (see~\cite[Def. 1, Sec. 2]{Mey92}) is a sequence $\{V_n\}_{n\in\Z}$ of closed subsets of $L^2(\R)$ such that 
\begin{itemize}[noitemsep,,]
\item $V_n\subset V_{n+1}$ for all $n\in\Z$, the intersection of all $V_n$'s is $\{0\}$ and their union dense in $L^2(\R)$
\item for all $n\in\Z$, $f\in V_0$ if and only if $f(2^n\cdot)\in V_n$
\item there exists a function $\varphi\in L^2(\R)$ (called \textit{father wavelet}) such that $\{\varphi(\cdot-k)\}_{k\in\Z}$ is an orthonormal basis of $V_0$. 
\end{itemize}

We now let $\Lambda_n\eqdef 2^{-n}\Z$ and define $\varphi_x^n=2^{n/2}\varphi(2^n(\cdot-x))$, the rescaled version of the father wavelet that preserves its $L^2$-norm, then it is immediate to see that, for every $n\in\Z$, the family $\{\varphi_x^n\}_{x\in\Lambda_n}$ forms an orthonormal basis of $V_n$. 

It is even possible to understand whether a given function $\varphi$ can be used as father wavelet for a multiresolution analysis. Indeed, if $\varphi\in L^2(\R)$ is such that 
\begin{enumerate}[noitemsep,,]
\item $\langle \varphi\,,\,\varphi_k\rangle=0$ for all $k\in \Z$ ($\varphi_k(\cdot)\eqdef \varphi(\cdot-k)$),
\item there exists a sequence $\{a_k\}_{k\in\Z}\in \ell^2(\Z)$ for which $\varphi(\cdot)=\sum_k a_k \varphi_k(2\cdot)$
\item for some $r\geq 0$, $\varphi$ is a compactly supported $\CC^r$ function integrating to 1,
\end{enumerate}
then defining  $V_n$, for $n\in\Z$, as the closure of span$\{\varphi_x^n\}_{x\in\Lambda_n}$, we have that $\{V_n\}_{n\in\Z}$ forms a multiresolution analysis of $L^2(\R)$. 
In the following proposition we collect (without proof) other important results concerning wavelets. 

\begin{proposition}\label{prop:CWave}
For every $r\geq 0$ there exists a function $\varphi\in L^2(\R)$ such that properties 1,2 and 3 stated above hold and, in 2, the number of non-zero $a_k$'s is finite. Moreover, take $n\in\Z$ and let $W_n$ be the orthogonal complement of $V_n$ in $V_{n+1}$. Then there exists a finite set of coefficients $\{b_k\}_{k\in\CK}$, $\CK\subset \Z$, such that, upon setting $\psi(x)=\sum_{k\in\CK} b_k \varphi(2x-k)$, the family $\{\psi_x^n\}_{x\in\Lambda_n}$ forms an orthonormal basis of $W_n$, where $\psi_x^n=2^{n/2}\psi(2^n(\cdot-x))$. As a consequence, for any $n\in\Z$, 
\begin{equation}\label{def:CWaveBasis}
\{\varphi_x^n\,:\,x\in\Lambda_n\}\cup\{\psi_x^m\,:\,m\geq n,\,x\in\Lambda_m\}
\end{equation}
is an orthonormal basis of the whole $L^2(\R)$. At last, $\psi$ is such that, for any $k\in\N$, $k\leq r$ 
\begin{equation}\label{eq:WavePol}
\int_\R \psi(x) x^k\dd x=0\;.
\end{equation}
We will refer to the function $\psi$ introduced above as ``mother wavelet".
\end{proposition}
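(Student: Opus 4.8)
The statement collects several classical facts about compactly supported orthonormal wavelets; indeed the paper invokes it ``without proof'', so the natural ``proof'' is to recall the standard constructions of Daubechies and Meyer and to point out where the genuine work lies. The plan is therefore to reduce the four assertions to (i) Daubechies' existence theorem, (ii) the two-scale relation bookkeeping, (iii) the telescoping of the multiresolution spaces, and (iv) the Fix--Strang condition.

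First I would establish existence of a smooth father wavelet with a finite refinement mask (the first sentence of the proposition together with properties 1--3). The plan is to start from a trigonometric polynomial $m_0(\xi)=\sum_k a_k e^{-2\pi i k\xi}$ with only finitely many non-zero $a_k$, satisfying $m_0(0)=1$, the quadrature-mirror relation $|m_0(\xi)|^2+|m_0(\xi+\tfrac12)|^2=1$, and having a zero of order at least $r+1$ at $\xi=\tfrac12$, and to define $\varphi$ via the infinite product $\hat\varphi(\xi)=\prod_{j\geq1}m_0(2^{-j}\xi)$. Finiteness of the mask forces $\varphi$ to be compactly supported; the identity $\hat\varphi(\xi)=m_0(\xi/2)\hat\varphi(\xi/2)$ gives property 2 with exactly these (finitely many) $a_k$; and for the degree of $m_0$ taken large enough relative to $r$ the decay of $\hat\varphi$ at infinity yields $\varphi\in\CC^r$. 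Property 1 — orthonormality of the integer translates — comes from the quadrature-mirror relation together with Cohen's non-vanishing criterion for $m_0$. This is the only genuinely analytic ingredient, and the hard part of the whole statement, so I would simply cite \cite{Dau, Mey92} for it; everything after is algebra with the two-scale relations.

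Next I would construct the mother wavelet. By the scaling axiom it suffices to treat $n=0$: set $W_0$ equal to the orthogonal complement of $V_0$ in $V_1$ and define $\psi(x)=\sum_k b_k\,\varphi(2x-k)$ with $b_k\eqdef(-1)^k\overline{a_{1-k}}$ (finitely many non-zero, so the index set $\CK$ is finite), equivalently $\hat\psi(\xi)=e^{-\pi i\xi}\,\overline{m_0(\xi/2+\tfrac12)}\,\hat\varphi(\xi/2)$. A direct computation with the quadrature-mirror relation shows that $\{\psi(\cdot-k)\}_{k\in\Z}$ is an orthonormal family contained in $W_0$ which, together with $\{\varphi(\cdot-k)\}_{k\in\Z}$, spans $V_1$; hence it is an orthonormal basis of $W_0$, and rescaling gives the basis $\{\psi_x^n\}_{x\in\Lambda_n}$ of $W_n$. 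For the orthonormal basis of $L^2(\R)$ in \eqref{def:CWaveBasis} I would use the multiresolution axioms $\bigcap_n V_n=\{0\}$ and $\overline{\bigcup_n V_n}=L^2(\R)$, which combined with $V_{m+1}=V_m\oplus W_m$ give, for every fixed $n$, the orthogonal decomposition $L^2(\R)=V_n\oplus\bigoplus_{m\geq n}W_m$; substituting the bases just produced yields the claim.

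Finally, for the vanishing moments \eqref{eq:WavePol}: since $m_0$ has a zero of order $r+1$ at $\xi=\tfrac12$, the factor $\overline{m_0(\xi/2+\tfrac12)}$ in $\hat\psi$ vanishes to order $r+1$ at $\xi=0$, while $e^{-\pi i\xi}$ and $\hat\varphi(\xi/2)$ are non-zero there; hence $(\partial^k\hat\psi)(0)=0$ for all $k\leq r$, and since $\psi$ is compactly supported this is, up to a non-zero constant, exactly $\int_\R\psi(x)\,x^k\,dx$. (Equivalently one may note that the zero of $m_0$ at $\tfrac12$ is precisely the Fix--Strang condition, so polynomials of degree $\leq r$ are locally reproduced by $V_0$ and are therefore orthogonal to $W_0$.) In summary, the main obstacle is Step 1 — existence of arbitrarily smooth compactly supported father wavelets with finite mask and orthonormal translates — which I would take verbatim from \cite{Dau, Mey92}, the remaining steps being routine manipulations of the refinement relations.
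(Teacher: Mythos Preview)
Your proposal is correct and entirely in the spirit of the paper: the paper explicitly presents this proposition ``without proof'' and simply refers the reader to \cite{Dau, Mey92}, so there is no original argument to compare against. Your sketch goes beyond what the paper offers by outlining the Daubechies construction (finite refinement mask $m_0$, infinite product for $\hat\varphi$, quadrature-mirror relation, the standard choice $b_k=(-1)^k\overline{a_{1-k}}$ for the mother wavelet, and the Fix--Strang condition for vanishing moments), all of which is the canonical route and exactly what those references contain.
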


Even if the proposition above is stated for wavelets on $\R$, it can be easily generalized to $\R^d$ with general scaling (for us it will be the parabolic scaling in which time counts twice), following the procedure outlined in Section 3.1 of~\cite{Hai14} and that we here briefly recall.  
Let $\s=(s_1,\cdots,s_d)\in\N^d$ be a scaling and set $\Lambda_n^\s\eqdef \{ 2^{-s_in}k_i e_i\,:\,i=1,...,d\, k_i\in\Z\}$, where $e_1,\,...,\,e_d$ is the canonical basis of $\R^d$. Then it suffices to define $\varphi_x^{n,\s}$, $x=(x_i)_{i=1,...,d}\in\Lambda_n^\s$, as the product of $\varphi_{x_i}^{s_i n}$, and it can be proved that there exists a finite collection, $\Psi$, of functions $\psi$, satisfying the properties of the mother wavelet, such that $\{\psi_x^{n,\s}\}_{x\in\Lambda_n^\s}$ generates $W_n$, where $\psi_x^{n,\s}(\cdot)=2^{n|\s|/2}\psi(2^{n\s}(\cdot-x))$. 

Moreover, it is possible to characterize the spaces $\CC^{\alpha,\s}$ using the family of wavelets we just introduced. Let $\alpha\in\R$ and $r>|\alpha|$, then, Proposition 3.20 in~\cite{Hai14} states that $\xi\in\CC^{\alpha,\s}$ if and only if for every compact set $\FK$ in $\R^d$, the bounds
\[
\langle \xi,\varphi_y^{n,\s}\rangle\lesssim 2^{-\frac{n|\s|}{2}-n\alpha}\qquad \text{and}\qquad \langle \xi,\psi_x^{m,\s}\rangle\lesssim 2^{-\frac{m|\s|}{2}-m\alpha}
\]
hold for $n\in\N$, $m\geq n$ and $y\in\Lambda_n^\s\cap\FK$, $x\in\Lambda_m^\s\cap\FK$. 
\newline

We now would like to define a set of functions that have similar properties to the ones of mother and father wavelet but can be used to give a characterization of H\"older functions/distributions on the grid $\Lambda_\eps^\s$, for $\eps\eqdef 2^{-N}$ and $N\in\N$, which, in a sense, is uniform in $\eps$. The construction carried out in Section 4.1.2 of~\cite{HM15} serves, among the various, also this purpose. 

Let $r\geq 0$ and $\varphi\in\CC^{r,\s}(\R^d)$ be the father wavelet of a multiresolution analysis in $L^2(\R^d)$ as given in Proposition~\eqref{prop:CWave} and the construction just below, and $\psi\in\Psi$ be the finite family of mother wavelet associated to it. 
For $n\in\N$, $n\leq N$ define the functions
\begin{equation}\label{def:DWave}
\varphi_x^{N,n,\s}(\cdot)\eqdef 2^{\frac{N|\s|}{2}}\langle \varphi_\cdot^{N,\s},\varphi_x^{n,\s}\rangle\qquad\text{and}\qquad \psi_x^{N,n,\s}(\cdot)\eqdef2^{\frac{N|\s|}{2}}\langle \varphi_\cdot^{N,\s},\psi_x^{n,\s}\rangle
\end{equation}
for $\psi\in\Psi$, $x\in\Lambda_n^\s$ and where $\langle\cdot,\,\cdot\rangle$ is the usual $L^2(\R^d)$-pairing. By~\eqref{def:DWave}, it is easy to see that the functions $\varphi^{N,n,\s}$ and $\psi^{N,n,\s}$ inherit many of the properties of $\varphi^{n,\s}$ and $\psi^{n,\s}$, for example, as functions on $\R^d$, they are supported in a ball of radius $\CO(2^{-n|\s|})$, belong to $\CC^{r,\s}$, and possess the same scaling features. Moreover, they also play essentially the same role in the description of discrete functions, in the sense that, for a function $f$ on $\Lambda_\eps^\s$, $\varphi^{N,n,\s}$ allows to look at it at scales $2^{-n}$ while the $\psi^{N,m,\s}$'s improve our knowledge of $f$ at finer scales.
The following proposition makes this idea more precise and provides the discrete counterpart of Proposition 3.20 in~\cite{Hai14}. 
Before stating it, we recall that we say that a family of maps $f^\eps$, $f^\eps$ on $\Lambda_\eps^\s$, belongs to $\CC^{\alpha,\s,\eps}$ if $\|f^\eps\|_{\CC^{\alpha,\s,\eps}}^{(\eps)}\leq M$ and $M$ is independent of $\eps$. 

\begin{proposition}\label{prop:DCalpha}
Let $\alpha\in\R$, $\eps=2^{-N}$ and $\xi$ be a function on the grid $\Lambda_\eps^\s$. Let $r\geq-|\alpha|$, $\varphi,\in\CC^{r,\s}$ be a father wavelet and $\psi\in\Psi$ be the finite family of mother wavelets associated to it. Then, $\xi^\eps\in\CC^{\alpha,\s,\eps}$ if and only if for every compact set $\FK$ in $\R^d$, the bounds
\begin{equation}\label{b:DCalpha}
\langle \xi^\eps,\varphi_y^{N,0,\s}\rangle_\eps\lesssim 1\qquad \text{and}\qquad \langle \xi^\eps,\psi_x^{N,n,\s}\rangle_\eps\lesssim 2^{-\frac{n|\s|}{2}-n\alpha}
\end{equation}
hold for $n\in\N$, $0\leq n\leq N$ and all $y\in\Lambda_0^\s\cap\FK$, $x\in\Lambda_n^\s\cap\FK$, uniformly in $\eps$.
\end{proposition}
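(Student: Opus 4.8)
The plan is to reduce the statement to the continuous wavelet characterisation of $\CC^{\alpha,\s}$ — namely \cite[Prop.~3.20]{Hai14} for $\alpha<0$ and \cite[Thm.~6.4.5]{Mey92} (with the parabolic scaling) for $\alpha>0$, $\alpha\notin\N$ — by means of a single extension operator. Given $\eps=2^{-N}$ and a function $f^\eps$ on $\Lambda_\eps^\s$, I would set $\iota_\eps f^\eps\eqdef 2^{-N|\s|/2}\sum_{w\in\Lambda_\eps^\s}f^\eps(w)\,\varphi_w^{N,\s}\in V_N$, which, since $\{\varphi_w^{N,\s}\}$ is an orthonormal basis of $V_N$, is an isometry from $(\R^{\Lambda_\eps^\s},\langle\cdot,\cdot\rangle_\eps)$ onto $V_N$. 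Using only the definition \eqref{def:DWave} and the fact that $\langle\cdot,\cdot\rangle_\eps=2^{-N|\s|}\sum_w$, one checks at once the identities $\langle f^\eps,\varphi_x^{N,n,\s}\rangle_\eps=\langle\iota_\eps f^\eps,\varphi_x^{n,\s}\rangle$ and $\langle f^\eps,\psi_x^{N,n,\s}\rangle_\eps=\langle\iota_\eps f^\eps,\psi_x^{n,\s}\rangle$ for all $0\le n\le N$ and all $\psi\in\Psi$, while all wavelet coefficients of $\iota_\eps f^\eps$ at levels $>N$ vanish because $\iota_\eps f^\eps\in V_N$. Thus the bounds \eqref{b:DCalpha} say \emph{exactly} that the wavelet coefficients of $\iota_\eps f^\eps$ obey the bounds characterising $\CC^{\alpha,\s}$, with the sole difference that the scale index is truncated at level $N=\log_2\eps^{-1}$ — which is precisely the constraint $\lambda\in[\eps,1]$ built into the discrete norm \eqref{e:DHolderNegative}. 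The whole content is therefore to make the passage back and forth between $f^\eps$, the true wavelets $\varphi_x^{n,\s},\psi_x^{n,\s}$, and $\iota_\eps f^\eps$ quantitative and uniform in $\eps$.

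For the direction ``$f^\eps\in\CC^{\alpha,\s,\eps}\Rightarrow$ the bounds'', I would use that, by \eqref{def:DWave}, the grid wavelets $\varphi_x^{N,n,\s},\psi_x^{N,n,\s}$ are obtained from $\varphi_x^{n,\s},\psi_x^{n,\s}$ by convolution against the fixed mass-one, compactly supported mollifier $\eps^{-|\s|}\varphi(-\eps^{-\s}\,\cdot)$ of width $\eps\le 2^{-n}$; as recalled after \eqref{def:DWave} (cf.\ \cite[Sec.~4.1.2]{HM15}) they are then supported in a ball of radius $\CO(2^{-n})$, are of class $\CC^r$, inherit the vanishing moments of $\psi$, and $\psi_x^{N,n,\s}=2^{-n|\s|/2}\zeta_x^{2^{-n}}$ with $\|\zeta\|_{\CC^r}\lesssim1$ uniformly in $\eps$ (and likewise for $\varphi_x^{N,0,\s}$). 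For $\alpha<0$ this immediately gives $|\langle f^\eps,\psi_x^{N,n,\s}\rangle_\eps|\lesssim 2^{-n|\s|/2}(2^{-n})^\alpha\,\|f^\eps\|_{\CC^{\alpha,\s}}^{(\eps)}=2^{-n|\s|/2-n\alpha}\,\|f^\eps\|_{\CC^{\alpha,\s}}^{(\eps)}$ by testing $f^\eps$ at the admissible scale $\lambda=2^{-n}\in[\eps,1]$, and similarly the $\varphi$-bound. For $\alpha\in(0,1)$ I would first observe that $\iota_\eps f^\eps$ itself satisfies the continuous bound $\|\iota_\eps f^\eps\|_{\CC^{\alpha,\s}(\FK)}\lesssim\|f^\eps\|_{\CC^{\alpha,\s}}^{(\eps)}$ uniformly in $\eps$: writing $\iota_\eps f^\eps(y)=\sum_w f^\eps(w)\varphi(\eps^{-\s}(y-w))$ and using that the family $\{\varphi(\eps^{-\s}(\,\cdot-w))\}_w$ is a partition of unity turns increments of $\iota_\eps f^\eps$ into increments of $f^\eps$; the bounds \eqref{b:DCalpha} then follow from the continuous characterisation applied to $\iota_\eps f^\eps$ together with the identities of the first paragraph.

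For the converse, assuming the bounds \eqref{b:DCalpha}, I would expand $\iota_\eps f^\eps\in V_N$ in the finite wavelet basis of $V_N$ and pair it against $\iota_\eps(\phi_z^\lambda|_{\Lambda_\eps^\s})\in V_N$, which, using the identities of the first paragraph, gives for $\alpha<0$, $\phi\in\CB_0^r$, $z\in\Lambda_\eps^\s$ and $\lambda\in[\eps,1]$
\begin{equ}
\langle f^\eps,\phi_z^\lambda\rangle_\eps=\sum_{y\in\Lambda_0^\s}\langle\iota_\eps f^\eps,\varphi_y^{0,\s}\rangle\,\langle\phi_z^\lambda,\varphi_y^{N,0,\s}\rangle_\eps+\sum_{n=0}^{N-1}\sum_{\psi\in\Psi}\sum_{x\in\Lambda_n^\s}\langle\iota_\eps f^\eps,\psi_x^{n,\s}\rangle\,\langle\phi_z^\lambda,\psi_x^{N,n,\s}\rangle_\eps\;.
\end{equ}
Into this I would insert the assumed bounds on the first factors and the standard estimates on the discrete pairing of a scale-$\lambda$ test function with the scale-$2^{-n}$ mollified wavelet $\psi_x^{N,n,\s}$: at each level only $\CO(1)$ values of $x$ contribute, with a gain of powers of $2^{-n}/\lambda$ from the smoothness of $\phi_z^\lambda$ when $2^{-n}\gtrsim\lambda$ and a gain of powers of $\lambda/2^{-n}$ from the vanishing moments of $\psi$ when $2^{-n}\lesssim\lambda$, exactly as in the proof of \cite[Prop.~3.20]{Hai14} (the mollification width $\eps\le\min(\lambda,2^{-n})$ does not affect these estimates, uniformly in $\eps$). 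Summing the resulting geometric series yields $|\langle f^\eps,\phi_z^\lambda\rangle_\eps|\lesssim\lambda^\alpha$; crucially the fine-scale sum now stops at $n=N$, which can only improve the bound. For $\alpha\in(0,1)$ I would instead reconstruct $\iota_\eps f^\eps$ from its uniformly bounded coefficients, obtaining $\|\iota_\eps f^\eps\|_{\CC^{\alpha,\s}(\FK)}\lesssim1$ uniformly in $N$, and then transfer this Hölder bound to $f^\eps$ via $f^\eps(w)=\int\iota_\eps f^\eps(w+\eps^\s u)\varphi(u)\,\dd u$ and $\int|\varphi|<\infty$.

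The step I expect to be the main obstacle is securing all constants \emph{uniformly in $\eps$}: concretely (i) that mollification at width $\eps\le 2^{-n}$ leaves the scale-$2^{-n}$ test-function, support and vanishing-moment estimates on $\varphi_x^{N,n,\s},\psi_x^{N,n,\s}$ unchanged up to $\eps$-independent constants — which is the substance of the construction in \cite[Sec.~4.1.2]{HM15} recalled before Proposition~\ref{prop:DCalpha} — and (ii) that the truncated sums $\sum_{n=0}^{N}$ occurring in both directions are bounded independently of $N$, which holds because the relevant series are geometric with ratio bounded away from $1$. Granting these, the remaining estimates are a line-by-line transcription of the continuous arguments, with integrals replaced by the discrete pairings $\langle\cdot,\cdot\rangle_\eps$.
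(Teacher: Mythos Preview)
Your approach is correct and, for $\alpha<0$, is the same computation as the paper's, repackaged through the extension operator $\iota_\eps$. The paper works directly on the grid: it uses $\varphi_x^{N,N,\s}(y)=2^{N|\s|/2}\delta_{x,y}$ together with the decomposition $V_N=V_0\oplus\bigoplus_{n<N}W_n$ to expand $\xi^\eps$ in the discrete wavelets $\varphi_z^{N,0,\s},\psi_z^{N,n,\s}$ (this is exactly your $\iota_\eps$-expansion restricted back to the grid), and then bounds $\langle\psi_z^{N,n,\s},\eta_x^\lambda\rangle_\eps$ by Taylor-expanding $\eta$ and invoking the vanishing moments of $\psi$ --- the estimate you delegate to \cite[Prop.~3.20]{Hai14}. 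Your framing via $\iota_\eps$ is cleaner in that it makes the reduction to the continuous characterisation explicit, and it lets you handle $\alpha\in(0,1)$ naturally (the paper only writes out $\alpha<0$). One minor slip: in the converse you say ``at each level only $\CO(1)$ values of $x$ contribute'', but when $2^{-n}\le\lambda$ there are $\CO((\lambda 2^n)^{|\s|})$ overlapping wavelets; the proof you cite accounts for this, so it does not affect your argument.
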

\begin{proof}
We are going to prove the result for $\alpha<0$ and the proof is very similar to the one of Proposition 3.20 in~\cite{Hai14}. 
Of course, if $\xi^\eps\in\CC^{\alpha,\s,\eps}$ according to~\eqref{e:DParabolicHolderNegative}, then the bounds above are satisfied by definition (recall that the scaling of mother and father wavelet preserve the $L^2$-norm, not the $L^1$). Assume then that~\eqref{b:DCalpha} hold. 
Notice that, by definition~\eqref{def:DWave} and the orthonormality of the elements in~\eqref{def:CWaveBasis}, we have
\begin{equation}\label{eq:BWave}
\varphi_x^{N,N,\s}(y)=2^{\frac{N|\s|}{2}}\langle \varphi_y^{N,\s},\varphi_x^{N,\s}\rangle_\eps=2^{\frac{N|\s|}{2}}\delta_{x,y}\,,\qquad x,\,y\in\Lambda_\eps^{\s}\;.
\end{equation}
Moreover, Proposition~\ref{prop:CWave}, guarantees that $V_N=V_0\oplus\bigoplus_{n=0}^{N-1} W_n$ which in particular implies
\[
\varphi_x^{N,\s}=2^{-\frac{N|\s|}{2}}\sum_{z\in\Lambda_0^\s}  \varphi_z^{N,0,\s}(x)\varphi_z^{0,\s}+2^{-\frac{N|\s|}{2}}\sum_{\psi,n,z}\psi_z^{N,n,\s}(x)\psi_w^{n,\s}\;,
\]
where in the second sum $\psi, \,n$ and $z$ belong respectively to $\Psi$, $\{0,...,N-1\}$ and $\Lambda_n^\s$. Hence, since $\xi^\eps(\cdot)=2^{-\frac{N|\s|}{2}}\langle \xi^\eps,\varphi_\cdot^{N,N,\s}\rangle_\eps$, we can rewrite $\xi^\eps$ as
\[
\xi^\eps(\cdot) =2^{-\frac{N|\s|}{2}}\sum_{z\in\Lambda_0^\s} \langle \xi^\eps,\varphi_z^{N,0,\s}\rangle_\eps \varphi_z^{N,0,\s}(\cdot)+2^{-\frac{N|\s|}{2}}\sum_{\psi,n,z}\langle \xi^\eps,\psi_z^{N,n,\s}\rangle_\eps\psi_z^{N,n,\s}(\cdot)\;.
\]
Now, let $\eta\in\CB_0^r(\R^d)$, $\lambda\in[\eps,1]$, $x\in\Lambda_\eps^\s$ and consider $\langle \xi^\eps\,,\eta_x^\lambda\rangle_\eps$, whose expression can be immediately deduced by the one for $\xi^\eps$ given above. We will bound each of the two summands separately, beginning with the second. 
Split the sum in two and consider first the one on those $n$ such that $2^{-n|\s|}\leq \lambda$. Notice that 
\begin{equ}
\langle\psi_z^{N,n,\s},\eta_x^\lambda\rangle_\eps= 2^{\frac{N|\s|}{2}} \eps^{|\s|} \sum_{y\in\Lambda_\eps^\s} \eta^\lambda_x(y)\int_{\R^d} \varphi_y^{N,\s}(w)\phi_z^{n,\s}(w)\dd w=\int \varphi^{0,\s}(w)\lambda^{-1} \eps^{|\s|}\sum_{y\in\Lambda_\eps^\s}\eta(y/\lambda)\psi^{n,\s}_z(2^{-N} w+x-y)\dd w
\end{equ}
and upon replacing $\eta$ by its Taylor's expansion around $0$ we can rewrite the inner sum as
\[
\lambda^{-1}\sum_{|k|_\s=0}^{r-1}\frac{D^k \eta(0)}{k!}\sum_{y\in\Lambda_\eps^\s}(y/\lambda)^k\psi^{n,\s}_z(2^{-N} w+x-y)+\lambda^{-1-r}\sum_{y\in\Lambda_\eps^\s}\tilde R(\eta) y^r\psi^{n,\s}_z(2^{-N} w+x-y)\;,
\]
where we indicated by $\tilde R(\eta)$ the remainder. Now, by properties of the mother wavelet, we now that in the $\eps$-limit the only summand that does not vanish is the last, which can be immediately seen to be bounded by $\lambda^{-1-r}2^{-\frac{n|\s|}{2}}2^{-nr}$. Since the $L^1$ norm of $\varphi^{0,\s}$ is bounded, we obtain, for $2^{-n|\s|}\leq \lambda$
\[
\langle\psi_z^{N,n,\s},\eta_x^\lambda\rangle_\eps\lesssim  \lambda^{-1-r}2^{-\frac{n|\s|}{2}-nr}
\]
uniformly in $\eps$. Exploiting a similar, but simpler, strategy one can show that, for $\lambda\leq 2^{-n|\s|}$ one has
\[
\langle\psi_z^{N,n,\s},\eta_x^\lambda\rangle_\eps\lesssim 2^{\frac{n|\s|}{2}}\,,\qquad\text{and}\qquad \langle\varphi^{N,0,\s},\eta_x^\lambda\rangle_\eps\lesssim 2^{\frac{n|\s|}{2}}\;.
\]
Thanks to the last three bounds the conclusion of the proof is straightforward. 
\end{proof}

\section{Basic results in discrete Fourier analysis}

We hereby introduce the basic Fourier analysis tools we exploited in the proof of Proposition~\ref{p:DControl}. Let $f$ be a function in $\ell^1(\Lambda_\eps)$, then we define its (discrete) Fourier transform as
\begin{equ}[e:DFT]
\CF^\eps f(k)\eqdef \eps \sum_{x\in\Lambda_\eps} f(x) e^{-2\pi i  kx}\,,\qquad\text{so that}\qquad f(x)=\int_{-\frac{1}{2\eps}}^{\frac{1}{2\eps}}\CF^\eps f(k)e^{2\pi i  kx}\dd k\;,
\end{equ}
where the second equality holds for all $x\in\Lambda_\eps$. Clearly, if $f$ and $g$ are two functions in $\ell^1(\Lambda_\eps)$, then one has 
\begin{equ}\label{e:DFTConv}
\CF^\eps(f\ast_\eps g)(k)=\CF^\eps f(k)\CF^\eps g(k)\;,
\end{equ}
where $f\ast_\eps g(x)=\eps\sum_{y\in\Lambda_\eps}f(x-y)g(y)$ is the usual discrete convolution. The following lemma, whose proof is straightforward, is a variant of  Parseval's identity for the twisted product $B_\eps$. 
\begin{lemma}\label{l:DFT}
Let $f,\,g\in\ell^2(\Lambda_\eps)$, $B_\eps$ the operator defined in~\eqref{e:Dproduct} for a measure $\mu$ satisfying Assumption~\ref{a:MeasureMu}. Then
\begin{equ}[e:DFTProd]
\eps\sum_{x\in\Lambda_\eps} B_\eps(f,g)(x)=\int_{-\frac{1}{2\eps}}^{\frac{1}{2\eps}}\CF^\eps f(k) \CF^\eps g(-k)\hat \mu(-\eps k, \eps k)\dd k\;,
\end{equ}
where $\hat \mu$ has been defined in Assumption~\ref{a:MeasureMu}. 
\end{lemma}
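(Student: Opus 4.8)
The identity \eqref{e:DFTProd} is a discrete Parseval-type computation, and the only structural input is that $\mu$ is atomic and supported on $\Z^2$; I expect no genuine obstacle, and the plan below merely organises the bookkeeping.

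First I would unwind the definition of the twisted product. By Assumption~\ref{a:MeasureMu} we may write $\mu = \sum_{(j,\ell)}\mu_{j,\ell}\,\delta_{(j,\ell)}$ as a finite sum over $(j,\ell)\in(\Z\cap B(0,R_\mu))^2$, so the rescaled measure is $\mu_\eps = \sum_{(j,\ell)}\mu_{j,\ell}\,\delta_{(\eps j,\eps\ell)}$, and \eqref{e:Dproduct} becomes
\[
B_\eps(f,g)(x) = \sum_{j,\ell}\mu_{j,\ell}\, f(x+\eps j)\, g(x+\eps\ell)\,,\qquad x\in\Lambda_\eps\,.
\]
Since this is a finite linear combination, $\eps\sum_{x\in\Lambda_\eps} B_\eps(f,g)(x) = \sum_{j,\ell}\mu_{j,\ell}\,\eps\sum_{x\in\Lambda_\eps} f(x+\eps j)\,g(x+\eps\ell)$, each inner sum being absolutely convergent by Cauchy--Schwarz since $f,g\in\ell^2(\Lambda_\eps)$.

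Next, for each fixed pair $(j,\ell)$ I would use translation invariance of the counting measure on $\Lambda_\eps$, namely $\eps\sum_x f(x+\eps j)g(x+\eps\ell) = \eps\sum_x f(x)\,g(x+\eps(\ell-j))$, together with the bilinear Parseval identity $\eps\sum_{x\in\Lambda_\eps} f(x)h(x) = \int_{-1/(2\eps)}^{1/(2\eps)} \CF^\eps f(k)\,\CF^\eps h(-k)\,\dd k$ for $h\in\ell^2(\Lambda_\eps)$, which follows at once from the inversion formula in \eqref{e:DFT} upon exchanging the (absolutely convergent) sum and integral. Taking $h(x)=g(x+\eps(\ell-j))$ gives $\CF^\eps h(-k) = e^{-2\pi i k\eps(\ell-j)}\CF^\eps g(-k)$, hence
\[
\eps\sum_{x\in\Lambda_\eps} f(x+\eps j)\,g(x+\eps\ell) = \int_{-\frac1{2\eps}}^{\frac1{2\eps}} \CF^\eps f(k)\,\CF^\eps g(-k)\, e^{2\pi i k\eps j}\,e^{-2\pi i k\eps\ell}\,\dd k\,.
\]

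Finally, I would sum over the finitely many $(j,\ell)$, pull the sum back inside the integral, and recognise
\[
\sum_{j,\ell}\mu_{j,\ell}\,e^{2\pi i k\eps j}\,e^{-2\pi i k\eps\ell} = \sum_{j,\ell}\mu_{j,\ell}\,e^{-2\pi i(-\eps k)j}\,e^{-2\pi i(\eps k)\ell} = \widehat\mu(-\eps k,\eps k)
\]
directly from the definition of $\widehat\mu$ in Assumption~\ref{a:MeasureMu}; this yields \eqref{e:DFTProd}. The only point requiring a word of care is the interchange of the (finite) summation with the frequency integral, which is harmless; one could equivalently organise the whole computation around the Dirac-comb identity $\eps\sum_{x\in\Lambda_\eps} e^{2\pi i\xi x} = \sum_{m\in\Z}\delta(\xi - m/\eps)$, which under a double Fourier inversion of $f$ and $g$ collapses the two frequency integrals to $k_2=-k_1$ and produces the same expression. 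Note that the symmetry hypothesis $\mu(A\times B)=\mu(B\times A)$ plays no role in the identity itself — it only renders $\widehat\mu(-\eps k,\eps k)$ amenable to the reality and parity arguments used in Section~\ref{sec:dnif_bounds_disc}.
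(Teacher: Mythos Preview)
Your proof is correct and is essentially the same computation as the paper's, just run in the opposite direction: you expand the left-hand side, apply Parseval term by term, and then recognise $\widehat\mu(-\eps k,\eps k)$, whereas the paper substitutes the defining sums for $\CF^\eps f$, $\CF^\eps g$, and $\widehat\mu$ into the right-hand side, shifts variables, and collapses the $k$-integral via the orthogonality relation $\int_{-1/(2\eps)}^{1/(2\eps)} e^{-2\pi i k(x-y)}\,\dd k = \eps^{-1}\delta_{x,y}$. Both arguments are the same Parseval/orthogonality identity dressed in slightly different clothing.
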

\begin{proof}
The proof is immediate, notice that by the definition of the discrete Fourier transform in~\eqref{e:DFT} and thanks to a simple change of variables, we have that the right hand side of~\eqref{e:DFTProd} equals
\begin{equ}
\eps^2\int_{\R^2}\sum_{x,y\in\Lambda_\eps} f(x+\eps y_1)g(y+\eps y_2) \int_{-\frac{1}{2\eps}}^{\frac{1}{2\eps}}e^{-2\pi i  k(x-y)} \dd k\mu(\dd y_1,\dd y_2)\;,
\end{equ}
from which the result follows at once. 
\end{proof}


\bibliographystyle{./Martin}
\bibliography{./bibliography}

\end{document}